\newtheorem{theorem}{Theorem}[section]
\newtheorem{lemma}[theorem]{Lemma}
\newtheorem{proposition}[theorem]{Proposition}
\theoremstyle{definition}
\newtheorem{definition}[theorem]{Definition}
\theoremstyle{remark}
\newtheorem{remark}[theorem]{Remark}
\newtheorem{claim}[theorem]{Claim}
\numberwithin{equation}{section}
\newcommand{\R}{\mathbb{R}}
\newcommand{\N}{\mathbb{N}}
\newcommand{\Z}{\mathbb{Z}}
\newcommand{\C}{\mathcal{C}}
\renewcommand{\S}{\mathfrak{S}}
\newcommand{\K}{\mathfrak{K}}
\renewcommand{\P}{\mathfrak{P}}
\newcommand{\M}{\mathcal{M}}
\newcommand{\cR}{\mathcal{R}}
\newcommand{\cS}{\mathcal{S}}
\renewcommand{\phi}{\varphi} 
\renewcommand{\epsilon}{\varepsilon} 
\def\XXint#1#2#3{{\setbox0=\hbox{$#1{#2#3}{\int}$}
\vcenter{\hbox{$#2#3$}}\kern-.5\wd0}}
\newcounter{step}
\newcounter{substep}
\newcounter{subsubstep}
\newcounter{subsubsubstep}
\numberwithin{substep}{step}
\numberwithin{subsubstep}{substep}
\numberwithin{subsubsubstep}{subsubstep}
\renewcommand{\thestep}{\arabic{step}}
\renewcommand{\thesubstep}{\thestep.\roman{substep}}
\renewcommand{\thesubsubstep}{\thesubstep.\alph{subsubstep}}
\renewcommand{\thesubsubsubstep}{\thesubsubstep.\engrec{subsubsubstep}}
\newcommand{\step}[1]{%
\bigskip\noindent%
\refstepcounter{step}%
\label{step:#1}%
(\thestep$^\circ$)}
\newcommand{\substep}[1]{%
\medskip\noindent%
\refstepcounter{substep}%
\label{step:#1}%
(\thesubstep$^\circ$)}
\newcommand{\subsubstep}[1]{%
\smallskip\noindent%
\refstepcounter{subsubstep}%
\label{step:#1}%
(\thesubsubstep$^\circ$)}
\newcommand{\subsubsubstep}[1]{%
\smallskip\noindent%
\refstepcounter{subsubsubstep}%
\label{step:#1}%
(\thesubsubsubstep$^\circ$)}
\newcommand{\stepref}[1]{(\ref{step:#1}$^\circ$)}
\renewcommand{\div}{\operatorname{div}}
\newcommand{\dist}{\operatorname{dist}}
\newcommand{\supp}{\operatorname{supp}}
\newcommand{\loc}{\mathrm{loc}}
\renewcommand{\Re}{\operatorname{Re}}
\renewcommand{\Im}{\operatorname{Im}}
\begin{document}

\title[The Parabolic Signorini Problem]{Optimal Regularity and the
  Free Boundary\\in the Parabolic Signorini Problem}

\author[D.~Danielli]{Donatella Danielli} \address{Department of
  Mathematics, Purdue University, West Lafayette, IN 47907, USA}
\email{danielli@math.purdue.edu} \thanks{D.D. was supported in part by
  NSF grant DMS-1101246} \author[N.~Garofalo]{Nicola Garofalo}
\address{Dipartimento d'Ingegneria Civile e Ambientale (DICEA),
  Universit\`a di Padova, via Trieste 63, 35131 Padova, Italy}
\email{rembrandt54@gmail.com} \thanks{N.G. was supported in part by
  NSF grant DMS-1001317} \author[A.~Petrosyan]{Arshak Petrosyan}
\address{Department of Mathematics, Purdue University, West Lafayette,
  IN 47907, USA} \email{arshak@math.purdue.edu} \thanks{A.P. was
  supported in part by NSF grant DMS-1101139} \author[T.~To]{Tung To}
\address{Department of Mathematics, Purdue University, West Lafayette,
  IN 47907, USA} \email{totung@gmail.com}

\subjclass[2000]{Primary 35R35, 35K85} \keywords{free boundary
  problem, parabolic Signorini problem, evolutionary variational inequality, Almgren's frequency
  formula, Caffarelli's monotonicity formula, Weiss's monotonicity
  formula, Monneau's monotonicity formula, optimal regularity,
  regularity of free boundary, singular set}
\begin{abstract} We give a comprehensive treatment of the parabolic
  Signorini problem based on a generalization of Almgren's
  monotonicity of the frequency. This includes the proof of the
  optimal regularity of solutions, classification of free boundary
  points, the regularity of the regular set and the structure of the
  singular set.
\end{abstract}
\maketitle
\tableofcontents

\section{Introduction}
Given a domain $\Omega$ in $\R^n$, $n\geq 2$, with a sufficiently
regular boundary $\partial\Omega$, let $\M$ be a relatively open
subset of $\partial \Omega$ (in its relative topology), and set
$\cS=\partial\Omega\setminus\M$. We consider the solution of the
problem
\begin{align}\label{eq:signor-v-1}
  \Delta v-\partial_t v=0&\quad\text{in }\Omega_T:=\Omega\times(0,T],\\
  \label{eq:signor-v-2}
  v\geq \phi,\quad \partial_\nu v\geq 0,\quad (v-\phi)\partial_\nu
  v=0&\quad\text{on }\M_T:=\M\times(0,T],\\
  \label{eq:signor-v-3}
  v=g&\quad\text{on }\cS_T:=\cS\times(0,T],\\
  v(\cdot, 0)=\phi_0&\quad\text{on }\Omega_0:=\Omega\times\{0\},
  \label{eq:signor-v-4}
\end{align}
where $\partial_\nu$ is the outer normal derivative on
$\partial\Omega$, and $\phi:\M_T\to \R$, $\phi_0:\Omega_0\to\R$, and
$g:\cS_T\to \R$ are prescribed functions satisfying the compatibility
conditions: $\phi_0\geq \phi$ on $\M\times\{0\}$, $g\geq \phi$ on
$\partial \cS\times(0,T]$, and $g=\phi$ on $\cS\times\{0\}$, see
Fig.~\ref{fig:par-Sig-prob}. The condition \eqref{eq:signor-v-2} is
known as the \emph{Signorini boundary condition} and the problem
\eqref{eq:signor-v-1}--\eqref{eq:signor-v-4} as the \emph{(parabolic)
  Signorini problem} for the heat equation. The function $\phi$ is
called the \emph{thin obstacle}, since $v$ is restricted to stay above
$\phi$ on $\M_T$. Classical examples where
Signorini-type boundary conditions appear are the problems with
unilateral constraints in elastostatics (including the original
Signorini problem \cites{Sig,Fic}), problems with semipermeable
membranes in fluid mechanics (including the phenomenon of osmosis and
osmotic pressure in biochemistry), and the problems on the temperature
control on the boundary in thermics. We refer to the book of Duvaut
and Lions \cite{DL}, where many such applications are discussed and
the mathematical models are derived.

Another historical importance of the parabolic Signorini problem is
that it serves as one of the prototypical examples of evolutionary
\emph{variational inequalities}, going back to the foundational paper
by Lions and Stampacchia \cite{LS}, where the existence and uniqueness
of certain weak solutions were established.  In this paper, we work
with a stronger notion of solution. Thus, we say that a function $v\in
W^{1,0}_2(\Omega_T)$ solves
\eqref{eq:signor-v-1}--\eqref{eq:signor-v-4} if
\begin{align*}
  \int_{\Omega_T} \nabla v\nabla(w-v)+\partial_t v(w-v)\geq 0\quad\text{for every $w\in\K$},\\
  v\in\K,\quad \partial_t v\in L_2(\Omega_T),\quad v(\cdot,0)=\phi_0,
\end{align*}
where $\K=\{w\in W^{1,0}_2(\Omega_T)\mid w\geq \phi\text{ on } \M_T,\
w=g\text{ on }\cS_T\}$. The reader should see
Section~\ref{sec:parab-funct-class} for the definitions of the
relevant parabolic functional classes. The existence and uniqueness of
such $v$, under some natural assumptions on $\phi$, $\phi_0$, and $g$
can be found in \cites{Bre,DL,AU0,AU}. See also
Section~\ref{sec:existence-regularity} for more details.


Two major questions arise in the study of the problem
\eqref{eq:signor-v-1}--\eqref{eq:signor-v-4}:
\begin{itemize}
\item the regularity properties of $v$;
\item the structure and regularity of the \emph{free boundary}
  \begin{align*}
    \Gamma(v)&=\partial_{\M_T}\{(x,t)\in \M_T\mid v(x,t)>\phi(x,t)\},
  \end{align*}
  where $\partial_{\M_T}$ indicates the boundary in the relative
  topology of $\M_T$.
\end{itemize}

\begin{figure}[t]
  \begin{picture}(150,150)
    \put(0,0){\includegraphics[height=150pt]{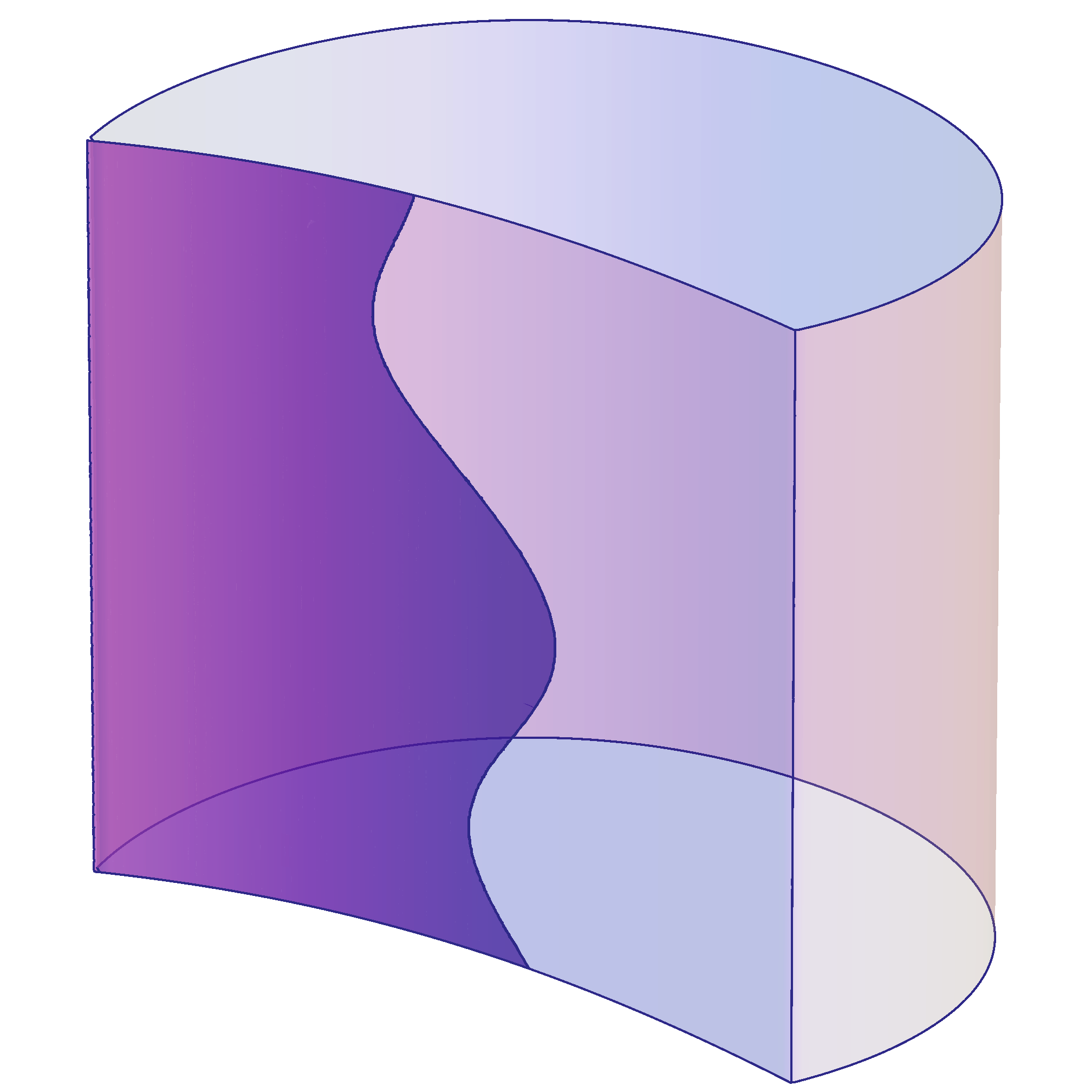}}
    \put(84,130){\footnotesize $\Omega_T$} \put(58,110){\footnotesize
      {$\M_T$}} \put(80,95){\footnotesize {$v>\phi$}}
    \put(75,80){\footnotesize $\partial_\nu v=0$}
    \put(25,75){\footnotesize \color{white}{$v=\phi$}}
    \put(20,60){\footnotesize \color{white}{$\partial_\nu v\geq 0$}}
    \put(80,30){\footnotesize $v=\phi_0$} \put(140,90){\footnotesize
      $v=g$} \put(88,59){\footnotesize $\Gamma(v)$}
    \put(87,62){\vector(-2,1){10}}
  \end{picture}
  \caption{The parabolic Signorini problem}
  \label{fig:par-Sig-prob}
\end{figure}

Concerning the regularity of $v$, it has long been known that the
spatial derivatives $\partial_{x_i}v$, $i=1,\ldots,n$, are
$\alpha$-H\"older continuous on compact subsets of $\Omega_T\cup\M_T$,
for some unspecified $\alpha\in (0,1)$. In the time-independent case,
such regularity was first proved by Richardson~\cite{Ric} in dimension
$n=2$, and by Caffarelli \cite{Ca-sig} for $n\ge 3$. In the parabolic
case, this was first proved by Athanasopoulos~\cite{Ath1}, and
subsequently by Uraltseva~\cite{Ur} (see also \cite{AU0}), under
certain regularity assumptions on the boundary data, which were
further relaxed by Arkhipova and Uraltseva \cite{AU}.

We note that the H\"older continuity of $\partial_{x_i}v$ is the best
regularity one should expect for the solution of
\eqref{eq:signor-v-1}--\eqref{eq:signor-v-4}. This can be seen from
the example
$$
v=\Re(x_1+ix_n)^{3/2},\quad x_n\geq 0,
$$
which is a harmonic function in $\R^n_+$, and satisfies the Signorini
boundary conditions on $\M=\R^{n-1}\times\{0\}$, with thin obstacle
$\phi\equiv 0$. This example also suggests that the optimal H\"older
exponent for $\partial_{x_i}v$ should be $1/2$, at least when $\M$ is
flat (contained in a hyperplane). Indeed, in the time-independent
case, such optimal $C^{1,1/2}$ regularity was proved in dimension
$n=2$ in the cited paper by Richardson~\cite{Ric}. The case of
arbitrary space dimension (still time-independent), however, had to
wait for the breakthrough work of Athanasopoulos and
Caffarelli~\cite{AC}. Very recently, the proof of the optimal
regularity for the original Signorini problem in elastostatics was
announced by Andersson \cite{And}.  

One of the main objectives of this paper is to establish, in the
parabolic Signorini problem, and for a flat thin manifold $\M$, that
$\nabla v\in H^{1/2,1/4}_\loc(\Omega_T\cup \M_T)$, or more precisely
that $v\in H^{3/2,3/4}_\loc(\Omega_T\cup \M_T)$, see
Theorem~\ref{thm:opt-reg} below.  Our approach is inspired by the
works of Athanasopoulos, Caffarelli, and Salsa \cite{ACS} and
Caffarelli, Salsa, and Silvestre \cite{CSS} on the time-independent
problem.  In such papers a generalization of the celebrated Almgren's
frequency formula established in \cite{Alm} was used, not only to give
an alternative proof of the optimal $C^{1,1/2}$ regularity of
solutions, but also to study the so-called \emph{regular set} $\cR(v)$
of the free boundary $\Gamma(v)$. This approach was subsequently
refined in \cite{GP} by the second and third named authors with the
objective of classifying the free boundary points according to their
separation rate from the thin obstacle $\phi$. In \cite{GP} the
authors also introduced generalizations of Weiss's and Monneau's
monotonicity formulas, originally developed in \cites{Wei1} and
\cite{Mon}, respectively, for the classical obstacle problem. Such
generalized Weiss's and Monneau's monotonicity formulas allowed to
prove a structural theorem on the so-called \emph{singular set}
$\Sigma(v)$ of the free boundary, see \cite{GP}. For an exposition of
these results in the case when the thin obstacle $\phi\equiv 0$ we
also refer to the book by the third named author, Shahgholian, and
Uraltseva \cite{PSU}*{Chapter 9}.

In closing we mention that, as far as we are aware of, the only result
presently available concerning the free boundary in the parabolic
setting is that of Athanasopoulos \cite{Ath2}, under assumptions on
the boundary data that guarantee boundedness and nonnegativity of
$\partial_t v$. In that paper it is shown that the free boundary is
locally given as a graph
$$
t=h(x_1,\ldots,x_{n-1}),
$$
for a Lipschitz continuous function $h$.

\subsection{Overview of the main results} In this paper we extend all
the above mentioned results from the elliptic to the parabolic
case. We focus on the situation when the principal part of the
diffusion operator is the Laplacian and that the thin manifold $\M$ is
flat and contained in $\R^{n-1}\times\{0\}$.

One of our central results is a generalization of Almgren's frequency
formula \cite{Alm}, see Theorem~\ref{thm:thin-monotonicity} below. As
it is well known, the parabolic counterpart of Almgren's formula was
established by Poon \cite{Poo}, for functions which are caloric in an
infinite strip $S_\rho=\R^n\times(-\rho^2,0]$.  In
Section~\ref{sec:gener-freq-funct}, we establish a truncated version
of that formula for the solutions of the parabolic Signorini problem,
similar to the ones in \cite{CSS} and \cite{GP}. The time dependent
case presents, however, substantial novel challenges with respect to
the elliptic setting. These are mainly due to the lack of regularity
of the solution in the $t$-variable, a fact which makes the
justification of differentiation formulas and the control of error
terms quite difficult. To overcome these obstructions, we have
introduced (Steklov-type) averaged versions of the quantities involved
in our main monotonicity formulas. This basic idea has enabled us to
successfully control the error terms.

Similarly to what was done in \cite{GP}, we then undertake a
systematic classification of the free boundary points based on the
limit at the point in question of the generalized frequency
function. When the thin obstacle $\phi$ is in the class
$H^{\ell,\ell/2}$, $\ell\geq 2$, this classification translates into
assigning to each free boundary point in $\Gamma(v)$ (or more
generally to every point on the \emph{extended free boundary}
$\Gamma_*(v)$, see Section~\ref{sec:classes-solutions}) a certain
\emph{frequency} $\kappa\leq \ell$, see
Sections~\ref{sec:exist-homog-blow} and \ref{sec:class-free-bound}.
At the points for which $\kappa<\ell$, the separation rate of the
solution $v$ from the thin obstacle can be ``detected'', in a sense
that it will exceed the truncation term in the generalized frequency
formula. At those points we are then able to consider the so-called
blowups, which will be parabolically $\kappa$-homogeneous solutions of
the Signorini problem, see Section~\ref{sec:exist-homog-blow}.

Next, we show that, similarly to what happens in the elliptic case,
the smallest possible value of the frequency at a free boundary point
is $\kappa =3/2$, see Section~\ref{sec:homog-glob-solut}.  We
emphasize that our proof of this fact does not rely on the
semiconvexity estimates, as in the elliptic case (see \cite{AC} or
\cite{CSS}). Rather, we use the monotonicity formula of
Caffarelli~\cite{Ca1} to reduce the problem to the spatial dimension
$n=2$, and then study the eigenvalues of the Ornstein-Uhlenbeck
operator in domains with slits (see Fig.~\ref{fig:slit-dom} in
Section~\ref{sec:homog-glob-solut}). The elliptic version of this
argument has appeared earlier in the book \cite{PSU}*{Chapter~9}. The
bound $\kappa\geq 3/2$ ultimately implies the optimal
$H^{3/2,3/4}_\loc$ regularity of solutions, see
Section~\ref{sec:optim-regul-solut}.

We next turn to studying the regularity properties of the free
boundary. We start with the so-called \emph{regular set} $\cR(v)$,
which corresponds to free boundary points of minimal frequency
$\kappa=3/2$. Similarly to the elliptic case, studied in
\cites{ACS,CSS}, the Lipschitz regularity of $\cR(v)$ with respect to
the space variables follows by showing that there is a cone of spatial
directions in which $v-\phi$ is monotone. The $1/2$-H\"older
regularity in $t$ is then a consequence of the fact that the blowups
at regular points are $t$-independent, see
Theorem~\ref{thm:lip-reg-reg-set}. Thus, after possibly rotating the
coordinate axes in $\R^{n-1}$, we obtain that $\cR(v)$ is given
locally as a graph
$$
x_{n-1}=g(x_1,\ldots,x_{n-2},t),
$$
where $g$ is parabolically Lipschitz (or
$\operatorname{Lip}(1,\frac12)$ in alternative terminology). To prove
the H\"older $H^{\alpha,\alpha/2}$ regularity of $\partial_{x_i}g$,
$i=1,\ldots, n-2$, we then use an idea that goes back to the paper of
Athanasopoulos and Caffarelli \cite{AC0} based on an application of
the boundary Harnack principle (forward and backward) for so-called
domains with thin Lipschitz complement, i.e, domains of the type
$$
Q_1\setminus\{(x',t)\in Q_1'\mid x_{n-1}\leq g(x'',t)\},
$$ 
see Lemma~\ref{lem:BHP}. This result was recently established in the
work of the third named author and Shi \cite{Shi}. We emphasize that,
unlike the elliptic case, the boundary Harnack principle for such
domains cannot be reduced to the other known results in the parabolic
setting (see e.g.\ \cites{Kem,FGS} for parabolically Lipschitz
domains, or \cite{HLN} for parabolically NTA domains with Reifenberg
flat boundary).

Another type of free boundary points that we study are the so-called
\emph{singular points}, where the coincidence set $\{v=\phi\}$ has
zero $\mathcal{H}^n$-density in the thin manifold with respect to the
thin parabolic cylinders. This corresponds to free boundary points
with frequency $\kappa=2m$, $m\in\N$. The blowups at those points are
parabolically $\kappa$-homogeneous polynomials, see
Section~\ref{sec:free-bound-sing}.

Following the approach in \cite{GP}, in
Section~\ref{sec:weiss-monneau-type} we establish appropriate
parabolic versions of monotonicity formulas of Weiss and Monneau
type. Using such formulas we are able to prove the uniqueness of the
blowups at singular free boundary points $(x_0,t_0)$, and consequently
obtain a Taylor expansion of the type
$$
v(x,t)-\phi(x',t)=q_\kappa(x-x_0,t-t_0)+o(\|(x-x_0,t-t_0)\|^\kappa),\quad
t\leq t_0,
$$
where $q_\kappa$ is a polynomial of parabolic degree $\kappa$ that
depends continuously on the singular point $(x_0,t_0)$ with frequency
$\kappa$. We note that such expansion holds only for $t\leq t_0$ and
may fail for $t>t_0$ (see
Remark~\ref{rem:taylor-cntrex}). Nevertheless, we show that this
expansion essentially holds when restricted to singular points
$(x,t)$, even for $t\geq t_0$. This is necessary in order to verify
the compatibility condition in a parabolic version of the Whitney's
extension theorem (given in
Appendix~\ref{sec:parab-whitn-extens}). Using the latter we are then
able to prove a structural theorem for the singular set. For the
elliptic counterpart of this result see \cite{GP}. It should be
mentioned at this moment that one difference between the parabolic
case treated in the present paper and its elliptic counterpart is the
presence of new types of singular points, which we call
\emph{time-like}. At such points the blowup may become independent of
the space variables $x'$. We show that such singular points are
contained in a countable union of graphs of the type
$$
t=g(x_1,\ldots,x_{n-1}),
$$
where $g$ is a $C^1$ function. The other singular points, which we
call \emph{space-like}, are contained in countable union of
$d$-dimensional $C^{1,0}$ manifolds ($d<n-1$). After a possible
rotation of coordinates in $\R^{n-1}$, such manifolds are locally
representable as graphs of the type
$$
(x_{d+1},\ldots,x_{n-1})=g(x_1,\ldots,x_d,t),
$$
with $g$ and $\partial_{x_i}g$, $i=1,\ldots,d$, continuous.

\subsection{Related problems}
The time-independent version of the Signorini problem is closely
related to the obstacle problem for the half-Laplacian in $\R^{n-1}$
$$
u\geq \phi,\quad (-\Delta_{x'})^{1/2}u\geq 0,\quad
(u-\phi)(-\Delta_{x'})^{1/2}u= 0\quad\text{in }\R^{n-1}.
$$
More precisely, if we consider a harmonic extension of $u$ from
$\R^{n-1}=\R^{n-1}\times\{0\}$ to $\R^n_+$ (by means of a convolution
with the Poisson kernel), we will have that
$$
(-\Delta_{x'})^{1/2}u=- c_n\partial_{x_n} u\quad\text{on
}\R^{n-1}\times\{0\}.
$$
Thus, the extension $u$ will solve the Signorini problem in
$\R^n_+$. More generally, in the above problem instead of the
half-Laplacian one can consider an arbitrary fractional power of the
Laplacian $(-\Delta_{x'})^{s}$, $0<s<1$, see e.g. the thesis of
Silvestre \cite{Sil}. Problems of this kind appear for instance in
mathematical finance, in the valuation of American options, when the
asset prices are modelled by jump processes. The time-independent
problem corresponds to the so-called perpetual options, with infinite
maturity time. In such framework, with the aid of an extension theorem
of Caffarelli and Silvestre \cite{CS}, many of the results known for
$s=1/2$ can be proved also for all powers $0<s<1$, see \cite{CSS}.

The evolution version of the problem above is driven by the fractional
diffusion and can be written as
\begin{align*}
  u(x',t)-\phi(x')&\geq 0,\\
  \left((-\Delta_{x'})^{s}+\partial_t\right) u&\geq
  0,\\
  \left(u(x',t)-\phi(x')\right)\left((-\Delta_{x'})^{s}+\partial_t\right)
  u&=0
\end{align*}
in $\R^{n-1}\times(0,T)$ with the initial condition
$$
u(x',0)=\phi(x').
$$
This problem has been recently studied by Caffarelli and Figalli
\cite{CF}. We emphasize that, although their time-independent versions
are locally equivalent, the problem studied in \cite{CF} is very
different from the one considered in the present paper.

In relation to temperature control problems on the boundary, described
in \cite{DL}, we would like to mention the two recent papers by
Athanasopoulos and Caffarelli \cite{AC-two-phase}, and by Allen and
Shi \cite{AS}. Both papers deal with two-phase problems that can be
viewed as generalizations of the one-phase problem (with $\phi=0$)
considered in this paper. The paper \cite{AS} establishes the
phenomenon of separation of phases, thereby locally reducing the study
of the two-phase problem to that of one-phase. A similar phenomenon
was shown earlier in the elliptic case by Allen, Lindgren, and the
third named author \cite{ALP}.

\subsection{Structure of the paper}

In what follows we provide a brief description of the structure of the
paper.

\begin{itemize}
\item In Section~\ref{sec:notat-prel} we introduce the notations used
  throughout the paper, and describe the relevant parabolic functional
  classes.
\item In Section~\ref{sec:existence-regularity} we overview some of
  the known basic regularity properties of the solution $v$ of the
  parabolic Signorini problem. The main ones are: $v\in
  W^{2,1}_{2,\loc}\cap L^\infty_\loc$; and, $\nabla v\in
  H^{\alpha,\alpha/2}_\loc$ for some $\alpha>0$. Such results will be
  extensively used in our paper.

\item In Section~\ref{sec:classes-solutions} we introduce the classes
  of solutions $\S_\phi(Q_1^+)$ of the parabolic Signorini problem
  with a thin obstacle $\phi$, and show how to effectively
  ``subtract'' the obstacle by maximally using its regularity. In this
  process we convert the problem to one with zero thin obstacle, but
  with a non-homogeneous right hand side $f$ in the equation. In order
  to apply our main monotonicity formulas we also need to to extend
  the resulting functions from $Q_1^+$ to the entire strip $S_1^+$. We
  achieve this by multiplication by a a cutoff function, and denote
  the resulting class of functions by $\S^f(S_1^+)$.

\item Section~\ref{sec:estimates-w2-1_2s_1+} contains generalizations
  of $W^{2,1}_2$ estimates to the weighted spaced with Gaussian
  measure. These estimates will be instrumental in the proof of the
  generalized frequency formula in Section~\ref{sec:gener-freq-funct}
  and in the study of the blowups in
  Section~\ref{sec:exist-homog-blow}. In order not to distract the
  reader from the main content, we have deferred the proof of these
  estimates to the Appendix~\ref{sec:est-gauss-proofs}.
\item Section~\ref{sec:gener-freq-funct} is the most technical part of
  the paper. There, we generalize Almgren's (Poon's) frequency formula
  to solutions of the parabolic Signorini problem.

\item In Section~\ref{sec:exist-homog-blow} we prove the existence and
  homogeneity of the blowups at free boundary points where the
  separation rate of the solution from the thin obstacle dominates the
  error (truncation) terms in the generalized frequency formula.
\item In Section~\ref{sec:homog-glob-solut} we prove that the minimal
  homogeneity of homogeneous solution of the parabolic Signorini
  problem is $3/2$.

\item Section~\ref{sec:optim-regul-solut} contains the proof of the
  optimal $H^{3/2,3/4}_\loc$ regularity of the solutions of the
  parabolic Signorini problem.

\item In the remaining part of the paper we study the free
  boundary. We start in Section~\ref{sec:class-free-bound} by
  classifying the free boundary points according to the homogeneity of
  the blowups at the point in question. We also use the assumed
  regularity of the thin obstacle in the most optimal way.

\item In Section~\ref{sec:free-bound-regul}, we study the so-called
  regular set $\cR(v)$ and show that it can be locally represented as
  a graph with $H^{\alpha,\alpha/2}$ regular gradient.

\item In Section~\ref{sec:free-bound-sing} we give a characterization
  of the so-called singular points.
\item Section~\ref{sec:weiss-monneau-type} contains some new Weiss and
  Monneau type monotonicity formulas for the parabolic problem. These
  results generalize the ones in \cite{GP} for the elliptic case.

\item In Section~\ref{sec:struct-sing-set} we prove the uniqueness of
  blowups at singular points and the continuous dependence of blowups
  on compact subsets of the singular set. We then invoke a parabolic
  version of Whitney's extension theorem (given in
  Appendix~\ref{sec:parab-whitn-extens}) to prove a structural theorem
  on the singular set.
\end{itemize}

\section{Notation and preliminaries}\label{sec:notat-prel}
\subsection{Notation}\label{sec:not}
To proceed, we fix the notations that we are going to use throughout
the paper.
\begin{alignat*}{3}
  &\N &&=\{1,2,\ldots\}&\qquad&\text{(natural numbers)}\\
  &\Z&&=\{0,\pm1,\pm2,\ldots\}&\qquad&\text{(integers)}\\
  &\Z_+&&=\N\cup\{0\}\qquad&&\text{(nonnegative integers)}\\
  &\R &&=(-\infty,\infty)&&\text{(real numbers)}\\
  & s^\pm&&=\max\{\pm s,0\},\quad s\in\R&&\text{(positive/negative
    part
    of $s$)}\\
  &\R^n&&=\{x=(x_1,x_2,\ldots, x_n)\mid x_i\in\R\}&\qquad& \text{(Euclidean space)}\\
  &\R^n_+&&=\{x\in \R^n \mid x_n > 0\}&& \text{(positive half-space)}\\
  &\R^n_-&&=\{x\in \R^n\mid x_n < 0\}&& \text{(negative half-space)}\\
  &\R^{n-1}&&\text{identified with $\R^{n-1}\times\{0\}\subset\R^n$}
  &&\text{(thin space)}\\
  & x' &&=(x_1, x_2,\ldots,x_{n-1})\quad\text{for }x\in\R^n\\
  &&& \text{we also identify $x'$ with $(x',0)$}\\
  &x''&&=(x_1,x_2,\ldots,x_{n-2})\\
  &|x|&&=\Big(\sum_{i=1}^n x_i^2\Big)^{1/2},\quad x\in\R^n&&\text{(Euclidean norm)}\\
  &\|(x,t)\|&&=(|x|^2+|t|)^{1/2},\quad x\in\R^n, t\in\R&&\text{(parabolic norm)}\\
  &x^\alpha &&=x_1^{\alpha_1}\dots x_n^{\alpha_n},\quad x\in\R^n, \alpha\in\Z_+^n\\
  &\overline{E}, E^\circ, \partial E&&\text{closure, interior,
    boundary of the set $E$}\\
  &\partial_{X} E &&\text{boundary in the relative topology of $X$}\\
  &\partial_p E &&\text{parabolic boundary of $E$}\\
  &E^c&&\text{complement of the set $E$}\\
  &\mathcal{H}^s(E)&&\text{$s$-dimensional Hausdorff measure}\\
  &&&\text{of a Borel set $E$}\\
  \intertext{For $x_0\in\R^{n}$, $t_0\in\R$, and $r>0$ we let}
  &B_r(x_0) &&=\{x\in\R^n\mid |x-x_0| < r\}&&\text{(Euclidean ball)}\\
  &B^\pm_r(x_0) &&= B_r(x_0)\cap \R^n_\pm,\quad x_0\in\R^{n-1} &&\text{(Euclidean half-ball)}\\
  &B'_r(x_0)&& =B_r(x_0)\cap \R^{n-1},\quad x_0\in\R^{n-1}&& \text{(thin ball)}\\
  &B''_r(x_0)&& =B_r'(x_0)\cap \R^{n-2},\quad x_0\in\R^{n-2}&&\\
  &\C_\eta'&&=\{x'\in\R^{n-1}\mid x_{n-1}\geq \eta |x''|\},\quad
  \eta>0&&\text{(thin cone)}\\
  &Q_r(x_0,t_0) &&= B_r(x_0)\times(t_0-r^2,t_0] &&\text{(parabolic cylinder)}\\
  &Q_r^\pm(x_0,t_0) &&= B_r^\pm(x_0)\times(t_0-r^2,t_0]&&\text{(parabolic half-cylinders)}\\
  &\tilde Q_r(x_0,t_0) &&=
  B_r(x_0)\times(t_0-r^2,t_0+r^2)&&\text{(full parabolic cylinder)}\\
  &Q_r'(x_0,t_0) &&= B_r'(x_0)\times(t_0-r^2,t_0]&&\text{(thin
    parabolic
    cylinder)}\\*
  &Q_r''(x_0,t_0) &&= B_r''(x_0)\times(t_0-r^2,t_0]\\
  &S_r &&=\R^n\times(-r^2,0] &&\text{(parabolic strip)}\\
  &S_r^\pm &&=\R^n_\pm\times(-r^2,0] &&\text{(parabolic half-strip)}\\
  &S'_r &&=\R^{n-1}\times(-r^2,0] &&\text{(thin parabolic strip)}
  \intertext{When $x_0=0$ and $t_0=0$, we routinely omit indicating
    the centers $x_0$ and $(x_0,t_0)$ in the above notations.}
  &\partial_e u, u_e&&\text{partial derivative in the direction $e$}
  \\
  &\partial_{x_i}u, u_{x_i} &&=\partial_{e_i}u,\quad
  \text{for standard coordinate}\\*
  &&&\text{vectors }e_i,\ i=1,\ldots, n\\
  &\partial_t u, u_t&&\text{partial derivative in $t$ variable}\\
  &u_{x_{i_1}\cdots x_{i_k}}&&=\partial_{x_{i_1}\cdots
    x_{i_k}}u=\partial_{x_{i_1}}\cdots\partial_{x_{i_k}}u\\
  &\partial_x^\alpha
  u&&=\partial_{x_1}^{\alpha_1}\cdots\partial_{x_n}^{\alpha_n}u,\\*
  &&&\text{for $\alpha=(\alpha_1,\ldots,\alpha_n)$, $\alpha_i\in\Z_+$}\\
  &\nabla u, \nabla_{x}u&&=(\partial_{x_1}u,\ldots,\partial_{x_{n}}u)&&\text{(gradient)}\\
  &\nabla'u,
  \nabla_{x'}u&&=(\partial_{x_1}u,\ldots,\partial_{x_{n-1}}u)&&\text{(tangential
    or thin gradient)}\\
  &\nabla''u,\nabla_{x''}u&&=(\partial_{x_1}u,\ldots,\partial_{x_{n-2}}u)\\
  &D^ku,D^k_x u&&=(\partial_x^\alpha u)_{|\alpha|=k},\quad k\in\Z_+,\\*
  &&& \text{where }
  |\alpha|=\alpha_1+\cdots+\alpha_n\\
  &\Delta u,\Delta_xu&&=\sum_{i=1}^n \partial_{x_ix_i}u&&\text{(Laplacian)}\\
  &\Delta'u,\Delta_{x'}u&&=\sum_{i=1}^{n-1} \partial_{x_ix_i}u&&\text{(tangential
    or thin Laplacian)}
\end{alignat*}
We denote by $G$ the \emph{backward heat kernel} on $\R^n\times\R$
\[
G(x,t) =\begin{cases}
  (-4\pi t)^{-\frac{n}{2}} e^{\frac{x^2}{4t}}, & t<0,\\
  0,& t\geq 0.
\end{cases}
\]
We will often use the following properties of $G$:
\begin{equation}\label{eq:G}
  \Delta G+\partial_t G = 0,\quad G(\lambda x,\lambda^2 t)=
  \lambda^{-n} G(x, t),\quad\nabla G =\frac{x}{2t}\, G.
\end{equation}
Besides, to simplify our calculations, we define the differential
operator:
\begin{equation}\label{eq:Z}
  Zu = x\nabla u+2t\partial_tu ,
\end{equation}
which is the generator of the parabolic scaling in the sense that
\begin{equation}\label{eq:Z-gen}
  Zu(x,t)=\frac{d}{d\lambda}\Big|_{\lambda=1} u(\lambda x,\lambda^2 t).
\end{equation}
Using \eqref{eq:G}, the operator $Z$ can also be defined through the
following identity:
\begin{equation}\label{eq:ZG}
  Zu =2t\Big(\nabla u\frac{\nabla G}{G}+\partial_t u\Big).
\end{equation}

\subsection{Parabolic functional
  classes} \label{sec:parab-funct-class}

For the parabolic functional classes, we have opted to use notations
similar to those in the classical book of Ladyzhenskaya, Solonnikov,
and Uraltseva \cite{LSU}.

Let $\Omega\subset\R^n$ be an open subset in $\R^n$ and
$\Omega_{T}=\Omega\times(0,T]$ for $T>0$. The class
$C(\Omega_T)=C^{0,0}(\Omega_T)$ is the class of functions continuous
in $\Omega_T$ with respect to parabolic (or Euclidean)
distance. Further, given for $m\in\Z_+$ we say $u\in
C^{2m,m}(\Omega_T)$ if for $|\alpha|+2j\leq 2m$ $
\partial_x^\alpha\partial_t^j u\in C^{0,0}(\Omega_T)$, and define the
norm
$$
\|u\|_{C^{2m,m}(\Omega_T)}=\sum_{|\alpha|+2j\leq 2m} \sup_{(x,t)\in
  \Omega_T}|\partial_x^\alpha\partial_t^j u(x,y)|.
$$
The parabolic H\"older classes $H^{\ell,\ell/2}(\Omega_T)$, for
$\ell=m+\gamma$, $m\in\Z_+$, $0<\gamma\leq 1$ are defined as follows.
First, we let
\begin{align*}
  \langle u \rangle^{(0)}_{\Omega_T}&=|u|^{(0)}_{\Omega_T}=\sup_{(x,t)\in \Omega_T} |u(x,t)|,\\
  \langle u \rangle^{(m)}_{\Omega_T}&=\sum_{|\alpha|+2j=m}
  |\partial^\alpha_x\partial_t^j u|^{(0)}_{\Omega_T},\\
  \langle
  u\rangle^{(\beta)}_{x,\Omega_T}&=\sup_{\substack{(x,t),(y,t)\in
      \Omega_T\\0<|x-y|\leq \delta_0}}\frac{|u(x,t)-u(y,t)|}{|x-y|^\beta},\quad 0<\beta\leq1,\\
  \langle
  u\rangle^{(\beta)}_{t,\Omega_T}&=\sup_{\substack{(x,t),(x,s)\in
      \Omega_T\\0<|t-s|<\delta_0^2}}\frac{|u(x,t)-u(x,s)|}{|t-s|^\beta},\quad 0<\beta\leq1,\\
  \langle u
  \rangle^{(\ell)}_{x,\Omega_T}&=\sum_{|\alpha|+2j=m}\langle \partial^\alpha_x\partial^j_t
  u\rangle^{(\gamma)}_{x,\Omega_T},\\
  \langle u \rangle^{(\ell/2)}_{t,\Omega_T}&=\sum_{m-1\leq
    |\alpha|+2j\leq m} \langle\partial_{x}^\alpha\partial_t^j
  u\rangle^{((\ell-|\alpha|-2j)/2)}_{t,\Omega_T},\\
  \langle u \rangle^{(\ell)}_{\Omega_T}&=\langle u
  \rangle^{(\ell)}_{x,\Omega_T}+\langle u
  \rangle^{(\ell/2)}_{t,\Omega_T}.
\end{align*}
Then, we define $H^{\ell,\ell/2}(\Omega_T)$ as the space of functions
$u$ for which the following norm is finite:
$$
\|u\|_{H^{\ell,\ell/2}(\Omega_T)}=\sum_{k=0}^m\langle u
\rangle^{(k)}_{\Omega_T}+\langle u \rangle^{(\ell)}_{\Omega_T}.
$$
The parabolic Lebesgue space $L_q(\Omega_T)$ indicates the Banach
space of those measurable functions on $\Omega_T$ for which the norm
$$
\|u\|_{L_q(\Omega_T)}=\Big(\int_{\Omega_T} |u(x,t)|^q dx dt\Big)^{1/q}
$$
is finite.  The parabolic Sobolev spaces $W^{2m,m}_q(\Omega_T)$, $m\in
\Z_+$, denote the spaces of those functions in $L_q(\Omega_T)$, whose
distributional derivative $\partial_x^\alpha\partial_t^j u$ belongs to
$\in L_q(\Omega_T)$, for $|\alpha|+2j\leq 2m$. Endowed with the norm
$$
\|u\|_{W^{2m,m}_q(\Omega_T)}=\sum_{|\alpha|+2j\leq 2m}
\| \partial^\alpha_x\partial^j_t u\|_{L^q(\Omega_T)},
$$ 
$W^{2m,m}_q(\Omega_T)$ becomes a Banach space.

We also denote by $W^{1,0}_q(\Omega_T)$, $W^{1,1}_q(\Omega_T)$ the
Banach subspaces of $L^q(\Omega_T)$ generated by the norms
\begin{align*}
  \|u\|_{W^{1,0}_q(\Omega_T)}&=\|u\|_{L_q(\Omega_T)}+\|\nabla u\|_{L_q(\Omega_T)},\\
  \|u\|_{W^{1,1}_q(\Omega_T)}&=\|u\|_{L_q(\Omega_T)}+\|\nabla
  u\|_{L_q(\Omega_T)}+\|\partial_t u \|_{L_q(\Omega_T)}.
\end{align*}
Let $E\subset S_R$ for some $R>0$. The weighted Lebesgue space $L_p(E,
G)$, $p>1$, with Gaussian weight $G(x,t)$, will appear naturally in
our proofs. The norm in this space is defined by
$$
\|u\|_{L_p(E,G)}^p=\int_{E} |u(x,t)|^pG(x,t)dxdt.
$$
When $E$ is a relatively open subset of $S_R$, one may also define the
respective weighted Sobolev spaces. We will also consider weighted
spatial Lebesgue and Sobolev spaces $L_p(\Omega,G(\cdot,s))$, and
$W^m_p(\Omega,G(\cdot,s))$ with Gaussian weights $G(\cdot,s)$ on
$\R^n$ for some fixed $s<0$. For instance, the norm in the space
$L_p(\Omega,G(\cdot,s))$ is given by
$$
\|u\|_{L_p(\Omega,G(\cdot,s))}^p=\int_{\Omega} |u(x)|^pG(x,s)dx.
$$

\section{Known existence and regularity results}\label{sec:existence-regularity}
In this section we recall some known results about the existence and
the regularity of the solution of the parabolic Signorini problem that
we are going to take as the starting point of our analysis. For
detailed proofs we refer the reader to the works of Arkhipova and
Uraltseva \cites{AU0,AU}. For simplicity we state the relevant results
only in the case of the unit parabolic half-cylinder $Q_1^+$.

Suppose we are given functions $f\in L_\infty(Q_1^+)$, $\phi\in
W^{2,1}_\infty(Q'_1)$, $g\in W^{2,1}_\infty((\partial B_1)^+\times
(-1,0])$, and $\phi_0\in W^2_\infty(B_1^+)$ obeying the compatibility
conditions
\begin{alignat*}{2}
  \phi_0&=g(\cdot,-1)&&\text{a.e.\ on }(\partial B_1)^+,\\
  \phi_0&\geq\phi(\cdot,-1)&\quad&\text{a.e.\ on }B'_1,\\
  g&\geq\phi&&\text{a.e.\ on }\partial B_1'\times(-1,0].
\end{alignat*}
Given $\phi$ and $g$ as above, we introduce the following closed
subset of $W^{1,0}_2(Q_1^+)$
$$
\K=\{v\in W^{1,0}_2(Q_1^+)\mid v\geq \phi\text{ a.e.\ on } Q'_1,\
v=g\text{ a.e.\ on }(\partial B_1)^+\times(-1,0]\}.
$$
We say that $u\in W^{1,0}_2(Q_1^+)$ satisfies
\begin{align}
  \label{eq:HEu-Q1}
  \Delta u-\partial_t u = f(x,t)&\quad\text{in } Q_{1}^+,\\
  \label{eq:u-signor-Q1}
  u\geq \phi,\quad -\partial_{x_n} u\geq 0,\quad
  (u-\phi) \partial_{x_n}u = 0&\quad\text{on } Q_1',\\
  \label{eq:u-Dir-bdry-Q1}
  u=g&\quad\text{on }(\partial B_1)^+\times(-1,0],\\
  \label{eq:u-init-Q1}
  u(\cdot,-1)=\phi_0&\quad\text{on } B_1^+,
\end{align}
if $u$ solves the variational inequality
\begin{gather*}
  \int_{Q_1^+} [\nabla u\nabla(v-u) +\partial_t u (v-u)+f(v-u)]\geq
  0\quad\text{for any }v\in\K,\\
  u\in\K, \quad \partial_t u\in L_2(Q_1^+),\\
  u(\cdot, -1)=\phi_0 \quad\text{on } B_1^+.
\end{gather*}
Under the assumptions above there exists a unique solution to the
problem \eqref{eq:HEu-Q1}--\eqref{eq:u-init-Q1}. Moreover, the
solution will have H\"older continuous spatial gradient: $\nabla u\in
H^{\alpha,\alpha/2}(Q_r^+\cup Q_r')$ for any $0<r<1$ with the H\"older
exponent $\alpha>0$ depending only on the dimension, see
\cites{AU0,AU}.  Below, we sketch the details in the case $\phi=0$ and
$g=0$, that would be most relevant in our case.

For any $\epsilon>0$ we denote by $f^\epsilon$ a mollifications of $f$
and consider the solution $u^\epsilon$ to the approximating problem
\begin{align*}
  \Delta u^\epsilon-\partial_t
  u^\epsilon=f^\epsilon(x,t)&\quad\text{in
  }Q_1^+,\\
  \partial_{x_n} u^\epsilon=\beta_\epsilon(u^\epsilon)&\quad\text{on
  }Q_1',\\
  u^\epsilon=0& \quad\text{on } (\partial B_1)^+\times(-1,0],\\
  u^\epsilon(\cdot,-1)=\phi_0&\quad\text{on } B_1^+,
\end{align*}
where the penalty function $\beta_\epsilon\in C^\infty(\R)$ is such
that
\begin{multline}\label{eq:betaeps}
  \beta_\epsilon(s)=0\quad\text{for }s\geq
  0,\quad\beta_\epsilon(s)=\epsilon+s/\epsilon\quad\text{for }s\leq
  -2\epsilon^2,\\
  \text{and}\quad \beta'_\epsilon(s)\geq 0\quad\text{for all }s\in\R.
\end{multline}
The solutions $u^\epsilon$ to the penalization problems are shown to
be smooth in $Q_1^+$ up to $Q_1'$, and it can be proved that they are
uniformly bounded $W^{1,1}_2(Q_1^+)$. To this end, for any $\eta\in
W^{1,0}_2(Q_1^+)$ vanishing a.e.\ on $(\partial B_1)^+\times(-1,0]$
and $(t_1,t_2]\subset (-1,0]$, one writes the integral identity
\begin{equation}\label{eq:penalty}
  \int_{B_1^+\times(t_1,t_2]}(\nabla u^\epsilon\nabla
  \eta+u^\epsilon_t\eta+f^\epsilon\eta)dx
  dt+\int_{B_1'\times(t_1,t_2]}\beta_\epsilon(u^\epsilon)\eta dx'dt=0.
\end{equation}
Taking in \eqref{eq:penalty} first $\eta=u^\epsilon$, and then
$\eta=u^\epsilon_t$, one obtains the following global uniform bounds
for the family $\{u^\epsilon\}_{0<\epsilon<1}$:
\begin{align*}
  \sup_{t\in(-1,0]}\|u^\epsilon(\cdot,t)\|_{L_2(B_1^+)}^2+\|\nabla
  u^\epsilon\|_{L_2(Q_1^+)}^2&\leq C_n \big(\|\phi_0\|^2_{L_2(B_1^+)}+\|f\|_{L_2(Q_1^+)}^2\big),\\
  \sup_{t\in(-1,0]}\|\nabla
  u^\epsilon(\cdot,t)\|_{L_2(B_1^+)}^2+\|\partial_t
  u^\epsilon\|_{L_2(Q_1^+)}^2&\leq C_n \big(\|\nabla
  \phi_0\|^2_{L_2(B_1^+)}+\|f\|_{L_2(Q_1^+)}^2\big).
\end{align*}
Thus, the family $\{u^\epsilon\}_{0<\epsilon<1}$ is uniformly bounded
in $W^{1,1}_2(Q_1^+)$ and passing to the weak limit as $\epsilon\to 0$
one obtains the existence of solutions of the Signorini problem
\eqref{eq:HEu-Q1}--\eqref{eq:u-init-Q1} in the case $\phi=0$,
$g=0$. Besides, by choosing the test functions
$\eta=\beta_\epsilon(u^\epsilon-w^\epsilon)|\beta_\epsilon(u^\epsilon-w^\epsilon)|^{p-2}$,
$p>1$, where $w$ solves the boundary value problem
\begin{align*}
  \Delta w-\partial_t w=f^\epsilon&\quad\text{in }Q_1^+,\\
  w=0&\quad\text{on }\partial_pQ_1^+,
\end{align*}
one can show the global uniform bound
$$
\sup_{Q_1^+} |\beta_\epsilon(u^\epsilon)|\leq C_n
\big(\|\phi_0\|_{W^2_\infty(B_1^+)}+\|f\|_{L_{\infty}(Q_1^+)}\big).
$$
For complete details, see the proofs of Lemmas 4 and 5 in \cite{AU0}.

Next we have a series of local estimates.  With $\zeta\in
C^\infty_0(B_1)$, we take the function
$\eta=\partial_{x_i}[(\partial_{x_i}u^\epsilon)\zeta^2(x)]$,
$i=1,\ldots, n$ in \eqref{eq:penalty}. Integrating by parts, we obtain
the following local, uniform, second order estimate
\begin{align*}
  \|D^2 u^\epsilon\|_{L_2(Q_r^+)}&\leq C_{n,r}\big(\|\nabla
  u\|^2_{L_2(Q_1^+)}
  +\|f\|_{L_2(Q_1^+)})\\
  &\leq C_{n,r}\big(
  \|\phi_0\|_{L_2(B_1^+)}+\|f\|_{L_2(Q_1^+)}\big),\quad 0<r<1.
\end{align*}
One should compare with our proof of Lemma~\ref{lem:w212} in
Appendix~\ref{sec:est-gauss-proofs} which is the weighted version of
this estimate. Furthermore, with more work one can establish the
following locally uniform spatial Lipschitz bound
$$
\|u^\epsilon\|_{W^1_\infty(Q_r^+)}\leq
C_{n,r}\big(\|\phi_0\|_{W^2_\infty(B_1^+)}+\|f\|_{L_{\infty}(Q_1^+)}\big),\quad
0<r<1,
$$
see Lemma 6 in \cite{AU0}.

Finally, one can show that there exists a dimensional constant
$\alpha>0$ such that $\nabla u^\epsilon \in
H^{\alpha,\alpha/2}(Q_r^+\cup Q_r')$ for any $0<r<1$, with the
estimate
\begin{align*}
  \|\nabla u^\epsilon\|_{H^{\alpha,\alpha/2}(Q_r^+\cup Q_r')}&\leq
  C_{n,r,\rho}\big(\|\nabla u^\epsilon\|_{L_\infty(Q_\rho
    ^+)}+\|f\|_{L_{\infty}(Q_\rho^+)}\big),\quad 0<r<\rho<1\\
  &\leq
  C_{n,r}\big(\|\phi_0\|_{W^2_\infty(B_1^+)}+\|f\|_{L_{\infty}(Q_1^+)}\big),
\end{align*}
see Theorem 2.1 in \cite{AU}.

We summarize the estimates above in the following two lemmas.

\begin{lemma}\label{lem:known-W22}
  \pushQED{\qed} Let $u\in W^{1,1}_2(Q_1^+)$ be a solution of the
  Signorini problem \eqref{eq:HEu-Q1}--\eqref{eq:u-init-Q1} with $f\in
  L_{2}(Q_1^+)$, $\phi_0\in W^{1}_2(B_1^+)$, $\phi=0$, and
  $g=0$. Then, $u\in W^{2,1}_2(Q_r^+)$ for any $0<r<1$ and
  \begin{align*}
    \|u\|_{W^{2,1}_2(Q_r^+)}&\leq
    C_{n,r}\big(\|\phi_0\|_{W^1_2(B_1^+)}+\|f\|_{L_{2}(Q_1^+)}\big).\qedhere
  \end{align*}
  \popQED
\end{lemma}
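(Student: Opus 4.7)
\medskip
\noindent
\textbf{Proof plan for Lemma~\ref{lem:known-W22}.} My plan is to obtain the estimate at the level of the penalized smooth approximations $u^\epsilon$ described in the discussion preceding the lemma, and then pass to the limit $\epsilon\to 0$, invoking the uniqueness of the Signorini solution. Since $\phi=0$ and $g=0$, the penalized problem
\[
\Delta u^\epsilon-\partial_t u^\epsilon=f^\epsilon\ \text{in }Q_1^+,\quad
\partial_{x_n} u^\epsilon=\beta_\epsilon(u^\epsilon)\ \text{on }Q_1',\quad
u^\epsilon=0\ \text{on }(\partial B_1)^+{\times}(-1,0],\quad
u^\epsilon(\cdot,-1)=\phi_0
\]
has smooth solutions, and the energy identities obtained by testing \eqref{eq:penalty} with $\eta=u^\epsilon$ and with $\eta=\partial_t u^\epsilon$ (for the latter, rewriting $\beta_\epsilon(u^\epsilon)\partial_t u^\epsilon=\partial_t B_\epsilon(u^\epsilon)$ with $B'_\epsilon=\beta_\epsilon\ge 0$) already yield the global uniform bound
\[
\sup_{t\in(-1,0]}\|\nabla u^\epsilon(\cdot,t)\|^2_{L_2(B_1^+)}
+\|\partial_t u^\epsilon\|^2_{L_2(Q_1^+)}
\leq C_n\bigl(\|\phi_0\|^2_{W^1_2(B_1^+)}+\|f\|^2_{L_2(Q_1^+)}\bigr).
\]

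The heart of the argument is the local second-order spatial estimate. Fix a cutoff $\zeta\in C^\infty_0(B_1)$ with $\zeta\equiv 1$ on $B_r$ and $0\le\zeta\le 1$. For each tangential index $i\in\{1,\ldots,n-1\}$, test \eqref{eq:penalty} with $\eta=-\partial_{x_i}\bigl[(\partial_{x_i}u^\epsilon)\zeta^2\bigr]$ (this $\eta$ still vanishes on the Dirichlet part of the lateral boundary because $\zeta$ does). Integration by parts in the bulk produces the good term $\int_{Q_1^+}|\nabla\partial_{x_i}u^\epsilon|^2\zeta^2$ plus lower-order commutator terms involving $\nabla\zeta$ that I would absorb into $\|\nabla u^\epsilon\|_{L_2}$ and $\|f\|_{L_2}$ by Cauchy--Schwarz. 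The thin-boundary contribution becomes, after one integration by parts in the tangential $x_i$ direction,
\[
\int_{Q_1'}\beta_\epsilon(u^\epsilon)\,\partial_{x_i}\!\bigl[(\partial_{x_i}u^\epsilon)\zeta^2\bigr]\,dx'dt
=-\int_{Q_1'}\beta'_\epsilon(u^\epsilon)\,(\partial_{x_i}u^\epsilon)^2\zeta^2\,dx'dt\ \le\ 0,
\]
since $\beta'_\epsilon\ge 0$ by \eqref{eq:betaeps}; this is the decisive sign that keeps the thin-boundary term on the right-hand side. Summing over $i=1,\ldots,n-1$ (and including the analogous test with $\partial_t$ if necessary to handle the initial-slice contribution) controls $\nabla\nabla' u^\epsilon$ in $L_2(Q_r^+)$. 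For the remaining second derivative $\partial_{x_nx_n}u^\epsilon$, I use the equation itself:
\[
\partial_{x_nx_n}u^\epsilon=\partial_t u^\epsilon+f^\epsilon-\sum_{i=1}^{n-1}\partial_{x_ix_i}u^\epsilon,
\]
and the already obtained bounds on $\partial_t u^\epsilon$, $f^\epsilon$, and tangential second derivatives give the desired $L_2(Q_r^+)$ bound on $\partial_{x_nx_n}u^\epsilon$.

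Combining these, $\{u^\epsilon\}$ is uniformly bounded in $W^{2,1}_2(Q_r^+)$ by $C_{n,r}(\|\phi_0\|_{W^1_2(B_1^+)}+\|f\|_{L_2(Q_1^+)})$. Extracting a weakly convergent subsequence, the limit solves the Signorini problem \eqref{eq:HEu-Q1}--\eqref{eq:u-init-Q1} (by the standard passage to the limit in the penalization inequality), and must coincide with $u$ by uniqueness. Lower semicontinuity of the $W^{2,1}_2$ norm under weak convergence then transfers the estimate to $u$.

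The main obstacle, as anticipated, is the thin-boundary term: only tangential test functions $\partial_{x_i}$ for $i<n$ give the coercive sign from $\beta'_\epsilon\ge 0$, and a direct second-order test in the normal direction is not available because the Signorini condition is not a linear Neumann one. The trick that unblocks the argument is to recover $\partial_{x_nx_n}u^\epsilon$ algebraically from the heat equation rather than variationally. This is exactly the mechanism that will be upgraded to the Gaussian-weighted setting in Appendix~\ref{sec:est-gauss-proofs}, where the cutoff $\zeta$ is replaced by a spatial cutoff adapted to the weight $G$.
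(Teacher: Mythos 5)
Your proposal is correct and follows essentially the same route as the paper, which likewise establishes the estimate on the penalized approximations $u^\epsilon$ by testing \eqref{eq:penalty} with $\eta=u^\epsilon$, $\eta=\partial_t u^\epsilon$, and $\eta=\partial_{x_i}\bigl[(\partial_{x_i}u^\epsilon)\zeta^2\bigr]$, exploiting the sign $\beta'_\epsilon\geq 0$ on the thin boundary, and then passes to the weak limit. Your explicit separation of the normal second derivative, recovered algebraically from the equation rather than variationally, is exactly the mechanism the paper itself uses in the weighted version in Appendix~\ref{sec:est-gauss-proofs} (Steps \stepref{2-W21-gauss} and \stepref{3-W21-gauss} of the proof of Lemma~\ref{lem:w212}), so this is a point of care rather than of divergence.
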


\begin{lemma}\label{lem:known-Halpha}
  \pushQED{\qed} Let $u\in W^{1,1}_2(Q_1^+)$ be a solution of the
  Signorini problem \eqref{eq:HEu-Q1}--\eqref{eq:u-init-Q1} with $f\in
  L_{\infty}(Q_1^+)$, $\phi_0\in W^{2}_\infty(B_1^+)$, $\phi=0$, and
  $g=0$. Then, for any $0<r<1$, $u\in L_\infty(Q_r^+)$, $\nabla u\in
  H^{\alpha,\alpha/2}(Q_r^+\cup Q_r')$ with a dimensional constant
  $\alpha>0$ and
  \[
  \|u\|_{L_\infty(Q_r^+)}+\|\nabla u\|_{H^{\alpha,\alpha/2}(Q_r^+\cup
    Q_r')}\leq
  C_{n,r}\big(\|\phi_0\|_{W^2_\infty(B_1^+)}+\|f\|_{L_{\infty}(Q_1^+)}\big).\qedhere
  \]
  \popQED
\end{lemma}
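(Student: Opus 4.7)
The proof follows the penalization scheme that is sketched in the text immediately preceding the statement, so the plan is to carefully assemble the uniform estimates on the penalized family $\{u^\epsilon\}$ and then pass to the limit. For each $\epsilon>0$, let $u^\epsilon$ solve the regularized problem with smooth penalty function $\beta_\epsilon$ satisfying \eqref{eq:betaeps}, mollified right hand side $f^\epsilon$, and the same initial and Dirichlet data. Standard parabolic theory gives that $u^\epsilon$ is smooth in $Q_1^+$ up to $Q_1'$, so every integration by parts below is legitimate.

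First I would collect the global $W^{1,1}_2$ bounds already recorded in the excerpt: testing \eqref{eq:penalty} with $\eta=u^\epsilon$ and $\eta=\partial_t u^\epsilon$ and using that $\beta_\epsilon'\geq 0$ to absorb the boundary penalty term gives uniform control of $\|u^\epsilon\|_{L_\infty(-1,0;L_2(B_1^+))}$, $\|\nabla u^\epsilon\|_{L_2(Q_1^+)}$, and $\|\partial_t u^\epsilon\|_{L_2(Q_1^+)}$ by $\|\phi_0\|_{W^1_2(B_1^+)}+\|f\|_{L_2(Q_1^+)}$. Next, I would obtain the global $L_\infty$ bound on the penalty term $\beta_\epsilon(u^\epsilon)$ by the comparison argument indicated above: test with $\eta=\beta_\epsilon(u^\epsilon-w^\epsilon)|\beta_\epsilon(u^\epsilon-w^\epsilon)|^{p-2}$ where $w^\epsilon$ solves $\Delta w^\epsilon-\partial_t w^\epsilon=f^\epsilon$ with zero parabolic boundary data, let $p\to\infty$, and use $W^2_\infty$ estimates for $w^\epsilon$ together with the bound on $\phi_0$; this yields $\|\beta_\epsilon(u^\epsilon)\|_{L_\infty(Q_1^+)}\le C_n(\|\phi_0\|_{W^2_\infty}+\|f\|_{L_\infty})$.

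With these global bounds in hand I would produce the local higher regularity. Taking $\eta=\partial_{x_i}[(\partial_{x_i}u^\epsilon)\zeta^2]$ with $\zeta\in C^\infty_0(B_1)$ for tangential directions $i=1,\dots,n-1$ and using the monotonicity of $\beta_\epsilon$ to handle the resulting boundary integral (it has a favorable sign because $\beta_\epsilon'\ge 0$) delivers uniform $L_2$ estimates for the tangential second derivatives on $Q_r^+$; the normal second derivative is then controlled through the equation, producing the $W^{2,1}_2(Q_r^+)$ bound of Lemma~\ref{lem:known-W22}. Bootstrapping this with the $L_\infty$ bound on $\beta_\epsilon(u^\epsilon)$, via a Moser-type iteration on $|\nabla u^\epsilon|^2$, upgrades this to the uniform spatial Lipschitz estimate $\|u^\epsilon\|_{W^1_\infty(Q_r^+)}\le C_{n,r}(\|\phi_0\|_{W^2_\infty}+\|f\|_{L_\infty})$, as in Lemma 6 of \cite{AU0}.

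The main obstacle is the last step: passing from the local Lipschitz bound to the parabolic H\"older regularity $\nabla u^\epsilon\in H^{\alpha,\alpha/2}(Q_r^+\cup Q_r')$ up to the thin boundary $Q_r'$. This is exactly where the Signorini (complementary) structure is essential, and for the penalized problem it follows from the Athanasopoulos--Uraltseva argument (Theorem 2.1 of \cite{AU}), whose key point is a De Giorgi/Moser-type oscillation decay for $\partial_{x_i}u^\epsilon$ that exploits the one-sided condition $\beta_\epsilon(u^\epsilon)\ge 0$ and its monotonicity. Once this $\epsilon$-independent H\"older estimate is in place, I would conclude by extracting a subsequence $u^\epsilon\rightharpoonup u$ in $W^{2,1}_2(Q_r^+)$ and converging in $C^1(\overline{Q_r^+\cup Q_r'})$; the limit satisfies \eqref{eq:HEu-Q1}--\eqref{eq:u-init-Q1}, and by uniqueness the whole family converges. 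Passing the H\"older estimate and the $L_\infty$ bound through the limit yields the conclusion of Lemma~\ref{lem:known-Halpha}.
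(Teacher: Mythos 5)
Your proposal is correct and follows essentially the same route as the paper, which itself only sketches this argument in Section~\ref{sec:existence-regularity} (penalization, the global energy and $L_\infty$ bounds on $\beta_\epsilon(u^\epsilon)$, the local $W^{2,1}_2$ and Lipschitz estimates, and finally the uniform H\"older gradient estimate from Theorem~2.1 of \cite{AU}) before stating the lemma with the proof omitted. The one step you rightly flag as the crux --- the $\epsilon$-independent $H^{\alpha,\alpha/2}$ estimate for $\nabla u^\epsilon$ up to $Q_1'$ --- is likewise outsourced by the paper to \cite{AU}, so no further detail is expected there.
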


We will also need the following variant of
Lemma~\ref{lem:known-Halpha} that does not impose any restriction on
the boundary data $g$ and $\phi_0$.

\begin{lemma}\label{lem:known-Halpha-2} Let $v\in W^{1,1}_2(Q_1^+)\cap
  W^{1,0}_\infty(Q_1^+)$ be a solution of the Signorini problem
  \eqref{eq:HEu-Q1}--\eqref{eq:u-init-Q1} with $f\in L_\infty(Q_1^+)$,
  and $\phi\in H^{2,1}(Q_1')$. Then, for any $0<r<1$, $\nabla v\in
  H^{\alpha,\alpha/2}(Q_r^+\cup Q_r')$ with a universal $\alpha>0$,
  and
  \[
  \|\nabla v\|_{H^{\alpha,\alpha/2}(Q_r^+\cup Q_r')}\leq
  C_{n,r}\big(\|v\|_{W^{1,0}_\infty(Q_1^+)}+\|f\|_{L_{\infty}(Q_1^+)}+\|\phi\|_{H^{2,1}(Q_1')}\big).
  \]
\end{lemma}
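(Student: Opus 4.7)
The plan is to reduce Lemma~\ref{lem:known-Halpha-2} to Lemma~\ref{lem:known-Halpha} in two steps: first subtract a trivially extended version of the thin obstacle to make it vanish, then localize by a cutoff chosen to preserve both the Signorini complementarity on $Q_1'$ and zero lateral/initial data, so that the hypotheses of Lemma~\ref{lem:known-Halpha} are met.

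I extend $\phi$ to $Q_1^+$ trivially in $x_n$ by setting $\Phi(x,t):=\phi(x',t)$ and let $w:=v-\Phi$. Since $\partial_{x_n}\Phi\equiv 0$, the complementarity relations \eqref{eq:u-signor-Q1} translate directly to $w\geq 0$, $-\partial_{x_n}w\geq 0$, $w\,\partial_{x_n}w=0$ on $Q_1'$, while
\[
\Delta w-\partial_t w=f-\Delta'\phi+\partial_t\phi=:\tilde f\quad\text{in }Q_1^+.
\]
The hypothesis $\phi\in H^{2,1}(Q_1')$ corresponds to $m=1$, $\gamma=1$ in the definition of the parabolic H\"older space, which forces $\Delta'\phi$ and $\partial_t\phi$ to lie in $L_\infty(Q_1')$; hence $\tilde f\in L_\infty(Q_1^+)$ with $\|\tilde f\|_{L_\infty}\leq\|f\|_{L_\infty}+C\|\phi\|_{H^{2,1}}$, and $w\in W^{1,1}_2(Q_1^+)\cap W^{1,0}_\infty(Q_1^+)$ with the corresponding bound.

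Next, fix $r<r'<1$ and choose $\eta\in C^\infty(\R^n\times\R)$ with $0\leq\eta\leq 1$, $\eta\equiv 1$ on $Q_r$, $\supp\eta\subset Q_{r'}$ (so in particular $\eta$ vanishes on $(\partial B_1)^+\times[-1,0]$ and at $t=-1$), and, crucially, \emph{even in} $x_n$ so that $\partial_{x_n}\eta\equiv 0$ on $\{x_n=0\}$. Setting $\tilde w:=\eta w$, the evenness of $\eta$ in $x_n$ preserves all three Signorini conditions on $Q_1'$, while the support of $\eta$ gives $\tilde w=0$ on $(\partial B_1)^+\times(-1,0]$ and $\tilde w(\cdot,-1)=0$. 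A direct computation yields
\[
\Delta\tilde w-\partial_t\tilde w=\eta\tilde f+2\nabla\eta\cdot\nabla w+(\Delta\eta-\partial_t\eta)\,w=:F\in L_\infty(Q_1^+),
\]
with $\|F\|_{L_\infty}\leq C_r\bigl(\|f\|_{L_\infty}+\|v\|_{W^{1,0}_\infty}+\|\phi\|_{H^{2,1}}\bigr)$.

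Thus $\tilde w$ solves the Signorini problem \eqref{eq:HEu-Q1}--\eqref{eq:u-init-Q1} on $Q_1^+$ with $\phi=0$, $g=0$, $\phi_0=0$, and right-hand side $F$, so Lemma~\ref{lem:known-Halpha} applies and yields $\nabla\tilde w\in H^{\alpha,\alpha/2}(Q_r^+\cup Q_r')$ with norm controlled by $\|F\|_{L_\infty}$. Since $\eta\equiv 1$ on $Q_r$, we have $\nabla v=\nabla\tilde w+(\nabla'\phi,0)$ there, and $\nabla'\phi\in H^{1,1/2}\subset H^{\alpha,\alpha/2}$, so assembling the bounds produces the asserted estimate. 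The main obstacle in this plan is the second step: an ordinary radial cutoff would create a nonzero $\partial_{x_n}\eta$ on the thin manifold and destroy the Signorini complementarity, and taking $\eta$ even in $x_n$ is the natural (essentially forced) fix.
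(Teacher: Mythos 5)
Your proposal is correct and follows essentially the same route as the paper: the paper's proof sets $u=[v(x,t)-\phi(x',t)]\,\eta(x,t)$ with a cutoff $\eta$ even in $x_n$ (so $\partial_{x_n}\eta=0$ on $Q_1'$), observes that $u$ solves the Signorini problem with zero obstacle, zero lateral and initial data, and right-hand side $[f-(\Delta'-\partial_t)\phi]\eta+(v-\phi)(\Delta-\partial_t)\eta+2(\nabla v-\nabla\phi)\nabla\eta$, and then invokes Lemma~\ref{lem:known-Halpha} — exactly your two steps (subtract the trivially extended obstacle, then localize with an even cutoff) merged into one. Your identification of the even reflection symmetry of the cutoff as the essential point, and your verification that $H^{2,1}$ regularity of $\phi$ puts $(\Delta'-\partial_t)\phi$ in $L_\infty$, match what the paper uses implicitly.
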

\begin{proof} Consider the function
$$
u(x,t)=[v(x,t)-\phi(x',t)] \eta(x,t)
$$
with $\eta\in C^\infty_0(Q_1^+)$, such that
$$
\eta=1\quad\text{on }Q_{r},\quad \eta(x',-x_n,t)=\eta(x',x_n,t).
$$
In particular, $\partial_{x_n}\eta=0$ on $Q_1'$. Then $u$ satisfies
the conditions of Lemma~\ref{lem:known-Halpha} with $\phi=0$, $g=0$,
$\phi_0=0$ and with $f$ replaced by
$$
[f-(\Delta'-\partial_t)\phi]\eta+(v-\phi)(\Delta-\partial_t)\eta+2(\nabla
v-\nabla\phi)\nabla \eta.
$$
The assumptions on $v$ and $\phi$ now imply the required estimate from
that in Lemma~\ref{lem:known-Halpha}.
\end{proof}

In Section~\ref{sec:estimates-w2-1_2s_1+} we generalize the estimates
in Lemma~\ref{lem:known-W22} for the appropriate weighted Gaussian
norms. The proof of these estimates are given in
Appendix~\ref{sec:est-gauss-proofs}. One of our main results in this
paper is the optimal value of the H\"older exponent $\alpha$ in
Lemma~\ref{lem:known-Halpha-2}. We show that $\nabla u\in
H^{1/2,1/4}_\loc$, or slightly stronger, that $u\in H^{3/2,3/4}_\loc$,
when $f$ is bounded, see Theorem~\ref{thm:opt-reg}.

\section{Classes of solutions}
\label{sec:classes-solutions}

In this paper we are mostly interested in local properties of the
solution $v$ of the parabolic Signorini problem and of its free
boundary. In view of this, we focus our attention on solutions in
parabolic (half-)cylinders. Furthermore, thanks to the results in
Section~\ref{sec:existence-regularity}, we can, and will assume that
such solutions possess the regularity provided by
Lemmas~\ref{lem:known-W22} and~\ref{lem:known-Halpha}.

\begin{definition}[Solutions in cylinders] Given $\phi\in
  H^{2,1}(Q_1')$, we say that $v\in\S_{\phi}(Q_1^+)$ if $v\in
  W^{2,1}_2(Q_1^+)\cap L_\infty(Q_1^+)$, $\nabla v\in
  H^{\alpha,\alpha/2}(Q_1^+\cup Q_1')$ for some $0<\alpha<1$, and $v$
  satisfies
  \begin{gather}
    \Delta v-\partial_t v = 0\quad\text{in } Q_1^+,\\
    v-\phi\geq 0,\quad -\partial_{x_n} v\geq
    0,\quad(v-\phi)\partial_{x_n} v = 0\quad\text{on } Q'_1,
  \end{gather}
  and
  \begin{equation}
    (0,0)\in\Gamma_*(v):=\partial_{Q_1'}\{(x',t)\in Q_1'\mid v(x',0,t)=\phi(x',t),\ \partial_{x_n}v(x',0,t)=0\},
  \end{equation}
  where $\partial_{Q_1'}$ is the boundary in the relative topology of
  $Q_1'$.
\end{definition}
We call the set $\Gamma_*(v)$ the \emph{extended free boundary} for
the solution $v$. Recall that the \emph{free boundary} is given by
$$
\Gamma(v):=\partial_{Q_1'}\{(x',t)\in Q_1'\mid v(x',0,t)>\phi(x',t)\}.
$$
Note that by definition $\Gamma_*(v)\supset \Gamma(v)$. The reason for
considering this extension is that parabolic cylinders do not contain
information on ``future times''.  This fact may create a problem when
restricting solutions to smaller subcylinders. The notion of extended
free boundary removes this problem. Namely, if $(x_0,t_0)\in
\Gamma_*(v)$ and $r>0$ is such that $Q_r^+(x_0,t_0)\subset Q_1^+$,
then $(x_0,t_0)\in
\Gamma_*\big(v\big|_{Q_r^+(x_0,t_0)}\big)$. Sometimes, we will abuse
the terminology and call $\Gamma_*(v)$ the free boundary.

Replacing $Q_1^+$ and $Q_1'$ by $Q_R^+$ and $Q_R'$ respectively in the
definition above, we will obtain the class $\S_\phi(Q_R^+)$. Note that
if $v\in\S_\phi(Q_R^+)$ then the parabolic rescaling
$$
v_R(x,t)=\frac{1}{C_R}v(Rx, R^2t),$$ where $C_R>0$ can be arbitrary
(but typically chosen to normalize a certain quantity), belongs to the
class $\S_{\phi_R}(Q_1^+)$. Having that in mind, we will state most of
the results only for the case $R=1$.

The function $v\in \S_\phi(Q_1^+)$ allows a natural extension to the
entire parabolic cylinder $Q_1$ by the even reflection in $x_n$
coordinate:
$$
v(x',-x_n,t):=v(x',x_n,t).
$$
Then $v$ will satisfy
$$
\Delta v-\partial_t v=0\quad\text{in }Q_1\setminus \Lambda(v),
$$
where
$$
\Lambda(v):=\{(x',t)\in Q_1'\mid v(x',0,t)=\phi(x',t)\},
$$
is the so-called \emph{coincidence set}.  More generally,
\begin{align*}
  \Delta v-\partial_t v\leq 0&\quad\text{in }Q_1,\\
  \Delta v-\partial_t v
  =2(\partial_{x_n}^+v)\mathcal{H}^n\big|_{\Lambda(v)}&\quad\text{in
  }Q_1,
\end{align*}
in the sense of distributions, where $\mathcal{H}^{n}$ is the
$n$-dimensional Hausdorff measure and by $\partial_{x_n}^+v$ we
understand the limit from the right $\partial_{x_n}v(x',0+,t)$ on
$Q_1'$.

We next show how to reduce the study of the solutions with nonzero
obstacle $\phi$ to the ones with zero obstacle.  As the simplest such
reduction we consider the difference $v(x,t)-\phi(x',t)$, which will
satisfy the Signorini conditions on $Q_1'$ with zero obstacle, but at
an expense of solving nonhomogeneous heat equation instead of the
homogeneous one. One may further extend this difference to the strip
$S_1^+=\R^n_+\times(-1,0]$ by multiplying with a cutoff function in
$x$ variables. More specifically, let $\psi\in C^\infty_0(\R^n)$ be
such that
\begin{gather}\label{eq:psi-1}
  0\leq\psi\leq 1,\quad\psi=1\quad\text{on } B_{1/2},\quad\supp
  \psi\subset B_{3/4},\\\label{eq:psi-2}
  \psi(x',-x_n)=\psi(x',x_n),\quad x\in\R^n,
\end{gather}
and consider the function
\begin{equation}
  u(x,t)=[v(x,t)-\phi(x',t)]\psi(x)\quad\text{for }(x,t)\in S_1^+.
\end{equation}
It is easy to see that $u$ satisfies the nonhomogeneous heat equation
in $S_1^+$
\begin{align*}
  \Delta u-\partial_t u &= f(x,t)\quad\text{in } S_1^+,\\
  f(x,t)&=-\psi(x)[\Delta'\phi-\partial_t\phi]+[v(x,t)-\phi(x',t)]\Delta\psi+2\nabla
  v\nabla\psi,
\end{align*}
and the Signorini boundary conditions on $S_1'$
$$ 
u\geq 0,\quad -\partial_{x_n}u\geq 0,\quad u \partial_{x_n}u =
0\quad\text{on } S'_1.
$$
Moreover, it is easy to see that $f$ is uniformly bounded.

\begin{definition}[Solutions in strips]\label{def:Sf} We say that $u\in \S^f(S_1^+)$, for $f\in
  L_\infty(S_1^+)$ if $u\in W^{2,1}_{2}(S_1^+)\cap L_\infty(S_1^+)$,
  $\nabla u\in H^{\alpha,\alpha/2}(S_1^+\cup S_1')$, $u$ has a bounded
  support and solves
  \begin{align}
    \label{eq:HEv-f}
    \Delta u-\partial_t u=f&\quad\text{in } S_1^+,\\
    \label{eq:v-signor-f} u\geq 0,\quad-\partial_{x_n}u\geq 0,\quad
    u \partial_{x_n}u = 0&\quad\text{on } S'_1,
  \end{align}
  and
$$
(0,0)\in \Gamma_*(u)=\partial_{S_1'}\{(x',t)\in S_1'\mid
u(x',0,t)=0,\ \partial_{x_n}u(x',0,t)=0\}.
$$
\end{definition}

If we only assume $\phi\in H^{2,1}(Q_1')$, then in the construction
above we can only say that the function $f\in L_\infty(S_1^+)$. For
some of the results that we are going to prove (such as the optimal
regularity in Theorem~\ref{thm:opt-reg}) this will be
sufficient. However, if we want to study a more refined behavior of
$u$ near the origin, we need to assume more regularity on $\phi$.

Thus, if we assume $\phi\in H^{\ell,\ell/2}(Q_1')$ with
$\ell=k+\gamma\geq 2$, $k\in\N$, $0<\gamma\leq 1$, then for its
parabolic Taylor polynomial $q_k(x',t)$ of parabolic degree $k$ at the
origin, we have
$$
|\phi(x',t)-q_k(x',t)|\leq M\|(x',t)\|^{\ell},\\
$$
for a certain $M>0$, and more generally
$$
|\partial_{x'}^{\alpha'}\partial_t^j\phi(x',t)
-\partial_{x'}^{\alpha'}\partial_t^j q_k(x',t)|\leq
M\|(x',t)\|^{\ell-|\alpha'|-2j},
$$
for $|\alpha'|+2j\leq k$.

To proceed, we calorically extend the polynomial $q_k(x',t)$ in the
following sense.

\begin{lemma}[Caloric extension of polynomials]\label{lem:calor-ext}
  For a given polynomial $q(x',t)$ on $\R^{n-1}\times\R$ there exists
  a caloric extension polynomial $\tilde q(x,t)$ in $\R^{n}\times\R$,
  symmetric in $x_n$, i.e.,
$$
(\Delta-\partial_t) \tilde q(x,t)=0,\quad \tilde
q(x',0,t)=q(x',t),\quad \tilde q(x',-x_n, t)=\tilde q (x',x_n,t).
$$
Moreover, if $q(x',t)$ is parabolically homogeneous of order $\kappa$,
then one can find $\tilde q (x,t)$ as above with the same homogeneity.
\end{lemma}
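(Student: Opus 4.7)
The plan is to construct $\tilde q$ by a finite power-series expansion in $x_n$ of the form
\begin{equation*}
  \tilde q(x,t)=\sum_{j\geq 0}\frac{x_n^{2j}}{(2j)!}\,p_j(x',t),
\end{equation*}
which is automatically even in $x_n$ and satisfies $\tilde q(x',0,t)=p_0(x',t)$. Imposing $p_0=q$ and formally computing $(\Delta-\partial_t)\tilde q$, one has
\begin{equation*}
  (\Delta-\partial_t)\tilde q=\sum_{j\geq 0}\frac{x_n^{2j}}{(2j)!}\bigl[(\Delta'-\partial_t)p_j+p_{j+1}\bigr],
\end{equation*}
where the $p_{j+1}$ term arises from differentiating $x_n^{2(j+1)}/(2j+2)!$ twice in $x_n$. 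The caloric condition therefore forces the recursion $p_{j+1}=(\partial_t-\Delta')p_j$, which gives the closed form $p_j=(\partial_t-\Delta')^j q$.

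The key observation is that since $q$ is a polynomial, $(\partial_t-\Delta')^j q\equiv 0$ for all sufficiently large $j$ (for instance once $2j$ exceeds the parabolic degree of $q$), so the sum is in fact finite and $\tilde q$ is a polynomial in $(x,t)$. By construction $\tilde q$ is symmetric in $x_n$, restricts to $q$ on $\{x_n=0\}$, and satisfies $(\Delta-\partial_t)\tilde q=0$ in $\R^n\times\R$.

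For the homogeneity statement, recall that parabolic $\kappa$-homogeneity means $q(\lambda x',\lambda^2 t)=\lambda^\kappa q(x',t)$. The operator $\partial_t-\Delta'$ lowers parabolic degree by $2$, so $(\partial_t-\Delta')^j q$ is parabolically homogeneous of degree $\kappa-2j$, while the factor $x_n^{2j}$ is parabolically homogeneous of degree $2j$. Hence every surviving term in the sum is parabolically homogeneous of degree $\kappa$, and so is $\tilde q$. There is no real obstacle here beyond checking the recursion; the finite termination of the series for polynomial data is what makes this construction work cleanly, and the same formula would of course fail for general $q$.
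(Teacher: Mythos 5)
Your construction is exactly the one in the paper: the ansatz $\tilde q=\sum_j \frac{x_n^{2j}}{(2j)!}(\partial_t-\Delta')^j q$, with the series terminating because $q$ is a polynomial, and the homogeneity count $2j+(\kappa-2j)=\kappa$ for each term. The recursion you verify is correct, and your argument supplies the "easily checked" computation that the paper omits.
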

\begin{proof} It is easily checked that the polynomial
$$
\tilde q (x',x_n,t)=\sum_{j=0}^{N}(\partial_t-\Delta_{x'})^j
q(x',t)\frac{x_n^{2j}}{2j!}
$$
is the desired extension. Here $N$ is taken so that the parabolic
degree of the polynomial $q(x',t)$ does not exceed $2N$.
\end{proof}

Let now $\tilde q_k$ be the extension of the parabolic Taylor
polynomial $q_k$ of $\phi$ at the origin and consider
$$
v_k(x,t):= v(x,t)-\tilde
q_k(x,t),\quad\phi_k(x',t):=\phi(x',t)-q_k(x',t).
$$
It is easy to see that $v_k$ solves the Signorini problem with the
thin obstacle $\phi_k$, i.e. $v_k\in \S_{\phi_k}(Q_1^+)$, now with an
additional property
$$
|\partial_{x'}^{\alpha'}\partial_t^j\phi_k(x',t)|\leq
M\|(x',t)\|^{\ell-|\alpha'|-2|j|},\quad\text{for }|\alpha'|+2j\leq k.
$$
Then if we proceed as above and define
\begin{align*}
  u_k(x,t)&=[v_k(x,t)-\phi_k (x',t)]\psi(x)\\
  &=[v(x,t)-\phi(x,t)-(\tilde q_k(x,t)-q_k(x',t))]\psi(x)
\end{align*}
then $u_k$ will satisfy \eqref{eq:HEv-f}--\eqref{eq:v-signor-f} with
the right-hand side
$$
f_k=-\psi(x)[\Delta'\phi_k-\partial_t\phi_k]+[
v_k(x,t)-\phi_k(x',t)]\Delta\psi+2\nabla v_k\nabla\psi,
$$
which additionally satisfies
$$
|f_k(x,t)|\leq M\|(x,t)\|^{\ell-2}\quad \text{for }(x,t)\in S_1^+,
$$
for $M$ depending only on $\psi$, $\|u\|_{W^{1,0}_\infty(Q_1^+)}$, and
$\|\phi\|_{H^{\ell,\ell/2}(Q_1^+)}$. Moreover, for $|\alpha|+2j\leq
k-2$ one will also have
$$
|\partial_x^\alpha\partial_t^j f_k(x,t)|\leq M_{\alpha,j}
\|(x,t)\|^{\ell-2-|\alpha|-2j}\quad \text{for }(x,t)\in Q_{1/2}^+.
$$

We record this construction in the following proposition.

\begin{proposition}\label{prop:uk-def}
  \pushQED{\qed} Let $v\in\S_\phi(Q_1^+)$ with $\phi\in
  H^{\ell,\ell/2}(Q_1')$, $\ell=k+\gamma\geq 2$, $k\in N$,
  $0<\gamma\leq 1$. If $q_k$ is the parabolic Taylor polynomial of
  order $k$ of $\phi$ at the origin, $\tilde q_k$ is its extension
  given by Lemma~\ref{lem:calor-ext}, and $\psi$ is a cutoff function
  as in \eqref{eq:psi-1}--\eqref{eq:psi-2}, then
$$
u_k(x,t)=[v(x,t)-\tilde q_k(x,t)-(\phi(x',t)-q_k(x',t))]\psi(x)
$$
belongs to the class $\S^{f_k}(S_1^+)$ with
$$
|f_k(x,t)|\leq M\|(x,t)\|^{\ell-2}\quad \text{for }(x,t)\in S_1^+
$$
and more generally, for $|\alpha|+2j\leq k-2$,
$$
|\partial_x^\alpha\partial_t^j f_k(x,t)|\leq M_{\alpha,j}
\|(x,t)\|^{\ell-2-|\alpha|-2j}\quad \text{for }(x,t)\in Q_{1/2}^+.
$$
Furthermore, $u_k(x',0,t)=v(x',0,t)-\phi(x,'t)$,
$\partial_{x_n}u_k(x',0,t)=\partial_{x_n}v(x',0,t)$ in $Q_{1/2}'$ and
therefore
\begin{align*}
  \Gamma(u_k)\cap Q_{1/2}'&=\Gamma(v)\cap Q_{1/2}',\\
  \Gamma_*(u_k)\cap Q_{1/2}'&=\Gamma_*(v)\cap Q_{1/2}'.\qedhere
\end{align*}
\popQED
\end{proposition}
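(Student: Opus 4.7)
The plan is to verify each assertion by direct computation, leveraging two structural inputs: the caloric, $x_n$-symmetric extension $\tilde q_k$ from Lemma~\ref{lem:calor-ext}, and the parabolic Taylor bound on $\phi_k := \phi - q_k$ coming from $\phi \in H^{\ell,\ell/2}$.

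First I would check membership in $\S^{f_k}(S_1^+)$. The regularity $u_k \in W^{2,1}_2 \cap L_\infty$ with $\nabla u_k \in H^{\alpha,\alpha/2}(S_1^+ \cup S_1')$ is inherited from $v \in \S_\phi(Q_1^+)$ together with the smoothness of $\tilde q_k$, $q_k$, and $\psi$; bounded support is immediate from $\supp \psi \subset B_{3/4}$. A Leibniz expansion of $(\Delta - \partial_t)u_k$ with $u_k = (v_k - \phi_k)\psi$ and $v_k = v - \tilde q_k$ gives
\[
(\Delta - \partial_t)u_k = -\psi\,(\Delta' - \partial_t)\phi_k + 2\nabla(v_k - \phi_k)\cdot\nabla\psi + (v_k - \phi_k)\,\Delta\psi,
\]
where critically $(\Delta - \partial_t)v_k = 0$ by Lemma~\ref{lem:calor-ext}, and $\phi_k$ viewed on $\R^n \times \R$ is independent of $x_n$. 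This identifies $f_k$. On $S_1'$ the $x_n$-symmetry of $\tilde q_k$ and $\psi$ forces $\partial_{x_n}\tilde q_k = 0$ and $\partial_{x_n}\psi = 0$ on $\{x_n=0\}$, while $\tilde q_k(x',0,t) = q_k(x',t)$, so
\[
u_k(x',0,t) = [v(x',0,t)-\phi(x',t)]\psi(x',0),\qquad \partial_{x_n}u_k(x',0,t) = \partial_{x_n}v(x',0,t)\,\psi(x',0).
\]
Since $\psi \geq 0$, the Signorini conditions \eqref{eq:v-signor-f} for $u_k$ transfer from the Signorini conditions for $v$ with obstacle $\phi$. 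On $Q_{1/2}'$, where $\psi \equiv 1$, the stated identities for $u_k$ and $\partial_{x_n}u_k$ fall out, and the coincidences $\Gamma(u_k)\cap Q_{1/2}' = \Gamma(v)\cap Q_{1/2}'$ and $\Gamma_*(u_k)\cap Q_{1/2}' = \Gamma_*(v)\cap Q_{1/2}'$ follow; in particular $(0,0) \in \Gamma_*(u_k)$.

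For the pointwise bounds on $f_k$, I would split $S_1^+$ into three regions. Outside $B_{3/4}$, $u_k \equiv 0$ and $f_k \equiv 0$. Inside $B_{1/2}$, $\psi \equiv 1$ and $\nabla\psi = \Delta\psi = 0$, so $f_k = -(\Delta' - \partial_t)\phi_k$; the parabolic Taylor theorem for $\phi \in H^{\ell,\ell/2}$, applied to $q_k$, yields $|\partial_{x'}^{\alpha'}\partial_t^j\phi_k(x',t)| \leq M\|(x',t)\|^{\ell-|\alpha'|-2j}$ for $|\alpha'|+2j \leq k$, and choosing $|\alpha'|+2j = 2$ produces $|f_k(x,t)| \leq M\|(x',t)\|^{\ell-2} \leq M\|(x,t)\|^{\ell-2}$. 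On the annular region $B_{3/4}\setminus B_{1/2}$ all three terms in the expression for $f_k$ are uniformly bounded by the regularity of $v$, $\phi$, $\tilde q_k$, $q_k$, $\psi$, while $\|(x,t)\| \geq |x| \geq 1/2$, so since $\ell\geq 2$ one absorbs a constant into $M$ to deliver the claim there. For the refined estimate on $Q_{1/2}^+$, $\psi \equiv 1$ throughout, so $f_k = -(\Delta' - \partial_t)\phi_k$ is independent of $x_n$: $\partial_x^\alpha \partial_t^j f_k$ vanishes unless $\alpha_n = 0$, and the Taylor bound applied with $|\alpha'|+2j+2 \leq k$ produces $|\partial_x^\alpha \partial_t^j f_k(x,t)| \leq M_{\alpha,j}\|(x,t)\|^{\ell-2-|\alpha|-2j}$.

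The main item requiring care is essentially the bookkeeping that stitches together the estimate on $f_k$ across the three regions cut out by $\psi$, together with the observation that the full parabolic norm $\|(x,t)\|$ dominates its tangential counterpart $\|(x',t)\|$ and so can replace it in the final bound. Everything else is a direct consequence of the caloric, $x_n$-symmetric extension of $q_k$ supplied by Lemma~\ref{lem:calor-ext} and the Taylor expansion available for $\phi_k$.
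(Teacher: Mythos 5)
Your proposal is correct and follows essentially the same route as the paper, which presents this proposition as a summary of the construction carried out just before it: Leibniz expansion of $(\Delta-\partial_t)[(v_k-\phi_k)\psi]$ using the caloricity and $x_n$-symmetry of $\tilde q_k$, the parabolic Taylor bounds on $\phi_k=\phi-q_k$, and the three-region splitting dictated by the cutoff $\psi$. Your form of the cross term, $2\nabla(v_k-\phi_k)\cdot\nabla\psi$, is in fact the precise Leibniz output (the paper writes $2\nabla v_k\nabla\psi$), but this makes no difference to any of the estimates.
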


\section{Estimates in Gaussian spaces}
\label{sec:estimates-w2-1_2s_1+}
In this section we state $W^{2,1}_2$-estimates with respect to the
Gaussian measure $G(x,t)dxdt$ in the half-strips $S_\rho^+$. The
estimates involves the quantities that appear in the generalized
frequency formula that we prove in the next section. Since the
computations are rather long and technical, to help with the
readability of the paper, we have moved the proofs to
Appendix~\ref{sec:est-gauss-proofs}.

\begin{lemma}\label{lem:w212}
  Let $u\in \S^f(S_1^+)$ with $f\in L_\infty(S_1^+)$. Then, for any
  $0<\rho<1$ we have the estimates
  \begin{align*}
    \int_{S_{\rho}^+} |t||\nabla u|^2  G&\leq C_{n,\rho}\int_{S_1^+} (u^2 +|t|^2f^2) G,\\
    \int_{S_{\rho}^+} |t|^2(|D^2 u|^2+u_t^2)G &\leq
    C_{n,\rho}\int_{S_1^+} (u^2+|t|^2f^2)G.
  \end{align*}
\end{lemma}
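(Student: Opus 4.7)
The plan is to prove both estimates by weighted Caccioppoli-type identities. The common idea is to test the equation $\Delta u-\partial_t u=f$ against a multiplier adapted to the Gaussian weight $G$, integrate by parts in both $x$ and $t$, and exploit two structural features: the identities $\nabla G=(x/2t)G$ and $\partial_t G=-(n/2t)G-(|x|^2/4t^2)G$, which make the drift terms cancel in pairs, and the Signorini complementarity $u\partial_{x_n}u=0$ on $S_1'$, which kills the boundary contribution. I fix once and for all a smooth cutoff $\eta=\eta(t)$ with $\eta\equiv 1$ on $(-\rho^2,0]$ and $\supp\eta\subset(-1,0]$, with $|\eta'|$ bounded by a constant depending only on $\rho$.

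For the first estimate I test the equation against $v:=|t|u\eta^2$. The boundary contribution on $S_1'$ arising from the integration by parts in $x$ is $\int_{S_1'}u\partial_{x_n}u\,|t|\eta^2G$, which vanishes by the complementarity condition. Integration by parts in $x$ (via $\nabla G=(x/2t)G$) and then in $t$ (via $\partial_t G=-\Delta G$) produces drift terms in which the singular $|x|^2/|t|$ contributions cancel exactly, leaving the clean identity
\begin{equation*}
\int|t||\nabla u|^2\eta^2G+\tfrac12\int u^2\eta^2G=\int u^2|t|\eta\eta' G-\int f\,u\,|t|\eta^2G.
\end{equation*}
Discarding the non-negative half-mass term on the left and bounding the right-hand side via $|fut|\le\tfrac12u^2+\tfrac12t^2f^2$ yields the first estimate.

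For the second estimate I apply a Bochner-type argument. Using the pointwise identity $(\Delta-\partial_t)(|\nabla u|^2)=2|D^2u|^2+2\nabla u\cdot\nabla f$ together with the integration-by-parts formula
\begin{equation*}
\int(\Delta-\partial_t)w\cdot\zeta(t)G\,dx\,dt=\int w\,\zeta'(t)G\,dx\,dt+(\text{boundary on }S_1'),
\end{equation*}
which follows from $(\Delta+\partial_t)G=0$ whenever $\zeta$ depends only on $t$, applied with $w=|\nabla u|^2$ and $\zeta=t^2\eta^2$, one obtains a relation of the schematic form
\begin{equation*}
2\int|D^2u|^2t^2\eta^2G+2\int|\nabla u|^2|t|\eta^2G=2\int|\nabla u|^2t^2\eta\eta' G-2\int\nabla u\cdot\nabla f\,t^2\eta^2G+(\text{boundary}).
\end{equation*}
The $\nabla f$ term is handled by a further integration by parts in $x$, replacing it by $f\Delta u$ and $f\nabla u\cdot\nabla G$ terms which, after using $\Delta u=u_t+f$, are absorbed into the $|D^2u|^2$ and $|\nabla u|^2$ terms on the left via Young's inequality; the resulting $\int|\nabla u|^2|t|G$ is in turn controlled by the first estimate. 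The $u_t^2$ bound then follows from $u_t^2\le 2n|D^2u|^2+2f^2$ together with the equation.

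The principal obstacle is the handling of the boundary contribution on $S_1'$ generated by the Bochner-type integration by parts, which is essentially $-2\int_{S_1'}\nabla u\cdot\partial_{x_n}\nabla u\,t^2\eta^2G$ and does \emph{not} vanish directly from $u\partial_{x_n}u=0$. The standard remedy is to perform the entire second-order computation at the level of the penalized approximants $u^\epsilon$ of Section~\ref{sec:existence-regularity}, for which the Robin-type condition $\partial_{x_n}u^\epsilon=\beta_\epsilon(u^\epsilon)$ with $\beta_\epsilon'\ge 0$ produces boundary terms of favourable sign, and then to pass to the limit $\epsilon\to 0^+$ using the uniform $W^{2,1}_2$ bounds recalled there. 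The singularity of $G$ as $t\to 0^-$ causes no difficulty thanks to the $|t|$ and $|t|^2$ weights, which make all integrands locally integrable.
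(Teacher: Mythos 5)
Your proof of the first estimate is correct and is essentially the paper's Step 1 in different clothes: testing against $|t|u\eta^2G$ with a pure time cutoff and integrating by parts directly in $t$ (via $\partial_tG=-\Delta G$) produces exactly the cancellation of the singular $|x|^2/|t|$ drift terms that the paper obtains through the operator $Z$ and the dilation substitution $t=-\lambda^2$, $x=\lambda y$; the boundary term on $S_1'$ vanishes by complementarity (and has a sign, $s\beta_\epsilon(s)\geq0$, at the penalized level), and the truncation at $t=-\delta^2$ contributes a term of favorable sign. This part stands.

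The second estimate has a genuine gap, located precisely where the paper's proof does its hardest work. On $\{x_n=0\}$ the Bochner boundary term decomposes as $\nabla u^\epsilon\cdot\partial_{x_n}\nabla u^\epsilon=\beta_\epsilon'(u^\epsilon)|\nabla'u^\epsilon|^2+\beta_\epsilon(u^\epsilon)\,\partial_{x_nx_n}u^\epsilon$. Only the tangential piece has the ``favourable sign'' you invoke; the normal--normal piece has no sign at all, since on the boundary $\partial_{x_nx_n}u^\epsilon=f^\epsilon+\partial_tu^\epsilon-\Delta'u^\epsilon$ bears no signed relation to $\beta_\epsilon$. Controlling it is the content of the paper's Step~(3$^\circ$) in Appendix~\ref{sec:est-gauss-proofs}: one must (a) note that the $f^\epsilon\beta_\epsilon(u^\epsilon)$ boundary contribution it produces cancels exactly against the boundary term generated by your integration by parts of $\nabla u\cdot\nabla f$ --- a boundary term your sketch omits, and which in isolation is only bounded by $\|f\|_{L_\infty}$, not by $\int|t|^2f^2G$, so it would destroy the precise form of the estimate needed later (e.g.\ in Lemma~\ref{lem:fr-est}); (b) recognize the surviving expression $\beta_\epsilon(u^\epsilon)\bigl(\partial_tu^\epsilon+\tfrac{x'\cdot\nabla'u^\epsilon}{2t}\bigr)G$ as $\tfrac1{2t}Z\bigl(\mathcal{B}_\epsilon(u^\epsilon)\bigr)G$ with $\mathcal{B}_\epsilon(s)=\int_0^s\beta_\epsilon$; and (c) exploit $0\leq\mathcal{B}_\epsilon(u^\epsilon)\leq C\epsilon$ (from the uniform bound $u^\epsilon\geq-C\epsilon$) together with an integration by parts along parabolic dilations to show this contribution is $O(\epsilon)$. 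None of these steps is present, and the stated reason for discarding the boundary terms is false for the one term that matters. Two smaller omissions: the term $f\nabla u\cdot\nabla G=f\tfrac{x\cdot\nabla u}{2t}G$ carries a factor $|x|/|t|$ and requires the auxiliary inequality $\int v^2\tfrac{|x|^2}{|t|}G\leq C_n\int(v^2+|t||\nabla v|^2)G$ (Claim~\ref{clm:v2x2t}) before Young's inequality can absorb it into the left-hand side; and every intermediate bound must be checked to depend on $f$ only through $\int|t|^2f^2G$, which your sketch does not track.
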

\begin{remark}\label{rem:w212-alt}

  Even though the estimates above are most natural for our further
  purposes, we would like to note that slightly modifying the proof
  one may show that
$$
\int_{S_{\rho}^+} |\nabla u|^2 G\leq C_{n,\rho}\int_{S_1^+}
(u^2+f^2)G,
$$
i.e., without the weights $|t|$ and $|t|^2$ in the integrals for
$|\nabla u|^2$ and $f^2$.  More generally, the same estimate can be
proved if $u(\Delta -\partial_t)u\geq 0$ in $S_1$, with the
half-strips $S_\rho^+$ and $S_1^+$ replaced by the full strips
$S_\rho$ and $S_1$.
\end{remark}

\begin{lemma}\label{lem:w112-diff} Suppose $u_i\in\S^{f_i}(S_1^+)$,
  $i=1,2$, with $f_i\in L_\infty(S_1^+)$. Then for any $0<\rho<1$ we
  have the estimate
  \begin{align*}
    \int_{S_{\rho}^+} |t||\nabla (u_1-u_2)|^2 G&\leq
    C_{n,\rho}\int_{S_1^+} [(u_1-u_2)^2+|t|^2(f_1-f_2)^2 ] G.
  \end{align*}
\end{lemma}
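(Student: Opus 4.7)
The plan is to mimic the argument for Lemma~\ref{lem:w212}, with $w := u_1-u_2$ playing the role of $u$ and $f := f_1-f_2$ in place of the right-hand side. Since $u_1, u_2 \in \S^{f_i}(S_1^+)$, the function $w$ satisfies $(\Delta - \partial_t) w = f$ in $S_1^+$, has bounded support, and belongs to $W^{2,1}_2(S_1^+)$. The only obstruction is that $w$ no longer satisfies Signorini complementarity on $S_1'$, so the boundary terms arising from spatial integration by parts must be handled separately rather than simply cancelling.

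The key observation is that although $w\,\partial_{x_n}w$ need not vanish on $S_1'$, it is nonnegative there. Indeed, using $u_i\partial_{x_n}u_i=0$, $u_i\ge 0$ and $\partial_{x_n}u_i\le 0$ on $S_1'$, one computes
\begin{equation*}
w\,\partial_{x_n}w = (u_1-u_2)(\partial_{x_n}u_1-\partial_{x_n}u_2) = -u_1\partial_{x_n}u_2 - u_2\partial_{x_n}u_1 \ge 0.
\end{equation*}

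With this sign in hand, I would repeat the weighted test-function computation from the proof of Lemma~\ref{lem:w212}: multiply the equation $(\Delta-\partial_t)w = f$ by $w\,\eta^2\,G$, where $\eta\in C^\infty_0(\R^n)$ is a cutoff with $\eta\equiv 1$ on $B_\rho$, and integrate over $S_1^+$. After integration by parts in $x$, the boundary contribution on $S_1'$ becomes
\begin{equation*}
-\int_{S_1'} w\,\partial_{x_n}w\cdot \eta^2\,G\,dx'\,dt \le 0
\end{equation*}
by the sign computation above, and may therefore be discarded instead of being set to zero. This single substitution is the only place where the argument for the difference $w$ differs from that for a single Signorini solution.

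The remaining calculation is then literally the interior part of the proof of Lemma~\ref{lem:w212}: exploit $\partial_t G = -\Delta G$ and $\nabla G = \tfrac{x}{2t}G$ to rewrite the time derivative, use Young's inequality to absorb the cross terms $w\nabla w\cdot\nabla G$ into $|\nabla w|^2 G$, and introduce the weight $|t|$ to handle the behavior at $t=0$ and $t=-1$ via the cutoff. This yields the stated bound
\begin{equation*}
\int_{S_\rho^+}|t||\nabla w|^2 G \le C_{n,\rho}\int_{S_1^+}\bigl(w^2+|t|^2 f^2\bigr)G.
\end{equation*}
I expect no genuine obstacle beyond the sign verification above: the interior manipulations are identical to those deferred to Appendix~\ref{sec:est-gauss-proofs} in the proof of Lemma~\ref{lem:w212}, and the present lemma amounts to noting that the Signorini complementarity is used there only through the nonpositivity (in fact vanishing) of the analogous boundary integral, which continues to hold for $w$.
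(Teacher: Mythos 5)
Your proposal is correct and follows essentially the same route as the paper: test the equation for $w=u_1-u_2$ against $w\zeta_1^2G$ and observe that the resulting boundary integral over $S_1'$ has a favorable sign, after which the computation is identical to Step~(1$^\circ$) of the proof of Lemma~\ref{lem:w212}. The only cosmetic difference is that the paper extracts the sign at the level of the penalized approximations, from the monotonicity $[\beta_\epsilon(u_1^\epsilon)-\beta_\epsilon(u_2^\epsilon)](u_1^\epsilon-u_2^\epsilon)\geq 0$ (which also keeps all integrations by parts among smooth functions), whereas you obtain the equivalent inequality $w\,\partial_{x_n}w\geq 0$ directly from the Signorini complementarity of the limit solutions — legitimate here since $\nabla u_i\in H^{\alpha,\alpha/2}(S_1^+\cup S_1')$ makes the trace of $\partial_{x_n}u_i$ on $S_1'$ classical.
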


\section{The generalized frequency function}
\label{sec:gener-freq-funct}

In this section we will establish a monotonicity formula, which will
be a key tool for our study. The origins of this formula go back to
Almgren's Big Regularity Paper \cite{Alm}, where he proved that for
(multiple-valued) harmonic functions in the unit ball, the frequency
function
$$
N_u(r)=r\frac{\int_{B_r}|\nabla u|^2}{\int_{\partial B_r} u^2}
$$
is monotonically increasing in $r\in(0,1)$. Versions of this formula
have been used in different contexts, most notably in unique
continuation \cites{GL1,GL2} and more recently in the thin obstacle
problem \cites{ACS,GP}.  Almgren's monotonicity formula has been
generalized by Poon to solutions of the heat equation in the unit
strip $S_1$. More precisely, he proved in \cite{Poo} that if $\Delta u
- u_t = 0$ in $S_1$, then its caloric frequency, defined as
$$
N_u(r)=\frac{r^2\int_{\R^n} |\nabla u|^2(x,-r^2)
  G(x,-r^2)dx}{\int_{\R^n} u(x,-r^2)^2 G(x,-r^2)dx},
$$
is monotone non-decreasing in $r\in(0,1)$. This quantity, which to a
large extent plays the same role as Almgren's frequency function,
differs from the latter in that it requires $u$ to be defined in an
entire strip. Thereby, it is not directly applicable to caloric
functions which are only locally defined, for instance when $u$ is
only defined in the unit cylinder $Q_1$. One possible remedy to this
obstruction is to consider an extension of a caloric function in $Q_1$
to the entire strip $S_1$ by multiplying it with a spatial cutoff
function $\psi$, supported in $B_1$:
$$
v(x,t)=u(x,t)\psi(x).
$$
Such extension, however, will no longer be caloric in $S_1$, and
consequently the parabolic frequency function $N_v$ is no longer going
to be monotone.  However, there is a reasonable hope that $N_v$ is
going to exhibit properties close to monotonicity. In fact, to be able
to control the error terms in the computations, we will need to
consider a ``truncated'' version of $N$. Moreover, we will be able to
extend this result to functions $u\in \S^f(S_1^+)$, and to functions
$v\in \S_\phi(Q_1^+)$, via the constructions in
Proposition~\ref{prop:uk-def}.

To proceed, we define the following quantities:
\begin{align*}
  h_u(t) &=\int_{\R^n_+} u(x, t)^2 G(x, t)dx,\\
  i_u(t) &=-t\int_{\R^n_+} |\nabla u(x, t)|^2 G(x, t)dx,
\end{align*}
for any function $u$ in the parabolic half-strip $S_1^+$ for which the
integrals involved are finite. Note that, if $u$ is an even function
in $x_n$, then Poon's parabolic frequency function is given by
$$
N_u(r)=\frac{i_u(-r^2)}{h_u(-r^2)}.
$$
There are many substantial technical difficulties involved in working
with this function directly. To overcome such difficulties, we
consider the following averaged versions of $h_u$ and $i_u$:
\begin{align*}
  H_u(r) &=\frac{1}{r^2}\int_{-r^2}^0
  h_u(t)dt=\frac{1}{r^2}\int_{S_r^+}u(x,t)^2 G(x,t)dxdt,\\
  I_u(r) &=\frac{1}{r^2}\int_{-r^2}^0
  i_u(t)dt=\frac1{r^2}\int_{S_r^+}|t||\nabla u(x,t)|^2 G(x,t)dxdt.
\end{align*}
One further obstruction is represented by the fact that the above
integrals may become unbounded near the endpoint $t=0$, where $G$
becomes singular. To remedy this problem we introduce the following
truncated versions of $H_u$ and $I_u$. For a constant $0<\delta<1$,
let
\begin{align*}
  H_u^\delta(r) &=\frac{1}{r^2}\int_{-r^2}^{-\delta^2r^2}
  h_u(t)dt=\frac{1}{r^2}\int_{S_r^+\setminus S_{\delta r}^+}u(x,t)^2
  G(x,t)dxdt,
  \\
  I_u^\delta(r) &=\frac{1}{r^2}\int_{-r^2}^{-\delta^2r^2}
  i_u(t)dt=\frac1{r^2}\int_{S_r^+\setminus S_{\delta r}^+}|t||\nabla
  u(x,t)|^2 G(x,t)dxdt.
\end{align*}

The following lemma plays a crucial role in what follows.

\begin{lemma}\label{lem:differentiation-formulae-smooth}
  Assume that $v\in C^{4,2}_0(S_1^+\cup S_1')$ satisfies
  \begin{align*}
    \Delta v-\partial_t v = g(x,t)\quad\text{in } S_1^+.
  \end{align*}
  Then, for any $0< \delta<1$ we have the following differentiation
  formulas
  \begin{align*}
    (H_v^\delta)'(r)
    &=\frac{4}{r}I_v^\delta(r)-\frac{4}{r^3}\int_{S_r^+\setminus
      S_{\delta r}} t vg G-
    \frac{4}{r^3}\int_{S'_r\setminus S'_{\delta r}} t v v_{x_n} G\\
    (I_v^\delta)'(r) &=\frac{1}{r^3}\int_{S_r^+\setminus S^+_{\delta
        r}}(Zv)^2 G+ \frac{2}{r^3}\int_{S_r^+\setminus S^+_{\delta r}}
    t (Zv) g G+\frac{2}{r^3}\int_{S'_r\setminus S'_{\delta r}} t
    v_{x_n}(Zv) G,
  \end{align*}
  where the vector field $Z$ is as in \eqref{eq:Z}.
\end{lemma}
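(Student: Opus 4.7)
The plan is to derive pointwise-in-$t$ differential identities for $h_v(t)$ and $i_v(t)$ and then translate them into the claimed formulas for $(H_v^\delta)'$ and $(I_v^\delta)'$ by integrating against $t$ over $[-r^2,-\delta^2 r^2]$ and integrating by parts in $t$. Throughout, the only properties of $G$ that enter are those in \eqref{eq:G} together with the crucial observation that $\partial_{x_n}G|_{x_n=0}=0$, so that $G$ itself never produces a boundary term on the thin space $\R^{n-1}\times\{0\}$.

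For $(H_v^\delta)'$, I would first compute $h_v'(t)$ by differentiating $\int_{\R^n_+}v^{2}G\,dx$ under the integral, using $\partial_t v=\Delta v-g$ and $\partial_t G=-\Delta G$, and integrating by parts twice in $x$. The cross term $2\int v\nabla v\cdot\nabla G$ appears with opposite signs in the two pieces and cancels, leaving
\[
h_v'(t)=\frac{2\,i_v(t)}{t}-2\int_{\R^n_+}v\,g\,G\,dx-2\int_{\R^{n-1}}v\,v_{x_n}\,G\,dx'.
\]
Differentiating $r^{2}H_v^\delta(r)=\int_{-r^{2}}^{-\delta^{2}r^{2}}h_v(t)\,dt$ in $r$ gives $2rH_v^\delta+r^{2}(H_v^\delta)'=2r\,h_v(-r^{2})-2\delta^{2}r\,h_v(-\delta^{2}r^{2})$. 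I then evaluate $\int_{-r^{2}}^{-\delta^{2}r^{2}}t\,h_v'(t)\,dt$ in two ways, once by integrating by parts in $t$ (which produces $r^{2}h_v(-r^{2})-\delta^{2}r^{2}h_v(-\delta^{2}r^{2})-r^{2}H_v^\delta$), and once by inserting the formula for $h_v'$ above (which produces $2r^{2}I_v^\delta$ plus the thin and right-hand side terms weighted by $t$). Equating the two expressions and solving for $(H_v^\delta)'$ yields the first formula.

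The technical crux is the pointwise Rellich-type identity
\[
i_v'(t)=\frac{1}{2t}\int_{\R^n_+}(Zv)^{2}G\,dx+\int_{\R^n_+}(Zv)\,g\,G\,dx+\int_{\R^{n-1}}v_{x_n}(Zv)\,G\,dx'.
\]
My plan is to multiply $\Delta v-\partial_t v=g$ by $(Zv)G$ and integrate over $\R^n_+$. The term $\int(\Delta v)(Zv)G$ is reorganised by a single IBP using $\nabla(Zv)=\nabla v+x\cdot D^{2}v+2t\,\nabla v_t$, the identity $2\nabla v\cdot\nabla v_t=\partial_t|\nabla v|^{2}$, and the weighted identity $\int_{\R^n_+}x\cdot\nabla w\,G\,dx=\int_{\R^n_+}w\,(2tG_t)\,dx$, which follows from $\nabla G=xG/(2t)$ and whose boundary contribution vanishes since $x_n=0$ on $\partial\R^n_+$. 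A useful intermediate step, obtained by substituting $x\cdot\nabla v=Zv-2tv_t$ into $\int(Zv)\nabla v\cdot\nabla G$, is
\[
\int_{\R^n_+}(Zv)\,\nabla v\cdot\nabla G=\frac{1}{2t}\int_{\R^n_+}(Zv)^{2}G-\int_{\R^n_+}(Zv)\,v_t\,G.
\]
The boundary contribution from $\partial\R^n_+$ (outer normal $-e_n$) produces exactly $-\int_{\R^{n-1}}v_{x_n}(Zv)G$; after all cancellations — in particular, all $v_t^{2}$ terms disappear — one lands on the displayed expression for $i_v'(t)$. This Rellich step is the principal obstacle: the weighted integration-by-parts bookkeeping is long, and one must be vigilant about the $-e_n$ orientation on $\partial\R^n_+$ and deploy $\partial_{x_n}G|_{x_n=0}=0$ at just the right places to discard spurious boundary integrals.

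With the $i_v'$ formula in hand, $(I_v^\delta)'$ follows by repeating the two-way computation used for $H_v^\delta$, now with $(h_v,H_v^\delta)$ replaced by $(i_v,I_v^\delta)$ and with the Rellich identity substituted in place of the formula for $h_v'$. The weight $t$ coming from the IBP in $t$ combines with the factor $1/(2t)$ in $i_v'$ to produce precisely the prefactor $1/r^{3}$ in front of $\int(Zv)^{2}G$ in the claim. The assumption $v\in C^{4,2}_{0}(S_1^+\cup S_1')$, together with the truncation at $t=-\delta^{2}r^{2}$, is exactly what legitimises the various differentiations and integrations by parts, removing the singularity of $G$ at $t=0$ and providing compactly supported decay at spatial infinity.
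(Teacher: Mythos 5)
Your proposal is correct, and its overall architecture coincides with the paper's: both reduce the lemma to the pointwise-in-$t$ identities for $h_v'(t)$ and $i_v'(t)$ and then integrate against $t$ over $[-r^2,-\delta^2r^2]$ (your two-way evaluation of $\int t\,h_v'\,dt$ is just an unwinding of the paper's substitution $t=r^2s$, which gives $(H_v^\delta)'(r)=\frac{2}{r^3}\int_{-r^2}^{-\delta^2r^2}t\,h_v'(t)\,dt$ directly). Where you diverge is in how the two pointwise identities are produced. The paper generates the $Zv$ terms by the dilation trick — differentiating $h_v(\lambda^2t)=\int v(\lambda y,\lambda^2t)^2G(y,t)\,dy$ and $i_v(\lambda^2t)$ at $\lambda=1$ — and then integrates by parts; you instead differentiate $\int v^2G$ under the integral sign (using $\partial_tG=-\Delta G$ and two spatial integrations by parts, with the cross terms $\pm2\int v\,\nabla v\cdot\nabla G$ cancelling and $\partial_{x_n}G|_{x_n=0}=0$ killing the spurious boundary term) for $h_v'$, and run a Rellich--Pohozaev multiplier argument, testing the equation against $(Zv)G$, for $i_v'$. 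Your intermediate identity $\int(Zv)\nabla v\cdot\nabla G=\frac{1}{2t}\int(Zv)^2G-\int(Zv)v_tG$ is correct (it is the substitution $x\cdot\nabla v=Zv-2tv_t$ combined with $\nabla G=\frac{x}{2t}G$), and the closing identification $-\int_{\R^n_+}\nabla v\cdot\nabla(Zv)\,G=i_v'(t)$ does follow from the two ingredients you list, namely $\nabla v\cdot\nabla(Zv)=|\nabla v|^2+\tfrac12Z(|\nabla v|^2)$ and the weighted identity $\int x\cdot\nabla w\,G=2t\int wG_t$ (whose boundary term vanishes because $x\cdot\nu=-x_n=0$ on the thin space), which combine to give $\int\nabla v\cdot\nabla(Zv)G=\frac{d}{dt}\bigl(t\int|\nabla v|^2G\bigr)=-i_v'(t)$. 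The trade-off is that your route for $h_v'$ is arguably more elementary (no $Zv$ appears at all until the very end), while for $i_v'$ you essentially run the paper's computation in reverse: the paper starts from $i_v'$ and arrives at the equation, you start from the equation and must separately identify the leftover Dirichlet-type term with $i_v'$. Both are legitimate; the paper's dilation derivative has the minor advantage of treating $h$ and $i$ uniformly.
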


\begin{proof}
  \setcounter{step}{0} The main step in the proof consists in
  establishing the following differentiation formulas for $-1<t<0$:
  \begin{equation}\label{eq:h'}
    h_v'(t) =\frac{2}{t} i_v(t)-2\int_{\R^n_+} v  g G-2\int_{\R^{n-1}} v v_{x_n} G,
  \end{equation}
  and
  \begin{equation}\label{eq:i'}
    i_v'(t) =\frac{1}{2t}\int_{\R^n_+}(Z v)^2 G+\int_{\R^n_+} (Zv) g G+\int_{\R^{n-1}}v_{x_n}(Zv) G.
  \end{equation}
  Once this is done, then noting that
  \begin{align*}
    H_u^\delta(r) &= \int_{-1}^{-\delta^2} h_u(r^2 s)ds,\ \ \
    I_u^\delta(r) = \int_{-1}^{-\delta^2} i_u(r^2 s)ds,
  \end{align*}
  we find
  \begin{align*}
    (H_u^\delta)'(r) &= 2r \int_{-1}^{-\delta^2} s h_u'(r^2 s)ds,\ \ \
    (I_u^\delta)'(r) = 2r \int_{-1}^{-\delta^2} s i_u'(r^2 s)ds.
  \end{align*}
  Using \eqref{eq:h'} we thus obtain
  \begin{align*}
    (H_v^\delta)'(r) & =\frac{2}{r^3}\int_{-r^2}^{-\delta^2r^2} t h_v'(t)dt\\
    &=\frac{2}{r^3}\int_{-r^2}^{-\delta^2r^2}\Big(2 i_v(t)-2\int_{\R^n_+} t v g G(\cdot,t)\,dx-2\int_{\R^{n-1}} v v_{x_n} G(\cdot,t)\,dx\Big)dt\\
    &=\frac{4}{r} I_v^\delta(r)-\frac{4}{r^3}\int_{S_r^+\setminus
      S_{\delta r}^+} t v g G-\frac{4}{r^3}\int_{S'_r\setminus
      S'_{\delta r}}t v v_{x_n} G .
  \end{align*}
  The formula for $(I_v^\delta)'(r)$ is computed similarly. We are
  thus left with proving \eqref{eq:h'} and \eqref{eq:i'}.

  \step{1-diff-form-smooth} We start with claiming that
$$
i_{v}(t)=\frac12\int_{\R^n_+} v Z v G+t\int_{\R^n_+} v g
G+t\int_{\R^{n-1}}v v_{x_n} G.
$$
Indeed, noting that $\Delta(v^2/2) = v\Delta v+|\nabla v|^2$ in
$S^+_1$, and keeping in mind that the outer unit normal to $\R^n_+$ on
$\R^{n-1}$ is given by $\nu = - e_n = (0,\ldots,0,-1)$, we integrate
by parts to obtain
\begin{align*}
  i_v(t) &=-t\int_{\R^n_+}\left(\Delta(v^2/2)-v\Delta v\right) G\\
  &= t\int_{\R^n_+} v\nabla v\nabla G +t\int_{\R^{n-1}} v v_{x_n} G+
  t\int_{\R^n_+} v v_t G+t\int_{\R^n_+} v g G\\
  &=t\int_{\R^n_+} \Big(\nabla v\frac{\nabla G}{G}+v_t\Big)v G+t\int_{\R^n_+} v g G+t\int_{\R^{n-1}} v v_{x_n} G\\
  &=\frac12\int_{\R^n_+} v (Z v) G+t\int_{\R^n_+} v g
  G+t\int_{\R^{n-1}} v v_{x_n} G,
\end{align*}
where in the first integral of the last equality we have used
\eqref{eq:ZG}. This proves the claim.

\step{2-diff-form-smooth} We now prove the formula \eqref{eq:h'} for
$h_v'$. Note that for $\lambda>0$ we have
\begin{align*}
  h_v(\lambda^2 t)&=\int_{\R^n_+} v(x,\lambda^2
  t)^2 G(x,\lambda^2t)dx\\
  &=\int_{\R^n_+} v(\lambda y,\lambda^2 t)^2 G(\lambda y,\lambda^2
  t)\lambda^n dy=\int_{\R^n_+} v(\lambda y,\lambda^2 t)^2 G(y, t)dy.
\end{align*}
Here, we have used the identity $ G(\lambda y,\lambda^2 t)\lambda^n=
G(y,t)$.  Differentiating with respect to $\lambda$ at $\lambda=1$,
and using \eqref{eq:Z-gen}, we therefore obtain
$$
2th'_v(t)=2\int_{\R^n_+} v Zv G,
$$
or equivalently
$$
h_v'(t)=\frac1t\int_{\R^n_+} v Zv G.
$$
Using now the formula for $i_v$ in \stepref{1-diff-form-smooth}, and
the fact that $\Delta v - v_t = g$, we obtain
$$
h_v'(t) =\frac{2}{t} i_v(t)-2\int_{\R^n_+} v g G-2\int_{\R^{n-1}} v
v_{x_n} G.$$

\step{3-diff-form-smooth} To obtain the differentiation formula
\eqref{eq:i'} for $i_v$, note that using the scaling properties of $
G$, similarly to what was done for $h_v$, we have
$$
i_v(\lambda^2t)=-\lambda^2t\int_{\R^n_+}|\nabla v(\lambda y,\lambda^2
t)|^2 G(y,t)dy.
$$
Differentiating with respect to $\lambda$ at $\lambda=1$, we obtain
$$
2t\, i'_v(t)=-t\int_{\R^n_+} Z(|\nabla v|^2) G-2t\int_{\R^n_+}|\nabla
v|^2 G = - t \int_{\R^n_+} \left(Z(|\nabla v|^2 + 2 |\nabla
  v|^2\right) G.
$$
We now use the following easily verifiable identity
$$
Z(|\nabla v|^2)+2|\nabla v|^2=2\nabla v\cdot\nabla(Zv),
$$
which, after substitution in the latter equation and integration by
parts, yields
\begin{align*}
  2t\, i'_v(t)&=-2t\int_{\R^n_+}\nabla v\cdot\nabla(Z
  v) G\\
  &=2t\int_{\R^n_+}\Delta v(Zv) G+2t\int_{\R^n_+}(Zv)\nabla v\nabla
  G+2t\int_{\R^{n-1}}v_{x_n}(Zv) G\\
  &=2t\int_{\R^n_+}g(Zv) G+2t\int_{\R^n_+}Zv\Big(\nabla v\frac{\nabla
    G}{G}+v_t\Big)G+2t\int_{\R^{n-1}}v_{x_n}(Zv) G\\
  &=2t\int_{\R^n_+}g(Zv) G+\int_{\R^n_+}(Zv)^2
  G+2t\int_{\R^{n-1}}v_{x_n}(Zv) G.
\end{align*}
Hence, we obtain
$$
i'_v(t)=\frac{1}{2t}\int_{\R^n_+}(Zv)^2 G+\int_{\R^n_+} g(Zv)
G+\int_{\R^{n-1}}v_{x_n}(Zv) G,
$$
which establishes \eqref{eq:i'}.
\end{proof}

\medskip
With Lemma \ref{lem:differentiation-formulae-smooth} in hands, we turn
to establishing the essential ingredient in the proof of our main
monotonicity result.

\begin{proposition}[Differentiation formulas for $H_u$ and $I_u$]
  \label{prop:diff-form-signor} Let $u\in \S^f(S_1^+)$. Then, $H_u$
  and $I_u$ are absolutely continuous functions on $(0,1)$ and for
  a.e.\ $r\in(0,1)$ we have
  \begin{align*}
    H_u'(r) &=\frac{4}{r}I_u(r)-\frac{4}{r^3}\int_{S_r^+} t u f G,
    \\
    I_u'(r) &\geq\frac{1}{r^3}\int_{S_r^+}(Zu)^2 G+
    \frac{2}{r^3}\int_{S_r^+} t (Zu)f G.
  \end{align*}
\end{proposition}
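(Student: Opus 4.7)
The plan is to derive the two differentiation formulas by approximating $u\in\S^f(S_1^+)$ with smooth solutions $u^\epsilon$ of a penalized problem, applying the smooth formulas of Lemma~\ref{lem:differentiation-formulae-smooth} to $u^\epsilon$, and then passing to the limits $\epsilon\to 0$ and $\delta\to 0$. The inequality in the formula for $I_u'$ will be traced to the lower semicontinuity of the $L^2$-norm of $Zu^\epsilon$. Concretely, I would use the penalization scheme of Section~\ref{sec:existence-regularity}: let $u^\epsilon\in C^{4,2}(S_1^+\cup S_1')$ solve $\Delta u^\epsilon-\partial_t u^\epsilon=f^\epsilon$ in $S_1^+$ with $\partial_{x_n}u^\epsilon=\beta_\epsilon(u^\epsilon)$ on $S_1'$ and with the same support/initial/lateral data as $u$ (possible since $u$ has bounded support, so the construction reduces to a large bounded half-cylinder). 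The estimates recalled in Section~\ref{sec:existence-regularity} together with Lemma~\ref{lem:w212} yield $u^\epsilon\to u$ in $W^{1,1}_2$, locally uniformly together with its gradient, and with uniform Gaussian-weighted bounds on $\nabla u^\epsilon$, $D^2 u^\epsilon$ and $u^\epsilon_t$.

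Applying Lemma~\ref{lem:differentiation-formulae-smooth} to $u^\epsilon$ with $g=f^\epsilon$ produces the formulas for $(H^\delta_{u^\epsilon})'(r)$ and $(I^\delta_{u^\epsilon})'(r)$, whose boundary integrals over $S_r'\setminus S_{\delta r}'$ are $-\tfrac{4}{r^3}\int t\,u^\epsilon\beta_\epsilon(u^\epsilon)G$ and $\tfrac{2}{r^3}\int t\,\beta_\epsilon(u^\epsilon)(Zu^\epsilon)G$, respectively. Both vanish in the limit $\epsilon\to 0$. For the first, $u^\epsilon\beta_\epsilon(u^\epsilon)\ge 0$ by \eqref{eq:betaeps} and tends pointwise to $u\,u_{x_n}=0$ by Signorini complementarity, so dominated convergence, based on the uniform $L^\infty$-bound and the integrability of $|t|G$ on $S_r'\setminus S_{\delta r}'$, gives zero contribution. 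For the second, the chain-rule identity $\beta_\epsilon(u^\epsilon)Zu^\epsilon=Z(B_\epsilon(u^\epsilon))$, where $B_\epsilon\ge 0$ is the antiderivative of $\beta_\epsilon$ vanishing at $0$, combined with the uniform estimate $B_\epsilon(u^\epsilon)=O(\epsilon)$ on $\supp u^\epsilon$, and an integration by parts in $x'$ and $t$ against the Gaussian, forces this boundary term to vanish in the limit as well.

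The remaining non-boundary terms pass to their natural limits: the pieces linear in $u^\epsilon$, $\nabla u^\epsilon$, $Zu^\epsilon$ and $f^\epsilon$ converge to the analogous expressions in $u$ and $f$ by the weighted convergences provided by Lemma~\ref{lem:w212} and Lemma~\ref{lem:w112-diff}, yielding equalities. The quadratic term $\int (Zu^\epsilon)^2 G$ on the right-hand side of the $I'$-formula, however, only admits a lower semicontinuity bound
\[
\int_{S_r^+\setminus S_{\delta r}^+}(Zu)^2\,G
\;\le\;
\liminf_{\epsilon\to 0}\int_{S_r^+\setminus S_{\delta r}^+}(Zu^\epsilon)^2\,G,
\]
which is precisely the source of the stated $\ge$ in the formula for $I_u'$. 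Finally, sending $\delta\to 0$ by monotone/dominated convergence, the right-hand sides are seen to be in $L^1_{\mathrm{loc}}(0,1)$, so $H_u$ and $I_u$ are absolutely continuous with the claimed derivatives. The main obstacle is the uniform-in-$\epsilon$ control of the two boundary integrals combined with the limit $\delta\to 0$: the weight $G$ is singular at $t=0$, which is exactly why the truncation $\delta$ was introduced, and the weighted $W^{2,1}_2$-estimates of Section~\ref{sec:estimates-w2-1_2s_1+} are tailor-made to supply the uniform bounds needed to carry these limits through.
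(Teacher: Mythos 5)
Your proposal follows essentially the same route as the paper's proof: penalization, application of the smooth differentiation formulas to the (cut off and extended) approximants, the antiderivative identity $\beta_\epsilon(u^\epsilon)Zu^\epsilon=Z(\mathcal B_\epsilon(u^\epsilon))$ with $\mathcal B_\epsilon(u^\epsilon)=O(\epsilon)$ to kill the delicate boundary term, weak lower semicontinuity of $\int (Zu^\epsilon)^2G$ as the source of the inequality in $I_u'$, and the order of limits $\epsilon\to0$ then $\delta\to0$ backed by the Gaussian $W^{2,1}_2$ estimates. The argument is correct and matches the paper's in all essential points.
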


\begin{proof}
  We note that, thanks to the estimates in Lemma~\ref{lem:w212} above,
  all integrals in the above formulas are finite.  The idea of the
  proof of the proposition is to approximate $u$ with smooth solutions
  $u^\epsilon$ to the Signorini problem, apply
  Lemma~\ref{lem:differentiation-formulae-smooth}, and then pass to
  the limit in $\epsilon$.  The limit process is in fact more involved
  then one may expect.  One complication is that although we have the
  estimates in Lemma~\ref{lem:w212} for the solution $u$, we do not
  have similar estimates, uniform in $\epsilon$, for the approximating
  $u^\epsilon$. This is in fact the main reason for which we have to
  consider truncated quantities $H_{u^\epsilon}^\delta$ and
  $I_{u^\epsilon}^\delta$, let $\epsilon\to 0$ first, and then
  $\delta\to 0$. However, the main difficulty is to show that the
  integrals over $S_r'$ vanish. This is relatively easy to do for
  $H_u$, since $u u_{x_n} =0$ on $S_1'$.  On the other hand, proving
  the formula for $I'_u$ is considerably more difficult, since one has
  to justify that $u_{x_n} Zu=0$ on $S_1'$. Furthermore, the vanishing
  of this term should be interpreted in a proper sense, since we
  generally only know that $Z u\in L_2(S_1^+)$, and thus its trace may
  not even be well defined on $S_1'$.

  With this being said, in the sequel we justify only the formula for
  $I'_u$, the one for $H'_u$ being analogous, but much simpler.

  \setcounter{step}{0} \step{1-diff-form} Assume that $u$ is supported
  in $B_{R-2}^+\times (-1,0]$, $R\geq 3$. Multiplying $u$ with a
  cutoff function $\eta(t)$ such that $\eta=1$ on $[-\tilde r^2,0]$
  and $\eta=0$ on $(-1,-\tilde{\tilde r}^2]$ for $0<r<\tilde
  r<\tilde{\tilde r}<1$, without loss of generality we may assume that
  $u(\cdot,-1)=0$. We then approximate $u$ in $B_R^+\times(-1,0]$ with
  the solutions of the penalized problem
  \begin{align*}
    \Delta u^\epsilon-\partial_t u^\epsilon=f^\epsilon&\quad\text{in }
    B_R^+\times(-1,0],\\
    \partial_{x_n}u^\epsilon=\beta_\epsilon(u^\epsilon)&\quad\text{on } B_R'\times(-1,0],\\
    u^\epsilon=0&\quad\text{on }(\partial B_R)^+\times(-1,0],\\
    u^\epsilon(\cdot, -1)=0&\quad\text{on }B_R^+,
  \end{align*}
  where $f^\epsilon$ is a mollification of $f$.  For any
  $\rho\in(0,1)$, let
$$
Q_{R-1,\rho}^+=B_{R-1}^+\times(-\rho^2,0],\quad Q_{R-1,\rho}
'=B_{R-1}'\times(-\rho^2,0].
$$
From the estimates in Section~\ref{sec:existence-regularity} we have
\begin{align*}
  \|u^\epsilon\|_{W^{2,1}_2(Q_{R-1, \rho}^+)},\
  \|u^\epsilon\|_{L_\infty(Q_{R-1, \rho}^+)},\ \|\nabla
  u^\epsilon\|_{L_\infty(Q_{R-1,\rho}^+)}&\leq C(\rho,u),\\
  \max_{Q_{R-1,\rho}'}|\beta_\epsilon(u^\epsilon)|&\leq C(\rho,u),
\end{align*}
uniformly in $\epsilon\in(0,1)$.

\step{2-diff-form} Now, in order to extend $u^\epsilon$ to $S_1^+$,
pick a cutoff function $\zeta\in C^\infty_0(\R^n)$ such that
$$
0\leq\zeta\leq 1,\quad\zeta=1\quad\text{on }B_{R-2},\quad\supp
\zeta\subset B_{R-1},\quad\zeta(x',-x_n)=\zeta(x',x_n),
$$
and define
$$
v^\epsilon(x,t)=u^\epsilon(x,t)\zeta(x).
$$
Note that since $u$ is supported in $B_{R-2}^+\times(-1,0]$,
$v^\epsilon$ will converge to $u$ and we will have the uniform
estimates
$$
\|v^\epsilon\|_{W^{2,1}_2(S_\rho^+)},\
\|v^\epsilon\|_{L_\infty(S_{\rho}^+)},\ \|\nabla
v^\epsilon\|_{L_\infty(S_{\rho}^+)}\leq C(\rho,u)<\infty.
$$
For $v^\epsilon$ we have that
$$
\Delta v^\epsilon-\partial_t
v^\epsilon=f^\epsilon\zeta+u^\epsilon\Delta\zeta+2\nabla
u^\epsilon\nabla \zeta=:g^\epsilon\quad\text{in }S_1^+.
$$
It is easy to see that $g^\epsilon$ converges strongly to $f$ in
$L_2(S_\rho^+\setminus S_{\delta \rho}^+,G)$ for $0<\delta<1$.
Besides, we also have
$$
v^\epsilon_{x_n}=u^\epsilon_{x_n}\zeta,\quad Zv^\epsilon=\zeta (Z
u^\epsilon)+(Z\zeta) u^\epsilon\quad\text{on }S_1',
$$
and therefore
$$
v^\epsilon_{x_n} Zv^\epsilon=\zeta (Z\zeta)
u^\epsilon\beta_\epsilon(u^\epsilon)+\zeta^2u^\epsilon_{x_n}
(Zu^\epsilon)\quad\text{on }S_1'.
$$

\step{3-diff-form} We now fix a small $\delta>0$, apply the
differentiation formulas in
Lemma~\ref{lem:differentiation-formulae-smooth} to $v^\epsilon$ and
pass to the limit. We have
\begin{align*}
  (I^\delta_{v^\epsilon})'(r)&=\frac{1}{r^3}\int_{S_r^+\setminus
    S_{\delta r}^+}(Zv^\epsilon)^2 G+
  \frac{2}{r^3}\int_{S_r^+\setminus S_{\delta r}^+} t (Zv^\epsilon)
  g^\epsilon G +
  \frac{4}{r^3}\int_{S'_r\setminus S_{\delta r}'} t v^\epsilon_{x_n} (Zv^\epsilon) G\\
  &=J_1+J_2+J_3.
\end{align*}
\substep{3.1-diff-form} To pass to the limit in $J_1$, we note that
$Zv^\epsilon$ converges to $Zu$ weakly in $L_2(S_r^+\setminus
S_{\delta r}^+, G)$. Indeed, this follows from the uniform $W^{2,1}_2$
estimates on $v^\epsilon$ in \stepref{2-diff-form} and the boundedness
of $G$ in $S_r^+\setminus S_{\delta r}^+$. Thus, in the limit we
obtain
$$
\frac{1}{r^3}\int_{S_r^+\setminus S_{\delta r}^+}(Zu)^2 G\leq
\liminf_{\epsilon\to 0}\frac{1}{r^3}\int_{S_r^+\setminus S_{\delta
    r}^+}(Zv^\epsilon)^2 G.
$$
Note that here we cannot claim equality, as we do not have a strong
convergence of $Zv^\epsilon$ to $Zu$.

\substep{3.2-diff-form} In $J_2$, the weak convergence of
$Zv^\epsilon$ to $Zu$, combined with the strong convergence of
$g^\epsilon$ to $f$ in $L_2(S_r^+\setminus S_{\delta r}^+, G)$ is
enough to conclude that
$$
\frac{2}{r^3}\int_{S_r^+\setminus S_{\delta r}^+} t (Zu) f
G=\lim_{\epsilon\to 0}\frac{2}{r^3}\int_{S_r^+\setminus S_{\delta
    r}^+} t (Zv^\epsilon) g^\epsilon G.
$$
Moreover, the convergence will be uniform in $r\in[r_1,r_2]\subset
(0,1)$.

\substep{3.3-diff-form} Finally, we claim that $J_3\to 0$ as
$\epsilon\to 0$, i.e.,
$$
\lim_{\epsilon\to 0+}\frac{4}{r^3}\int_{S'_r\setminus S_{\delta r}'} t
v^\epsilon_{x_n} (Zv^\epsilon) G=0.
$$
Indeed, we have \newcommand{\B}{\mathcal{B}}
\begin{align*}
  \int_{S'_r\setminus S_{\delta r}'} t v^\epsilon_{x_n} (Zv^\epsilon)
  G &=\int_{S_r'\setminus S_{\delta r}'} t\zeta (Z\zeta) u^\epsilon
  \beta_\epsilon(u^\epsilon) G+\int_{S_r'\setminus S_{\delta r}'}t\zeta^2\beta_\epsilon(u^\epsilon)(Zu^\epsilon) G\\
  &=:E_1+E_2.
\end{align*}
We then estimate the integrals $E_1$ and $E_2$ separately.

\subsubstep{3.3.1-diff-form} We start with $E_1$. Recall that
$|\beta_\epsilon(u^\epsilon)|\leq C(\rho)$ in $Q_{R-1,\rho}'$. By
\eqref{eq:betaeps}, this implies that $u^\epsilon\geq-C(\rho)\epsilon$
in $Q_{R-1,\rho}'$ and therefore
$$
|E_1|\leq C(\rho)r^3\epsilon,\quad 0<r\leq\rho<1.
$$ 

\subsubstep{3.3.2-diff-form} A similar estimate holds also for $E_2$,
but the proof is a little more involved. To this end consider
$$
\B_\epsilon(t)=\int_{0}^t\beta_\epsilon(s)ds,\quad t\in\R.
$$
From \eqref{eq:betaeps}, it is easy to see that
$$
\B_\epsilon(t)=0\quad\text{for }t>0,\quad \mathcal{B}_\epsilon(t)\geq
0\quad\text{for all }t,\quad \B_\epsilon(t)=C_\epsilon+\epsilon
t+\frac{t^2}{2\epsilon}\quad\text{for }t\leq -2\epsilon^2,
$$
with $C_\epsilon=\mathcal{B}_\epsilon(-2\epsilon^2)\leq
2\epsilon^3$. Note that the uniform bound $u^\epsilon\geq
-C(\rho)\epsilon$ in $Q_{R-1,\rho}'$ implies that
$$
\max_{Q_{R-1,\rho}'}|\B_\epsilon(v^\epsilon)|\leq C(\rho)\epsilon.
$$
To use this fact, note that
\begin{align*}
  E_2&=\int_{S_r'\setminus S_{\delta r}'}
  t\zeta^2\beta_\epsilon(u^\epsilon)(Z
  u^\epsilon) G = \int_{S_r'\setminus S_{\delta r}'}t\zeta^2 [Z\B_\epsilon(u^\epsilon)] G\\
  &=\int_{S_r'\setminus S_{\delta
      r}'}Z[t\zeta^2\B_\epsilon(u^\epsilon)]
  G-\int_{S_r'\setminus S_{\delta r}'}Z(t\zeta^2)\B_\epsilon(u^\epsilon) G\\
  &=:E_{22}-E_{21}.
\end{align*}
\subsubsubstep{3.3.2.1-diff-form} The estimate for $E_{21}$ is
straightforward:
$$
|E_{21}|\leq C(\rho)r^3\epsilon,\quad 0<r\leq\rho.
$$
\subsubsubstep{3.3.2.2-diff-form} To estimate $E_{22}$, denote
$U^\epsilon=t\zeta^2\B_\epsilon(u^\epsilon)$. Then, substituting
$t=-\lambda^2$, $x'=\lambda y'$, we have
\begin{align*}
  E_{22}&=\int_{S_r'}Z U^\epsilon G\, dx' dt=\int_{\delta
    r}^r\int_{\R^{n-1}}(Z U^\epsilon)(\lambda
  y',-\lambda^2) G(\lambda y',-\lambda^2)2\lambda^{n} dy'd\lambda\\
  &=2\int_{\delta r}^r\int_{\R^{n-1}}\lambda\frac{d}{d\lambda}
  U^\epsilon(\lambda y',-\lambda^2) G(y',-1)dy'd\lambda\\
  &=2\int_{\R^{n-1}} [r U^\epsilon(r
  y',-r^2) -\delta r U^\epsilon(\delta r y',-\delta^2r^2)]G(y',-1)dy'\\
  &\qquad-2\int_{\delta r}^r\int_{\R^{n-1}} U^\epsilon(\lambda
  y',-\lambda^2) G(y',-1)dy'd\lambda
\end{align*}
and consequently
$$
|E_{22}|\leq C(\rho)r^3\epsilon,\quad 0<r\leq\rho.
$$
Combining the estimates in
\stepref{3.3.1-diff-form}--\stepref{3.3.2-diff-form} above, we obtain
$$
|J_3|=\Big|\frac4{r^3}\int_{S'_r\setminus S'_{\delta r}} t
v^\epsilon_{x_n} (Zv^\epsilon) G\Big|\leq C(\rho)\epsilon\to 0,\quad
0<r\leq\rho.
$$
\step{4-diff-form} Now, writing for any $0<r_1<r_2<1$
$$
I_{v^\epsilon}^\delta(r_2)-I_{v^\epsilon}^\delta(r_1)=\int_{r_1}^{r_2}
(I_{v^\epsilon}^\delta)'(r) dr,
$$
collecting the facts proved in
\stepref{3.1-diff-form}--\stepref{3.3-diff-form}, and passing to the
limit as $\epsilon\to 0$, we obtain
$$
I_u^\delta(r_2)-I_u^\delta(r_1)\geq \int_{r_1}^{r_2}
\bigg(\frac{1}{r^3}\int_{S_r^+\setminus S_{\delta r}^+}(Zu)^2 G+
\frac{2}{r^3}\int_{S_r^+\setminus S_{\delta r}^+} t (Zu)f G\bigg)dr.
$$
Next, note that by Lemma~\ref{lem:w212} the integrands are uniformly
bounded with respect to $\delta$ (for fixed $r_1$ and
$r_2$). Therefore, we can let $\delta\to 0$ in the latter inequality,
to obtain
$$
I_u(r_2)-I_u(r_1)\geq \int_{r_1}^{r_2}
\bigg(\frac{1}{r^3}\int_{S_r^+}(Zu)^2 G+ \frac{2}{r^3}\int_{S_r^+} t
(Zu)f G\bigg)dr.
$$
This is equivalent to the sought for conclusion for $I_u'$.
\end{proof}

To state the main result of this section, the generalized frequency
formula, we need the following notion. We say that a positive function
$\mu(r)$ is a $\log$-convex function of $\log r$ on $\R^+$ if $\log
\mu(e^t)$ is a convex function of $t$. This simply means that
\[
\mu(e^{(1-\lambda)s + \lambda t}) \leq \mu(e^s)^{1-\lambda}
\mu(e^t)^\lambda,\quad 0\le \lambda \leq 1.
\]
This is equivalent to saying that $\mu$ is locally absolutely
continuous on $\R_+$ and $r\mu'(r)/\mu(r)$ is nondecreasing. For
instance, $\mu(r)=r^\kappa$ is a $\log$-convex function of $\log r$
for any $\kappa$. The importance of this notion in our context is that
Almgren's and Poon's frequency formulas can be regarded as
$\log$-convexity statements in $\log r$ for the appropriately defined
quantities $H_u(r)$.

\begin{theorem}[Generalized frequency formula]\label{thm:thin-monotonicity} Let $u\in\S^f(S_1^+)$
  with $f$ satisfying the following condition: there is a positive
  monotone nondecreasing $\log$-convex function $\mu(r)$ of $\log r$,
  and constants $\sigma>0$ and $C_\mu > 0$, such that
$$
\mu(r)\geq C_\mu r^{4-2\sigma}\int_{\R^n} f^2(\cdot,-r^2)
G(\cdot,-r^2)\,dx.
$$
Then, there exists $C>0$, depending only on $\sigma$, $C_\mu$ and $n$,
such that the function
\[
\Phi_u(r) = \frac 12 r e^{C
  r^{\sigma}}\frac{d}{dr}\log\max\{H_u(r),\mu(r)\}+2 (e^{C
  r^{\sigma}}-1)
\]
is nondecreasing for $r\in(0, 1)$.
\end{theorem}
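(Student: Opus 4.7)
The plan is to reduce the statement to a differential inequality for the averaged Poon frequency
\[
N_u(r) := \frac{I_u(r)}{H_u(r)},
\]
and to treat separately the regions $\{H_u>\mu\}$ and $\{H_u\leq \mu\}$. First I would rewrite $\Phi_u$ explicitly. On $\{H_u>\mu\}$, Proposition~\ref{prop:diff-form-signor} gives
\[
\frac{r H_u'(r)}{2 H_u(r)} = 2 N_u(r) - \frac{2}{r^2 H_u(r)}\int_{S_r^+} t u f\, G,
\]
so $\Phi_u(r) = (2N_u(r)+R_1(r))\,e^{Cr^\sigma}+2(e^{Cr^\sigma}-1)$, with a remainder $R_1$ coming from the $f$-term. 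On $\{H_u\leq\mu\}$ one has $\Phi_u(r)=(r\mu'(r)/(2\mu(r)))\,e^{Cr^\sigma}+2(e^{Cr^\sigma}-1)$; the hypothesis that $\mu$ is log-convex in $\log r$ means $r\mu'/\mu$ is nondecreasing, and combined with monotonicity of $e^{Cr^\sigma}$ and $e^{Cr^\sigma}-1$, $\Phi_u$ is manifestly nondecreasing there.

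The heart of the proof is the inequality $N_u'(r)\geq -Cr^{\sigma-1}(1+N_u(r))$ a.e.\ on $\{H_u>\mu\}$. To obtain it, I would use the identity from Step~1 of the proof of Lemma~\ref{lem:differentiation-formulae-smooth} combined with the Signorini complementarity $u u_{x_n}=0$ on $S_1'$, which gives
\[
r^2 I_u(r)=\tfrac{1}{2}\int_{S_r^+} u(Zu)\, G + \int_{S_r^+} t u f\, G.
\]
Cauchy--Schwarz then yields
\[
\Bigl(\int_{S_r^+} u(Zu)\,G\Bigr)^2\leq r^2 H_u(r)\int_{S_r^+}(Zu)^2\, G,
\]
while Proposition~\ref{prop:diff-form-signor} provides $r^3 I_u'(r)\geq \int_{S_r^+}(Zu)^2\, G+2\int_{S_r^+}t(Zu)f\,G$. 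Setting $f\equiv 0$ in these three relations, a short computation of $(\log N_u)'$ recovers the classical Almgren--Poon monotonicity $N_u'\geq 0$; the general case amounts to keeping track of the $f$-terms.

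The main obstacle is to absorb the error produced by $f$ into the exponential factor $e^{Cr^\sigma}$. The hypothesis $\mu(r)\geq C_\mu r^{4-2\sigma}\int_{\R^n} f^2(\cdot,-r^2)G(\cdot,-r^2)\,dx$, together with monotonicity of $\mu$, yields after integrating in $t$ a weighted Gaussian bound of the form
\[
\int_{S_r^+} |t|^{2-2\sigma} f^2\, G \leq \frac{C\, r^2\, \mu(r)}{C_\mu}.
\]
Applying Cauchy--Schwarz to $\int_{S_r^+} tufG$ and $\int_{S_r^+} t(Zu)fG$ (using $|t|\leq r^2$), and invoking $H_u\geq \mu$ together with the weighted estimates of Lemma~\ref{lem:w212} to control $\int u^2 G$ and $\int |t||\nabla u|^2 G$, each remainder picks up a factor $r^{\sigma-1}$ relative to the main terms. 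Assembling the pieces produces the announced inequality $N_u'\geq -Cr^{\sigma-1}(1+N_u)$; multiplying by the integrating factor $e^{Cr^\sigma}$ (with $C$ large enough depending on $\sigma$, $C_\mu$ and $n$) gives monotonicity of $(2N_u+2)e^{Cr^\sigma}$, which up to the absorbed errors equals $\Phi_u(r)+2$, so that $\Phi_u$ is nondecreasing on $\{H_u>\mu\}$.

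Finally, to patch the two regions I would check the behaviour at crossover points $r_0$ where $H_u(r_0)=\mu(r_0)$. Since $\max\{H_u,\mu\}$ is absolutely continuous (both are, by Proposition~\ref{prop:diff-form-signor} and the log-convexity of $\mu$) and its derivative coincides a.e.\ with the larger of the two derivatives on each open sub-interval, the only jumps of $\Phi_u$ at crossover points are upward, which is compatible with global monotonicity. The delicate technical point, as signalled above, is to make the error estimates sharp enough that the exponent $\sigma$ appearing in $\Phi_u$ is precisely the one produced by the hypothesis on $\mu$; this requires combining the weighted Gaussian $L^2$-estimates of Section~\ref{sec:estimates-w2-1_2s_1+} with the averaged (rather than pointwise in $t$) nature of $H_u$ and $I_u$.
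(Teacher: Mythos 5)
Your proposal is correct and follows essentially the same route as the paper: the same splitting into $\{H_u>\mu\}$ and $\{H_u\leq\mu\}$ with the upward-jump argument at crossover points, the same Cauchy--Schwarz comparison of $\int u(Zu)G$ against $\int(Zu)^2G$ and $\int u^2G$ via Proposition~\ref{prop:diff-form-signor}, and the same absorption of the $f$-errors using $H_u\geq\mu$ and the hypothesis on $\mu$ (the paper phrases this as estimating $E_1=I_u/H_u$ and $E_2=-r^{-2}\int tufG/H_u$ separately rather than as a single inequality for $N_u$). The one point your sketch leaves implicit is that differentiating the remainder $R_1$ produces the slice term $\int_{\R^n_+}u^2(\cdot,-r^2)G(\cdot,-r^2)$, which the paper controls by the identity $\int_{\R^n_+}u^2(\cdot,-r^2)G(\cdot,-r^2)=H_u(r)+\tfrac r2H_u'(r)$ and the resulting inequality $1+\tfrac r2 H_u'/H_u\geq0$ --- exactly the source of the $(1+N_u)$ factor in your differential inequality.
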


Note that on the open set where $H_u(r)>\mu(r)$ we have $\Phi_u(r)\sim
\frac12 r H_u'(r)/H_u(r) $ which coincides with $2N_u$, when
$f=0$. The purpose of the ``truncation'' of $H_u(r)$ with $\mu(r)$ is
to control the error terms in computations that appear from the
right-hand-side $f$.

\begin{proof}[Proof of Theorem~\ref{thm:thin-monotonicity}]
  First, we want to make a remark on the definition of $r\mapsto
  \Phi_u(r)$, for $r\in (0,1)$. The functions $H_u(r)$ and $\mu(r)$
  are absolutely continuous and therefore so is $\max\{H_u(r),
  \mu(r)\}$. It follows that $\Phi_u$ is uniquely identified only up
  to a set of measure zero. The monotonicity of $\Phi_u$ should be
  understood in the sense that there exists a monotone increasing
  function which equals $\Phi_u$ almost everywhere. Therefore, without
  loss of generality we may assume that
  $$
  \Phi_u(r)=\frac12re^{Cr^\sigma}\frac{\mu'(r)}{\mu(r)}+2(e^{Cr^\sigma}-1)
  $$
  on $\mathcal{F}=\{H_u(r)\leq \mu(r)\}$ and
  $$
  \Phi_u(r)=\frac12re^{Cr^\sigma}\frac{H_u'(r)}{H_u(r)}+2(e^{Cr^\sigma}-1)
  $$
  in $\mathcal{O}=\{H_u(r)>\mu(r)\}$. Following an idea introduced in
  \cites{GL1,GL2} we now note that it will be enough to check that
  $\Phi_u'(r)>0$ in $\mathcal{O}$. Indeed, from the assumption on
  $\mu$, it is clear that that $\Phi_u$ is monotone on
  $\mathcal{F}$. Next, if $(r_0,r_1)$ is a maximal open interval in
  $\mathcal{O}$, then $H_u(r_0)=\mu(r_0)$ and $H_u(r_1)=\mu(r_1)$
  unless $r_1=1$. Besides, if $\Phi_u$ is monotone in $(r_0,r_1)$, it
  is easy to see that the limits $H_u'(r_0+)$ and $H_u'(r_1-)$ will
  exist and satisfy
$$
\mu'(r_0+)\leq H_u'(r_0+),\quad H_u'(r_1-)\leq \mu'(r_1-)\quad
(\text{unless }r_1=1)
$$
and therefore we will have
  $$
  \Phi_u(r_0)\leq \Phi_u(r_0+)\leq \Phi_u(r_1-)\leq \Phi_u(r_1),
  $$
  with the latter inequality holding when $r_1<1$. This will imply the
  monotonicity of $\Phi_u$ in $(0,1)$.

  Therefore, we will concentrate only on the set
  $\mathcal{O}=\{H_u(r)>\mu(r)\}$, where the monotonicity of
  $\Phi_u(r)$ is equivalent to that of
  \[
  \Big(r\frac{H_u'(r)}{H_u(r)}+4\Big) e^{C r^{\sigma}}=2\Phi_u(r)+4.
  \]
  The latter will follow, once we show that
  \[
  \frac{d}{dr}\Big(r\frac{H_u'(r)}{H_u(r)}\Big)\geq-C\Big(r\frac{H_u'(r)}{H_u(r)}+4\Big)
  r^{-1+\sigma}
  \]
  in $\mathcal{O}$.  Now, from Proposition~\ref{prop:diff-form-signor}
  we have
  \begin{align*}
    r\frac{H_u'(r)}{H_u(r)} &=
    4\frac{I_u(r)}{H_u(r)}-\frac{4}{r^2}\frac{\int_{S_r^+} t u f
      G}{H_u(r)}:=4E_1(r)+4 E_2(r).
  \end{align*}
  We then estimate the derivatives of each of the quantities $E_i(r)$,
  $i=1,2$.

  \setcounter{step}{0} \step{1-mon-form} Using the differentiation
  formulas in Proposition~\ref{prop:diff-form-signor}, we compute
  \begin{align*}
    r^5 H_u^2(r) E_1'(r)&=r^5 H_u^2(r)\frac{d}{dr}\Big(\frac{I_u(r)}{H_u(r)}\Big)\\
    &= r^5(I_u'(r) H_u(r)-I_u(r)H_u'(r))\\
    &\geq r^2 H_u(r)\Big(\int_{S_r^+}(Zu)^2 G+2\int_{S_r^+} t (Zu)
    f G\Big)\\
    &\qquad-r^2 I_u(r) r^3 H_u'(r)\\
    &= r^2 H_u(r)\Big(\int_{S_r^+}(Zu+t f)^2 G-\int_{S_r^+} t^2
    f^2 G\Big)\\&\qquad -\Big(\int_{S_r^+}t u f G+\frac{r^3}{4}H_u'(r)\Big)r^3 H_u'(r)\\
    &=\int_{S_r^+} u^2 G\Big(\int_{S_r^+}(Zu+t f)^2 G-\int_{S_r^+} t^2
    f^2 G\Big)\\
    &\qquad-\Big(\frac{r^3}{2}H_u'(r)+\int_{S_r^+} t u f G\Big)^2+\Big(\int_{S_r^+} t u f G\Big)^2\\
    &=\Big[\int_{S_r^+} u^2 G\int_{S_r^+}(Zu+t f)^2
    G-\Big(\int_{S_r^+} u(Zu+t f) G\Big)^2\Big]\\&\qquad-\int_{S_r^+}
    u^2 G\int_{S_r^+} t^2 f^2 G+\Big(\int_{S_r^+} t u f G\Big)^2.
  \end{align*}
  Applying the Cauchy-Schwarz inequality we obtain that the term in
  square brackets above is nonnegative. Therefore,
  \begin{align*}
    r^5 H_u^2(r) E_1'(r) &\geq-\int_{S_r^+} u^2 G\int_{S_r^+} t^2 f^2 G\\
    \intertext{or equivalently,} E_1'(r) &\geq-\frac{1}{r^{3}
      H_u(r)}\int_{S_r^+} t^2 f^2 G.
  \end{align*}

  \step{2-mon-form} We next estimate the derivative of $E_2(r)$.
  \begin{align*}
    E_2'(r)&=\frac{d}{dr}\bigg(-\frac{1}{r^2}\frac{\int_{S_r^+} t u
      f G}{H_u(r)}\bigg)\\
    &=\frac{2}{r^3}\frac{\int_{S_r^+} t u f G}{H_u(r)}-\frac{2}{r}\frac{\int_{\R^n_+}(-r^2) u(\cdot,-r^2)f(\cdot,-r^2) G(\cdot,-r^2)}{H_u(r)}\\
    &\qquad+\frac{1}{r^2}\frac{H_u'(r)\int_{S_r^+} t u f G}{H_u(r)^2}\\
    &\geq-\frac{2}{r^3 H_u(r)}\Big(\int_{S_r^+} u^2 G\Big)^{1/2}\Big(\int_{S_r^+} t^2 f^2 G\Big)^{1/2}\\
    &\qquad-\frac{2}{r H_u(r)}\Big(\int_{\R^n_+}u^2(\cdot,-r^2) G(\cdot,-r^2)\Big)^{1/2}\Big(r^4\int_{\R^n_+}f^2(\cdot,-r^2) G(\cdot,-r^2)\Big)^{1/2}\\
    &\qquad-r\frac{H_u'(r)}{H_u(r)}\frac{1}{r^3 H_u(r)}\Big(\int_{S_r^+} u^2 G\Big)^{1/2}\Big(\int_{S_r^+} t^2 f^2 G\Big)^{1/2}\\
    &=-\frac{2}{r^2H_u(r)^{1/2}}\Big(\int_{S_r^+} t^2 f^2 G\Big)^{1/2}\\
    &\qquad-\frac{2}{r H_u(r)}\Big(\int_{\R^n_+}u^2(\cdot,-r^2) G(\cdot,-r^2)\Big)^{1/2}\Big(r^4\int_{\R^n_+}f^2(\cdot,-r^2) G(\cdot,-r^2)\Big)^{1/2}\\
    &\qquad-r\frac{H_u'(r)}{H_u(r)}\frac{1}{r^2H_u(r)^{1/2}}\Big(\int_{S_r^+}
    t^2 f^2 G\Big)^{1/2}.
  \end{align*}

  \step{3-mon-form} Combining together the estimates for $E_1'(r)$ and
  $E_2'(r)$, we have
  \begin{align*}
    \frac{d}{dr}\Big(r\frac{H_u'(r)}{H_u(r)}\Big) &\geq-\frac{4}{r^3 H_u(r)}\int_{S_r^+} t^2 f^2 G-\frac{8}{r^2H_u(r)^{1/2}}\Big(\int_{S_r^+} t^2 f^2 G\Big)^{1/2}\\
    &\quad-\frac{8}{r H_u(r)}\Big(\int_{\R^n_+}u^2(\cdot,-r^2) G(\cdot,-r^2)\Big)^{1/2}\Big(r^4\int_{\R^n_+}f^2(\cdot,-r^2) G(\cdot,-r^2)\Big)^{1/2}\\
    &\quad-r\frac{H_u'(r)}{H_u(r)}\frac{4}{r^2H_u(r)^{1/2}}\Big(\int_{S_r^+}
    t^2 f^2 G\Big)^{1/2}.
  \end{align*}
  We estimate the third term separately. First, from
  \[
  \frac{d}{dr}\int_{S_r^+} u^2 G = 2r\int_{\R^n_+} u^2(\cdot,-r^2)
  G(\cdot,-r^2)\,dx
  \]
  we have
  \begin{align*}
    \int_{\R^n_+} u^2(\cdot,-r^2) G(\cdot,-r^2)&=\frac{1}{2r}\frac{d}{dr}(r^2 H_u(r))\\
    &= H_u(r)+\frac{r}{2} H_u'(r).
  \end{align*}
  From here we see that
  \[
  1+\frac{r}2\frac{H_u'(r)}{H_u(r)}\geq 0
  \]
  and therefore we also have
  \begin{align*}
    2\Big(\int_{\R^n_+} u^2(\cdot,-r^2) G(\cdot,-r^2)\Big)^{1/2} &= 2H_u(r)^{1/2}\left(1+\frac{r}{2}\frac{H_u'(r)}{H_u(r)}\right)^{1/2}\\
    &\leq
    H_u(r)^{1/2}\left(2+\frac{r}{2}\frac{H_u'(r)}{H_u(r)}\right).
  \end{align*}
  Substituting into the inequality above, we then have
  \begin{align*}
    \frac{d}{dr}\Big(r\frac{H_u'(r)}{H_u(r)}\Big) &\geq-\frac{4}{r^3 H_u(r)}\int_{S_r^+} t^2 f^2 G-\frac{8}{r^2H_u(r)^{1/2}}\Big(\int_{S_r^+} t^2 f^2 G\Big)^{1/2}\\
    &\quad-\Big(2+\frac{r}{2}\frac{H_u'(r)}{H_u(r)}\Big)\frac{4}{rH_u(r)^{1/2}}\Big(r^4\int_{\R^n_+}f^2(\cdot,-r^2) G(\cdot,-r^2)\Big)^{1/2}\\
    &\quad-r\frac{H_u'(r)}{H_u(r)}\frac{4}{r^2H_u(r)^{1/2}}\Big(\int_{S_r^+}
    t^2 f^2 G\Big)^{1/2}.
  \end{align*}
  On the set $\{H_u(r) >\mu(r)\}$, we easily have
  \begin{align*}
    H_u(r)&\geq C_\mu r^{4-2\sigma}\int_{\R^n_+} f^2(\cdot,-r^2) G(\cdot,-r^2),\\
    H_u(r)&\geq C_\mu r^{-2-2\sigma}\int_{S_r^+} t^2 f^2 G,
  \end{align*}
  and consequently,
  \begin{align*}
    \frac{d}{dr}\Big(r\frac{H_u'(r)}{H_u(r)}\Big) &\geq-C r^{-1+2\sigma}-C r^{-1+\sigma}-\Big(2+\frac{r}{2}\frac{H'_u(r)}{H_u(r)}\Big) C r^{-1+\sigma}-C r\frac{H_u'(r)}{H_u(r)} r^{-1+\sigma}\\
    &\geq-C\Big(r\frac{H_u'(r)}{H_u(r)}+4\Big) r^{-1+\sigma}.
  \end{align*}
  Note that in the last step we have again used the fact that
  $1+\frac{r}2 \frac{H_u'(r)}{H_u(r)}\geq 0$.  The desired conclusion
  follows readily.
\end{proof}

\section{Existence and homogeneity of blowups}
\label{sec:exist-homog-blow}
In this section, we show how the generalized frequency formula in
Theorem~\ref{thm:thin-monotonicity} can be used to study the behavior
of the solution $u$ near the origin. The central idea is to consider
some appropriately normalized rescalings of $u$, indicated with $u_r$
(see Definition~\ref{def:resc-H}), and then pass to the limit as $r\to
0+$ (see Theorem~\ref{thm:exist-homogen-blowups}). The resulting
limiting functions (over sequences $r=r_j\to 0+$) are known as
\emph{blowups}.  However, because of the truncation term $\mu(r)$ in
the generalized frequency function $\Phi_u(r)$, we can show the
existence of blowups only when the growth rate of $u$ can be
``detected,'' in a certain proper sense to be made precise
below. Finally, as a consequence of the monotonicity of $\Phi_u(r)$,
we obtain that the blowups must be parabolically homogeneous solutions
of the Signorini problem in $S_\infty=\R^n\times(-\infty,0]$.

Henceforth, we assume that $u\in \S^f(S_1^+)$, and that $\mu(r)$ be
such that the conditions of Theorem~\ref{thm:thin-monotonicity} are
satisfied. In particular, we assume that
\begin{equation*}
  r^{4}\int_{\R^n} f^2(\cdot,-r^2) G(\cdot,-r^2)\,dx \le \frac{r^{2\sigma} \mu(r)}{C_\mu}.
\end{equation*}
Consequently, Theorem~\ref{thm:thin-monotonicity} implies that the
function
\[
\Phi_u(r) = \frac12 r e^{C
  r^{\sigma}}\frac{d}{dr}\log\max\{H_u(r),\mu(r)\}+2(e^{C
  r^{\sigma}}-1)
\]
is nondecreasing for $r\in (0,1)$. Hence, there exists the limit
\begin{equation}\label{eq:kappa}
  \kappa:=\Phi_u(0+)=\lim_{r\to 0+}\Phi_u(r).
\end{equation}
Since we assume that $r\mu'(r)/\mu(r)$ is nondecreasing, the limit
\begin{equation}\label{eq:kappamu}
  \kappa_\mu:=\frac12\lim_{r\to 0+}\frac{r\mu'(r)}{\mu(r)}
\end{equation}
also exists. We then have the following basic proposition concerning
the values of $\kappa$ and $\kappa_\mu$.

\begin{lemma}\label{lem:Hu-mu} Let $u\in \S^f(S_1^+)$ and $\mu$ satisfy the
  conditions of Theorem~\ref{thm:thin-monotonicity}. With $\kappa$,
  $\kappa_\mu$ as above, we have
  \begin{equation*}
    \kappa\leq \kappa_\mu.
  \end{equation*}
  Moreover, if $\kappa<\kappa_\mu$, then there exists $r_u>0$ such
  that $H_u(r)\geq \mu(r)$ for $0<r\leq r_u$. In particular,
$$
\kappa=\frac12\lim_{r\to 0+}\frac{r H'_u(r)}{H_u(r)}=2\lim_{r\to
  0+}\frac{I_u(r)}{H_u(r)}.
$$
\end{lemma}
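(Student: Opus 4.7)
The strategy is to dichotomize on whether the set $\mathcal{F}:=\{r\in(0,1):H_u(r)\le\mu(r)\}$ accumulates at $0$, and in each case to combine the monotonicity of $\Phi_u$ from Theorem~\ref{thm:thin-monotonicity} with the observation that the log-convexity hypothesis gives $\tfrac12 r\mu'(r)/\mu(r)\downarrow\kappa_\mu$ as $r\to 0^+$; in particular $\tfrac12 r\mu'(r)/\mu(r)\ge\kappa_\mu$ for every $r\in(0,1)$.

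If $\mathcal{F}$ accumulates at $0$, I would pick a sequence $r_j\to 0^+$ of regular points in $\mathcal{F}$ where the convention adopted in the proof of Theorem~\ref{thm:thin-monotonicity} gives $\Phi_u(r_j)=\tfrac12 r_j e^{Cr_j^\sigma}\mu'(r_j)/\mu(r_j)+2(e^{Cr_j^\sigma}-1)$. The right-hand side tends to $\kappa_\mu$, while $\Phi_u(r_j)\to\kappa$ by the very definition $\kappa=\Phi_u(0^+)$, so $\kappa=\kappa_\mu$. This proves the first assertion in this case, and simultaneously shows that the hypothesis $\kappa<\kappa_\mu$ rules this alternative out, thereby yielding the $r_u$ asserted in the second statement.

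If $\mathcal{F}$ does \emph{not} accumulate at $0$, there is $r_0>0$ with $H_u(r)>\mu(r)$ for $0<r<r_0$, and on this interval $\Phi_u(r)=\tfrac12 re^{Cr^\sigma}H_u'(r)/H_u(r)+2(e^{Cr^\sigma}-1)$, which immediately gives the first ``in particular'' identity $\kappa=\tfrac12\lim_{r\to 0^+}rH_u'(r)/H_u(r)$. To close the inequality $\kappa\le\kappa_\mu$ in this case, I would argue by contradiction: if $\kappa>\kappa_\mu$, pick $\varepsilon>0$ with $\kappa-\varepsilon>\kappa_\mu+\varepsilon$, so that for sufficiently small $r$, $\tfrac12 rH_u'(r)/H_u(r)\ge\kappa-\varepsilon$ while $\tfrac12 r\mu'(r)/\mu(r)\le\kappa_\mu+\varepsilon$ (the latter by the limit $\tfrac12 r\mu'/\mu\to\kappa_\mu$). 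Then $\tfrac{d}{dr}\log(H_u/\mu)\ge 2\varepsilon/r$ a.e.\ on a small interval, and integrating from $r$ up to some fixed $r_1<r_0$ gives $\log(H_u(r)/\mu(r))\le\log(H_u(r_1)/\mu(r_1))-2\varepsilon\log(r_1/r)\to-\infty$, contradicting $H_u(r)>\mu(r)$.

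For the remaining identity $\kappa=2\lim I_u/H_u$, I would substitute the formula $H_u'(r)=(4/r)I_u(r)-(4/r^3)\int_{S_r^+}tuf\,G$ from Proposition~\ref{prop:diff-form-signor} into $\tfrac12 rH_u'(r)/H_u(r)$, and estimate the resulting error term $\tfrac{2}{r^2 H_u(r)}\int_{S_r^+}tuf\,G$ by Cauchy--Schwarz. On the set $\{H_u>\mu\}$, where we now sit for $r<r_u$, the hypothesis on $\mu$ yields $\int_{S_r^+}t^2f^2\,G\le C_\mu^{-1}r^{2+2\sigma}H_u(r)$, exactly as in the last step of the proof of Theorem~\ref{thm:thin-monotonicity}; the error is then $O(r^\sigma)$ and vanishes as $r\to 0^+$. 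The main technical point to be careful about is the logarithmic integration in the non-accumulation case: one must verify that the a.e.\ pointwise representations of $\Phi_u$ deliver pointwise bounds of the form $\tfrac12 rH_u'(r)/H_u(r)\ge\kappa-\varepsilon$ on a set of full measure near $0$, which is where the monotonicity of $\Phi_u$ enters decisively.
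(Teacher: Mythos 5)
Your proposal is correct and follows essentially the same route as the paper: the accumulation dichotomy on $\{H_u\le\mu\}$ is the paper's implication \eqref{eq:Hmu} in contrapositive form, the exclusion of $\kappa>\kappa_\mu$ by integrating the logarithmic derivatives is the paper's final step (carried out there by integrating the two inequalities for $rH_u'/H_u$ and $r\mu'/\mu$ separately rather than their difference), and the Cauchy--Schwarz estimate for the identity $\kappa=2\lim_{r\to0+}I_u(r)/H_u(r)$ is identical. The only point to tidy is that the dichotomy should really be taken on the open set $\{H_u<\mu\}$ (as the paper implicitly does by negating ``$H_u\ge\mu$ near $0$''), so that the chosen $r_j$ can be perturbed to nearby points where $\mu'$ exists and the a.e.\ representation of $\Phi_u$ by the $\mu$-formula is valid.
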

\begin{proof}
  As a first step we show that
  \begin{equation}\label{eq:Hmu}
    \kappa\not= \kappa_\mu\quad\Rightarrow\quad  \text{there exists}\ r_u>0\
    \text{such that}\ H_u(r)\geq \mu(r)\text{ for } 0<r\leq
    r_u.
  \end{equation}
  Indeed, if the implication claimed in \eqref{eq:Hmu} fails, then for
  a sequence $r_j\to 0+$ we have $H_{u}(r_j)< \mu(r_j)$. This implies
  that
$$
\Phi_u(r_j)=\frac12
r_je^{C{r_j^\sigma}}\frac{\mu'(r_j)}{\mu(r_j)}+2(e^{Cr_j^\sigma}-1),
$$
and therefore
$$
\kappa=\lim_{j\to\infty} \Phi_u(r_j)=\frac12\lim_{j\to
  \infty}r_j\frac{\mu'(r_j)}{\mu(r_j)}=\kappa_\mu,
$$
which contradicts $\kappa\not= \kappa_\mu$. We have thus proved
\eqref{eq:Hmu}.

Now, \eqref{eq:Hmu} implies that, if $\kappa\not= \kappa_\mu$, then
$$
\Phi_u(r)=\frac12
re^{C{r^\sigma}}\frac{H_u'(r)}{H_u(r)}+2(e^{Cr^\sigma}-1),\quad
0<r<r_u.
$$
Passing to the limit, we conclude that, if $\kappa\not= \kappa_\mu$,
then
\begin{equation}\label{eq:firstlim}
  \kappa=\lim_{r\to 0+} \Phi_u(r)=\frac12\lim_{r\to 0+} \frac{r H'_u(r)}{H_u(r)}.
\end{equation}
However, in this case we also have
\begin{equation}\label{eq:secondlim}
  \frac12\lim_{r\to 0+}\frac{r H'_u(r)}{H_u(r)}=2\lim_{r\to
    0+}\frac{I_u(r)}{H_u(r)}.
\end{equation}
Indeed, recall that (see Proposition~\ref{prop:diff-form-signor})
$$
r\frac{H'_u(r)}{H_u(r)}=4\frac{I_u(r)}{H_u(r)}-\frac{4}{r^2}\frac{\int_{S_r^+}
  tuf G}{H_u(r)},
$$
and \eqref{eq:secondlim} will follow once we show that
$$
\lim_{r\to 0+}\frac{1}{r^2}\frac{\int_{S_r^+} tuf G}{H_u(r)}=0.
$$
Using the Cauchy-Schwarz inequality, we have
\begin{align*}
  \frac{\int_{S_r^+} tuf G}{r^2H_u(r)}&\leq\frac{\big(\int_{S_r^+}
    t^2f^2 G\big)^{1/2}\big(\int_{S_r^+}u^2
    G\big)^{1/2}}{r^2H_u(r)}=\frac{\big(\int_{S_r^+}
    t^2f^2 G\big)^{1/2}}{rH_u(r)^{1/2}}\\
  &\leq \Big(\frac{\mu(r)}{C_\mu H_u(r)}\Big)^{1/2}r^\sigma\leq
  C^{-1}_\mu r^\sigma\to 0,
\end{align*}
where in the last inequality before the limit we have used
\eqref{eq:Hmu}.  Summarizing, the assumption $\kappa\not=\kappa_\mu$
implies \eqref{eq:firstlim}--\eqref{eq:secondlim} above. Therefore,
the proof will be completed if we show that the case
$\kappa>\kappa_\mu$ is impossible.

So, assume towards a contradiction that $\kappa>\kappa_\mu$, and fix
$0<\epsilon <\kappa - \kappa_\mu$. For such $\epsilon$ choose
$r_\epsilon>0$ so that
$$
\frac{r H_u'(r)}{H_u(r)}> 2\kappa-\epsilon,\quad \frac{r
  \mu'(r)}{\mu(r)}<2\kappa_\mu+\epsilon,\quad 0<r<r_\epsilon.
$$
Integrating these inequalities from $r$ to $r_\epsilon$, we obtain
$$
H_u(r)\leq \frac{H_u(r_\epsilon)}{r_\epsilon^{2\kappa-\epsilon}}
r^{2\kappa-\epsilon},\quad \mu(r)\geq
\frac{\mu(r_\epsilon)}{r_\epsilon^{2\kappa_\mu+\epsilon}}r^{2\kappa_\mu+\epsilon}.
$$
Since by our choice of $\epsilon>0$ we have
$2\kappa-\epsilon>2\kappa_\mu+\epsilon$, the above inequalities imply
that $H_u(r)<\mu(r)$ for small enough $r$, contrary to the established
conclusion of \eqref{eq:Hmu} above. Hence, the case
$\kappa>\kappa_\mu$ is impossible, which implies that we always have
$\kappa\leq \kappa_\mu$.
\end{proof}

To proceed, we define the appropriate notion of rescalings that works
well with the generalized frequency formula.

\begin{definition}[Rescalings]\label{def:resc-H} For $u\in\S^f(S_1^+)$
  and $r>0$ define the \emph{rescalings}
$$
u_r(x,t):=\frac{u(rx, r^2 t)}{H_u(r)^{1/2}},\quad (x,t)\in
S_{1/r}^+=\R^n_ +\times(-1/r^2,0].
$$
\end{definition}
It is easy to see that the function $u_r$ solves the nonhomogeneous
Signorini problem
\begin{align*}
  \Delta u_r-\partial_t u_r = f_r(x,t)&\quad\text{in } S_{1/r}^+,
  \\
  u_r\geq 0,\quad-\partial_{x_n}u_r\geq 0,\quad u_r\partial_{x_n} u_r
  = 0&\quad\text{on } S'_{1/r},
\end{align*}
with
$$
f_r(x,t)=\frac{r^2f(rx,r^2t)}{H_u(r)^{1/2}}.
$$
In other words, $u_r\in \S^{f_r}(S_{1/r}^+)$. Further, note that $u_r$
is normalized by the condition
$$
H_{u_r}(1)=1,
$$
and that, more generally, we have
$$
H_{u_r}(\rho)=\frac{H_u(\rho r)}{H_u(r)}.
$$
We next show that, unless we are in the borderline case
$\kappa=\kappa_\mu$, we will be able to study the so-called blowups of
$u$ at the origin. The condition $\kappa<\kappa_\mu$ below can be
understood, in a sense, that we can ``detect'' the growth of $u$ near
the origin.

\begin{theorem}[Existence and homogeneity of blowups]
  \label{thm:exist-homogen-blowups}
  Let $u\in \S^f(S_1^+)$, $\mu$ satisfy the conditions of
  Theorem~\ref{thm:thin-monotonicity}, and
$$
\kappa:=\Phi_u(0+)< \kappa_\mu= \frac12\lim_{r\to
  0+}r\frac{\mu'(r)}{\mu(r)}.
$$
Then, we have:
\begin{enumerate}[label=\textup{\roman*)}]
\item For any $R>0$, there is $r_{R,u}>0$ such that
$$
\int_{S_R^+}(u_r^2+|t||\nabla u_r|^2+|t|^2|D^2u_r|^2+|t|^2 (\partial_t
u_r)^2) G\leq C(R),\quad 0<r<r_{R,u}.
$$
\item There is a sequence $r_j\to 0+$, and a function $u_0$ in
  $S^+_\infty=\R^n_+\times(-\infty, 0]$, such that
$$
\int_{S_R^+}(|u_{r_j}-u_0|^2+|t||\nabla (u_{r_j}-u_0)|^2) G \to 0.
$$
We call any such $u_0$ a \emph{blowup} of $u$ at the origin.
\item $u_0$ is a nonzero \emph{global solution} of Signorini problem:
  \begin{align*}
    \Delta u_0-\partial_t u_0=0&\quad\text{in }S_\infty^+\\
    u_0\geq 0,\quad -\partial_{x_n}u_0\geq 0,\quad
    u_0\partial_{x_n}u_0=0&\quad\text{on }S_\infty',
  \end{align*}
  in the sense that it solves the Signorini problem in every $Q_R^+$.
\item $u_0$ is parabolically homogeneous of degree $\kappa$:
$$
u_0(\lambda x,\lambda^2t)=\lambda^\kappa u_0(x,t),\quad (x,t)\in
S_\infty^+,\ \lambda>0
$$
\end{enumerate}
\end{theorem}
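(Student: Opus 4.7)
My plan is to use the monotonicity of $\Phi_u$ from Theorem~\ref{thm:thin-monotonicity} together with the weighted estimates of Lemmas~\ref{lem:w212}--\ref{lem:w112-diff} to obtain uniform $W^{2,1}_2$-type bounds on the rescalings $u_r$ in the Gaussian-weighted setting, extract a subsequential limit $u_0$ by compactness, verify the Signorini boundary conditions and the heat equation in the limit, and finally convert the constancy of the limiting frequency into parabolic $\kappa$-homogeneity via the Cauchy--Schwarz equality case in the proof of Theorem~\ref{thm:thin-monotonicity}.

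For \emph{(i)}, the assumption $\kappa<\kappa_\mu$ together with Lemma~\ref{lem:Hu-mu} guarantees $H_u(\rho)\ge\mu(\rho)$ on some interval $(0,r_u)$, so the truncation in $\Phi_u$ is inactive there and $\Phi_u(\rho)=\tfrac12\rho e^{C\rho^\sigma}H_u'(\rho)/H_u(\rho)+O(\rho^\sigma)$. Combined with $\Phi_u\downarrow \kappa$, this gives a uniform two-sided bound $0\le \rho H_u'(\rho)/H_u(\rho)\le K$ on $(0,r_u)$, and integration from $r$ to $Rr$ yields $H_{u_r}(\rho)=H_u(\rho r)/H_u(r)\le C(R)$ for $\rho\le R$ when $r\le r_u/R$, controlling $\int_{S_R^+}u_r^2\,G$. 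A change of variables in the hypothesis on $\mu$ together with $H_u\ge\mu$ shows
\[
\int_{S_R^+}|t|^2 f_r^2\,G\le C(R)\,r^{2\sigma}\,\frac{\mu(Rr)}{\mu(r)}\longrightarrow 0,
\]
so a parabolically rescaled application of Lemma~\ref{lem:w212} on $S_{2R}^+$ supplies the remaining $|t||\nabla u_r|^2$, $|t|^2|D^2 u_r|^2$ and $|t|^2|\partial_t u_r|^2$ bounds.

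For \emph{(ii)} I would extract a weak subsequential limit $u_0$ from the $W^{2,1}_{2,\mathrm{loc}}(S_\infty^+;G)$ bound in (i), upgrade to strong $L_2(S_R^+,G)$-convergence by a Gaussian Aubin--Lions argument, and then apply Lemma~\ref{lem:w112-diff} to the pair $(u_{r_j},u_{r_k})$ on $S_{2R}^+$; its right-hand side vanishes along the subsequence thanks to the strong $L_2$-convergence of $u_{r_j}$ and the decay of $f_{r_j}$ from (i), so $|t|^{1/2}\nabla u_{r_j}$ is Cauchy in $L_2(S_R^+,G)$. For \emph{(iii)}, the one-sided constraints $u_{r_j}\ge 0$ and $-\partial_{x_n}u_{r_j}\ge 0$ on the thin space pass to the limit because of the trace regularity furnished by Lemma~\ref{lem:known-Halpha-2} and the strong convergence of $u_{r_j}$ and $\nabla u_{r_j}$ there; the complementarity $u_0\partial_{x_n}u_0=0$ passes because the product of strongly and weakly convergent sequences converges to the product of the limits. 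The equation $\Delta u_0-\partial_t u_0=0$ in $S_\infty^+$ follows from $f_{r_j}\to 0$ in $L_2(S_R^+,|t|^2 G)$, and $u_0\not\equiv 0$ from $H_{u_0}(1)=\lim_j H_{u_{r_j}}(1)=1$.

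For \emph{(iv)} I use the scaling identity $H_{u_r}(\rho)=H_u(\rho r)/H_u(r)$ to write $\Phi_{u_0}(\rho)=\lim_j\Phi_u(\rho r_j)$; since $\Phi_u$ is monotone with $\Phi_u(0+)=\kappa$, this limit equals $\kappa$ for every $\rho>0$, so $u_0$ has constant frequency $\kappa$. With $f=0$ the truncation in $\Phi_{u_0}$ is inactive, and revisiting the proof of Theorem~\ref{thm:thin-monotonicity} the inequality $E_1'(\rho)\ge 0$ came purely from the Cauchy--Schwarz bound
\[
\int_{S_\rho^+}u_0^2\,G\int_{S_\rho^+}(Zu_0)^2\,G\ \ge\ \Bigl(\int_{S_\rho^+}u_0\,Zu_0\,G\Bigr)^2;
\]
constancy of $\Phi_{u_0}$ forces equality here for every $\rho>0$, hence $Zu_0=c\,u_0$ a.e., and identifying $c$ from $I_{u_0}/H_{u_0}=\kappa/2$ together with the identity $i_{u_0}=\tfrac12\int u_0\,Zu_0\,G$ from \stepref{1-diff-form-smooth} of Lemma~\ref{lem:differentiation-formulae-smooth} yields $c=\kappa$. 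Integrating the Euler-type identity $Zu_0=\kappa u_0$ along the parabolic scaling $\lambda\mapsto u_0(\lambda x,\lambda^2 t)$ then produces the homogeneity $u_0(\lambda x,\lambda^2 t)=\lambda^\kappa u_0(x,t)$. I expect the main obstacle to be making this equality case rigorous: one must upgrade integrated equality in Cauchy--Schwarz to a distributional identity despite the low regularity of $u_0$ and the Signorini constraint, and this requires reproducing the penalization and extension scheme of Proposition~\ref{prop:diff-form-signor} while tracking equality throughout.
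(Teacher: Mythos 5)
Your overall strategy coincides with the paper's: parts (i) and (iv) are carried out essentially as you describe (the paper's Lemmas~\ref{lem:Hu-est} and \ref{lem:fr-est} are exactly your integration of $rH_u'/H_u$ and your change of variables in the bound on $f_r$, and the homogeneity in (iv) is obtained from constancy of $I_{u_0}/H_{u_0}$ and the equality case in Cauchy--Schwarz, just as you propose). There are, however, two places where your outline skips over what is actually the bulk of the work.

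First, in (ii) the phrase ``Gaussian Aubin--Lions argument'' hides the two genuine difficulties. The uniform bounds from (i) carry the weights $|t|$ and $|t|^2$, which degenerate as $t\to 0^-$, so no compactness of Aubin--Lions type is available near the top of the strip; the paper instead uses $\kappa\geq 0$ to get $rH_u'(r)/H_u(r)\geq -1$, hence $H_{u_r}(\delta)\leq \delta^{-1}$ and the uniform smallness $\int_{S_\delta^+}u_r^2G<\delta$. Likewise, strong convergence in $L_2(S_R^+,G)$ over the \emph{unbounded} spatial domain requires uniform control of the Gaussian tails $\int_{S_R^+\cap\{|x|\geq A\}}u_r^2G$, which the paper extracts from the log-Sobolev inequality (Lemma~\ref{lem:cor-log-Sob}) applied to $u_r(1-\zeta_A)$. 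Only after these two reductions does one get compactness on the bounded sets $B_A\times(-R^2,-\delta^2]$, where $G$ is bounded above and below, followed by a diagonal argument. Your proposal should supply both of these ingredients explicitly.

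Second, and more seriously, your treatment of (iii) only verifies the equation and the Signorini conditions on $B_R^+\times(-R^2,-\delta^2]$ for each $\delta>0$; it does not show that $u_0$ solves the problem \emph{in every $Q_R^+$}, i.e.\ up to $t=0$, which requires $u_0\in W^{2,1}_2(Q_R^+)\cap L_\infty(Q_R^+)$ in the \emph{unweighted} classes. Since all the a priori bounds from (i) degenerate at $t=0$, this does not follow from the limit procedure. The paper's proof devotes most of part (iii) to exactly this point: one shows that $u_0^\pm$ are subcaloric, establishes the sub-mean-value property $u_0^\pm(x,t)\leq\int_{\R^n}u_0^\pm(y,-1)G(x-y,-t-1)\,dy$ via the kernel estimates of Claim~\ref{clm:rho-est} (using the finiteness of $\int_{S_1}|\nabla u_0|^2G$ to kill the error from the cutoff at infinity), deduces $\|u_0\|_{L_\infty(Q_{1/2})}^2\leq C_nH_{u_0}(1)$, and only then applies the energy inequality and Lemma~\ref{lem:known-W22} to a localized $u_0$ to land in $W^{2,1}_2(Q_{1/8}^+)$. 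A related minor point: invoking Lemma~\ref{lem:known-Halpha-2} for ``trace regularity'' of the $u_{r_j}$ would require uniform $W^{1,0}_\infty$ bounds on the rescalings, which are not available at this stage; the paper passes to the limit in the constraints using only the uniform $W^{2,1}_2$ bounds on $B_R^+\times(-R^2,-\delta^2]$ (strong convergence of traces of $u_{r_j}$, weak convergence of $\partial_{x_n}u_{r_j}$), which suffices for the variational inequality.
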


The proof of Theorem \ref{thm:exist-homogen-blowups} is based on the
following lemmas.

\begin{lemma}\label{lem:Hu-est}If $\kappa<\kappa'<\kappa_\mu$ and $r_u>0$ is such that
  $\Phi_u(r)<\kappa'$ and $H_u(r)\geq \mu(r)$ for $0<r< r_u$, then
  \begin{align*}
    H_{u_r}(\rho)&\geq \rho^{2\kappa'}\quad\text{for any
    }0<\rho\leq 1,\ 0<r<r_u,\\
    H_{u_r}(R) &\leq R^{2\kappa'}\quad\text{for any }R\geq 1,\
    0<r<r_u/R.
  \end{align*}
\end{lemma}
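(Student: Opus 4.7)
The two inequalities are, after unwinding the definition $H_{u_r}(\rho)=H_u(\rho r)/H_u(r)$, lower and upper bounds on the ratio $H_u(\rho r)/H_u(r)$. The natural approach is to convert the hypothesis $\Phi_u(r)<\kappa'$ into a pointwise differential inequality for $\log H_u$, and then integrate.

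First, I would observe that since $H_u(s)\geq \mu(s)$ on $(0,r_u)$, the function $\Phi_u$ coincides on this interval with the ``smooth'' branch
\[
\Phi_u(s)=\tfrac12\, s\,e^{Cs^{\sigma}}\,\frac{H'_u(s)}{H_u(s)}+2(e^{Cs^{\sigma}}-1).
\]
Solving the inequality $\Phi_u(s)<\kappa'$ for $sH_u'(s)/H_u(s)$ gives
\[
s\,\frac{H'_u(s)}{H_u(s)}<(2\kappa'+4)\,e^{-Cs^{\sigma}}-4,\qquad 0<s<r_u.
\]
Since $\kappa'>\kappa\geq 0$ (so $2\kappa'+4>0$) and $e^{-Cs^{\sigma}}<1$ for $s>0$, the right-hand side is strictly less than $2\kappa'$ on $(0,r_u)$. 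Hence we obtain the clean bound $s H'_u(s)/H_u(s)<2\kappa'$ throughout $(0,r_u)$.

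Next I would integrate this inequality. For the first estimate, fix $0<\rho\leq 1$ and $0<r<r_u$; integrating from $\rho r$ to $r$ (an interval contained in $(0,r_u)$) yields
\[
\log\frac{H_u(r)}{H_u(\rho r)}<2\kappa'\log\frac{1}{\rho},
\]
i.e.\ $H_u(\rho r)/H_u(r)>\rho^{2\kappa'}$, which is the required lower bound on $H_{u_r}(\rho)$. For the second estimate, fix $R\geq 1$ and $0<r<r_u/R$; integrating the same differential inequality from $r$ to $Rr$ (still inside $(0,r_u)$ thanks to the restriction $r<r_u/R$) gives $H_u(Rr)/H_u(r)<R^{2\kappa'}$, which is the upper bound on $H_{u_r}(R)$.

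The argument is essentially mechanical once one unpacks $\Phi_u$ on the ``good'' branch, so there is no serious obstacle. The only point requiring care is verifying that both structural hypotheses of the lemma ($H_u\geq \mu$ and $\Phi_u<\kappa'$) persist throughout the interval over which we integrate; this is exactly why the second conclusion requires the stronger restriction $r<r_u/R$ on the rescaling parameter, whereas for the first conclusion the weaker $r<r_u$ suffices because the integration runs to smaller values of $s$.
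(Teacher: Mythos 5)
Your proof is correct and follows essentially the same route as the paper: on the set where $H_u\geq\mu$ one reads off the explicit branch of $\Phi_u$, deduces $sH_u'(s)/H_u(s)\leq 2\kappa'$ on $(0,r_u)$, and integrates over $[\rho r,r]$ and $[r,Rr]$ respectively. The paper states the implication $\Phi_u<\kappa'\Rightarrow H_u'/H_u\leq 2\kappa'/r$ without the intermediate algebra; you supply it (correctly, using $2\kappa'+4>0$ and $e^{-Cs^\sigma}<1$), which is the only difference.
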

\begin{proof} From the assumptions we have
$$
\Phi_u(r)=\frac12
re^{Cr^\sigma}\frac{H'_u(r)}{H_u(r)}+2(e^{Cr^\sigma}-1)\leq \kappa',
$$
for $0<r<r_u$, which implies that
$$
\frac{H_u'(r)}{H_u(r)}\leq \frac{2\kappa'}{r}.
$$
Integrating from $\rho r$ to $r$ and exponentiating, we find
$$
\frac{H_u(r)}{H_u(\rho r)}\leq \rho^{-2\kappa'},
$$
which implies that
$$
H_{u_r}(\rho)=\frac{H_u(\rho r)}{H_u(r)}\geq \rho^{2\kappa'}.
$$
Similarly, integrating from $r$ to $Rr$ (under the assumption that
$Rr\leq r_u$) we find
$$
H_{u_r}(R)=\frac{H_u(Rr)}{H_u(r)}\leq R^{2\kappa'}.
$$
\end{proof}

\begin{lemma}\label{lem:fr-est} Under the notations of the previous
  lemma, for any $R\geq 1$ and $0<r<r_{u}/R$, we have
$$
\int_{S_R^+} t^2 f_r^2 G\leq c_\mu R^{2+2\sigma+2\kappa'} r^{2\sigma}.
$$
\end{lemma}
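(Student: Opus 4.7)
The strategy is a direct change of variables, followed by the two ingredients we already have at our disposal: the hypothesis bounding $f^2$ by $\mu$, and the two-sided control of $H_u$ from Lemma~\ref{lem:Hu-est}. First I would undo the rescaling: with $y=rx$, $s=r^2 t$, and using $G(x,t) = r^n G(y,s)$ (which follows from $G(\lambda x,\lambda^2 t)=\lambda^{-n}G(x,t)$), the definition $f_r(x,t)= r^2 f(rx,r^2t)/H_u(r)^{1/2}$ gives
\[
\int_{S_R^+} t^2 f_r^2\, G \;=\; \frac{1}{r^2 H_u(r)} \int_{S_{Rr}^+} s^2 f(y,s)^2\, G(y,s)\, dy\, ds.
\]
Thus the claim reduces to estimating $\int_{S_{Rr}^+} s^2 f^2 G$ in terms of $H_u(r)$.

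Next I would express the space–time integral in polar time by setting $s=-\rho^2$, $ds=-2\rho\, d\rho$, which converts it to $2\int_0^{Rr}\rho^5 \bigl(\int_{\R^n_+} f(\cdot,-\rho^2)^2 G(\cdot,-\rho^2)\,dy\bigr)\,d\rho$. The standing hypothesis on $\mu$ in Theorem~\ref{thm:thin-monotonicity} bounds the inner integral by $C_\mu^{-1}\mu(\rho)\rho^{-4+2\sigma}$, so
\[
\int_{S_{Rr}^+} s^2 f^2 G \;\leq\; \frac{2}{C_\mu}\int_0^{Rr}\rho^{1+2\sigma}\mu(\rho)\,d\rho \;\leq\; \frac{\mu(Rr)\,(Rr)^{2+2\sigma}}{C_\mu(1+\sigma)},
\]
where in the last step I use monotonicity of $\mu$ to pull $\mu(Rr)$ outside the integral.

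It remains to dominate $\mu(Rr)$ by $H_u(r)$ times the right power of $R$. Since $r<r_u/R$ forces $Rr<r_u$, and since $\kappa<\kappa_\mu$, Lemma~\ref{lem:Hu-mu} (which guarantees $H_u\geq \mu$ on $(0,r_u)$) gives $\mu(Rr)\leq H_u(Rr)$. The second inequality of Lemma~\ref{lem:Hu-est} then gives $H_u(Rr)\leq R^{2\kappa'}H_u(r)$, so $\mu(Rr)\leq R^{2\kappa'} H_u(r)$. Substituting back into the formula at the top,
\[
\int_{S_R^+} t^2 f_r^2\, G \;\leq\; \frac{1}{r^2 H_u(r)}\cdot \frac{R^{2\kappa'}H_u(r)\,(Rr)^{2+2\sigma}}{C_\mu(1+\sigma)} \;=\; \frac{R^{2+2\sigma+2\kappa'}\, r^{2\sigma}}{C_\mu(1+\sigma)},
\]
which is the desired estimate with $c_\mu = 1/(C_\mu(1+\sigma))$.

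I do not expect any serious obstacle: the only small subtleties are to verify that $Rr<r_u$ (so both Lemma~\ref{lem:Hu-mu} and Lemma~\ref{lem:Hu-est} apply), and to correctly track the Gaussian scaling factor $r^n$, which cancels against the Jacobian $r^{-n-2}$ to leave the clean $r^{-6}$ appearing above. Everything else is bookkeeping.
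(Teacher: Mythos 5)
Your proof is correct and follows essentially the same route as the paper's: the identical change of variables reducing the left-hand side to $\frac{1}{r^2 H_u(r)}\int_{S_{Rr}^+} t^2 f^2 G$, then the bound $\int_{S_{Rr}^+} t^2 f^2 G\leq C_\mu^{-1}(Rr)^{2+2\sigma}\mu(Rr)$ (which the paper simply asserts "from the assumptions" and you derive explicitly via the polar-time substitution and monotonicity of $\mu$), and finally $\mu(Rr)\leq H_u(Rr)\leq R^{2\kappa'}H_u(r)$ from Lemmas~\ref{lem:Hu-mu} and \ref{lem:Hu-est}, both legitimately applicable since $Rr<r_u$. The only blemish is the harmless parenthetical claim that the scaling factors combine to $r^{-6}$ — the net power is $r^{-2}$, exactly as in your displayed identity — and your bookkeeping in fact yields the slightly sharper constant $c_\mu=1/(C_\mu(1+\sigma))$.
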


\begin{proof}
  Note that from the assumptions we have
  \begin{align*}
    \mu(r)&\geq C_\mu r^{4-2\sigma}\int_{\R_n^+} f^2(\cdot,-r^2) G(\cdot,-r^2),\\
    \mu(r)&\geq C_\mu r^{-2-2\sigma}\int_{S_r^+} t^2 f^2 G.
  \end{align*}
  Now take $R\geq 1$. Then, making the change of variables and using
  the inequalities above, we have
  \begin{align*}
    \int_{S_R^+} t^2 f_r^2 G&=\frac{r^4}{H_u(r)}\int_{S_R^+}t^2
    f(rx,r^2t)^2 G(x,t)dxdt\\
    &=\frac{1}{r^2 H_u(r)}\int_{S_{Rr}^+}t^2 f^2 G\leq
    R^{2+2\sigma}r^{2\sigma}\frac{\mu(Rr)}{C_\mu H_u(r)}.
  \end{align*}
  Thus, if $0<r< r_{u}/R$, then $H_u(Rr)\geq \mu(Rr)$ and therefore
$$
\int_{S_R^+} t^2 f_r^2 G\leq c_\mu R^{2+2\sigma}
r^{2\sigma}\frac{H_u(Rr)}{H_u(r)}\leq c_\mu R^{2+2\sigma+2\kappa'}
r^{2\sigma}.
$$
This completes the proof.
\end{proof}

We will also need the following well-known inequality (see \cite{Gro})
and one of its corollaries.

\begin{lemma}[$\log$-Sobolev inequality]
  \pushQED{\qed} For any $f\in W^{1}_2(\R^n, G(\cdot,s))$ one has
  \[
  \int_{\R^n} f^2\log(f^2) G(\cdot,s)\leq \Big(\int_{\R^n} f^2
  G(\cdot,s)\Big)\log\Big(\int_{\R^n} f^2
  G(\cdot,s)\Big)+4|s|\int_{\R^n} |\nabla f|^2 G(\cdot,s).\qedhere
  \]
  \popQED
\end{lemma}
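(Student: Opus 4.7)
The statement is exactly Gross's classical logarithmic Sobolev inequality for the Gaussian measure, but written with the backward heat kernel $G(\cdot,s)$ instead of the standard Gaussian density. My plan is therefore to reduce it to the standard case by an appropriate scaling.

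Fix $s<0$. Observe that by definition
\[
G(x,s)=(-4\pi s)^{-n/2}e^{x^2/(4s)}=(4\pi|s|)^{-n/2}e^{-|x|^2/(4|s|)},
\]
so that $G(\cdot,s)\,dx$ is the centered Gaussian measure on $\R^n$ with covariance $2|s|\,I$. The natural change of variables is $x=\sqrt{2|s|}\,y$, under which $dx=(2|s|)^{n/2}dy$ and
\[
G(x,s)\,dx=(2\pi)^{-n/2}e^{-|y|^2/2}\,dy=:d\gamma(y),
\]
the standard Gaussian measure on $\R^n$.

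Given $f\in W^1_2(\R^n,G(\cdot,s))$, I would set $g(y):=f(\sqrt{2|s|}\,y)$. Then $g\in W^1_2(\R^n,\gamma)$ and $|\nabla_y g(y)|^2=2|s|\,|\nabla_x f(\sqrt{2|s|}\,y)|^2$. The classical Gross inequality (for instance as stated in \cite{Gro}) applied to $g$ gives
\[
\int_{\R^n} g^2\log(g^2)\,d\gamma\;\leq\;\Big(\int_{\R^n} g^2\,d\gamma\Big)\log\Big(\int_{\R^n} g^2\,d\gamma\Big)+2\int_{\R^n}|\nabla g|^2\,d\gamma.
\]
Translating each integral back through the change of variables, the three integrals against $d\gamma$ become the corresponding integrals against $G(\cdot,s)\,dx$, while the gradient term picks up precisely the extra factor $2|s|$ from the chain rule, yielding the factor $4|s|$ on the right-hand side.

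The main (and essentially only) obstacle is bookkeeping: verifying that the rescaling $y\mapsto\sqrt{2|s|}\,y$ preserves the functional classes in which Gross's inequality is stated, and that it converts $G(\cdot,s)$ exactly into the standard Gaussian measure with the correct Jacobian and correct gradient scaling. Once this dictionary is in place, the stated inequality follows directly from the classical one with no further work.
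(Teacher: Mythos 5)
Your proof is correct: the paper states this lemma without proof, simply citing Gross \cite{Gro}, and your rescaling $x=\sqrt{2|s|}\,y$ is exactly the intended reduction to the standard Gaussian log-Sobolev inequality. All the constants check out ($G(\cdot,s)\,dx$ is the centered Gaussian with covariance $2|s|I$, and the chain rule turns the factor $2$ in Gross's inequality into $4|s|$), so there is nothing to add.
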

\begin{lemma}\label{lem:cor-log-Sob} For any $f\in W^{1}_2(\R^n,
  G(\cdot,s))$, let $\omega=\{|f|>0\}$. Then,
$$
\log\frac{1}{|\omega|_s}\int_{\R^n} f^2 G(\cdot,s)\leq 2|s|\int_{\R^n}
|\nabla f|^2 G(\cdot,s),
$$
where
$$
|\omega|_s=\int_{\omega} G(\cdot,s).
$$
\end{lemma}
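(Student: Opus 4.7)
The plan is to combine Jensen's inequality with the log-Sobolev inequality stated immediately above the corollary. Write $A = \int_{\R^n} f^2 G(\cdot,s)\,dx$. Since $f$ vanishes off $\omega$, we have $A = \int_\omega f^2 G(\cdot,s)\,dx$, so on the probability measure $d\nu = |\omega|_s^{-1} G(\cdot,s)\,\mathbf{1}_\omega\,dx$ on $\omega$ we may apply Jensen's inequality to the convex function $\phi(t) = t\log t$, with $g = f^2$. This yields
\[
\phi\Big(\int g\,d\nu\Big) \leq \int \phi(g)\,d\nu,
\]
that is,
\[
\frac{A}{|\omega|_s}\log\frac{A}{|\omega|_s} \leq \frac{1}{|\omega|_s}\int_\omega f^2\log f^2\, G(\cdot,s)\,dx.
\]

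Multiplying through by $|\omega|_s$ and rearranging gives
\[
\int_{\R^n} f^2\log f^2\, G(\cdot,s)\,dx - A\log A \;\geq\; A\log\frac{1}{|\omega|_s}.
\]
Combining this lower bound with the upper bound supplied by the log-Sobolev inequality $\int f^2\log f^2\,G - A\log A \leq c|s|\int|\nabla f|^2 G$ produces exactly
\[
A\,\log\frac{1}{|\omega|_s}\;\leq\; c|s|\int_{\R^n} |\nabla f|^2 G(\cdot,s)\,dx,
\]
which is the claim.

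The only technical point to check is integrability of $f^2\log f^2$ with respect to $G(\cdot,s)\,dx$. Since $t\mapsto t\log t$ is bounded below on $[0,\infty)$ and $G(\cdot,s)\in L^1(\R^n)$, the negative part of the integrand is integrable; the positive part is controlled, in view of the log-Sobolev inequality, by $A\log A$ plus $\int|\nabla f|^2 G$, both of which are finite by hypothesis $f\in W^{1}_2(\R^n, G(\cdot,s))$. So each integral above is well-defined in $(-\infty,+\infty]$ and the arithmetic is legitimate. There is no substantive obstacle in the argument; the main content of the proof is simply the entropy inequality of Jensen together with Gross's inequality.
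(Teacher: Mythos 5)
Your proposal is correct and is essentially the paper's own proof: Jensen's inequality for the convex function $y\mapsto y\log y$ on the probability measure $|\omega|_s^{-1}G(\cdot,s)\mathbf{1}_\omega\,dx$, combined with the log-Sobolev inequality, with the identity $\lambda\psi(a/\lambda)-\psi(a)=a\log(1/\lambda)$ doing the final rearrangement. The only loose end is that you leave the constant as $c|s|$ rather than tracking that the version of Gross's inequality being invoked must carry the constant $2|s|$ to match the stated conclusion.
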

\begin{proof} Let $\psi(y)=y\log y$ for $y>0$ and $\psi(0)=0$. Then,
  the $\log$-Sobolev inequality can be rewritten as
  $$
  \int_{\R^n}\psi(f^2) G(\cdot,s)\leq \psi\bigg(\int_{\R^n}f^2
  G(\cdot,s)\bigg)+2|s|\int_{\R^n} |\nabla f|^2 G(\cdot,s).
  $$
  On the other hand, since $\psi$ is convex on $[0,\infty)$, by
  Jensen's inequality we have
  $$
  \frac1{|\omega|_s}\int_{\R^n}\psi(f^2) G(\cdot,s)\geq
  \psi\bigg(\frac1{|\omega|_s}\int_{\R^n}f^2 G(\cdot,s)\bigg).
  $$
  Combining these inequalities and using the identity
  $\lambda\,\psi\left(\dfrac{a}{\lambda}\right)-\psi(a)=a\,\log\dfrac1\lambda$,
  we arrive at the claimed inequality.
\end{proof}

\begin{proof}[Proof of Theorem~\ref{thm:exist-homogen-blowups}]

  i) From Lemmas~\ref{lem:Hu-est} and \ref{lem:fr-est} as well as
  Lemma~\ref{lem:w212}, for $R\geq 1$, $0<r<r_{R,u}$ we have
$$
\int_{S_{R/2}^+}(u_r^2+|t||\nabla u_r|^2+|t|^2|
D^2u_r|^2+|t|^2|\partial_t u_r|^2) G\leq C_R(1+c_\mu r^{2\sigma}).
$$
Since $R\geq 1$ is arbitrary, this implies the claim of part i).

ii) Note that, in view of Lemma~\ref{lem:w112-diff}, it will be enough
to show the existence of $u_0$, and the convergence
$$
\int_{S_R^+}|u_{r_j}-u_{0}|^2 G\to 0.
$$
From Lemma~\ref{lem:Hu-mu} it follows that $\kappa\geq 0$,\footnote{We
  will prove later that $\kappa\geq 3/2$, but the information
  $\kappa\geq 0$ will suffice in this proof.} and therefore we obtain
$$
r\frac{H_u'(r)}{H_u(r)}\geq-1,\quad 0<r<r_0.
$$
Integrating, we obtain that for small $\delta>0$
$$
H_u(r\delta)\leq H_u(r)\delta^{-1},\quad
$$
which gives
$$
H_{u_r}(\delta)\leq \delta^{-1}
$$
and consequently
$$
\int_{S_\delta^+} u_r^2 G<\delta,\quad 0<r<r_0.
$$
Next, let $\zeta_A\in C^\infty_0(\R^n)$ be a cutoff function, such
that
$$
0\leq\zeta\leq 1,\quad \zeta_A=1\quad\text{on }B_{A-1},\quad \supp
\zeta\subset B_{A}.
$$
We may take $A$ so large that $\int_{\R^n\setminus B_{A-1}}
G(x,t)dx<e^{-1/\delta}$ for $-R^2< t<0$. Then from
Lemma~\ref{lem:cor-log-Sob}, we have that
$$
\int_{S_R^+\cap\{|x|\geq A\}}u_r^2 G \leq
\int_{S_R^+}u_r^2(1-\zeta_A)^2 G\leq \delta
\int_{S_R^+}(u_r^2+|t||\nabla u_r|^2) G\leq \delta C(R),
$$
for small enough $r$, where in the last step we have used the uniform
estimate from part i).

Next, notice that on $E=E_{R,\delta,A}=(S_R^+\setminus
S_\delta^+)\cap\{|x|\leq A\}=B_A\times(-R^2,-\delta^2]$ the function $
G$ is bounded below and above and therefore the estimates in i) imply
that the family $\{u_r\}_{0<r<r_{R,u}}$ is uniformly bounded in
$W^{1,1}_2(E^\circ)$ and thus we can extract a subsequence $u_{r_j}$
converging strongly in $L_2(E)$ and consequently in $L_2(E,
G)$. Letting $\delta\to 0$ and $A\to \infty$, combined with the
estimates above, by means of the Cantor diagonal method, we complete
the proof of this part.

iii) We first start with the Signorini boundary conditions. From the
estimates in ii) we have that $\{u_r\}$ is uniformly bounded in
$W^{2,1}_2(B_R^+\times(-R^2,-\delta^2])$ for any $0<\delta<R$. We thus
obtain that $u_{r_j}\to u_0$ strongly, and $\partial_{x_n}
u_{r_j}\to \partial_{x_n} u_0$ weakly in
$L_2(B_R'\times(-R^2,-\delta^2])$. This is enough to pass to the limit
in the Signorini boundary conditions and to conclude that
$$
u_0\geq 0,\quad -\partial_{x_n}u_0\geq 0,\quad
u_0\partial_{x_n}u_0=0\quad\text{on }\R^{n-1}\times(-\infty,0).
$$
Besides, arguing similarly, and using Lemma~\ref{lem:fr-est} we obtain
that
$$
\Delta u_0-\partial_t u_0=0\quad\text{in }\R^n_+\times(-\infty,0).
$$
Thus, to finish the proof of this part it remains to show that $u_0$
is in the unweighted Sobolev class $W^{1,1}_2(Q_R^+)$ for any
$R>0$. Because of the scaling properties, it is sufficient to prove it
only for $R=1/8$. We argue as follows. First, extend $u_0$ by even
symmetry in $x_n$ to $\R^n\times(-\infty,0)$. We then claim that
$u_0^\pm$ are subcaloric functions in $\R^n\times(-\infty,0)$. Indeed,
this would follow immediately, if we knew the continuity of $u_0$,
since $u_0$ is caloric where nonzero. But since we do not know the
continuity of $u_0$ at this stage, we argue as follows. By continuity
of $u_r$ we easily obtain
$$
(\Delta -\partial_t) u_r^\pm\geq -f_r^\mp\quad\text{in }
B_R\times(-R^2,-\delta^2].
$$
Then, passing to the limit as $r=r_j\to 0$, we conclude that $u_0^\pm$
are subcaloric, since $|f_r|\to 0$ in $L_2(B_R\times(-R^2,-\delta^2])$
by Lemma~\ref{lem:fr-est}.  Further, we claim that $u_0^\pm$ satisfy
the sub mean-value property
$$
u_0^\pm(x,t)\leq\int_{\R^n} u_0^\pm(y,-1) G(x-y,-t-1)dy,
$$
for any $(x,t)\in\R^n\times(-1,0)$. The proof of this fact is fairly
standard, since, by the estimates in part i), $u_0$ satisfies an
integral Tychonoff-type condition in the strips $S_1\setminus
S_\delta$, $\delta>0$. Nevertheless, for completeness we give the
details below. For large $R>0$ let $\zeta_R\in C^\infty_0(\R^n)$ be a
cutoff function such that $0\leq \zeta_R\leq 1$, $\zeta_R=1$ on $B_R$,
$\supp\zeta_R\subset B_{R+1}$, $|\nabla \zeta_R|\leq 1$. Let now
$w=u_0^\pm\zeta_R$ in $\R^n\times(-1,0)$. From the fact that $u_0^\pm$
are subcaloric, we have that
$$
(\Delta-\partial_t)w\geq 2\nabla u_0^\pm\nabla \zeta_R.
$$
The advantage of $w$ now is that it has a bounded support, and
therefore we can write
\begin{multline*}
  u_0(x,t)^\pm\zeta_R(x)=w(x,t)\leq\int_{\R^n}u_0^\pm(y,-1)\zeta_R(y)
  G(x-y,-t-1)\\+2\int_{-1}^t\int_{\R^n}|\nabla u_0(y,s)||\nabla
  \zeta_R(y,s)| G(x-y, s-t)dyds.
\end{multline*}
To proceed, fix $A>0$ large and $a>0$ small and consider $|x|\leq A$
and $-1<t<-a$. We want to show that the second integral above will
vanish as we let $R\to \infty$. This will be done with suitable
estimates on the kernel $G$.

\begin{claim}\label{clm:rho-est} Let  $|x|\leq A$,
  $-1<s<-a<0$, and $s<t<0$. Then
  \begin{equation*}
    G(x-y,s-t)\leq 
    \begin{cases} C G(y,s),&\quad \text{if } t-s<-s/8,\ |y|\geq R\\
      C G(y,s) e^{C|y|},&\quad \text{if } t-s\geq-s/8
    \end{cases}
  \end{equation*}
  with $C=C_{n,a,A}$, $R=R_{n,a,A}$.
\end{claim}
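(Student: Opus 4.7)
The plan is to reduce the claim to a direct comparison of the explicit Gaussian kernels by writing out the ratio
\[
\frac{G(x-y,s-t)}{G(y,s)}=\left(\frac{|s|}{t-s}\right)^{n/2}\exp\!\left(-\frac{|x-y|^2}{4(t-s)}+\frac{|y|^2}{4|s|}\right),
\]
valid because $s-t<0$ and $s<0$. All the work will consist in bounding the prefactor and the exponent. Expanding $|x-y|^2=|x|^2-2x\cdot y+|y|^2$ and collecting the $|y|^2$-terms, the exponent becomes
\[
-\frac{|x|^2}{4(t-s)}+\frac{x\cdot y}{2(t-s)}+\frac{|y|^2\,t}{4|s|(t-s)},
\]
since $\frac{1}{4|s|}-\frac{1}{4(t-s)}=\frac{t}{4|s|(t-s)}$. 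Note that $t<0$ makes the last summand nonpositive.

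For the second regime, $t-s\geq|s|/8$, the prefactor is bounded by $8^{n/2}$ because $|s|/(t-s)\leq 8$. The first and third terms of the exponent are nonpositive, so I only need to control the cross term, which by $|x|\leq A$ and $t-s\geq|s|/8\geq a/8$ satisfies $|x\cdot y|/(2(t-s))\leq (4A/a)|y|$. This gives the estimate $G(x-y,s-t)\leq C\,G(y,s)\,e^{C|y|}$ with $C=C_{n,a,A}$.

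For the first regime, $t-s<|s|/8$ and $|y|\geq R$, I will choose $R\geq 2A$ so that $|x-y|\geq|y|-|x|\geq|y|/2$, whence $|x-y|^2\geq|y|^2/4$. Combining this with the $|y|^2/(4|s|)$ term and using the inequality $t-s<|s|/8$ collapses the two leading contributions to
\[
-\frac{|y|^2}{16(t-s)}+\frac{|y|^2}{4|s|}\;\leq\;-\frac{|y|^2}{32(t-s)},
\]
so the ratio is dominated by $(|s|/(t-s))^{n/2}\exp\!\big(-|y|^2/(32(t-s))\big)$. The elementary bound $e^{-u}\leq C_n u^{-n/2}$ applied with $u=|y|^2/(32(t-s))$ then yields the pointwise estimate $C_n\,(|s|/|y|^2)^{n/2}\leq C_n/R^n$, which is bounded since $|s|\leq 1$.

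The routine part is the algebraic expansion of the exponent; the only delicate point, and the step I would verify most carefully, is the first regime, because the prefactor $(|s|/(t-s))^{n/2}$ blows up as $t\to s$ and must be absorbed into the Gaussian decay from $|y|^2/(t-s)$. Choosing $R=R_{n,a,A}\geq 2A$ large enough so that the resulting constant $C_n/R^n$ is acceptable is the whole content of this step; after that, assembling the two cases gives the claim.
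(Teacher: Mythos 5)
Your proof is correct and follows essentially the same strategy as the paper: split into the two regimes according to whether $t-s$ is comparable to $|s|$, absorb the singular prefactor $(t-s)^{-n/2}$ into the surplus Gaussian decay coming from $|x-y|\geq |y|/2$ when $|y|\geq R$, and in the other regime bound the prefactor by $8^{n/2}$ and control the cross term by $e^{C|y|}$. The only differences are cosmetic bookkeeping (expanding the square in the exponent and invoking $e^{-u}\leq C_n u^{-n/2}$ rather than the boundedness of $r\mapsto r^{-n/2}e^{-1/4r}$), so nothing further is needed.
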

\begin{proof}
  \setcounter{step}{0} \step{1-G-est} $t-s<-s/8$. Choose $R=2A+1$ and
  let $|y|\geq R$. Then
$$
|x-y|^2\geq \frac{|x-y|^2}{2}+\frac{|x-y|^2}{2}\geq
\frac{|R-A|^2}{2}+\frac{(|y|/2)^2}{2}\geq \frac12+\frac{|y|^2}8
$$
and therefore
\begin{align*}
  G(x-y,s-t)&=\frac{1}{(4\pi(t-s))^{n/2}}e^{-\frac{|x-y|^2}{4(t-s)}}\leq
  \frac{1}{(4\pi(t-s))^{n/2}}e^{-\frac{1}{8(t-s)}}e^{-\frac{|y|^2}{32(t-s)}}\\
  &\leq
  \frac{1}{(4\pi(t-s))^{n/2}}e^{-\frac{1}{8(t-s)}}e^{\frac{|y|^2}{4s}}\leq
  C_ne^{\frac{|y|^2}{4s}}\\
  &\leq \frac{C_{n,a}}{(4\pi(-s))^{n/2}}e^{\frac{|y|^2}{4s}}\leq
  C_{n,a} G(y,s),
\end{align*}
where we have used that the function $r\mapsto 1/(4\pi r)^{n/2}
e^{-1/4r}$ is uniformly bounded on $(0,\infty)$.

\step{2-G-est} Suppose now $t-s\geq-s/8$. Then
\begin{align*}
  G(x-y,s-t)&=\frac{1}{(4\pi(t-s))^{n/2}}e^{-\frac{|x-y|^2}{4(t-s)}}\leq
  \frac{8^{n/2}}{(4\pi(-s))^{n/2}}e^{-\frac{|x-y|^2}{4s}}\\
  &\leq
  \frac{C_n}{(4\pi(-s))^{n/2}}e^{-\frac{(|y|-|A|)^2}{4s}}\\
  &\leq
  \frac{C_n}{(4\pi(-s))^{n/2}}e^{-\frac{|y|^2}{4s}}e^{C_{a,A}|y|}\leq
  C_n G(y,s)e^{C_{a,A}|y|}.
\end{align*}
Hence, the claim follows.
\end{proof}

Using the claim, the facts that $\int_{S_1}|\nabla u_0|^2 G<\infty$
and $\int_{S_1}e^{C|y|} G(y,s)dyds<\infty$, and letting $R\to \infty$,
we then easily obtain
$$
u_0^\pm(x,t)\leq\int_{\R^n} u_0^\pm(y,-1) G(x-y,-t-1)dy.
$$
Then, using the second estimate in Claim~\ref{clm:rho-est}, for
$(x,t)\in Q_{1/2}$ we obtain
$$
|u_0(x,t)|\leq C_n\int_{\R^n}|u_0(y,-1)| G(y,-1)e^{C_n|y|}dy.
$$
More generally, changing the initial point $s=-1$ to arbitrary point
$s\in(-1,-1/2]$ we will have
$$
|u_0(x,t)|\leq C_n\int_{S_1\setminus S_{1/2}}|u_0(y,s)|
G(y,s)e^{C_n|y|}dyds
$$
and by applying Cauchy-Schwarz
$$
\|u_0\|_{L_\infty(Q_{1/2})}^2\leq C_n\int_{S_1}u_0^2 G=C_n
H_{u_0}(1)<\infty.
$$
The energy inequality applied to $u_0^\pm$ then yields $u_0\in
W^{1,0}(Q_{1/4})$. Further, applying the estimate in
Lemma~\ref{lem:known-W22} for $u(x,t)=u_0(x,t)\zeta(x,t)$, where
$\zeta$ is a smooth cutoff function in $Q_{1/4}$, equal to $1$ on
$Q_{1/8}$, with $\phi_0=0$ and $f=2\nabla u_0\nabla
\zeta+u_0(\Delta\zeta-\partial_t\zeta)$, we obtain that $u_0\in
W^{2,1}_2(Q_{1/8}^+)$. As remarked earlier, the scaling properties
imply that $u_0\in W^{2,1}_2(Q_{R}^+)$ for any $R>0$.

iv) Finally, we show that $u_0$ is parabolically homogeneous of degree
$\kappa$. Let $r_j\to 0+$ be such that $u_{r_j}\to u_0$ as in
ii). Then by part ii) again we have for any $0<\rho<1$
$$
H_{u_{r_j}}(\rho)\to H_{u_0}(\rho),\quad I_{u_{r_j}}(\rho)\to
I_{u_0}(\rho).
$$
Moreover, since by Lemma~\ref{lem:Hu-est} $H_{u_r}(\rho)\geq
\rho^{2\kappa'}$ for sufficiently small $r$, we also have
$$
H_{u_0}(\rho)\geq \rho^{2\kappa'},\quad 0<\rho<1.
$$
Hence, we obtain that for any $0<\rho<1$
$$
2\frac{I_{u_0}(\rho)}{H_{u_0}(\rho)}=2\lim_{j\to
  \infty}\frac{I_{u_{r_j}}(\rho)}{H_{u_{r_j}}(\rho)}=2\lim_{j\to
  \infty}\frac{I_{u}(r_j\rho)}{H_u(r_j\rho)}=\kappa,
$$
by Lemma~\ref{lem:Hu-mu}. Thus the ratio
$2I_{u_0}(\rho)/H_{u_0}(\rho)$ is constant in the interval
$(0,1)$. Further, notice that passing to the limit in the
differentiation formulas in Proposition~\ref{prop:diff-form-signor},
we will obtain the similar formulas hold for $u_0$ for any
$0<r<\infty$. Thus, from computations in step \stepref{1-mon-form} in
Theorem~\ref{thm:thin-monotonicity}, before the application of the
Cauchy-Schwarz inequality, we have
$$
\frac{d}{dr}\Big(\frac{I_{u_0}(r)}{H_{u_0}(r)}\Big)\geq \frac{1}{r^5
  H_{u_0}(r)}\Big[\int_{S_r^+} u_0^2 G\int_{S_r^+}(Zu_0)^2
G-\Big(\int_{S_r^+} u_0(Zu_0) G\Big)^2\Big].
$$
Note here that $H_{u_0}(r)$ is never zero, since $H_{u_0}(r)\geq
r^{2\kappa'}$ for $r\leq 1$ and $H_{u_0}(r)\geq r^{-2}H_{u_0}(1)\geq
r^{-2}$.  And since we know that the above derivative must be zero it
implies that we have equality in Cauchy-Schwarz inequality
$$
\int_{S_r^+} u_0^2 G\int_{S_r^+}(Zu_0)^2 G=\Big(\int_{S_r^+} u_0(Zu_0)
G\Big)^2,
$$
which can happen only if for some constant $\kappa_0$ we have
$$
Zu_0=\kappa_0 u_0\quad\text{in }S_\infty^+,
$$
or that $u_0$ is parabolically homogeneous of degree $\kappa_0$. But
then, in this case it is straightforward to show that
\begin{align*}
  H_{u_0}(r)&=Cr^{2\kappa_0},\\
  H'_{u_0}(r)&=\frac{4}{r}I_{u_0}(r)=2\kappa_0 Cr^{2\kappa_0-1},
\end{align*}
and therefore
$$
2\frac{I_{u_0}(r)}{H_{u_0}(r)}=\kappa_0.
$$
This implies that $\kappa_0=\kappa$ and completes the proof of the
theorem.
\end{proof}

\section{Homogeneous global solutions}
\label{sec:homog-glob-solut}

In this section we study the homogeneous global solutions of the
parabolic Signorini problem, which appear as the result of the blowup
process described in Theorem~\ref{thm:exist-homogen-blowups}. One of
the conclusions of this section is that the homogeneity $\kappa$ of
the blowup is
$$
\text{either}\quad\kappa=\frac32\quad\text{or}\quad \kappa\geq 2,
$$
see Theorem~\ref{thm:min-homogen} below.  This will have two important
consequences: (i) the fact that $\kappa\geq 3/2$ will imply the
optimal $H^{3/2,3/4}_{\loc}$ regularity of solutions (see
Theorem~\ref{thm:opt-reg}) and (ii) the ``gap'' $(3/2,2)$ between
possible values of $\kappa$ will imply the relative openness of the
so-called regular set (see Proposition~\ref{prop:reg-set-rel-open}).

\medskip We start by noticing that $\kappa>1$.

\begin{proposition}\label{prop:kappa>1alpha} Let $u\in \S^f(S_1^+)$ be as in
  Theorem~\ref{thm:exist-homogen-blowups}. Then, $\kappa\geq
  1+\alpha$, where $\alpha$ is the H\"older exponent of $\nabla u$ in
  Definition~\ref{def:Sf}.
\end{proposition}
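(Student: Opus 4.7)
The strategy is to sandwich $H_u(r)$ between the upper bound $Cr^{2(1+\alpha)}$, obtained from the vanishing of $u$ and $\nabla u$ at the origin together with the H\"older regularity of $\nabla u$, and the lower bound $cr^{2\kappa'}$ for arbitrary $\kappa' > \kappa$ coming from the frequency monotonicity; then let $\kappa' \downarrow \kappa$.

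First I would verify $u(0,0) = 0$ and $\nabla u(0,0) = 0$. Since $(0,0) \in \Gamma_*(u)$, there is a sequence $(y^j, s^j) \to (0,0)$ with $u(y^j,0,s^j) = 0$ and $\partial_{x_n} u(y^j,0,s^j) = 0$; continuity of $u$ and $\partial_{x_n} u$ (guaranteed by $\nabla u \in H^{\alpha,\alpha/2}$) gives $u(0,0) = 0$ and $\partial_{x_n} u(0,0) = 0$, while $u \geq 0$ on $S_1'$ attaining its minimum at $(0,0)$ forces $\nabla' u(0,0) = 0$. The H\"older seminorm of $\nabla u$ then yields $|\nabla u(x,t)| \leq L\|(x,t)\|^\alpha$, and integrating in $x$ gives $|u(x,t) - u(0,t)| \leq C\|(x,t)\|^{1+\alpha}$ near the origin.

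The main technical step is to bound $|u(0,t)| \leq C|t|^{(1+\alpha)/2}$. I would introduce the spatial mean $\bar u_r(t) := |B_r^+|^{-1}\int_{B_r^+} u(\cdot,t)\,dx$, apply the divergence theorem to $(\Delta - \partial_t)u = f$ on $B_r^+$, and use $|\nabla u| \leq C(r^\alpha + |t|^{\alpha/2})$ on $\partial B_r^+$ together with $f \in L^\infty$ to obtain $|\bar u_r'(t)| \leq C(r^{\alpha-1} + |t|^{\alpha/2}/r + 1)$. Combined with $|\bar u_r(0)| \leq Cr^{1+\alpha}$ (from the spatial H\"older of $u$ at $t=0$) and $|u(0,t) - \bar u_r(t)| \leq Cr(r^\alpha + |t|^{\alpha/2})$, the choice $r = |t|^{1/2}$ produces $|u(0,t)| \leq C|t|^{(1+\alpha)/2}$, and therefore $|u(x,t)| \leq M\|(x,t)\|^{1+\alpha}$ for $\|(x,t)\|$ small.

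The Gaussian scaling $x = \sqrt{|t|}\,y$ then gives $\int_{\R^n_+}\|(x,t)\|^{2(1+\alpha)} G(x,t)\,dx = C|t|^{1+\alpha}$, so after integrating in $t \in (-r^2,0)$ and absorbing the Gaussian-small contribution from $\|(x,t)\| > \delta$ (which is negligible since $u$ has bounded support), one obtains $H_u(r) \leq Cr^{2(1+\alpha)}$. For the lower bound, fix any $\kappa' \in (\kappa,\kappa_\mu)$: since $\Phi_u$ is nondecreasing with $\Phi_u(0+) = \kappa$, we have $\Phi_u(r) < \kappa'$ for small $r$, and Lemma~\ref{lem:Hu-mu} gives $H_u(r) \geq \mu(r)$ in the same range, so Lemma~\ref{lem:Hu-est} yields $H_u(r) \geq cr^{2\kappa'}$. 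Sandwiching $cr^{2\kappa'} \leq H_u(r) \leq Cr^{2(1+\alpha)}$ and letting $r \to 0$ forces $\kappa' \geq 1+\alpha$; letting $\kappa' \downarrow \kappa$ gives $\kappa \geq 1+\alpha$. The delicate point is the mean-value estimate for $|u(0,t)|$, since $f \in L^\infty$ is too rough to invoke an $H^{1+\alpha,(1+\alpha)/2}$ regularity of $u$ directly.
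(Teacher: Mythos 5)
Your argument is correct and follows the same skeleton as the paper's: establish $\nabla u(0,0)=0$, upgrade the H\"older bound $|\nabla u|\leq L\|(x,t)\|^{\alpha}$ to the pointwise bound $|u|\leq C\|(x,t)\|^{1+\alpha}$, integrate against the Gaussian to get $H_u(r)\leq Cr^{2(1+\alpha)}$ (this is exactly Lemma~\ref{lem:Holder-Hu-above}), and sandwich against the lower bound $H_u(r)\geq c\,r^{2\kappa'}$ from Lemma~\ref{lem:Hu-est} for every $\kappa'\in(\kappa,\kappa_\mu)$.

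The one place where you genuinely depart from the paper is the time-direction estimate $|u(0,t)|\leq C|t|^{(1+\alpha)/2}$, which you correctly identify as the delicate step. The paper proves the equivalent bound $u(0,-r^2)\leq C_1 r^{1+\alpha}$ by contradiction: assuming the contrary, the gradient bound forces $u\geq(C_1-C_0)r^{1+\alpha}$ on all of $B_r\times\{-r^2\}$, and the parabolic Harnack inequality applied to the nonnegative function $u+C_0(2r)^{1+\alpha}$ propagates this largeness forward to $t=0$, contradicting $u(0,0)=0$. Your route instead differentiates the spatial average $\bar u_r(t)$ over $B_r^+$, converts $\int_{B_r^+}\Delta u$ to a boundary flux controlled by $|\nabla u|\leq C(r^\alpha+|t|^{\alpha/2})$, and integrates in time with $r=|t|^{1/2}$; the exponents work out ($|t|\,r^{\alpha-1}=|t|^{1+\alpha/2}\,r^{-1}=|t|^{(1+\alpha)/2}$), and the divergence theorem is justified since $u(\cdot,t)\in W^2_2(B_r^+)$ for a.e.\ $t$ by the $W^{2,1}_2$ membership in Definition~\ref{def:Sf}. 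Your version avoids the Harnack inequality entirely and exploits only the equation in divergence form plus $f\in L_\infty$; the paper's Harnack argument is more robust in that it only uses $u\geq 0$ on the thin set and a one-sided bound, but for this statement both yield the same conclusion. No gaps.
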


For the proof we will need the following fact.

\begin{lemma}\label{lem:Holder-Hu-above} Let $u\in\S^f(S_1^+)$. Then,
$$
H_u(r)\leq C_ur^{2(1+\alpha)},\quad 0<r<1,
$$
where $\alpha$ is the H\"older exponent of $\nabla u$ in
Definition~\ref{def:Sf}.
\end{lemma}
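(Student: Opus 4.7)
The plan is to first establish the pointwise parabolic Taylor estimate $|u(x,t)|\leq C\|(x,t)\|^{1+\alpha}$ in a parabolic neighborhood of the origin, and then deduce the Gaussian bound on $H_u(r)$ by rescaling the defining integral. For the pointwise bound, I would start by observing that both $u$ and $\nabla u$ vanish at $(0,0)$: continuity of $u$ and the definition of $\Gamma_*(u)$ give $u(0,0)=0$ and $\partial_{x_n}u(0,0)=0$, while the Signorini constraint $u\geq 0$ on $S_1'$ combined with $u(0,0)=0$ forces $(0,0)$ to be a minimum on the thin space and hence $\nabla' u(0,0)=0$. From $\nabla u\in H^{\alpha,\alpha/2}$ this yields $|\nabla u(x,t)|\leq C\|(x,t)\|^\alpha$, and integrating along the spatial segment from $(0,t)$ to $(x,t)$ gives
\[
|u(x,t)-u(0,t)|\leq C|x|\|(x,t)\|^\alpha\leq C\|(x,t)\|^{1+\alpha},
\]
so that it only remains to bound $|u(0,t)|\leq C|t|^{(1+\alpha)/2}$.

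The time-direction estimate $|u(0,t)|\leq C|t|^{(1+\alpha)/2}$ is the main obstacle, since parabolic H\"older continuity of $\nabla u$ gives no direct control of $u$ in the time variable at a fixed spatial point; morally this is the upgrade from $\nabla u\in H^{\alpha,\alpha/2}$ to $u\in H^{1+\alpha,(1+\alpha)/2}$, which genuinely uses the equation. I would approach it via a scaling/contradiction argument. Consider the rescalings $\tilde u_r(x,t):=r^{-(1+\alpha)}u(rx,r^2t)$. From $\nabla\tilde u_r(x,t)=r^{-\alpha}\nabla u(rx,r^2t)$ the parabolic H\"older seminorm of $\nabla\tilde u_r$ is $r$-independent, and combined with $\nabla\tilde u_r(0,0)=0$ this produces a uniform $L_\infty$ bound on $\nabla\tilde u_r$ on compact sets; moreover $\tilde u_r\in\S^{f_r}$ with $f_r(x,t)=r^{1-\alpha}f(rx,r^2t)$ uniformly bounded in $L_\infty$. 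If $M_r:=\|\tilde u_r\|_{L_\infty(Q_1^+)}\to\infty$ along some sequence $r_j\to 0$, the normalized functions $v_j:=\tilde u_{r_j}/M_{r_j}$ would be uniformly bounded, vanish at $(0,0)$, satisfy $\|\nabla v_j\|_{L_\infty(Q_1^+)}\to 0$, and solve the Signorini problem with right-hand side tending to zero in $L_\infty$; by parabolic compactness a subsequential limit $v_*$ would be independent of $x$ (from $\nabla v_j\to 0$), satisfy the homogeneous heat equation and hence be constant in $t$, and vanish at the origin, forcing $v_*\equiv 0$ in contradiction with unit $L_\infty$ norm. This yields uniform $\|\tilde u_r\|_{L_\infty(Q_1^+)}\leq C$, i.e.\ the desired pointwise bound $|u(x,t)|\leq C\|(x,t)\|^{1+\alpha}$.

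With the pointwise estimate in hand, the substitution $x=ry$, $t=r^2s$ in the definition of $H_u(r)$ gives
\[
H_u(r)=\int_{-1}^{0}\int_{\R^n_+}u(ry,r^2s)^2\,G(y,s)\,dy\,ds\leq C\,r^{2(1+\alpha)}\int_{-1}^{0}\int_{\R^n_+}\|(y,s)\|^{2(1+\alpha)}\,G(y,s)\,dy\,ds,
\]
where the parabolic homogeneity $\|(ry,r^2s)\|=r\|(y,s)\|$ has been used, and the last integral is finite because $G$ has all polynomial moments; this gives $H_u(r)\leq C_u r^{2(1+\alpha)}$ as claimed.
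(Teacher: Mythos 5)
Your proof is correct in outline: it reaches the same intermediate pointwise estimate $|u(x,t)|\leq C\|(x,t)\|^{1+\alpha}$ as the paper, and the concluding change of variables in the Gaussian integral is identical. The difference lies in how the time-direction bound is obtained. The paper argues quantitatively with the parabolic Harnack inequality: if $u(0,-r^2)\geq C_1r^{1+\alpha}$ for $C_1$ large, the gradient bound forces $u\geq (C_1-C_0)r^{1+\alpha}$ on the whole slice $B_r\times\{-r^2\}$; adding the constant $C_0(2r)^{1+\alpha}$ makes the function nonnegative in $Q_{2r}^+$ (using $u\geq 0$ on the thin space), and Harnack then propagates the positivity forward to time $0$, contradicting $u(0,0)=0$. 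Your route is a blow-up/normalization argument that classifies the limit of $\tilde u_{r}/M_{r}$; it is more in the Liouville spirit, handles both signs of $u$ at once, and is arguably more robust, while the paper's is shorter and fully explicit in the constants.

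The one step that needs more care is the phrase ``by parabolic compactness.'' The uniform data you list ($\|v_j\|_{L_\infty}=1$, $\|\nabla v_j\|_{L_\infty}\to 0$, right-hand side tending to $0$) give equicontinuity in $x$ but not, by themselves, equicontinuity in $t$ --- and that is precisely the obstruction you yourself identified two sentences earlier. Without it the contradiction can evaporate: the points where $|v_j|$ is close to $1$ may drift to times $t_j\to 0^-$, while $v_j(\cdot,0)\to 0$ uniformly (from $v_j(0,0)=0$ and the spatial gradient bound), so a weak limit could be identically $0$ without contradicting $\sup|v_j|=1$; likewise, evaluating the limit at the corner point $(0,0)$ requires uniform convergence up to $t=0$. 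The repair is to use the equation once more before passing to the limit: integrating $\partial_t v_j=\Delta v_j-g_j$ over $B_\rho^+(x)$ and applying the divergence theorem (the flux through $(\partial B_\rho(x))^+$ and through $B_\rho'(x)$ is $O(\|\nabla v_j\|_\infty\rho^{n-1})$, the latter because the outward normal derivative there is $-\partial_{x_n}v_j$) gives
\[
\Big|\frac{d}{dt}\frac{1}{|B_\rho^+|}\int_{B_\rho^+(x)}v_j(\cdot,t)\Big|\leq C\Big(\frac{\|\nabla v_j\|_\infty}{\rho}+\|g_j\|_\infty\Big),
\]
and choosing $\rho=|t-s|^{1/2}$ yields $|v_j(x,t)-v_j(x,s)|\leq C(\epsilon_j+\delta_j)\big(|t-s|^{1/2}+|t-s|\big)$ with $\epsilon_j=\|\nabla v_j\|_\infty$, $\delta_j=\|g_j\|_\infty$. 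Hence $v_j$ converges uniformly on $\overline{Q_{3/4}^+}$ to a constant, the constant is $0$ because $v_j(0,0)=0$, and this contradicts $|v_j|\geq 1-O(\epsilon_j)$ on $B_{3/4}^+\times\{t_j\}$. With that inserted your argument is complete.
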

\begin{proof} Since $(0,0)\in \Gamma_*(u)$, we must have $|\nabla
  u(0,0)|=0$. Recalling also that $u$ has a bounded support, we obtain
  that
$$
|\nabla u|\leq C_0\|(x,t)\|^{\alpha},\quad (x,t)\in S_1^+.
$$
Let us show that for $C>0$
$$
|u|\leq C\|(x,t)\|^{1+\alpha},\quad (x,t)\in S_1^+.
$$
Because of the gradient estimate above, it will be enough to show that
$$
|u|\leq C r^{1+\alpha}\quad\text{in }Q_r^+.
$$
First, observe that since $u\geq 0$ on $Q_r'$, we readily have
$$
u\geq -C_0 r^{1+\alpha}\quad\text{in }Q_r^+.
$$
To show the estimate from above, it will be enough to establish that
$$
u(0,-r^2)\leq C_1 r^{1+\alpha}.
$$
Note that since $u$ is bounded, it is enough to show this bound for
$0<r<1/2$.  Assuming the contrary, let $r\in (0,1/2 )$ be such that
$u(0,-r^2)\geq C_1 r^\alpha$ with large enough $C_1$. Then, from the
bound on the gradient, we have $u\geq (C_1-C_0)r^{1+\alpha}$ on
$B_r\times\{-r^2\}$. In particular,
$$
u(\tfrac12r e_n,-r^2)\geq (C_1-C_0)r^{1+\alpha}.
$$
Also, let $M$ be such that $|f(x,t)|\leq M$ in $S_1^+$. Then, consider
the function
$$
\tilde u(x,t)=u(x,t)+C_0(2r)^{1+\alpha}.
$$
We will have
$$
\tilde u\geq 0,\quad |(\Delta -\partial_t) \tilde u|\leq
M\quad\text{in } Q_{2r}^+.
$$
Besides,
$$
\tilde u(\tfrac 12 r e_n,-r^2)\geq C_1r^{1+\alpha}.
$$
Then, from the parabolic Harnack inequality (see e.g.\
\cite{Lie}*{Theorems~6.17--6.18})
$$
\tilde u(\tfrac12 e_n,0)\geq C_nC_1 r^{1+\alpha}- Mr^2,
$$
or equivalently,
$$
u((\tfrac12 e_n,0)\geq (C_nC_1-C_0 2^{1+\alpha})r^{1+\alpha}-Mr^2.
$$
But then from the bound on the gradient we will have
$$
u(0,0)\geq (C_nC_1-C_02^{1+\alpha}-C_0)r^{1+\alpha}-Mr^2>0,
$$
if $C_1$ is sufficiently large, a contradiction. This implies the
claimed estimate
$$
|u(x,t)|\leq C\|(x,t)\|^{1+\alpha}.
$$
The estimate for $H_u(r)$ is then a simple corollary:
\begin{align*}
  H_u(r)&\leq \frac{C}{r^2}\int_{S_r^+}(|x|^2+|t|)^{1+\alpha} G(x,t)dxdt\\
  &=\frac{C}{r^2}\int_0^{r^2}\int_{\R^n_+}s^{1+\alpha}(y^2+1) G(s^{1/2}y,-s)s^{n/2}dyds\\
  &=\frac{C}{r^2}\int_0^{r^2}\int_{\R^n_+}s^{1+\alpha}(y^2+1) G(y,-1)dyds\\
  &=Cr^{2(1+\alpha)}.\qedhere
\end{align*}
\end{proof}

The proof of Proposition~\ref{prop:kappa>1alpha} now follows easily.

\begin{proof}[Proof of Proposition~\ref{prop:kappa>1alpha}] Let
  $\kappa'\in (\kappa, \kappa_\mu)$ be arbitrary. Then, by
  Lemma~\ref{lem:Hu-est} we have
$$
H_u(r)\geq c_u\, r^{2\kappa'},\quad 0<r<r_u.
$$
On the other hand, by Lemma~\ref{lem:Holder-Hu-above} (proved below)
we have the estimate
$$
H_{u}(r)\leq C_u r^{2(1+\alpha)},\quad 0<r<1.
$$
Hence, $\kappa'\geq 1+\alpha$. Since this is true for any $\kappa'\in
(\kappa, \kappa_\mu)$, we obtain that $\kappa\geq 1+\alpha$, which is
the sought for conclusion.
\end{proof}

We will also need the following technical fact.

\begin{lemma} Let $u_0$ be a nonzero $\kappa$-parabolically
  homogeneous solution of the Signorini problem in $S_\infty^+$, as in
  Theorem~\ref{thm:exist-homogen-blowups}(iii). Then, $\nabla u_0\in
  H^{\alpha,\alpha/2}_\loc((\R^n_+\cup\R^{n-1})\times(-\infty,0))$ for
  some $0<\alpha<1$.
\end{lemma}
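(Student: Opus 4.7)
The strategy is to apply Lemma~\ref{lem:known-Halpha-2} to $u_0$ on arbitrary compact parabolic subcylinders of $S_\infty^+$ that avoid $t=0$. By Theorem~\ref{thm:exist-homogen-blowups}(iii), $u_0\in W^{2,1}_2(Q_R^+)$ for every $R>0$, and $u_0$ is a global solution of the \emph{homogeneous} Signorini problem ($f\equiv 0$, $\phi\equiv 0$). Hence the only hypothesis of Lemma~\ref{lem:known-Halpha-2} that is not automatic is the local spatial Lipschitz bound $\nabla u_0\in L_\infty$; once it is available, the lemma (after translation and rescaling) yields $\nabla u_0\in H^{\alpha,\alpha/2}(Q_r^+(x_0,t_0)\cup Q_r'(x_0,t_0))$ for any $t_0<0$ and $0<r<\sqrt{-t_0}$, with a dimensional $\alpha>0$. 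A covering argument for compact subsets of $(\R^n_+\cup\R^{n-1})\times(-\infty,0)$ then gives the claim.

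The local $L_\infty$ bound is in fact established inside the proof of Theorem~\ref{thm:exist-homogen-blowups}(iii), where one finds
$$
\|u_0\|_{L_\infty(Q_{1/2})}^2\leq C_n H_{u_0}(1)<\infty.
$$
Combined with the parabolic $\kappa$-homogeneity $u_0(\lambda x,\lambda^2 t)=\lambda^\kappa u_0(x,t)$, this yields $\|u_0\|_{L_\infty(Q_r)}\leq C_n H_{u_0}(1)^{1/2}\,r^\kappa$ for every $r>0$, so that $u_0\in L_\infty(Q_r)$ for each $r>0$.

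To upgrade from $L_\infty$ to $\nabla u_0\in L_\infty$ locally, fix $(x_0,t_0)$ with $t_0<0$ and $R>0$ with $t_0+R^2<0$, and pick $\eta(x,t)=\eta_1(x)\eta_2(t)$ with $\eta_1\in C^\infty_0(B_R(x_0))$ even in $x_n$, and $\eta_2\in C^\infty(\R)$ supported in $(t_0-R^2,t_0+R^2)$, equal to $1$ on $Q_{R/2}(x_0,t_0)$. Then $\tilde u:=u_0\eta$ solves the Signorini problem on $Q_R^+(x_0,t_0)$ with $\phi=g=\phi_0=0$ and source
$$
\tilde f=u_0(\Delta\eta-\partial_t\eta)+2\nabla u_0\cdot\nabla\eta,
$$
which is only in $L_2$ a priori because of the $\nabla u_0$ factor. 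The plan is to run the Arkhipova--Uraltseva penalization scheme from Section~\ref{sec:existence-regularity} on $\tilde u$ together with a parabolic $L_p$ bootstrap: at each step the uniform $W^{2,1}_p$ estimate on the approximating $\tilde u^\epsilon$ combined with the parabolic Sobolev embedding $W^{2,1}_p\hookrightarrow W^{1,0}_q$ raises the integrability of $\nabla u_0$ on a slightly smaller cylinder; iterating over a nested family of concentric cylinders with radii between $R/2$ and $R/4$, and reaching $p$ beyond $n+2$, produces $\nabla u_0\in L_\infty(Q_{R/4}^+(x_0,t_0))$.

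The only delicate point is this final bootstrap: the cutoff forces the still unknown gradient into the source term, and the AU penalization delivers uniform $L_p$-gradient estimates only when the source lies in $L_p$. Running the iteration on nested concentric cylinders, conceding a small amount of spatial domain at each step, avoids any circularity and produces the required local Lipschitz bound; Lemma~\ref{lem:known-Halpha-2} then closes the proof.
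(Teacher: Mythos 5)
Your reduction to Lemma~\ref{lem:known-Halpha-2} would be fine \emph{provided} you could first establish that $\nabla u_0$ is locally bounded, and you correctly identify this as the crux --- the paper itself flags exactly this obstruction in the remark following the lemma. The problem is that your mechanism for producing the local Lipschitz bound does not work. The bootstrap hinges on uniform $W^{2,1}_p$ estimates for the (penalized) Signorini problem with an $L_p$ right-hand side, for $p$ eventually exceeding $n+2$. Such estimates are false for the Signorini problem once $p\geq 4$: for the model solution $\Re(x_1+ix_n)^{3/2}$ one has $|D^2u|\sim\rho^{-1/2}$, where $\rho$ is the distance to the $(n-2)$-dimensional free boundary, and $\rho^{-p/2}$ is locally integrable only for $p<4$. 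Since $n+2\geq 4$, the iteration can never reach the exponent needed for the embedding $W^{2,1}_p\hookrightarrow W^{1,0}_\infty$, and shrinking the cylinders at each step does not help --- the obstruction is the solution itself, not the cutoff. (In any case the paper only establishes the $W^{2,1}_2$ theory, Lemma~\ref{lem:known-W22}; the intermediate $W^{2,1}_p$ estimates you invoke for $2<p<4$ are not available either.) The actual route to the gradient bound in this setting (Lemma 6 of \cite{AU0}) is not an $L_p$ bootstrap but a direct argument that requires $f\in L_\infty$ and $\phi_0\in W^2_\infty$, which is precisely where your cutoff construction is circular, since the source $\tilde f$ contains $\nabla u_0$.

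The paper sidesteps all of this by exploiting homogeneity: it suffices to control the single time slice $g(x):=u_0(x,-1)$, and since $Zu_0=\kappa u_0$ one finds that $g\in W^2_2(B_R^+)$ solves the \emph{elliptic} Signorini problem $\Delta g-\tfrac12 x\nabla g+\tfrac{\kappa}{2}g=0$ in $B_R^+$ with the Signorini conditions on $B_R'$. The known elliptic regularity theory for variable-coefficient Signorini problems (see \cite{AU}) then gives $g\in H^{1+\alpha}_\loc(\R^n_+\cup\R^{n-1})$, and homogeneity transports the estimate to every time slice $t<0$. You should adopt that reduction (or otherwise supply a genuine proof of the local Lipschitz bound) before invoking Lemma~\ref{lem:known-Halpha-2}.
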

\begin{remark} Note that this does not follow from
  Lemmas~\ref{lem:known-Halpha} or \ref{lem:known-Halpha-2} directly,
  since they rely on $W^{2}_\infty$-regularity of $\phi_0$ (which is
  given by the function $u_0$ itself), or $W^{1,0}_\infty$-regularity
  of $u_0$, which has to be properly justified.
\end{remark}

\begin{proof}
  Note that because of the homogeneity, it is enough to show that
  $g(x):=u_0(x,-1)$ is in
  $H^{1+\alpha}_\loc(\R^n_+\cup\R^{n-1})$. Indeed, $g\in
  W^{2}_{2}(B_R^+)$ for any $R>0$ and since $x\nabla u_0+2t\partial_t
  u_0=\kappa u_0$, we obtain that $g$ solves the Signorini problem
  \begin{align*}
    \Delta g-\frac12x\nabla g+\frac{\kappa}{2}g=0&\quad\text{in }B_R^+,\\
    g\geq 0,\quad -\partial_{x_n}g\geq 0,\quad
    g\partial_{x_n}g=0&\quad\text{on }B_R'.
  \end{align*}
  But now, the known results for the elliptic Signorini problem for
  operators with variable coefficients (see e.g.\ \cite{AU}) imply
  that $g\in H^{1+\alpha}_\loc(\R^n_+\cup \R^{n-1})$, as claimed.
\end{proof}

\begin{proposition}[Homogeneous global solutions of homogeneity
  $1<\kappa<2$]
  \label{prop:homogen-glob-sol-1-2}
  Let $u_0$ be a nonzero $\kappa$-parabolically homogeneous solution
  of the Signorini problem in $S_\infty^+=\R^n_+\times(-\infty,0]$
  with $1<\kappa<2$. Then, $\kappa=3/2$ and
$$
u_0(x,t)=C\Re(x'\cdot e+ix_n)_+^{3/2}\quad\text{in }S_\infty^+
$$
for some tangential direction $e\in\partial B_1'$.
\end{proposition}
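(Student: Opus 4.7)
The plan is to reduce the problem to spatial dimension $n=2$ via a parabolic version of Caffarelli's two-phase monotonicity formula, and then to resolve the remaining two-dimensional case by analyzing the Ornstein--Uhlenbeck eigenvalue problem on a slit plane.

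First I would extend $u_0$ by even reflection in $x_n$, so that it is caloric on $\R^n\times(-\infty,0]$ off the coincidence set $\Lambda(u_0)\subset\R^{n-1}\times(-\infty,0]$. For any tangential unit vector $e\in\partial B_1'$, the derivative $v_e:=\partial_e u_0$ is parabolically $(\kappa-1)$-homogeneous with $\kappa-1\in(0,1)$, vanishes on $\Lambda(u_0)$ (since $u_0$ does there and $e$ is tangential), and is caloric off $\Lambda(u_0)$. A Kato-type inequality then yields that $v_e^\pm$ are subcaloric on $\R^n\times(-\infty,0]$, with disjoint positivity sets and with $v_e^\pm(0,0)=0$. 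Applying the parabolic Caffarelli monotonicity formula to $(v_e^+,v_e^-)$: the associated functional scales like $r^{4(\kappa-1)-4}$ under parabolic dilations, hence is strictly decreasing since $\kappa-1<1$, which is incompatible with its monotone nondecreasing behaviour unless one of $v_e^\pm$ vanishes identically. Thus $\partial_e u_0$ has a constant sign on $S_\infty^+$ for every tangential direction $e$.

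The set $K:=\{e\in\R^{n-1}\mid\partial_e u_0\geq 0\}$ is then a closed convex cone with $K\cup(-K)=\R^{n-1}$; convex analysis forces $K$ to be either all of $\R^{n-1}$ or a closed half-space. The first case reduces $u_0$ to a one-dimensional problem in $(x_n,t)$, which via a direct Hermite ODE analysis allows only $\kappa\in\{0,2,4,\ldots\}$, ruling out $\kappa\in(1,2)$. In the second case, the $(n-2)$-dimensional edge of $K$ provides directions of invariance for $u_0$, and after a rotation in $\R^{n-1}$ we may assume $u_0=u_0(x_{n-1},x_n,t)$, reducing the problem to spatial dimension $n=2$. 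In this two-dimensional setting, writing $u_0(x,t)=(-t)^{\kappa/2}U(x/\sqrt{-t})$, the profile $U$ satisfies the Ornstein--Uhlenbeck eigenvalue equation
\[
\Delta U-\tfrac12\,y\cdot\nabla U+\tfrac\kappa 2\,U=0\quad\text{in }\R^2_+\setminus(\Lambda(u_0)\cap\{t=-1\}),
\]
with Signorini conditions on $\{y_2=0\}$; after even reflection in $y_2$, $U$ becomes an eigenfunction of the OU operator on $\R^2$ slit along a half-line $(-\infty,c]\times\{0\}$ (the form forced by $\partial_{x_{n-1}}u_0\geq 0$ and parabolic homogeneity). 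A spectral analysis of the OU operator on such slit planes then shows that an admissible eigenvalue $\kappa/2\in(1/2,1)$ can occur only at $\kappa/2=3/4$ with the slit passing through the origin ($c=0$), and the unique (up to scaling) eigenfunction is $U(y)=C\,\Re(y_1+iy_2)_+^{3/2}$; undoing the rotation from the dimension reduction produces the tangential direction $e\in\partial B_1'$.

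The main obstacle is precisely this spectral analysis: the OU operator does not separate in standard polar coordinates because of the radial drift, and one must either conjugate out the Gaussian weight to reduce the angular part to a Sturm--Liouville problem on the unit circle with Dirichlet data at the slit, or argue by direct comparison with an explicit family of parabolically homogeneous caloric functions. Identifying the gap $(3/2,2)$ of forbidden exponents and ruling out shifted slits ($c\neq 0$) in the range $\kappa\in(1,2)$ is the key step of the proof.
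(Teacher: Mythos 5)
Your first half — even reflection, the sign of $\partial_e u_0$ for every tangential $e$ via the parabolic Caffarelli monotonicity formula applied to $(v_e^+,v_e^-)$ with the scaling $\phi(r)=r^{4(\kappa-2)}\phi(1)$, and the reduction to $n=2$ (your convex-cone argument for $K$ is a clean way to phrase what the paper does implicitly) — matches the paper's proof. The genuine gap is in the two-dimensional endgame, which you yourself flag as ``the main obstacle'' and do not carry out, and moreover the way you have set it up makes it harder than it needs to be. You propose to treat the profile $U=u_0(\cdot,-1)$ as an Ornstein--Uhlenbeck eigenfunction with eigenvalue $\kappa/2$ on the slit plane with Dirichlet data on the slit. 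But $U$ is not known to have a sign, so the ground-state (Rayleigh quotient) characterization is unavailable, and you would genuinely need to determine the full OU spectrum on every shifted slit domain $\R^2\setminus\big((-\infty,c]\times\{0\}\big)$ to exclude $\kappa/2\in(1/2,1)\setminus\{3/4\}$ and to force $c=0$. That is precisely the part no argument is offered for.

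The paper avoids this by never touching the spectrum of $U$ itself. It works with \emph{derivatives}: $g_1(x)=\partial_{x_1}u_0(x,-1)$, which is nonnegative by the monotonicity already established, vanishes on $\Sigma_a^-=(-\infty,a]\times\{0\}$, and solves $-\Delta g_1+\tfrac12 x\cdot\nabla g_1=\tfrac{\kappa-1}{2}g_1$ off the slit; being nonnegative and nontrivial it is the \emph{ground state}, so $\tfrac{\kappa-1}{2}=\lambda(\Sigma_a^-)$, the infimum of the Gaussian Rayleigh quotient over functions vanishing on $\Sigma_a^-$. A second application of the Caffarelli monotonicity formula — this time to the pair built from $\pm\partial_{x_2}u_0$ by odd reflection in $x_2$, a step absent from your proposal — shows that this function $w$ is also one-signed, producing a second nonnegative ground state $g_2$ on the complementary slit $\Sigma_a^+=[a,\infty)\times\{0\}$ with the \emph{same} eigenvalue $\tfrac{\kappa-1}{2}$. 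Since $\lambda(\Sigma_a^+)=\lambda(\Sigma_{-a}^-)$ and $a\mapsto\lambda(\Sigma_a^-)$ is strictly monotone, this forces $a=0$; the explicit ground state $\Re(x_1+i|x_2|)^{1/2}$ then gives $\lambda(\Sigma_0^-)=1/4$, hence $\kappa=3/2$, and integrating $\partial_{x_1}u_0=C\Re(x_1+i|x_2|)^{1/2}$ recovers the stated form of $u_0$. To complete your argument you would need either to supply this two-ground-state comparison (including the sign of the reflected normal derivative) or to actually prove the spectral claim you assert; as written, the conclusion $\kappa=3/2$ is not established. A smaller unproven assertion is your one-dimensional case ($K=\R^{n-1}$): the claim that $(x_n,t)$-dependent homogeneous solutions have only even integer homogeneities is plausible but needs an argument.
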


\begin{proof} Extend $u_0$ by even symmetry in $x_n$ to the strip
  $S_\infty$, i.e., by putting
$$
u_0(x',x_n,t)=u_0(x',-x_n,t).
$$
Take any $e\in\partial B_1'$, and consider the positive and negative
parts of the directional derivative $\partial_eu_0$
$$
v_e^\pm=\max\{\pm\partial_e u_0,0\}.
$$
We claim that they satisfy the following conditions
$$
(\Delta-\partial_t)v_e^\pm\geq 0,\quad v_e^\pm\geq0,\quad v_e^+\cdot
v_e^-=0\quad\text{in }S_\infty.
$$
The last two conditions are obvious. The first one follows from the
fact that $v_e^\pm$ are continuous in $\R^n\times(-\infty,0)$ (by
Lemma~\ref{lem:Holder-Hu-above}) and caloric where positive. Hence, we
can apply Caffarelli's monotonicity formula to the pair $v_e^\pm$, see
\cite{Ca1}. Namely, the functional
$$
\phi(r)=\frac{1}{r^4}\int_{S_r} |\nabla v_e^+|^2 G\int_{S_r}|\nabla
v_e^-|^2 G,
$$
is monotone nondecreasing in $r$.  On the other hand, from the
homogeneity of $u$, it is easy to see that
$$
\phi(r)=r^{4(\kappa-2)}\phi(1),\quad r>0.
$$
Since $\kappa<2$, $\phi(r)$ can be monotone increasing if and only if
$\phi(1)=0$ and consequently $\phi(r)=0$ for all $r>0$. If fact, one
has to exclude the possibility that $\phi(1)=\infty$ as well. This can
be seen in two different ways. First, by Remark~\ref{rem:w212-alt},
one has
$$
\int_{S_{1}}|\nabla v_e^\pm|^2 G\leq C_n\int_{S_{2}}(v_e^\pm)^2 G\leq
\int_{S_4} u_0^2 G.
$$
Alternatively, from Theorem~\ref{thm:exist-homogen-blowups} (i) and
(iv) it follows that
$$
\int_{\R^n} |\nabla v_e^\pm|^2 G(\cdot,-1)dx=j^\pm<\infty.
$$
Then,
\begin{align*}
  \int_{S_1} |\nabla v_e^\pm|^2 G(x,t)dxdt&=\int_{S_1}|\nabla
  v_e^\pm(|t|^{1/2}y,t)|^2 G(|t|^{1/2}y,t)|t|^{n/2}dydt\\
  &=j^\pm\int_{-1}^0 |t|^{(\kappa-2)} dt<\infty,
\end{align*}
since $\kappa>1$.

From here it follows that one of the functions $v_e^\pm$ is
identically zero, which is equivalent to $\partial_e u_0$ being ether
nonnegative or nonpositive on the entire
$\R^n\times(-\infty,0]$. Since this is true for any tangential
direction $e\in\partial B_1$, it thus follows that $u_0$ depends only
on one tangential direction, and is monotone in that
direction. Therefore, without loss of generality we may assume that
$n=2$ and that the coincidence set at $t=-1$ is an infinite interval
$$
\Lambda_{-1}=\{(x',0)\in\R^2\mid u_0(x',0,-1)=0\}=(-\infty,
a]\times\{0\}=:\Sigma_a^-.
$$
Besides, repeating the monotonicity formula argument above for the
pair of functions $\max\{\pm w,0\}$, where
$$
w(x,t)=\begin{cases}-\partial_{x_2} u_0(x_1,x_2,t),& x_2\geq
  0\\\partial_{x_2}u_0(x_1,x_2,t),& x_2<0
\end{cases}
$$
is a caloric function in $\R^2\times(-\infty,0]\setminus\Lambda$,
parabolically homogeneous of degree $\kappa-1$, we obtain that also
$w$ does not change sign in $\R^2\times(-\infty,0]$. Noting also that
$w\geq 0$ on $\R^\times\{0\}\times (-\infty,0]$, we see that $w\geq 0$
everywhere(unless $w=0$ identically on $\R^{n-1}\times(-\infty,0]$, in
which case $w$ is a polynomial of degree $\kappa-1$, which is
impossible, since $\kappa$ is noninteger). Hence, we get
$$
\partial_{x_1}u_0\geq 0,\quad-\partial_{x_2} u_0(x_1, x_2,t)\geq
0\quad\text{in } \R^2_+\times(-\infty, 0].
$$
Further if $g_1(x)=\partial_{x_1}u_0(x,-1)$ and using that
$\partial_{x_1}u_0(x,t)$ is caloric we obtain that
$$
g_1=0\quad\text{on }\Sigma_a^-,\quad-\Delta g_1+\frac12x\nabla
g_1=\frac{\kappa-1}{2} g_1\quad\text{in }\R^2\setminus\Sigma_a^-.
$$
Since also $g_1$ is nonnegative and not identically zero, then $g_1$
is the ground state for the Ornstein-Uhlenbeck operator in
$\R^2\setminus\Sigma_a^-$ and
$$
\frac{\kappa-1}2=\lambda(\Sigma_a^-)=\inf_{v|_{\Sigma_a^-}=0}\frac{\int_{\R^2}|\nabla
  v|^2 e^{-\frac{x^2}{4}}dx}{\int_{\R^2}v^2 e^{-\frac{x^2}{4}}dx}.
$$
On the other hand, let $g_2(x)=-\partial_{x_2}u_0(x_1,x_2,-1)$ in
$x_2\geq 0$ and $g_2(x)=\partial_{x_2}u_0(x_1,x_2,-1)$ for
$x_2<0$. Then, we have
$$
g_2=0\quad\text{on }\Sigma_a^+,\quad-\Delta g_2+\frac12x\nabla
g_2=\frac{\kappa-1}{2} g_2\quad\text{in }\R^2\setminus\Sigma_a^+.
$$
with $\Sigma_a^+:=[a,\infty)\times\{0\}$.
\begin{figure}[t]
  \begin{picture}(100,100)
    \put(0,0){\includegraphics[height=100pt]{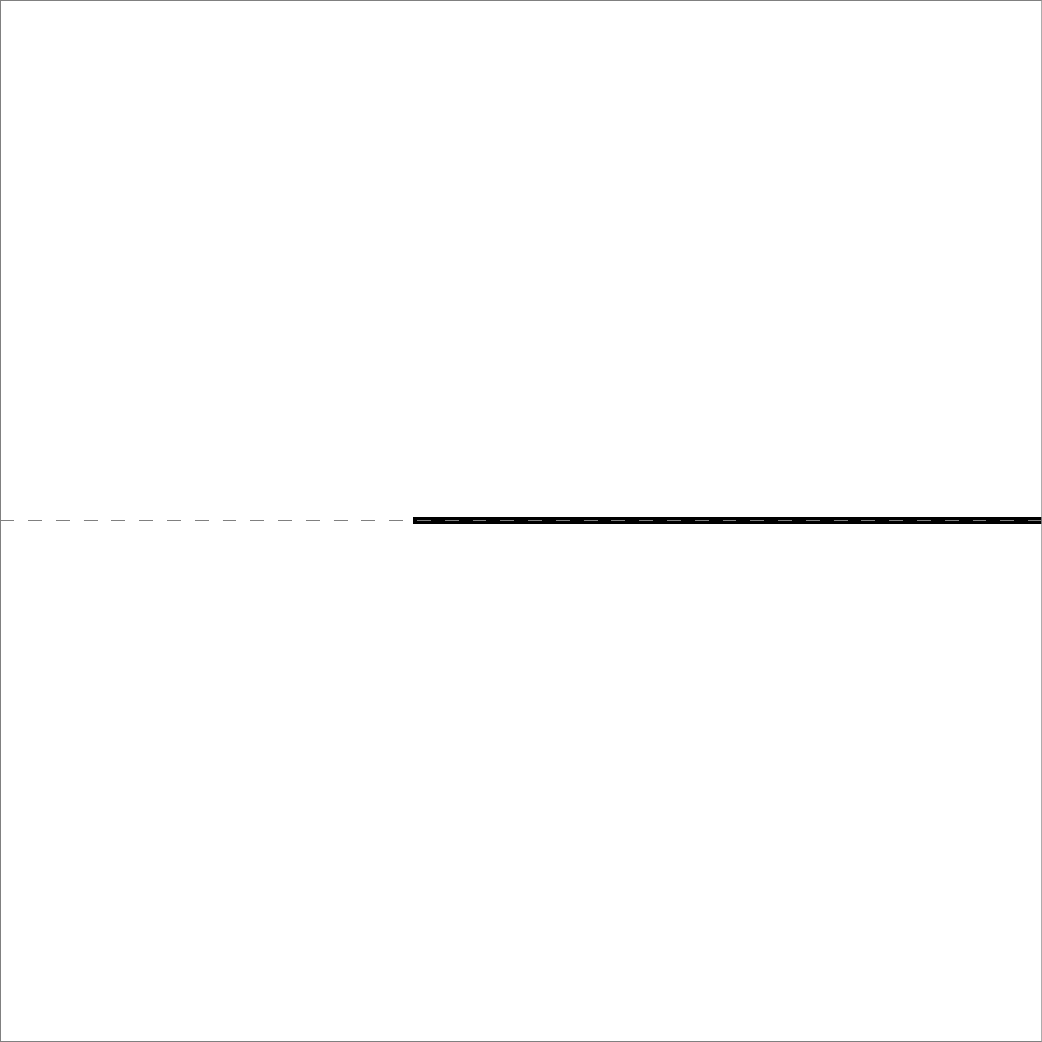}}
    \put(40,43){\footnotesize $a$} \put(78,40){\footnotesize
      $\Sigma_a^+$} \put(10,40){\footnotesize $\R$}
    \put(86,90){\footnotesize $\R^2$}
  \end{picture}
  \caption{The slit domain $\R^2\setminus \Sigma_a^+$,
    $\Sigma_a^+=[a,\infty)\times\{0\}$.}
  \label{fig:slit-dom}
\end{figure}
Thus, this time $g_2$ is the ground state for the Ornstein-Uhlenbeck
operator in $\R^2\setminus\Sigma_a^+$, and therefore
$$
\frac{\kappa-1}2=\lambda(\Sigma_a^+)=\inf_{v|_{\Sigma_a^+}=0}\frac{\int_{\R^2}|\nabla
  v|^2 e^{-\frac{x^2}{4}}dx}{\int_{\R^2}v^2 e^{-\frac{x^2}{4}}dx}.
$$
Observe now that $\lambda(\Sigma_a^+)=\lambda(\Sigma_{-a}^-)$ and
therefore from the above equations we have
$$
\lambda(\Sigma_{a}^-)=\lambda(\Sigma_{-a}^-).
$$
On the other hand, it is easy to see that the function $a\mapsto
\lambda(\Sigma_{a}^-)$ is strictly monotone and therefore the above
equality can hold only if $a=0$. In particular,
$$
\kappa=1+2\lambda(\Sigma_0^-).
$$
We now claim that $\lambda(\Sigma_0^-)=1/4$. Indeed, consider the
function
$$
v(x)=\Re(x_1+i|x_2|)^{1/2},
$$
which is harmonic and homogeneous of degree $1/2$:
$$
\Delta v=0,\quad x\nabla v-\frac12 v=0.
$$
Therefore,
$$
v=0\quad\text{on }\Sigma_0^-,\quad-\Delta v+\frac12 x\nabla v=\frac14
v\quad\text{in }\R^2\setminus\Sigma_0^-.
$$
Also, since $v$ is nonnegative, we obtain that $v$ is the ground state
of the Ornstein-Uhlenbeck operator in $\R^2\setminus\Sigma_0^-$. This
implies in particular that $\lambda(\Sigma_0^-)=\frac14$, and
consequently
$$
\kappa=3/2.
$$
Moreover, $g_1(x)=\partial_{x_1}u_0(x,-1)$ must be a multiple of the
function $v$ above and from homogeneity we obtain that
\begin{align*}
  \partial_{x_1}u_0(x,t)&=(-t)^{1/4}g_1(x/(-t)^{1/2})=C(-t)^{1/4}\Re((x_1+i
  |x_2|)/(-t)^{1/2})^{1/2}\\&=C\Re(x_1+i |x_2|)^{1/2}.
\end{align*}
From here it is now easy to see that necessarily
\[
u_0(x,t)=C\Re(x_1+i |x_2|)^{3/2}.\qedhere
\]
\end{proof}
Combining Propositions~\ref{prop:kappa>1alpha} and
\ref{prop:homogen-glob-sol-1-2} we obtain the following result.

\begin{theorem}[Minimal homogeneity]\label{thm:min-homogen}
  \pushQED{\qed} Let $u\in \S^f(S_1^+)$ and $\mu$ satisfies the
  conditions of Theorem~\ref{thm:thin-monotonicity}. Assume also
  $\kappa_\mu=\frac12\lim_{r\to 0+}r\mu'(r)/\mu(r)\geq 3/2$. Then
$$
\kappa:=\Phi_u(0+)\geq 3/2.
$$ 
More precisely, we must have
\[
\text{either}\quad \kappa=3/2\quad\text{or}\quad \kappa\geq 2.\qedhere
\]
\popQED
\end{theorem}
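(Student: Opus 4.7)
The plan is to obtain this theorem as a direct corollary of Propositions~\ref{prop:kappa>1alpha} and \ref{prop:homogen-glob-sol-1-2}, by routing through the blowup machinery of Theorem~\ref{thm:exist-homogen-blowups}. No fresh PDE estimates are needed; the whole content is a short contradiction-by-dichotomy.

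The main lower bound $\kappa\ge 3/2$ I would argue by contradiction. Suppose $\kappa<3/2$. By Lemma~\ref{lem:Hu-mu} we have $\kappa\le\kappa_\mu$, and the standing hypothesis $\kappa_\mu\ge 3/2$ upgrades this to the strict inequality $\kappa<\kappa_\mu$. This is precisely the regime where Theorem~\ref{thm:exist-homogen-blowups} applies: it produces a nonzero $\kappa$-parabolically homogeneous global solution $u_0$ of the Signorini problem in $S_\infty^+$. Proposition~\ref{prop:kappa>1alpha} pins the homogeneity from below, $\kappa\ge 1+\alpha>1$, so $1<\kappa<3/2<2$. The rigidity statement Proposition~\ref{prop:homogen-glob-sol-1-2} collapses the homogeneity of any such $u_0$ to the unique value $3/2$, contradicting $\kappa<3/2$. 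Therefore $\kappa\ge 3/2$.

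For the sharper dichotomy $\kappa=3/2$ or $\kappa\ge 2$, I would rerun exactly the same scheme under the opposite assumption $\kappa\in(3/2,2)$. Whenever the strict inequality $\kappa<\kappa_\mu$ holds in this range, Theorem~\ref{thm:exist-homogen-blowups} again furnishes a global $\kappa$-homogeneous Signorini solution with $1<\kappa<2$, and Proposition~\ref{prop:homogen-glob-sol-1-2} again forces $\kappa=3/2$, excluding the open interval $(3/2,2)$.

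I do not expect any substantial obstacle at this step: all the real work has already been absorbed into Proposition~\ref{prop:homogen-glob-sol-1-2}, whose hard parts are the reduction to $n=2$ via Caffarelli's monotonicity formula applied to $\max\{\pm\partial_e u_0,0\}$ for tangential directions $e$, and the Ornstein--Uhlenbeck eigenvalue identity $\lambda(\Sigma_0^-)=1/4$ on the slit plane $\R^2\setminus\Sigma_0^-$ that pins the homogeneity at $3/2$. Given that input, the theorem here is a two-line consequence; the only point worth flagging is that the blowup argument needs the strict inequality $\kappa<\kappa_\mu$, which is automatic in the range $\kappa<3/2$ from the hypothesis $\kappa_\mu\ge 3/2$ and is the reason the assumption on $\kappa_\mu$ is stated in precisely this form.
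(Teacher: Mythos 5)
Your proposal is correct and is essentially the paper's own proof: the paper obtains Theorem~\ref{thm:min-homogen} by exactly this combination of Proposition~\ref{prop:kappa>1alpha}, the blowup existence and homogeneity from Theorem~\ref{thm:exist-homogen-blowups}, and the classification of homogeneous global solutions in Proposition~\ref{prop:homogen-glob-sol-1-2}. You also correctly isolate the one delicate point --- that the blowup argument requires the strict inequality $\kappa<\kappa_\mu$, which is automatic in the range $\kappa<3/2$ --- a point the paper leaves implicit.
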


\begin{remark} Very little is known about the possible values of
  $\kappa$. However, we can say that the following values of $\kappa$
  do occur:
$$
\kappa=2m-1/2,\ 2m,\ 2m+1,\quad m\in\N.
$$
This can be seen from the following explicit examples of homogeneous
solutions (that are actually $t$-independent)
$$
\Re(x_1+ix_n)^{2m-1/2},\quad \Re(x_1+ix_n)^{2m},\quad
-\Im(x_1+ix_n)^{2m+1}.
$$
It is known (and easily proved) that in $t$-independent case and
dimension $n=2$, the above listed values of $\kappa$ are the only ones
possible. In all other cases, finding the set of possible values of
$\kappa$ is, to the best of our knowledge, an open problem.
\end{remark}
\section{Optimal regularity of solutions}
\label{sec:optim-regul-solut}

Using the tools developed in the previous sections we are now ready to
prove the optimal regularity of solutions of the parabolic Signorini
problem with sufficiently smooth obstacles. In fact, we will establish
our result for a slightly more general class of functions solving the
Signorini problem with nonzero obstacle and nonzero right-hand side.

\begin{theorem}[Optimal regularity in the parabolic Signorini problem]
  \label{thm:opt-reg}
  Let $\phi\in H^{2,1}(Q_1^+)$, $f\in L_\infty(Q_1^+)$. Assume that
  $v\in W^{2,1}_2(Q_1^+)$ be such that $\nabla v\in
  H^{\alpha,\alpha/2}(Q_1^+\cup Q_1')$ for some $0<\alpha<1$, and
  satisfy
  \begin{gather}
    \Delta v-\partial_t v = f\quad\text{in } Q_1^+,\\
    v-\phi\geq 0,\quad -\partial_{x_n} v\geq
    0,\quad(v-\phi)\partial_{x_n} v = 0\quad\text{on } Q'_1.
  \end{gather}
  Then, $v\in H^{3/2,3/4}(Q_{1/2}^+\cup Q_{1/2}')$ with
$$
\|v\|_ {H^{3/2,3/4}(Q_{1/2}^+\cup Q_{1/2}')}\leq
C_n\left(\|v\|_{W^{1,0}_\infty(Q_1^+)}+\|f\|_{L_\infty(Q_1^+)}+\|\phi\|_{H^{2,1}(Q_1')}\right).
$$
\end{theorem}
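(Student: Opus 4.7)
The plan is to derive the theorem from the minimal-frequency bound $\kappa\ge 3/2$ (Theorem~\ref{thm:min-homogen}) by combining the generalized frequency formula with a Campanato-type scaling argument. Since the assertion is local and, away from the extended free boundary $\Gamma_*(v)$, the function $v$ locally satisfies either the pure heat equation with a Dirichlet datum $\phi\in H^{2,1}$ (on the coincidence set) or an even-reflected heat equation (on the non-coincidence part of $Q_1'$), classical parabolic Schauder estimates already give $v\in H^{3/2,3/4}$ on compact subsets of the complement of $\Gamma_*(v)$. It therefore suffices to establish, uniformly in $p_0 \in \Gamma_*(v)\cap\overline{Q}_{3/4}'$, the pointwise growth
\[
  |v(x,t) - L_{p_0}(x',t)|\le C\,\|(x,t)-p_0\|^{3/2},
\]
where $L_{p_0}(x',t) = \phi(x_0',t_0) + \nabla_{x'}\phi(x_0',t_0)\cdot(x'-x_0')$ is the spatial linearization of $\phi$ at $p_0 = (x_0',0,t_0)$ and $C$ depends only on the data.

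Fix such a $p_0$, translate it to the origin, and apply Proposition~\ref{prop:uk-def} with $\ell=2$ (so $k=1$, $\gamma=1$): this produces a function $u = u_1\in\S^{f_1}(S_1^+)$ with $\|f_1\|_{L_\infty(S_1^+)}\le M$, where $M$ is bounded in terms of the data. Now apply the generalized frequency formula (Theorem~\ref{thm:thin-monotonicity}) with $\sigma=1/2$ and truncation $\mu(r) = C_\ast r^3$, the constant $C_\ast$ chosen so large that the hypothesis $\mu(r) \ge C_\mu r^{4-2\sigma}\|f_1\|_\infty^2 = C_\mu r^3 \|f_1\|_\infty^2$ is satisfied. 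Since $\kappa_\mu = \tfrac12 \lim_{r\to 0+} r\mu'(r)/\mu(r) = 3/2$, Theorem~\ref{thm:min-homogen} gives $\kappa = \Phi_u(0+) \ge 3/2$, and monotonicity of $\Phi_u$ then forces $\Phi_u(r)\ge 3/2$ for every $r\in(0,1)$.

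The next step converts this into the growth bound $H_u(r)\le Cr^3$. On the open set $\{H_u>\mu\}$, the monotone quantity reduces to $\Phi_u(r) = \tfrac12 r e^{Cr^\sigma}(\log H_u)'(r) + 2(e^{Cr^\sigma}-1)$, so $\Phi_u(r)\ge 3/2$ rewrites as the differential inequality $(\log H_u)'(r) \ge 3/r - C'r^{\sigma-1}$; integrating from $r$ up to a fixed $r_0\in(0,1)$ and exponentiating yields $H_u(r)\le Cr^3$ on $\{H_u>\mu\}$, while on the complement $H_u\le \mu = C_\ast r^3$ is immediate. To pass from this integral bound to a pointwise one, rescale via $w(y,s) := u(ry,r^2s)/r^{3/2}$, which yields a family uniformly bounded in the Gaussian-weighted $L_2$ norm with uniformly bounded right-hand side. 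After even reflection in $x_n$ the square $u^2$ is almost sub-caloric --- the jump of its normal derivative is killed on both the contact set ($u=0$) and the non-contact set ($\partial_{x_n}u=0$), and elsewhere $(\Delta-\partial_t)(u^2)\ge -C|u|$ --- so a weighted sub-mean-value inequality controls $\sup_{Q_{1/2}^+}|w|$ by the $L_2$ bound, giving $\sup_{Q_{r/2}^+}|u|\le Cr^{3/2}$ and hence the desired pointwise growth at $p_0$.

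Finally, a standard Campanato-type dichotomy converts this uniform growth at free boundary points into $v\in H^{3/2,3/4}(Q_{1/2}^+\cup Q_{1/2}')$: for $p\in Q_{1/2}^+\cup Q_{1/2}'$, let $d$ denote the parabolic distance from $p$ to $\Gamma_*(v)$; if $d\ge 1/8$ one applies interior parabolic Schauder estimates directly, whereas if $d<1/8$, picking a nearest free boundary point $p_0$ one has $|v-L_{p_0}|\le Cd^{3/2}$ on $Q_{d/2}(p)$ where $v-L_{p_0}$ is caloric, and interior $C^2$ estimates together with rescaling produce the H\"older bounds $[\nabla v]_{C^{1/2}_x} + [\nabla v]_{C^{1/4}_t} + [v]_{C^{3/4}_t} \le C$ required for $H^{3/2,3/4}$. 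The main technical obstacle I foresee is closing the growth inequality at the sharp exponent $3$ rather than $3-\varepsilon$, which requires that the correction factor $e^{Cr^\sigma}$ in $\Phi_u$ be tracked carefully as $r\to 0+$, together with handling the Gaussian $L_2$-to-$L_\infty$ passage uniformly up to the thin boundary via the reflection argument above.
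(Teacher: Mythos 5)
Your proposal follows the paper's own route almost step for step: reduce to $u=(v-\phi)\psi\in\S^f(S_1^+)$ with bounded right-hand side, use the generalized frequency formula together with the minimal homogeneity $\kappa\ge 3/2$ to get $H_u(r)\le Cr^3$ (this is Lemma~\ref{lem:HuMr3}; the paper takes $\sigma=1/4$ and $\mu(r)=M^2r^{7/2}$, so that $\kappa_\mu=7/4>3/2$, while your borderline choice $\kappa_\mu=3/2$ also works since Lemma~\ref{lem:Hu-mu} then forces $\kappa=3/2$ exactly), convert the Gaussian $L_2$ bound into the pointwise growth $|u|\le C\|(x,t)-(x_0,t_0)\|^{3/2}$ at free boundary points by a sub-mean-value argument (the paper works with the subcaloric functions $u^\pm$ where you work with $u^2$; both are legitimate, and your observation that the jump of $\partial_{x_n}(u^2)$ across the thin space is killed by the complementarity condition is exactly the right point), and finish with interior $W^{2,1}_q$/Sobolev estimates at scale $d=\dist_p(\cdot,\Gamma_*)$ and a two-case dichotomy. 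One small caution in the $H_u(r)\le Cr^3$ step: the differential inequality for $\log H_u$ holds only on the open set $\{H_u>\mu\}$, so the integration must be carried out on a maximal interval of that set and closed using the boundary value $H_u(r_1)=\mu(r_1)$ (or $H_u(1)\le M^2$), not up to an arbitrary fixed $r_0$.

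There is one genuine omission. Your sub-mean-value step yields $\sup_{Q_{r/2}(x_0,t_0)}|u|\le Cr^{3/2}$ only on \emph{backward} cylinders centered at the free boundary point, i.e.\ the growth bound for $t\le t_0$. In the final dichotomy, however, the point of $\Gamma_*(v)$ realizing the distance $d$ from a given $(x,t)$ may lie at an earlier time $t_0<t$, and then the neighborhood of $(x,t)$ on which you apply interior estimates contains times $s>t_0$ that the backward estimate does not reach; redefining $d$ as the distance only to future free boundary points does not help, since past free boundary points within that distance would destroy the caloricity needed for the interior estimates. The paper devotes a separate step to this (Lemma~\ref{lem:l2linf-upper}): starting from $|u(\cdot,t_0)|\le C|x-x_0|^{3/2}$ on the whole time slice $\{t=t_0\}$, one convolves forward with the heat kernel and uses the parabolic $3/2$-homogeneity of $V(x,s)=\int_{\R^n}|y|^{3/2}G(x-y,s)\,dy$ to propagate the bound to all $t>t_0$, obtaining Lemma~\ref{lem:growth-est}. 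You need to add this forward-in-time propagation; as written, the Campanato step fails precisely at points whose nearest free boundary point lies in their past.
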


The proof of Theorem \ref{thm:opt-reg} will follow from the interior
parabolic estimates and the growth bound of $u$ away from the free
boundary $\Gamma(v)$.

\begin{lemma}\label{lem:HuMr3} Let $u\in \S^f(S_1^+)$ with
  $\|u\|_{L_\infty(S_1^+)}$, $\|f\|_{L_\infty(S_1^+)}\leq M$. Then,
$$
H_r(u)\leq C_{n}M^2 r^3.
$$
\end{lemma}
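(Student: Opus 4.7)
The plan is to apply the generalized frequency formula (Theorem~\ref{thm:thin-monotonicity}) together with the minimal homogeneity bound (Theorem~\ref{thm:min-homogen}), choosing a truncation $\mu$ that is tuned precisely to the expected growth rate $r^3$. Since $|f|\leq M$ and $\int_{\R^n}G(\cdot,-r^2)\,dx=1$, we have
\[
\int_{\R^n}f^2(\cdot,-r^2)\,G(\cdot,-r^2)\,dx\leq M^2\qquad\text{for every }r\in(0,1).
\]
With the choices $\sigma=1/2$ and $\mu(r):=C_\mu M^2 r^3$, the function $\mu$ is positive, monotone nondecreasing, and a $\log$-convex function of $\log r$, and satisfies the hypothesis $\mu(r)\geq C_\mu r^{4-2\sigma}\int_{\R^n}f^2(\cdot,-r^2)\,G(\cdot,-r^2)\,dx$ of Theorem~\ref{thm:thin-monotonicity}. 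Moreover $\kappa_\mu=\tfrac12\lim_{r\to 0+}r\mu'(r)/\mu(r)=3/2$, so Theorem~\ref{thm:min-homogen} applies and gives
\[
\kappa:=\Phi_u(0+)\geq \tfrac{3}{2}.
\]

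The monotonicity of $\Phi_u$ on $(0,1)$ supplied by Theorem~\ref{thm:thin-monotonicity}, combined with $\kappa\geq 3/2$, then yields $\Phi_u(r)\geq 3/2$ for every $r\in(0,1)$. Setting $F(r):=\max\{H_u(r),\mu(r)\}$, which is absolutely continuous and strictly positive, the definition of $\Phi_u$ translates into the a.e.\ differential inequality
\[
\frac{r\,e^{Cr^\sigma}}{2}(\log F)'(r)+2(e^{Cr^\sigma}-1)\geq \tfrac{3}{2}\qquad\text{for a.e.\ }r\in(0,1).
\]
Using the elementary bounds $e^{-Cr^\sigma}\geq 1-Cr^\sigma$ and $1-e^{-Cr^\sigma}\leq Cr^\sigma$ valid on $(0,1)$, this simplifies to $(\log F)'(r)\geq \tfrac{3}{r}-C'r^{\sigma-1}$ for a constant $C'$ depending only on $n$.

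To finish, I would integrate this inequality from $r$ to $r_1=1/2$, obtaining
\[
\log\frac{F(1/2)}{F(r)}\geq 3\log\frac{1}{2r}-\frac{C'}{\sigma}\bigl((1/2)^\sigma-r^\sigma\bigr),
\]
so $F(r)\leq C_n\,F(1/2)\,r^3$. The pointwise bound $|u|\leq M$ together with $\int_{\R^n}G(\cdot,t)\,dx=1$ yields the trivial estimate $H_u(1/2)\leq M^2$, and $\mu(1/2)\leq C_\mu M^2$, so $F(1/2)\leq C_n M^2$ and hence $H_u(r)\leq F(r)\leq C_n M^2 r^3$ for $r\in(0,1/2]$. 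For $r\in[1/2,1)$ the same trivial bound $H_u(r)\leq M^2\leq 8M^2 r^3$ takes care of the remaining range directly.

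The subtlety here, rather than a genuine difficulty, lies in the matching of scales: the choice $\mu(r)=C_\mu M^2 r^3$ is designed so that $\kappa_\mu$ coincides with the minimal homogeneity $3/2$ provided by Theorem~\ref{thm:min-homogen}, which is precisely what makes the $r^3$ rate sharp. Working with the combined function $F=\max\{H_u,\mu\}$ rather than with $H_u$ directly lets the transition between the regions $\{H_u>\mu\}$ and $\{H_u\leq\mu\}$ be handled uniformly, without any case analysis.
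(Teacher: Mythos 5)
Your proof is correct and follows essentially the same route as the paper: invoke Theorem~\ref{thm:thin-monotonicity} with a power-law truncation $\mu$, use Theorem~\ref{thm:min-homogen} to get $\Phi_u(0+)\geq 3/2$, and integrate the resulting logarithmic differential inequality together with the trivial bound $H_u\leq M^2$. The only (harmless) differences are cosmetic: the paper takes $\sigma=1/4$, $\mu(r)=M^2r^{4-2\sigma}$ and argues on the maximal intervals of $\{H_u>\mu\}$ using the matching condition $H_u(r_1)=\mu(r_1)$, whereas you take $\mu(r)=C_\mu M^2r^3$ (so $\kappa_\mu=3/2$ exactly, which Theorem~\ref{thm:min-homogen} still permits) and integrate $\log\max\{H_u,\mu\}$ directly, avoiding the case analysis.
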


\begin{proof} The $L_\infty$ bound on $f$ allows us to apply Theorem
  \ref{thm:thin-monotonicity} with the following specific choice of
  $\mu$ in the generalized frequency function. Indeed, fix
  $\sigma=1/4$ and let $\mu(r)=M^2 r^{4-2\sigma}$. Then,
$$
r^{4-2\sigma}\int_{\R^n_+} f^2(\cdot, -r^2) G(\cdot, -r^2)\leq \mu(r)
$$
and therefore
$$
\Phi_u(r)=\frac12 r e^{Cr^\sigma} \frac{d}{dr}\log \max\{H_u(r),M^2
r^{4-2\sigma}\}+2(e^{C r^\sigma}-1)
$$
is monotone for $C=C_n$. By Theorem~\ref{thm:min-homogen} we have that
$\Phi_u(0+)\geq 3/2$.

Now, for $H_u(r)$ we have two alternatives: either $H_u(r)\leq
\mu(r)=M^2r^{4-2\sigma}$ or $H_u(r)>\mu(r)$. In the first case the
desired estimate is readily satisfied, so we concentrate on the latter
case. Let $(r_0,r_1)$ be a maximal interval in the open set
$\mathcal{O}= \{r\in (0,1)\mid H_u(r)>\mu(r)\}$. Then, for $r\in
(r_0,r_1)$ we have
$$
\Phi_u(r)=\frac12 r e^{Cr^\sigma}
\frac{H'_u(r)}{H_u(r)}+2(e^{Cr^\sigma}-1)\geq \Phi_u(0+)\geq \frac32.
$$
We thus have,
$$
\frac{H'_u(r)}{H_u(r)}\geq \frac{3}{r}(1-Cr^\sigma),\quad r\in
(r_0,r_1),
$$
which, after integration, implies
$$
\log\frac{H_u(r_1)}{H_u(r)}\geq\log\frac{r_1^3}{r^3}-C r_1^\sigma,
$$
and therefore
$$
H_u(r)\leq Cr^3\frac{H_u(r_1)}{r_1^3}.
$$
Now, for $r_1$ we either have $r_1=1$ or $H_u(r_1)=\mu(r_1)$. Note
that $H_u(1)\leq M^2$ from the $L_\infty$ bound on $u$, and thus in
both cases we have $H_u(r_1)\leq M^2 r_1^3$.  We thus have the desired
conclusion
$$
H_u(r)\leq CM^2r^3.
$$
\end{proof}

To apply the results of the previous sections, we will need the
following $L_\infty-L_2$ type estimates.

\begin{lemma}\label{lem:l2linf-lower}
  Let $w$ be a nonnegative function with at most polynomial growth at
  infinity in the strip $S_R$, and such that for some $\gamma> 0$
$$
\Delta w-\partial_t w\geq -M\|(x,t)\|^{\gamma-2}\quad\text{in }S_R.
$$
Then,
$$
\sup_{Q_{r/2}} w\leq C_n H_w(r)^{1/2}+C_{n,\gamma}Mr^{\gamma},\quad
0<r<R.
$$
\end{lemma}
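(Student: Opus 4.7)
The plan is to reduce to a subcaloric function by absorbing the right-hand side into a power corrector, then exploit the Tychonoff-type parabolic sub-mean-value inequality together with the Gaussian comparison from Claim~\ref{clm:rho-est}.

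For the reduction, a direct calculation shows $(\Delta-\partial_t)\|(x,t)\|^{\gamma}\ge c_{n,\gamma}\|(x,t)\|^{\gamma-2}$ with $c_{n,\gamma}>0$ for every $\gamma>0$. Hence $\tilde w:=w+C_{n,\gamma}M\|(x,t)\|^{\gamma}$, with $C_{n,\gamma}:=1/c_{n,\gamma}$, is nonnegative and subcaloric in $S_R$ (and still of polynomial growth). On $Q_{r/2}$ one has $\|(x,t)\|^{\gamma}\le Cr^{\gamma}$, while a parabolic rescaling of the integral defining $H_{\|(x,t)\|^{\gamma}}$ gives $H_{\|(x,t)\|^{\gamma}}(r)\le Cr^{2\gamma}$. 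Therefore both the $L^{\infty}$ and $H^{1/2}$ errors incurred by passing from $w$ to $\tilde w$ are $\le CMr^{\gamma}$, and it suffices to establish
$$
\sup_{Q_{r/2}}\tilde w\le C_n H_{\tilde w}(r)^{1/2}
$$
for nonnegative subcaloric $\tilde w$ of polynomial growth. By parabolic scaling I normalize to $r=1$.

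For $(x_0,t_0)\in Q_{1/2}$ and $s\in(-1,t_0)$, the Tychonoff sub-mean-value inequality (proved in the same spirit as in Theorem~\ref{thm:exist-homogen-blowups}(iii)) combined with Cauchy--Schwarz (using $\int_{\R^n}G(\cdot,s-t_0)\,dy=1$) yields
$$
\tilde w(x_0,t_0)^2\le\int_{\R^n}\tilde w(y,s)^2\,G(y-x_0,s-t_0)\,dy.
$$
I then integrate this inequality in $s$ over $(-1/2,-1/4)\subset(-1,t_0)$, which is admissible since $t_0>-1/4$. On this range $t_0-s\ge 1/4\ge -s/8$, so case~2 of Claim~\ref{clm:rho-est} (with $a=A=1/4$) gives $G(y-x_0,s-t_0)\le CG(y,s)\,e^{C|y|}$ for every $y$. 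A standard completion of the square $-|y|^2/(4|s|)+C|y|\le-|y|^2/(8|s|)+2C^2|s|$ then shows $e^{C|y|}G(y,s)\le C_nG(y,2s)$ for $|s|\le\tfrac12$.

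Putting these together and changing variable $s'=2s$,
$$
\tfrac14\tilde w(x_0,t_0)^2\le C_n\!\int_{-1/2}^{-1/4}\!h_{\tilde w}(2s)\,ds=\tfrac{C_n}{2}\!\int_{-1}^{-1/2}\!h_{\tilde w}(s')\,ds'\le\tfrac{C_n}{2}H_{\tilde w}(1),
$$
which gives $\tilde w(x_0,t_0)\le C_nH_{\tilde w}(1)^{1/2}$ uniformly on $Q_{1/2}$; the case $t_0=0$ follows by continuity from $t_0\to 0^-$. The principal obstacle is the Gaussian comparison: the pointwise ratio $G(y-x_0,s-t_0)/G(y,s)$ has supremum $+\infty$ when $t_0=0$ (a direct calculation gives the supremum as $(|s|/(t_0-s))^{n/2}e^{|x_0|^2/(4|t_0|)}$ for $t_0<0$, attained at $y^*=|s|x_0/|t_0|$), which is why the integration in $s$ must be kept bounded away from $t_0$ and the factor $e^{C|y|}$ absorbed into a wider Gaussian as described.
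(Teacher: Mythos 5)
Your reduction to a nonnegative subcaloric function via the corrector $C_{n,\gamma}M\|(x,t)\|^{\gamma}$ is exactly the paper's first step and is fine, as is the scaling normalization. The gap is in the main estimate, at the point where you pass from $\int_{\R^n}\tilde w(y,s)^2G(y-x_0,s-t_0)\,dy$ to $H_{\tilde w}(1)$. After absorbing $e^{C|y|}$ into the wider Gaussian you arrive at $\int\tilde w(y,s)^2G(y,2s)\,dy$ and identify this with $h_{\tilde w}(2s)$. That identification is false: $h_{\tilde w}(2s)=\int\tilde w(y,2s)^2G(y,2s)\,dy$ evaluates the \emph{function} at time $2s$, whereas your integrand evaluates it at time $s$. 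The mismatched quantity $\int\tilde w(y,s)^2G(y,2s)\,dy$ is not controlled by $h_{\tilde w}(s)$ either, since $G(y,2s)/G(y,s)=2^{-n/2}e^{|y|^2/(8|s|)}$ is unbounded in $y$; any bound of it by $\int\tilde w(y,s)^2G(y,s)\,dy$ would need a constant depending on the growth of $\tilde w$, which the lemma does not allow (and attempting to shift the time of the function using subcaloricity only convolves the Gaussians and widens them further, so the mismatch propagates). Hence the substitution $s'=2s$ and the final bound by $H_{\tilde w}(1)$ do not follow. There is also a smaller slip: for $s\in(-1/2,-1/4)$ and $t_0\in(-1/4,0]$ one only has $t_0-s>0$, not $t_0-s\geq 1/4$, so case 2 of Claim~\ref{clm:rho-est} need not apply; you would have to integrate over $s\in(-1,-1/2)$ instead, as the paper does.

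The source of the trouble is the order of operations: by applying Jensen first, with the shifted kernel $G(y-x_0,s-t_0)$ as the probability measure, you are forced to dominate that kernel by $G(y,s)$ pointwise in $y$, which fails by a factor $e^{C|y|}$ that cannot be absorbed without detuning the Gaussian. The paper's proof does the kernel comparison first, obtaining $G(x-y,t-s)\leq C_nG(y,s)e^{(x\cdot y)/2|s|}$ for $s\in(-r^2,-r^2/2]$, and only then applies Cauchy--Schwarz, splitting $G(y,s)e^{(x\cdot y)/2|s|}=\bigl(G(y,s)^{1/2}\bigr)\cdot\bigl(G(y,s)^{1/2}e^{(x\cdot y)/2|s|}\bigr)$ so that $w$ is paired with $G(y,s)^{1/2}$ (giving exactly $h_w(s)^{1/2}$) and the correction factor is paired with the other half, where $\int_{\R^n} e^{(x\cdot y)/|s|}G(y,s)\,dy=e^{|x|^2/|s|}\leq C$ for $|x|\leq r/2$, $|s|\geq r^2/2$. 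If you reorganize your argument along these lines the proof goes through; as written, it does not close.
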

\begin{proof} Choosing a constant $C_{n,\gamma}>0$ we can guarantee
  that
$$
\tilde w(x,t)=w(x,t)+C_{n,\gamma} M (|x|^2-t)^{\gamma/2}
$$
is still nonnegative, has a polynomial growth at infinity and
satisfies
$$
\Delta \tilde w-\partial_t\tilde w\geq 0\quad\text{in }S_R.
$$
Moreover,
$$
H_{\tilde w}(r)^{1/2}\leq
H_w(r)^{1/2}+C_{n,\gamma}M\Big(\frac1{r^2}\int_{S_r}
(|x|^2+|t|)^\gamma G(x,t)dxdt\Big)^{1/2}.
$$
From the scaling properties of $ G$ it is easily seen that
$$\frac1{r^2}\int_{S_r}
(|x|^2+|t|)^\gamma G(x,t)dxdt=C_n r^{2\gamma}
$$
and therefore we may assume that $M=0$ from the beginning.

The rest of the proof is now similar to that of
Theorem~\ref{thm:exist-homogen-blowups}(iii).

Indeed, for $(x,t)\in Q_{r/2}$ and $s\in(-r^2,-r^2/2]$ we have the sub
mean-value property
\begin{align*}
  w(x,t) &\leq\int_{\R^n} w(y,s) G(x-y,t-s)dy.
\end{align*}
This can be proved as in Theorem~\ref{thm:exist-homogen-blowups}(iii)
by combining the fact that $\int_{S_\rho\setminus S_\delta}|\nabla
w|^2 G<\infty$ for any $0<\delta<\rho<R$, with the polynomial growth
assumption on $w$, and the energy inequality. On the other hand, for
our choice of $t$ and $s$, we have $|s|/4<t-s<|s|$.  Thus, arguing as
in Claim~\ref{clm:rho-est}, we have
\begin{align*}
  G(x-y,t-s)&=\frac{1}{(4\pi
    (t-s))^{n/2}}e^{-|x-y|^2/4(t-s)}\\
  &\leq\frac{C_n}{(4\pi
    |s|)^{n/2}} e^{-|x-y|^2/4|s|}\\
  &\leq C_n G(y,s) e^{(x\cdot y)/2|s|}.
\end{align*}
Hence, we obtain that
$$
w(x,t)\leq C_n\int_{\R^n} w(y,s) e^{(x\cdot y)/2|s|} G(y,s)dy.
$$
Now, applying the Cauchy-Schwarz inequality, we will have
\begin{align*}
  w(x,t) &\leq C_n\Big(\int_{\R^n} w(y,s)^2 G(y,s) dy\Big)^{1/2}
  \Big(\int_{\R^n} e^{(x\cdot y)/|s|} G(y,s) dy\Big)^{1/2}\\
  & \leq C_n\Big(\int_{\R^n} w(y,s)^2 G(y,s) dy\Big)^{1/2}
  e^{2|x|^2/|s|}\\
  &\leq C_n\Big(\int_{\R^n} w(y,s)^2 G(y,s) dy\Big)^{1/2},
\end{align*}
where in the last step we have used that $|x|\leq r$ and $|s|\geq
r^2/2$. Integrating over $s\in [-r^2,-r^2/4]$, we obtain
\begin{align*}
  w(x,t) &\leq\frac{C_n}{r^2}\int_{-r^2}^{-r^2/2}\Big(\int_{\R^n}
  w(y,s)^2 G(y,s) dy\Big)^{1/2}ds\\
  &\leq C_n\Big(\frac{1}{r^2}\int_{-r^2}^{-r^2/2}\int_{\R^n}
  w(y,s)^2 G(y,s) dyds\Big)^{1/2}\\
  &\leq C_n H_w(r)^{1/2}.
\end{align*}
This completes the proof.
\end{proof}

\begin{lemma}\label{lem:l2linf-upper} Let $w$ be a nonnegative bounded
  function in $S_R$ satisfying
$$
0\leq w\leq M,\quad \Delta w -\partial_t w\geq -M\quad\text{in }S_R,
$$
and let $(x_0,t_0)\in S_R'$, $r_0>0$ be such that for some constant
$A$
$$
\sup_{Q_r(x_0,t_0)}w\leq A r^{3/2},\quad 0<r<r_0.
$$
Then,
$$
w(x,t)\leq C \|(x-x_0,t-t_0)\|^{3/2},\quad(x,t)\in S_R,
$$
with $C=C_{n,r_0,R}(A+M)$.
\end{lemma}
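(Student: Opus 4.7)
After translating so that $(x_0,t_0)=(0,0)$, I split the argument according to the parabolic distance $\rho:=\|(x,t)\|$ from the origin. The easy cases are dispatched immediately: if $\rho\ge r_0/4$, the bound $w\le M$ yields $w(x,t)\le M\le M(4/r_0)^{3/2}\rho^{3/2}$; if $\rho<r_0/4$ and $t\le 0$, then $(x,t)\in\overline{Q_\rho}\subset Q_r$ for every $r\in(\rho,r_0)$, and the hypothesis together with letting $r\to\rho^+$ gives $w(x,t)\le A\rho^{3/2}$. The only real content is the remaining case $\rho<r_0/4$ and $t>0$, where one must propagate the past growth bound forward in time.

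For this forward propagation, the plan is to consider the auxiliary function
\[
\tilde w(y,s):=w(y,s)+\tfrac{M}{2n}|y|^2,
\]
which is nonnegative, of polynomial growth, and satisfies $(\Delta-\partial_t)\tilde w\ge 0$ by the choice of the quadratic corrector. By a standard cutoff/Tychonoff-type argument analogous to the one carried out in step iii) of the proof of Theorem~\ref{thm:exist-homogen-blowups}, such a subcaloric function of polynomial growth enjoys the sub-mean-value property
\[
\tilde w(x,t)\le\int_{\R^n}\tilde w(y,s_0)\,G(x-y,s_0-t)\,dy,
\]
valid for any $s_0\in(-R^2,t)$. I would choose $s_0=-\rho^2/2$, which lies in $(-R^2,0)$ thanks to the implicit inclusion $Q_{r_0}\subset S_R$ (hence $r_0^2\le R^2$), and ensures $t-s_0\in[\rho^2/2,3\rho^2/2]$, so that the heat kernel $G(\cdot,s_0-t)$ is a Gaussian of variance $\asymp\rho^2$ centered on the spatial scale $|x|\le\rho$.

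To estimate the integral I would split it at $|y|=r_0/2$. On $\{|y|<r_0/2\}$, the past case already established gives $w(y,s_0)\le A(|y|^2+\rho^2/2)^{3/4}$; after rescaling $y=\sqrt{t-s_0}\,u$, the bounded Gaussian moment $\int(|u|^2+\text{const})^{3/4}e^{-|u|^2/2}\,du$ produces a contribution of order $A\rho^{3/2}$, while the quadratic corrector integrates to $\tfrac{M}{2n}(|x|^2+n(t-s_0))\lesssim M\rho^2\le C_{r_0}M\rho^{3/2}$ since $\rho<r_0/4$ is bounded. On $\{|y|\ge r_0/2\}$, the substitution $z=y-x$ (with $|x|<r_0/4$) pushes the domain into $\{|z|\ge r_0/4\}$, i.e., deep into the Gaussian tail of variance $O(\rho^2)$; the tail decays as $e^{-cr_0^2/\rho^2}$, which beats any algebraic power of $\rho$ and absorbs the polynomial prefactor $|y|^2$, so the contribution is $\le C_{n,r_0,R}M\rho^{3/2}$. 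Summing over the three cases yields $w(x,t)\le C_{n,r_0,R}(A+M)\rho^{3/2}$.

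\textbf{Main obstacle.} The delicate step is the choice of $s_0$ in the forward case: it must simultaneously satisfy $s_0>-R^2$ (so the sub-MVP uses only values inside the strip where $w$ is defined) and $t-s_0\asymp\rho^2$ (so the Gaussian kernel lives on the parabolic scale at which the hypothesis provides information). The explicit choice $s_0=t_0-\rho^2/2$ accomplishes both thanks to the implicit containment $Q_{r_0}(x_0,t_0)\subset S_R$, which forces $r_0^2\le t_0+R^2$; this containment is also what produces the $R$-dependence in the final constant. A second, lower-order technical point is rigorously justifying the sub-mean-value property for $\tilde w$ despite its quadratic growth, which is handled by truncating with a cutoff and using a Gaussian kernel estimate identical to Claim~\ref{clm:rho-est}.
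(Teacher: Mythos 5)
Your argument is correct and follows essentially the same route as the paper's proof: both dispose of points with $t\le t_0$ (and of points far from $(x_0,t_0)$, via $w\le M$) directly from the hypotheses, and then propagate the bound forward in time through the sub-mean-value representation of $w$ over a past time slice. The remaining differences are bookkeeping — the paper starts the representation at the slice $s=t_0$, handles the inhomogeneity with the Duhamel term $M(t-t_0)$, and estimates the resulting integral by the parabolic $3/2$-homogeneity of $V(x,s)=\int_{\R^n}|y|^{3/2}G(x-y,s)\,dy$, whereas you fix the slice at parabolic depth $\rho^2/2$, use the spatial corrector $\tfrac{M}{2n}|y|^2$, and estimate by Gaussian moments and tail bounds.
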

\begin{proof} Without loss of generality we may assume that
  $x_0=0$. Then, from the assumptions on $w$ we have that
$$
w(x,t_0)\leq C|x|^{3/2},\quad x\in\R^n,
$$
and more generally
$$
w(x,t)\leq C\|(x,t-t_0)\|^{3/2},\quad x\in\R^n,\ t\in (-R^2, t_0],
$$
where $C=C_{n,r_0,R}(A+M)$. To propagate the estimate to $t\in
(t_0,0)$ we use the sub-mean value property. Namely, for $x\in\R^n$
and $t_0<t<0$ we have
\begin{align*}
  w(x,t)&\leq \int_{\R^n}w(y,t_0)  G(x-y,t-t_0)dy+M(t-t_0)\\
  &\leq C\int_{\R^n} |y|^{3/2} G(x-y,t-t_0)dy+M(t-t_0).
\end{align*}
To obtain the desired conclusion, we notice that the integral
$$
V(x,s)=\int_{\R^n} |y|^{3/2} G(x-y,s)dy,\quad x\in\R^n, s>0
$$
is parabolically homogeneous of degree $3/2$ in the sense that
$$
V(\lambda x, \lambda^2s)=\lambda^{3/2}V(x,s),\quad \lambda>0
$$
and therefore we immediately obtain that
$$
V(x,s)\leq C_n\|(x,s)\|^{3/2},\quad x\in\R^n, \ s>0.
$$
Consequently, this yields
$$
w(x,t)\leq C_nC\|(x,t-t_0)\|^{3/2}+M(t-t_0),\quad x\in\R^n,\
t\in(t_0,0],
$$
which implies the statement of the lemma.
\end{proof}

\begin{lemma}\label{lem:growth-est} Let $u\in \S^f(S_1^+)$ and $(x_0,t_0)\in
  \Gamma_*(u)\cap Q_{3/4}'$. Then,
$$
|u(x,t)|\leq C\|(x-x_0,t-t_0)\|^{3/2},\quad (x,t)\in S_1^+,
$$
with
$$
C=C_n\big(\|u\|_{L_\infty(S_1^+)}+\|f\|_{L_\infty(S_1^+)}\big).
$$
\end{lemma}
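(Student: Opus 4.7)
The plan is to chain Lemmas~\ref{lem:HuMr3}, \ref{lem:l2linf-lower} and~\ref{lem:l2linf-upper}. Set $M := \|u\|_{L_\infty(S_1^+)} + \|f\|_{L_\infty(S_1^+)}$. First, Lemma~\ref{lem:HuMr3} will produce a Gaussian $L^2$-growth $H \lesssim M^2 r^3$ at the free boundary point $(x_0,t_0)$; next, Lemma~\ref{lem:l2linf-lower} will convert this into pointwise $3/2$-growth on parabolic cylinders $Q_{r/2}(x_0,t_0)$; finally, Lemma~\ref{lem:l2linf-upper} will propagate the growth rate to the whole strip. Outside a fixed-size neighborhood of $(x_0,t_0)$ the desired bound follows trivially from $|u|\le M$.

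The first substantive step is a localization. Since $(x_0,t_0) \in Q'_{3/4}$, I fix $\psi \in C^\infty_0(\R^n)$, even in $x_n$, with $\psi \equiv 1$ on $B_\rho(x_0)$ and $\supp \psi \subset B_{2\rho}(x_0)$ for some small dimensional $\rho$, so that $\supp\psi \times [t_0 - 4\rho^2, t_0] \subset Q_1^+$. The localized $\bar u := \psi u$ lies in $\S^{\bar f}(S_1^+)$ with $(x_0,t_0) \in \Gamma_*(\bar u)$, where $\bar f = \psi f + 2\nabla u\cdot\nabla\psi + u\Delta \psi$; using the standard interior Lipschitz estimate $\|\nabla u\|_{L_\infty(Q_{7/8}^+)} \le C_n M$ for the parabolic Signorini problem, one obtains $\|\bar u\|_{L_\infty} + \|\bar f\|_{L_\infty} \le C_n M$. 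After a time translation putting $(x_0,t_0)$ at the origin, Lemma~\ref{lem:HuMr3} applied to $\bar u$ yields $H^{(x_0,t_0)}_{\bar u}(r) \le C_n M^2 r^3$ for $0 < r < \tfrac12$, where $H^{(x_0,t_0)}$ denotes the obvious $(x_0,t_0)$-centered analogue of $H$.

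Next, extend $\bar u$ to $S_1$ by even reflection in $x_n$. The Signorini conditions imply $(\Delta - \partial_t)\bar u = \bar f + \mu$ in $\D'(S_1)$, where $\mu = 2(\partial_{x_n}^+ \bar u)\mathcal H^n|_{\Lambda(\bar u)} \le 0$ is supported on $\{\bar u = 0\}$. Applying Kato's inequality $(\Delta-\partial_t)|\bar u| \ge \operatorname{sgn}(\bar u)(\Delta-\partial_t)\bar u$, and observing that $\operatorname{sgn}(\bar u) \equiv 0$ on the support of $\mu$, we conclude that $(\Delta-\partial_t)|\bar u| \ge -C_n M$ distributionally on $S_1$. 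Since $H^{(x_0,t_0)}_{|\bar u|} = 2\,H^{(x_0,t_0)}_{\bar u}$ by even symmetry, the $(x_0,t_0)$-centered version of Lemma~\ref{lem:l2linf-lower} with $\gamma=2$ (so the admissible source $M\|(x-x_0,t-t_0)\|^{\gamma-2}$ reduces to a constant) will give
\[
\sup_{Q_{r/2}(x_0,t_0)} |\bar u| \le C_n \bigl(H^{(x_0,t_0)}_{|\bar u|}(r)\bigr)^{1/2} + C_n M r^2 \le C_n M r^{3/2}, \qquad 0<r<\tfrac12.
\]

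Finally, Lemma~\ref{lem:l2linf-upper} is already stated for an arbitrary center $(x_0,t_0)\in S_R'$; applying it to $w = |\bar u|$ on $S_1$ with $A = C_n M$ and $r_0 = \tfrac14$ produces $|\bar u(x,t)| \le C_n M\|(x-x_0, t-t_0)\|^{3/2}$ for every $(x,t) \in S_1$. On $B_\rho(x_0)\times(-1,0]$ we have $\bar u = u$, so the desired estimate holds there; for $(x,t) \in S_1^+$ with $|x - x_0| \ge \rho$, the parabolic distance is bounded below by a dimensional constant and the trivial bound $|u| \le M$ completes the proof. The main obstacle I anticipate is the Kato-inequality step: one must justify in $\D'$ that the nonpositive Signorini singular measure $\mu$ drops out when passing from $\bar u$ to $|\bar u|$, which relies on the elementary but key observation that $\mu$ is concentrated exactly on the zero-set of $\bar u$.
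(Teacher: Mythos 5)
Your architecture is exactly the paper's: the proof there is literally ``combine Lemma~\ref{lem:HuMr3} with Lemmas~\ref{lem:l2linf-lower}--\ref{lem:l2linf-upper} applied to $w=u^\pm$.'' Your use of $w=|\bar u|$ in place of the pair $u^\pm$ is a cosmetic variant (since $|u|=u^++u^-$ and each part is subcaloric modulo $M$), though invoking a measure-valued Kato inequality is heavier machinery than needed: the paper's route --- $u^\pm$ are continuous, nonnegative, and caloric modulo $f$ where positive, hence $(\Delta-\partial_t)u^\pm\geq -\|f^\mp\|_\infty$ distributionally --- avoids having to make sense of $\operatorname{sgn}(\bar u)\,\mu$ for a singular measure $\mu$.

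The one step that does not close as written is the localization. You need $\|\bar f\|_{L_\infty}\leq C_nM$ with $M=\|u\|_{L_\infty}+\|f\|_{L_\infty}$, and $\bar f$ contains $2\nabla u\cdot\nabla\psi$; the ``standard interior Lipschitz estimate'' you invoke is not available in the paper in the form $\|\nabla u\|_{L_\infty(Q_{7/8}^+)}\leq C_n(\|u\|_{L_\infty}+\|f\|_{L_\infty})$ --- Lemma~\ref{lem:known-Halpha-2} bounds $\nabla u$ in terms of $\|u\|_{W^{1,0}_\infty}$ (circular here), and Lemma~\ref{lem:known-Halpha} in terms of the initial datum $\phi_0$ in $W^2_\infty$, which is not controlled by $M$. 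Without such an estimate your final constant silently acquires a dependence on $\|\nabla u\|_{L_\infty(S_1^+)}$, which the lemma does not allow. The fix is to drop the cutoff entirely: Definition~\ref{def:Sf} already builds bounded support into the class $\S^f(S_1^+)$, and the Signorini conditions hold on all of $S_1'$, so the translate $u(x_0+\cdot,\,t_0+\cdot)$ (after a parabolic rescaling by a dimensional factor, using $|t_0|\leq 9/16$) lies in $\S^{f^{(x_0,t_0)}}(S_1^+)$ with the same $L_\infty$ bounds, and Lemma~\ref{lem:HuMr3} applies directly at the recentered origin. With that modification the rest of your chain --- $H\lesssim M^2r^3$, then Lemma~\ref{lem:l2linf-lower} with $\gamma=2$, then Lemma~\ref{lem:l2linf-upper} --- is correct and matches the paper.
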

\begin{proof} This is simply a combination of Lemma~\ref{lem:HuMr3}
  for $u$ and Lemmas~\ref{lem:l2linf-lower}--\ref{lem:l2linf-upper}
  applied for $w=u^\pm$.
\end{proof}

\begin{proof}[Proof of Theorem~\ref{thm:opt-reg}]
  As in Section~\ref{sec:classes-solutions}, let $\psi\in
  C^\infty_0(\R^n)$ be a cutoff function satisfying
  \eqref{eq:psi-1}--\eqref{eq:psi-2}, and consider
$$
u(x,t)=[v(x,t)-\phi(x',t)]\psi(x).
$$
We may also assume that $|\nabla\psi|\leq C_n$. We will thus have
$u\in\S^g(S_1^+)$ with
$$
\|g\|_{L_\infty(S_1^+)}\leq
C_n\left(\|v\|_{W^{1,0}_\infty(Q_1^+)}+\|f\|_{L_\infty(Q_1^+)}+\|\phi\|_{H^{2,1}(Q_1')}\right).
$$
In the remaining part of the proof, $C$ will denote a generic constant
that has the same form as the right-hand side in the above inequality.
For $(x,t)\in Q_{1/2}$, and
$$
Q_r^*(x,t):=\tilde Q_r(x,t)\cap S_\infty=B_r(x)\times
((t-r^2,t+r^2)\cap(-\infty, 0]),
$$ 
let
$$
d=d(x,t)=\sup\{r>0\mid Q^*_r(x,t)\subset Q_1\setminus \Gamma_*(u)\}.
$$
We first claim that
\begin{equation}\label{eq:ud32}
  |u|\leq C d^{3/2}\quad\text{in }Q_{d}^*(x,t).
\end{equation}
Indeed, if $d>1/4$, this follows from boundedness of $u$. If instead
$d\leq 1/4$, then there exist $(x_0,t_0)\in Q_{3/4}'\cap
\Gamma_*(u)\cap \partial Q_{d}^*(x,t)$ and by
Lemma~\ref{lem:growth-est} we have the desired estimate.  Next, we
claim that
\begin{equation}\label{eq:gradud12}
  |\nabla u|\leq C d^{1/2} \quad\text{in }Q_{d/2}^*(x,t)
\end{equation}
This will follow from the interior gradient estimates, applied to the
even or odd extension of $u$ in $x_n$ variable. More specifically,
consider the intersection $Q_{d}^*(x,t)\cap Q_1'$. Since there are no
points of $\Gamma_*(u)$ in this set, we have a dichotomy: either (i)
$u>0$ on $Q_{d}^*(x,t)\cap Q_1'$, or (ii) $u=0$ on $Q_{d}^*(x,t)\cap
Q_1'$.  Accordingly, we define
\begin{align*}
  \tilde u(x',x_n,t)&=\begin{cases} u(x',x_n,t), & x_n\geq 0\\
    u(x',-x_n,t), &x_n\leq 0
  \end{cases}\quad\text{in case (i)},\\
  \tilde u(x',x_n,t)&=\begin{cases} u(x',x_n,t), & x_n\geq 0\\
    -u(x',-x_n,t), &x_n\leq 0\quad\text{in case (ii)}.
  \end{cases}
\end{align*}
In either case $\tilde u$ satisfies a nonhomogeneous heat equation
$$
(\Delta-\partial_t)\tilde u=\tilde g\quad\text{in }Q_{d}^*(x,t),
$$
for an appropriately defined $\tilde g$. The claimed estimate for
$|\nabla u|=|\nabla \tilde u|$ now follows from parabolic interior
gradient estimates, see e.g.\ \cite{LSU}*{Chapter III,
  Theorem~11.1}. Moreover, by \cite{LSU}*{Chapter IV, Theorem~9.1},
one also has that $\tilde u\in W^{2,1}_q(Q_{d/2}^*(x,t))$ for any
$3/2<q<\infty$. To be more precise, we apply the latter theorem to
$\tilde u\zeta$, where $\zeta$ is a cutoff function supported in
$Q^*_d(x,t)$, such $\zeta=1$ on $Q^*_{d/2}(x,t)$, $0\leq \zeta\leq 1$,
$|\nabla\zeta|\leq C_n/d$, $|\partial_t\zeta|\leq C_n/d^2$. From the
estimates on $\tilde u$ and $|\nabla \tilde u|$, we thus have
$$
|(\Delta -\partial_t)(\tilde u\zeta)|\leq C d^{-1/2},
$$
which provides the estimate
$$
\|D^2\tilde u\|_{L_q(Q^*_{d/2}(x,t))}+\|\partial_t \tilde
u\|_{L_q(Q^*_{d/2}(x,t))}\leq C d^{-1/2} d^{(n+2)/q}.
$$
Then, from the Sobolev embedding of $W^{2,1}_q$ into
$H^{2-\frac{n+2}q,1-\frac{n+2}{2q}}$ when $q>n+2$, we obtain the
estimates for H\"older seminorms
$$
\langle \nabla \tilde u\rangle^{(\alpha)}_{Q^*_{d/2}(x,t)}+ \langle
\tilde u\rangle^{((1+\alpha)/2)}_{t,Q^*_{d/2}(x,t)}\leq C
d^{1/2-\alpha},
$$
for any $0<\alpha<1$, see \cite{LSU}*{Chapter II, Lemma~3.3}.  In
particular, we have
\begin{equation}\label{eq:graduholder12}
  \langle \nabla \tilde u\rangle^{(1/2)}_{Q^*_{d/2}(x,t)}+
  \langle \tilde u\rangle^{(3/4)}_{t,Q^*_{d/2}(x,t)}\leq C.
\end{equation}
Now take two points $(x^i,t^i)\in Q_{1/2}^+$, $i=1,2$, and let
$d^i=d(x^i,t^i)$. Without loss of generality we may assume $d_1\geq
d_2$. Let also $\delta=(|x^1-x^2|^2+|t^1-t^2|)^{1/2}$.

Consider two cases:

1) $\delta> \frac12 d_1$. In this case, we have by \eqref{eq:gradud12}
\begin{align*}
  |\nabla u(x^1,t^1)-\nabla u(x^2,t^2)|&\leq |\nabla
  u(x^1,t^1)|+|\nabla u(x^2,t^2)|\\&\leq C (d^1)^{1/2}+C
  (d^2)^{1/2}\leq C \delta^{1/2}.
\end{align*}

2) $\delta< \frac12 d_1$. In this case, both $(x^i,t^i)\in
Q_{d^1/2}^*(x^1,t^1)$, and therefore by \eqref{eq:graduholder12}
\begin{align*}
  |\nabla u(x^1,t^1)-\nabla u(x^2,t^2)|&\leq C \delta^{1/2}
\end{align*}
This gives the desired estimate for the seminorm $\langle \nabla
u\rangle^{1/2}_{Q_{1/2}^+}$. Arguing analogously, we can also prove a
similar estimate for $\langle u\rangle^{(3/4)}_{t,Q_{1/2}^+}$, thus
completing the proof of the theorem.
\end{proof}

\section{Classification of free boundary points}
\label{sec:class-free-bound}

After establishing the optimal regularity of the solutions, we are now
able to undertake the study of the free boundary
$$
\Gamma(v)=\partial\{(x',t)\mid v(x',0,t)>\phi(x',t)\}.
$$
We start with classifying the free boundary points and more generally
points in
$$
\Gamma_*(v)=\{(x',t)\mid
v(x',0,t)=\phi(x',t),\ \partial_{x_n}v(x',0,t)=0\}.
$$
As we will see the higher is the regularity of $\phi$, the finer is
going to be the classification.

Let $v\in \S_\phi(Q_1^+)$ with $\phi\in H^{\ell,\ell/2}(Q_1')$,
$\ell=k+\gamma\geq 2$, $k\in \N$, $0<\gamma\leq1$ and $u_k\in
\S^{f_k}(S_1^+)$ be as constructed in Proposition~\ref{prop:uk-def}.
In particular, $f_k$ satisfies
$$
|f_k(x,t)|\leq M\|(x,t)\|^{\ell-2},\quad (x,t)\in S_1^+.
$$
This implies that
\begin{align*}
  \int_{\R^n_+}f_k(x,-r^2)^2 G(x,-r^2)dx&\leq
  M^2\int_{\R^n_+}(|x|^2+r^2)^{\ell-2} G(x,-r^2)dx\\
  &=M^2r^{2\ell-4}\int_{\R^n_+}(|y|^2+1)^\ell G(y,-1)dy\\
  &=C_\ell M^2 r^{2\ell-4}.
\end{align*}
Thus, if we choose
$$
\mu(r)=r^{2\ell_0},\quad\text{with }k\leq \ell_0<\ell\text{ and
}\sigma\leq\ell-\ell_0,
$$
then $\mu$, $f_k$ and $u_k$ will satisfy the conditions of
Theorem~\ref{thm:thin-monotonicity}. In particular, we will have that
\begin{equation}\label{eq:Philo}
  \Phi_{u_k}^{(\ell_0)}(r) :=  \frac 12 r e^{C
    r^{\sigma}}\frac{d}{dr}\log\max\{H_{u_k}(r),r^{2\ell_0}\}+2
  (e^{C r^{\sigma}}-1)
\end{equation}
is monotone increasing in $r\in(0,1)$ and consequently there exists
the limit (see \eqref{eq:kappa} above)
$$
\kappa=\Phi_{u_k}^{(\ell_0)}(0+).
$$
Recalling the definition \eqref{eq:kappamu} of $\kappa_\mu$, we note
that in the present case we have
$$
\kappa_\mu=\ell_0.
$$
Therefore, by Lemma~\ref{lem:Hu-mu} we infer that
$$
\kappa\leq \ell_0<\ell.
$$ 
Generally speaking, the value of $\kappa$ may depend on the cutoff
function that we have chosen to construct $u_k$. However, as the next
result proves, it is relatively straightforward to check that this is
not the case.

\begin{lemma}\label{lem:kappa-indep-psi} The limit $\kappa=\Phi_{u_k}^{(\ell_0)}(0+)$ does
  not depend on the choice of the cutoff function $\psi$ in the
  definition of $u_k$.
\end{lemma}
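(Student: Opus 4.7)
The overall strategy is to exploit the fact that any two valid cutoffs $\psi_1$ and $\psi_2$ must both equal $1$ on $B_{1/2}$, so the two constructions $u_k^{(1)} = w\psi_1$ and $u_k^{(2)} = w\psi_2$ (with $w := v - \tilde q_k - (\phi - q_k)$) differ only on an annulus bounded away from the origin. Since $G(x,t)$ is exponentially small on such an annulus when $|t|\leq r^2$ with $r$ small, we should be able to compare $H_{u_k^{(1)}}$ and $H_{u_k^{(2)}}$ (and their derivatives) up to exponentially small errors, negligible against the polynomial behavior of $H_{u_k^{(i)}}(r)$ predicted by the frequency.

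Concretely, I would first estimate
\[
D(r):=H_{u_k^{(1)}}(r)-H_{u_k^{(2)}}(r)=\frac{1}{r^2}\int_{S_r^+} w^2(\psi_1^2-\psi_2^2)\,G
\]
by using $G(x,t)\le (4\pi|t|)^{-n/2}e^{-1/(16|t|)}$ valid when $|x|\ge 1/2$, $-r^2\le t<0$, $r\le 1/4$, to get $|D(r)|\le C e^{-c/r^2}$; differentiating $D$ directly in $r$ gives the same bound $|D'(r)|\le C e^{-c/r^2}$ for small $r$. Then set $\kappa^{(i)}:=\Phi^{(\ell_0)}_{u_k^{(i)}}(0+)$ and assume without loss of generality that $\kappa^{(1)}\le\kappa^{(2)}\le \ell_0$, the last inequality by Lemma~\ref{lem:Hu-mu}. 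If $\kappa^{(1)}=\ell_0$ the ordering forces equality and we are done, so the remaining case is $\kappa^{(1)}<\ell_0$. Picking any $\kappa'\in(\kappa^{(1)},\ell_0)$, Lemma~\ref{lem:Hu-est} yields the polynomial lower bound $H_{u_k^{(1)}}(r)\ge c\,r^{2\kappa'}$ for $0<r<r_u$, which swallows the exponentially small $D(r)$ and gives $H_{u_k^{(2)}}(r)=H_{u_k^{(1)}}(r)(1+o(1))>r^{2\ell_0}$ for small $r$, so $\kappa^{(2)}<\ell_0$ as well by Lemma~\ref{lem:Hu-mu}.

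In this regime both $\Phi^{(\ell_0)}_{u_k^{(i)}}(r)$ reduce to the untruncated expression $\tfrac12 r e^{Cr^\sigma} H'/H + 2(e^{Cr^\sigma}-1)$ for small $r$, so
\[
\Phi^{(\ell_0)}_{u_k^{(1)}}(r)-\Phi^{(\ell_0)}_{u_k^{(2)}}(r)=\tfrac{1}{2}re^{Cr^\sigma}\cdot\frac{H'_{u_k^{(1)}}\,D-D'\,H_{u_k^{(1)}}}{H_{u_k^{(1)}}\,H_{u_k^{(2)}}}.
\]
Combining the exponential smallness of $D,D'$ with the $L^\infty$ bound $H_{u_k^{(1)}}\le C$, the bound $|H'_{u_k^{(1)}}|\le C H_{u_k^{(1)}}/r$ (read off from $\Phi^{(\ell_0)}_{u_k^{(1)}}(r)\le \Phi^{(\ell_0)}_{u_k^{(1)}}(1)<\infty$ by monotonicity), and the polynomial lower bound $H_{u_k^{(1)}} H_{u_k^{(2)}}\ge c\,r^{4\kappa'}$, the right-hand side is $O(e^{-c/r^2}/r^{4\kappa'})\to 0$. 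Passing to the limit gives $\kappa^{(1)}=\kappa^{(2)}$.

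The main obstacle is ensuring that the exponential decay $e^{-c/r^2}$ dominates the polynomial rates that appear in the denominators; this hinges on the polynomial lower bound on $H_{u_k^{(i)}}$ produced by Lemma~\ref{lem:Hu-est}, which is available only when $\kappa^{(i)}<\ell_0$ — which is precisely why the boundary case $\kappa^{(i)}=\ell_0$ is handled separately through the \emph{ad hoc} ordering $\kappa^{(1)}\le \kappa^{(2)}$. A secondary technical point is extracting the bound $|H'_{u_k^{(1)}}(r)|\le C H_{u_k^{(1)}}(r)/r$ from the finite value $\Phi^{(\ell_0)}_{u_k^{(1)}}(1)$ via monotonicity of the generalized frequency.
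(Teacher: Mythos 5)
Your proof is correct and follows essentially the same route as the paper: both hinge on the observation that the two constructions agree on $B_{1/2}^+\times(-1,0]$, so $|H_{u_k}(r)-H_{u_k'}(r)|\leq Ce^{-c_n/r^2}$, together with a case split on whether the frequency equals $\ell_0$ and the polynomial lower bound on $H$ from Lemma~\ref{lem:Hu-est} to absorb the exponential error. The only (cosmetic) difference is that the paper avoids estimating $D'$ by transferring the two-sided bounds $c_\epsilon r^{2\kappa+\epsilon}\leq H\leq C_\epsilon r^{2\kappa-\epsilon}$ from one quantity to the other and reading off $\kappa'$ from them, rather than comparing the logarithmic derivatives directly.
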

\begin{proof}
  Indeed, if we choose a different cutoff function $\psi'$, satisfying
  \eqref{eq:psi-1}--\eqref{eq:psi-2} and denote by $u_k'$ the function
  corresponding to $u_k$ in the construction above, and by $\kappa'$
  the corresponding value as in \eqref{eq:kappa}, then by simply using
  the fact that $u_k=u_k'$ on $B_{1/2}^+\times(-1,0]$ and that $|
  G(x,t)|\leq e^{-c_n/r^2}$ for $|x|\geq 1/2$ and $-r^2<t\leq0$, we
  have
$$
|H_{u_k}(r)-H_{u_k'}(r)|\leq C e^{-c_n/r^2}.
$$
To show now that $\kappa=\kappa'$, we consider several cases.

1) If $\kappa=\kappa'=\ell_0$, then we are done.

2) If $\kappa<\ell_0$, then Lemma~\ref{lem:Hu-mu} implies that
$$
2\kappa-\epsilon \leq r\frac{H_{u_k}'(r)}{H_{u_k}(r)}\leq
2\kappa+\epsilon,\quad 0<r<r_\epsilon.
$$
Integrating these inequalities we obtain
$$
c_\epsilon r^{2\kappa+\epsilon}\leq H_{u_k}(r)\leq C_\epsilon
r^{2\kappa-\epsilon},\quad 0<r<r_\epsilon,
$$
for some (generic) positive constants $c_\epsilon$, $C_\epsilon$. This
will also imply
$$
c_\epsilon r^{2\kappa+\epsilon}\leq H_{u_k'}(r)\leq C_\epsilon
r^{2\kappa-\epsilon},\quad 0<r<r_\epsilon.
$$
Now, if $\epsilon$ is so small that $2\kappa+\epsilon<\ell_0$, we will
have that $H_{u_k'}(r)> \mu(r)=r^{\ell_0}$ for $0<r<r_\epsilon$. But
then, we also have $\kappa'=\lim_{r\to 0}r H'_{u_k'}(r)/H_{u_k'}(r)$,
and therefore
$$
c_\epsilon r^{2\kappa'+\epsilon}\leq H_{u_k'}(r)\leq C_\epsilon
r^{2\kappa'-\epsilon},\quad 0<r<r_\epsilon,
$$
for arbitrarily small $\epsilon>0$. Obviously, the above estimates
imply that $\kappa'=\kappa$.

3) If $\kappa'<\ell_0$, we argue as in 2) above.
\end{proof}

\begin{definition}[Truncated homogeneity] To stress in the above
  construction the dependence only of the function $v$, we will denote
  the quantity $\kappa=\Phi_{u_k}^{(\ell_0)}(0+)$ by
$$
\kappa^{(\ell_0)}_v(0,0).
$$
More generally, for $(x_0,t_0)\in\Gamma_*(v)$ we let
$$
v^{(x_0,t_0)}(x,t):=v(x_0+x,t_0+t),
$$
which translates $(x_0,t_0)$ to the origin. Then, $v^{(x_0,t_0)}\in
\S^{f^{(x_0,t_0)}}(Q_r^+)$ for some small $r>0$. The construction
above has been carried out in $Q_1^+$, rather than $Q_r^+$. However, a
simple rescaling argument generalizes it to any $r>0$. Thus, we can
define
$$
\kappa_v^{(\ell_0)}(x_0,t_0)=\kappa_{v^{(x_0,t_0)}}^{(\ell_0)}(0,0),
$$ 
which we will call the \emph{truncated homogeneity} of $v$ at an
extended free boundary point $(x_0,t_0)$.
\end{definition}

Suppose now for a moment that the thin obstacle $\phi$, that was
assumed to belong to the class $H^{\ell,\ell/2}(Q_1')$, has a higher
regularity. To fix the ideas, suppose $\phi\in H^{\tilde \ell,\tilde
  \ell/2}(Q_1')$, for some $\tilde \ell\geq\ell\geq 2$, with $\tilde
\ell=\tilde k+\tilde \gamma$, $\tilde k\in \N$, $\tilde k\geq k$,
$0<\tilde \gamma\leq 1$. We may thus define
$$
\kappa_v^{(\tilde\ell_0)}(x_0,t_0),
$$
for any $\tilde k\leq \tilde\ell_0<\tilde\ell$. It is natural to ask
about the relation between $\kappa_v^{(\tilde\ell_0)}(x_0,t_0)$ and
$\kappa_v^{(\ell_0)}(x_0,t_0)$. The following proposition provides an
answer to this question.

\begin{proposition}[Consistency of truncated homogeneities]
  \label{prop:kappa-compat}
  If $\ell\leq\tilde\ell$, $\ell_0\leq\tilde\ell_0$ are as above, then
$$
\kappa^{(\ell_0)}_v(x_0,t_0)=\min\{\kappa^{(\tilde\ell_0)}_v(x_0,t_0),\ell_0\}.
$$
\end{proposition}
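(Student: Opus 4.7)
The plan is to compare the two auxiliary functions $u_k$ and $u_{\tilde k}$ produced by Proposition~\ref{prop:uk-def} (taken with the same cutoff $\psi$ and, WLOG, centered at the origin) and to show that their Gaussian $L^2$ norms differ only by a higher-order term. The two truncated frequencies $\kappa:=\kappa_v^{(\ell_0)}(x_0,t_0)$ and $\tilde\kappa:=\kappa_v^{(\tilde\ell_0)}(x_0,t_0)$ can then be read off from the behavior of $H_{u_k}$ and $H_{u_{\tilde k}}$ via Lemma~\ref{lem:Hu-mu}.

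First I would identify the difference explicitly. Writing $P:=q_{\tilde k}-q_k$ (which contains exactly the Taylor monomials of $\phi$ of parabolic degree in $\{k+1,\ldots,\tilde k\}$) and $\tilde P:=\tilde q_{\tilde k}-\tilde q_k$, one checks that $\tilde P$ is the caloric extension of $P$ in the sense of Lemma~\ref{lem:calor-ext}. Hence
\[
u_k-u_{\tilde k}=(\tilde P-P)\psi=:R\psi,
\]
where $R$ is a polynomial vanishing on $\{x_n=0\}$, whose every monomial has parabolic degree in $\{k+1,\ldots,\tilde k\}$ (the caloric extension formula preserves the parabolic degree). In particular $|R(x,t)|\leq C\|(x,t)\|^{k+1}$ for $\|(x,t)\|\leq 1$.

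Next I would prove the key estimate
\[
|H_{u_k}(r)^{1/2}-H_{u_{\tilde k}}(r)^{1/2}|\leq H_{R\psi}(r)^{1/2}\leq C\,r^{k+1},\qquad 0<r<r_0.
\]
The first inequality is Minkowski's inequality in $L^2(S_r^+,r^{-2}G\,dx\,dt)$. For the second, I decompose $R=\sum_{j=k+1}^{\tilde k}R_j$ into parabolically homogeneous parts; the substitution $x=\sqrt{|t|}y$, $t=-s$ gives $\int_{S_r^+}R_jR_{j'}\,G=c_{jj'}\,r^{j+j'+2}/\bigl((j+j')/2+1\bigr)$, so $H_R(r)\leq C r^{2(k+1)}$ for $r\leq 1$. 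The replacement of $R$ by $R\psi$ only removes mass outside $B_{1/2}$, where $G$ contributes super-exponentially small tails for small $r$.

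Finally I would do a case analysis, noting that $k\leq\ell_0<\ell\leq k+1$ forces $\ell_0<k+1$, and that by Lemma~\ref{lem:Hu-mu} always $\kappa\leq\ell_0$ and $\tilde\kappa\leq\tilde\ell_0$.

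\emph{Case A: $\tilde\kappa<\ell_0$.} Then $\tilde\kappa<\tilde\ell_0$, so Lemma~\ref{lem:Hu-mu} gives $H_{u_{\tilde k}}(r)\geq r^{2\tilde\ell_0}$ for small $r$ and $\tilde\kappa=\tfrac12\lim rH'_{u_{\tilde k}}/H_{u_{\tilde k}}$. Integrating this limit yields, for any $\epsilon>0$, two-sided bounds $c_\epsilon r^{2\tilde\kappa+\epsilon}\leq H_{u_{\tilde k}}(r)\leq C_\epsilon r^{2\tilde\kappa-\epsilon}$ for $r<r_\epsilon$. Since $\tilde\kappa<k+1$, the key estimate transfers the same bounds to $H_{u_k}$, whence $H_{u_k}(r)>r^{2\ell_0}$ for small $r$ (choose $\epsilon<\ell_0-\tilde\kappa$). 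Applying Lemma~\ref{lem:Hu-mu} to $u_k$ identifies $\kappa=\tfrac12\lim rH'_{u_k}/H_{u_k}$, and running the integration argument in the reverse direction on the bounds forces $\kappa=\tilde\kappa=\min(\tilde\kappa,\ell_0)$.

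\emph{Case B: $\tilde\kappa\geq\ell_0$.} I argue by contradiction that $\kappa<\ell_0$ is impossible. If $\kappa<\ell_0$, Lemma~\ref{lem:Hu-mu} gives bounds $c_\epsilon r^{2\kappa+\epsilon}\leq H_{u_k}(r)\leq C_\epsilon r^{2\kappa-\epsilon}$; using $\kappa<\ell_0<k+1$ and the key estimate, the same bounds hold for $H_{u_{\tilde k}}$, and one has $H_{u_{\tilde k}}(r)>r^{2\tilde\ell_0}$ for small $r$ (since $\kappa+\tfrac{\epsilon}{2}<\ell_0\leq\tilde\ell_0$). Consequently $\Phi^{(\tilde\ell_0)}_{u_{\tilde k}}(r)=\tfrac12 re^{Cr^\sigma}H'_{u_{\tilde k}}/H_{u_{\tilde k}}+2(e^{Cr^\sigma}-1)$ for small $r$, forcing $\tilde\kappa=\tfrac12\lim rH'_{u_{\tilde k}}/H_{u_{\tilde k}}$ and, by integration, two-sided bounds $\sim r^{2\tilde\kappa}$ on $H_{u_{\tilde k}}$. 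These must be compatible with the bounds $\sim r^{2\kappa}$ already established, forcing $\kappa=\tilde\kappa\geq\ell_0$, contradicting $\kappa<\ell_0$. Hence $\kappa=\ell_0=\min(\tilde\kappa,\ell_0)$.

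The main obstacle is not any single step, but the symmetric use of Lemma~\ref{lem:Hu-mu}: one must carefully integrate the one-sided limit $\tfrac12 rH'/H\to\kappa$ into two-sided polynomial bounds on $H$, and then, in the reverse direction, deduce that matching two-sided polynomial bounds on $H_{u_k}$ and $H_{u_{\tilde k}}$ force $\kappa$ and $\tilde\kappa$ to agree (when both are strictly below their respective truncation levels). Once this logical symmetry is in place, the bookkeeping of Cases A and B is routine.
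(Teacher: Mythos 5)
Your proof is correct and follows essentially the same route as the paper: a comparison estimate showing $H_{u_k}$ and $H_{u_{\tilde k}}$ differ by a term of order $r^{2(k+1)}$ (the paper states it as $|H_{u_k}(r)-H_{u_{\tilde k}}(r)|\leq Cr^{2\ell}$), followed by Lemma~\ref{lem:Hu-mu} and integration of the frequency limit into two-sided power bounds, with the same two-case analysis according to whether $\tilde\kappa<\ell_0$ or $\tilde\kappa\geq\ell_0$. The only organizational difference is that the paper first uses the comparison to conclude $\Phi^{(\ell_0)}_{u_k}(0+)=\Phi^{(\ell_0)}_{u_{\tilde k}}(0+)$ and then runs both cases on the single function $H_{u_{\tilde k}}$ with the two truncations, whereas you transfer the two-sided bounds between $H_{u_k}$ and $H_{u_{\tilde k}}$ inside each case; the substance is identical.
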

This proposition essentially says that $\kappa^{(\ell_0)}_v(x_0,t_0)$
is the truncation of $\kappa^{(\tilde \ell_0)}_v(x_0,t_0)$ by the
value $\ell_0$.
\begin{proof} It will be sufficient to prove the statement for
  $(x_0,t_0)=(0,0)$. To simplify the notation in the proof, we are
  going to denote
$$
\kappa=\kappa^{(\ell_0)}_v(0,0),\quad \tilde\kappa=\kappa^{(\tilde
  \ell_0)}_v(0,0),
$$
so we will need to show that
$$
\kappa=\min\{\tilde\kappa,\ell_0\}.
$$
First, we fix a cutoff function $\psi$ is the definition of the
functions $u_k$ and $u_{\tilde k}$, and note that
$$
|u_k(x,t)-u_{\tilde k}(x,t)|\leq C \|(x,t)\|^{\ell}.
$$
This implies that
$$
|H_{u_{k}}(r)-H_{u_{\tilde k}}(r)|\leq C r^{2\ell}.
$$
Arguing as in the proof of Lemma~\ref{lem:kappa-indep-psi}, we obtain
that, in fact,
$$
\kappa=\Phi_{u_k}^{(\ell_0)}(0+)=\Phi_{u_{\tilde k}}^{(\ell_0)}(0+).
$$
Using this information, form now on in this proof we will abbreviate
$H_{u_{\tilde k}}(r)$ with $H(r)$.

We consider two cases.

\setcounter{step}{0}

\step{1-trunc-kappa} Assume first that $\tilde \kappa<\ell_0$. In this
case we need to show that $\kappa=\tilde \kappa$.

From the assumption we will have that $\tilde \kappa<\tilde \ell_0$
and by Lemma~\ref{lem:Hu-mu}
$$
\tilde \kappa=\frac12\lim_{r\to 0+}\frac{r H'(r)}{H(r)}.
$$
Therefore, for any $\epsilon>0$ we obtain
$$
r\frac{H'(r)}{H(r)}\leq 2\tilde\kappa+\epsilon,\quad 0<r<r_\epsilon.
$$
Integrating, we find
$$
H(r)\geq\frac{H(r_\epsilon)}{r_\epsilon^{2\tilde\kappa+\epsilon}}
r^{2\tilde \kappa+\epsilon},\quad 0<r<r_\epsilon.
$$
In particular, if $\epsilon>0$ is so small that
$2\tilde\kappa+\epsilon<2\ell_0$, then $H(r)>r^{2\ell_0}$ and
therefore
$$
\kappa=\Phi^{(\ell_0)}(0+)=\frac12\lim_{r\to 0+}\frac{r
  H'(r)}{H(r)}=\tilde \kappa.
$$

\step{2-trunc-kappa} Assume now that $\tilde \kappa\geq \ell_0$. We
need to show in this case that $\kappa=\ell_0$. In general, we know
that $\kappa\leq \ell_0$, so arguing by contradiction, assume
$\kappa<\ell_0$. We thus know by Lemma~\ref{lem:Hu-mu} that $H(r)\geq
r^{2\ell_0}$ for $0<r<r_0$, and
$$
\kappa=\frac12\lim_{r\to 0+}\frac{r H'(r)}{H(r)}.
$$
But then, we also have $H(r)\geq r^{2\tilde\ell_0}$ for $0<r<r_0$ and
therefore
$$
\tilde \kappa=\Phi^{(\tilde\ell_0)}(0+)=\frac12\lim_{r\to 0+}\frac{r
  H'(r)}{H(r)}=\kappa<\ell_0,
$$
contrary to the assumption.
\end{proof}
\begin{definition}[Truncated homogeneity, part II]
  \label{def:trunc-hom-II}
  In view of Proposition~\ref{prop:kappa-compat}, if $\phi\in
  H^{\ell,\ell/2}(Q_1')$ we can push $\ell_0$ in the definition of the
  truncated homogeneity up to $\ell$ by setting
$$
\kappa^{(\ell)}_v(x_0,t_0)=\sup_{\ell_0<\ell}\kappa^{(\ell_0)}_v(x_0,t_0).
$$
Indeed, Proposition~\ref{prop:kappa-compat} guarantees that
$\ell_0\mapsto \kappa^{(\ell_0)}_v$ is monotone increasing. Moreover,
we have
$$
\kappa^{(\ell_0)}_v=\min\{\kappa^{(\ell)}_v,\ell_0\}.
$$
\end{definition}

\begin{lemma}\label{lem:kappa-upp-semi} The function $(x,t)\mapsto \kappa^{(\ell)}_v(x,t)$
  is upper semicontinuous on $\Gamma_*(v)$ (with respect to Euclidean
  or, equivalently, parabolic distance), i.e., for any
  $(x_0,t_0)\in\Gamma_*(v)$ one has
$$
\lim_{\delta\to 0} \sup_{\tilde Q_\delta'(x_0,t_0)\cap \Gamma_*(v)}
\kappa^{(\ell)}_v\leq \kappa_v^{(\ell)}(x_0,t_0).
$$
\end{lemma}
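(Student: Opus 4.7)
The plan is to deduce the upper semicontinuity of $\kappa_v^{(\ell)}$ from that of the simpler truncated homogeneity $\kappa_v^{(\ell_0)}$ for every $\ell_0 \in [k,\ell)$, and then to prove the latter using the monotonicity of the generalized frequency function (Theorem~\ref{thm:thin-monotonicity}) together with continuous dependence of the auxiliary function $u_k^{(x_0,t_0)}$ on the base point $(x_0,t_0)$.

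For the reduction, suppose $(x_j,t_j) \to (x_0,t_0)$ in $\Gamma_*(v)$ along a subsequence with $\kappa_v^{(\ell)}(x_j,t_j) \to \kappa^* > \kappa_v^{(\ell)}(x_0,t_0)$. Pick any $\ell_0 \in (\kappa_v^{(\ell)}(x_0,t_0),\kappa^*) \cap [k,\ell)$. By Proposition~\ref{prop:kappa-compat} and Definition~\ref{def:trunc-hom-II}, for all large $j$ we then have $\kappa_v^{(\ell_0)}(x_j,t_j) = \min\{\kappa_v^{(\ell)}(x_j,t_j),\ell_0\} = \ell_0$, while $\kappa_v^{(\ell_0)}(x_0,t_0) = \min\{\kappa_v^{(\ell)}(x_0,t_0),\ell_0\} = \kappa_v^{(\ell)}(x_0,t_0) < \ell_0$, contradicting USC of $\kappa_v^{(\ell_0)}$. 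Thus it suffices to prove that $\kappa_v^{(\ell_0)}$ is USC for each $\ell_0 \in [k,\ell)$.

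Fix such an $\ell_0$ and let $\kappa_0 := \kappa_v^{(\ell_0)}(x_0,t_0)$. If $\kappa_0 = \ell_0$, USC is automatic because $\kappa_v^{(\ell_0)} \le \kappa_\mu = \ell_0$ everywhere by Lemma~\ref{lem:Hu-mu}. So assume $\kappa_0 < \ell_0$. Then Lemmas~\ref{lem:Hu-mu} and~\ref{lem:Hu-est} give $H_{u_k^{(x_0,t_0)}}(r)/r^{2\ell_0} \to \infty$ as $r\to 0^+$, so for $r \in (0,r_0)$ the truncation in the generalized frequency function is strictly inactive and
\begin{equation*}
\Phi^{(\ell_0)}_{u_k^{(x_0,t_0)}}(r) = \tfrac{1}{2}\, r\, e^{Cr^{\sigma}}\, \frac{H'_{u_k^{(x_0,t_0)}}(r)}{H_{u_k^{(x_0,t_0)}}(r)} + 2\bigl(e^{Cr^{\sigma}}-1\bigr)
\end{equation*}
is a continuous, monotone nondecreasing function of $r$ converging to $\kappa_0$ as $r \to 0^+$. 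Given $\epsilon > 0$, choose $r_\epsilon \in (0,r_0)$ with $\Phi^{(\ell_0)}_{u_k^{(x_0,t_0)}}(r_\epsilon) < \kappa_0 + \epsilon$. By the optimal regularity Theorem~\ref{thm:opt-reg}, $v \in H^{3/2,3/4}_{\loc}$, so the translates $v^{(x_j,t_j)}(x,t) := v(x_j+x,t_j+t)$ converge uniformly to $v^{(x_0,t_0)}$ together with their spatial gradients on any compact subset of $Q_1^+ \cup Q_1'$; since $\phi \in H^{\ell,\ell/2}$ with $\ell\ge 2$, the parabolic Taylor polynomials $q_k^{(x_j,t_j)}$ and their caloric extensions from Lemma~\ref{lem:calor-ext} depend continuously on the base point. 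Consequently the functions $u_k^{(x_j,t_j)}$ from Proposition~\ref{prop:uk-def} converge uniformly to $u_k^{(x_0,t_0)}$ on $S_1^+$ (whose relevant support is bounded by the cutoff $\psi$) and $f_k^{(x_j,t_j)}\to f_k^{(x_0,t_0)}$ in $L^\infty(S_1^+)$.

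Dominated convergence, together with the uniform Gaussian bounds of Lemma~\ref{lem:w212}, now yields $H_{u_k^{(x_j,t_j)}}(r_\epsilon) \to H_{u_k^{(x_0,t_0)}}(r_\epsilon)$ and $I_{u_k^{(x_j,t_j)}}(r_\epsilon) \to I_{u_k^{(x_0,t_0)}}(r_\epsilon)$; then the differentiation formula of Proposition~\ref{prop:diff-form-signor} expresses $H_u'(r_\epsilon)$ as $4\, I_u(r_\epsilon)/r_\epsilon$ minus a Gaussian integral of $t u f$, giving also $H'_{u_k^{(x_j,t_j)}}(r_\epsilon) \to H'_{u_k^{(x_0,t_0)}}(r_\epsilon)$. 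By continuity the strict inequality $H_{u_k^{(x_j,t_j)}}(r_\epsilon) > r_\epsilon^{2\ell_0}$ persists for large $j$, so the truncation remains inactive and hence $\Phi^{(\ell_0)}_{u_k^{(x_j,t_j)}}(r_\epsilon) \to \Phi^{(\ell_0)}_{u_k^{(x_0,t_0)}}(r_\epsilon) < \kappa_0+\epsilon$. By the monotonicity of $\Phi^{(\ell_0)}_{u_k^{(x_j,t_j)}}$ applied at the base point $(x_j,t_j)$,
\begin{equation*}
\kappa_v^{(\ell_0)}(x_j,t_j) \;=\; \lim_{r\to 0^+}\Phi^{(\ell_0)}_{u_k^{(x_j,t_j)}}(r) \;\le\; \Phi^{(\ell_0)}_{u_k^{(x_j,t_j)}}(r_\epsilon) \;<\; \kappa_0 + 2\epsilon
\end{equation*}
for all sufficiently large $j$, and since $\epsilon>0$ was arbitrary we conclude $\limsup_j \kappa_v^{(\ell_0)}(x_j,t_j) \le \kappa_0$, proving USC. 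The main technical obstacle is that $\Phi^{(\ell_0)}_u(r)$ involves the derivative of $\max\{H_u(r),r^{2\ell_0}\}$, which is merely absolutely continuous and potentially non-smooth at the crossing locus $\{H_u(r)=r^{2\ell_0}\}$; this is neatly sidestepped by selecting $r_\epsilon$ in the range where the truncation is strictly inactive, an option made available precisely by the growth dichotomy of Lemma~\ref{lem:Hu-est} in the non-trivial case $\kappa_0 < \ell_0$.
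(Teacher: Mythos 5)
Your proof follows essentially the same route as the paper's: at a point with $\kappa_0=\kappa_v^{(\ell_0)}(x_0,t_0)<\ell_0$ the truncation in $\Phi^{(\ell_0)}$ is strictly inactive at small scales, so one fixes $r_\epsilon$ with $\Phi^{(\ell_0)}_{u_k^{(x_0,t_0)}}(r_\epsilon)<\kappa_0+\epsilon$, uses continuity of $H$ and $I$ at the fixed scale $r_\epsilon$ in the base point (hence of the untruncated $\Phi^{(\ell_0)}(r_\epsilon)$, since $H>r_\epsilon^{2\ell_0}$ persists), and invokes the monotonicity of $\Phi^{(\ell_0)}$ to bound $\kappa_v^{(\ell_0)}$ at nearby free boundary points by $\kappa_0+2\epsilon$. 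The one slip is in your reduction step: the interval $(\kappa_v^{(\ell)}(x_0,t_0),\kappa^*)\cap[k,\ell)$ on which you rely can be empty (for instance whenever $\kappa^*\le k$), so the contradiction argument does not always launch. This is easily repaired, and is in effect how the paper proceeds: when $\kappa_v^{(\ell)}(x_0,t_0)<\ell$, choose any $\ell_0\in(\kappa_v^{(\ell)}(x_0,t_0),\ell)\cap[k,\ell)$, which is always nonempty since $k<\ell$; your bound $\kappa_v^{(\ell_0)}(x_j,t_j)\le\kappa_0+2\epsilon<\ell_0$ combined with the identity $\kappa_v^{(\ell_0)}=\min\{\kappa_v^{(\ell)},\ell_0\}$ then forces $\kappa_v^{(\ell)}(x_j,t_j)=\kappa_v^{(\ell_0)}(x_j,t_j)\le\kappa_0+2\epsilon$ directly, with no contradiction needed.
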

\begin{proof} Suppose first $\kappa=\kappa_v^{(\ell)}(x_0,t_0)<\ell$
  and fix $\ell_0\in (\kappa,\ell)$. Then, for any $\epsilon>0$ there
  exists $r_\epsilon>0$ such that
  $\Phi_{u}^{(\ell_0)}(r_\epsilon)<\kappa+\epsilon<\ell_0$, where
  $u=u_k^{(x_0,t_0)}$. This implies that that $H_{u}(r)\geq C
  r^{2(\kappa+\epsilon)}$ for $0<r<r_\epsilon$. Since the mapping
  $(x,t)\mapsto H_{u_k^{(x,t)}}(r_\epsilon)$ is continuous on
  $\Gamma_*(v)$, we will have
$$
H_{u^{(x,t)}_k}(r_\epsilon)\geq (C/2) r_\epsilon^{2(\kappa+\epsilon)}>
r_\epsilon^{2\ell_0},
$$
if $|x-x_0|^2+|t-t_0|<\eta_\epsilon^2$, $(x,t)\in \Gamma_*(v)$,
provided $r_\epsilon$ and $\eta_\epsilon>0$ are small enough. In
particular, this implies the explicit formula
$$
\Phi_{u_k^{(x,t)}}^{(\ell_0)}(r_\epsilon)=\frac12r_\epsilon
e^{Cr_\epsilon^\sigma}\frac{H_{u_k^{(x,t)}}'(r_\epsilon)}{H_{u_k^{(x,t)}}(r_\epsilon)}+2(e^{Cr_\epsilon^\sigma}-1).
$$
Therefore, taking $\eta_\epsilon>0$ small, we can guarantee
$$
|\Phi_{u_k^{(x,t)}}^{(\ell_0)}(r_\epsilon)-\Phi_{u_k^{(x_0,t_0)}}^{(\ell_0)}(r_\epsilon)|\leq\epsilon,
$$
if $|x-x_0|^2+|t-t_0|<\eta_\epsilon^2$, $(x,t)\in \Gamma_*(v)$. It
follows that, for such $(x,t)$, one has
$$
\kappa_v^{(\ell_0)}(x,t)\leq \Phi_{u_k^{(x,t)}}(r_\epsilon)\leq
\kappa+2\epsilon,
$$
which implies the upper semicontinuity of $\kappa_v^{(\ell_0)}$ and
$\kappa_v^{(\ell)}$ at $(x_0,t_0)$.

If $\kappa^{(\ell)}_v(x_0,t_0)=\ell$, the upper continuity follows
immediately since $\kappa^{(\ell)}_v\leq \ell$.
\end{proof}

The truncated homogeneity $\kappa_v^{(\ell)}$ gives a natural
classification of extended free boundary points.

\begin{definition}[Classification of free boundary points]
  Let $v\in \S_\phi(Q_1^+)$, with $\phi\in H^{\ell,\ell/2}(Q_1')$. For
  $\kappa\in [3/2,\ell]$, we define
$$
\Gamma_\kappa^{(\ell)}(v):=\{(x,t)\in\Gamma_*(v)\mid
\kappa_v^{(\ell)}(x,t)=\kappa\}.
$$
\end{definition}
As a direct corollary of Proposition~\ref{prop:kappa-compat}, we have
the following consistency for the above definition.

\begin{proposition}[Consistency of classification]
  \pushQED{\qed} If $\phi\in H^{\tilde\ell,\tilde\ell/2}(Q_1')$ with
  $\tilde \ell\geq \ell\geq 2$, then
  \begin{align*}
    \Gamma_\kappa^{(\ell)}(v)&=\Gamma_\kappa^{(\tilde\ell)}(v),\quad\text{if
    }\kappa<\ell,\\
    \Gamma_\ell^{(\ell)}(v)&=\bigcup_{\ell\leq \kappa\leq
      \tilde\ell}\Gamma_\kappa^{(\tilde\ell)}(v).\qedhere
  \end{align*}
  \popQED
\end{proposition}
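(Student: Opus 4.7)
The plan is to deduce this directly from Proposition~\ref{prop:kappa-compat}, pushed to the endpoint via Definition~\ref{def:trunc-hom-II}. Concretely, I would first establish the pointwise identity
\[
\kappa_v^{(\ell)}(x_0,t_0)=\min\{\kappa_v^{(\tilde\ell)}(x_0,t_0),\,\ell\}
\qquad\text{for every }(x_0,t_0)\in\Gamma_*(v).
\]
To see this, fix any $\ell_0\in[k,\ell)$ and any $\tilde\ell_0\in[\tilde k,\tilde\ell)$ with $\ell_0\leq\tilde\ell_0$. Proposition~\ref{prop:kappa-compat} gives $\kappa_v^{(\ell_0)}=\min\{\kappa_v^{(\tilde\ell_0)},\ell_0\}$. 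Taking the supremum in $\tilde\ell_0\nearrow\tilde\ell$ (and using that $\min\{\,\cdot\,,\ell_0\}$ is continuous and monotone) yields $\kappa_v^{(\ell_0)}=\min\{\kappa_v^{(\tilde\ell)},\ell_0\}$. Taking now the supremum in $\ell_0\nearrow\ell$ on the left via Definition~\ref{def:trunc-hom-II}, and noting that $\sup_{\ell_0<\ell}\min\{a,\ell_0\}=\min\{a,\ell\}$ for any $a\in[0,\tilde\ell]$, gives the claimed identity.

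With this identity in hand, the two set equalities follow by a straightforward case split. For $\kappa<\ell$, if $(x_0,t_0)\in\Gamma_\kappa^{(\ell)}(v)$, then $\kappa_v^{(\ell)}(x_0,t_0)=\kappa<\ell$, which forces $\min\{\kappa_v^{(\tilde\ell)}(x_0,t_0),\ell\}<\ell$ and hence $\kappa_v^{(\tilde\ell)}(x_0,t_0)=\kappa_v^{(\ell)}(x_0,t_0)=\kappa$; conversely, if $\kappa_v^{(\tilde\ell)}(x_0,t_0)=\kappa<\ell$, then the identity gives $\kappa_v^{(\ell)}(x_0,t_0)=\kappa$. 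This proves $\Gamma_\kappa^{(\ell)}(v)=\Gamma_\kappa^{(\tilde\ell)}(v)$ for $\kappa<\ell$.

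For the second equality, note that $(x_0,t_0)\in\Gamma_\ell^{(\ell)}(v)$ iff $\kappa_v^{(\ell)}(x_0,t_0)=\ell$, which by the identity is equivalent to $\kappa_v^{(\tilde\ell)}(x_0,t_0)\geq\ell$; since we always have $\kappa_v^{(\tilde\ell)}\leq\tilde\ell$, this in turn is equivalent to $(x_0,t_0)\in\Gamma_\kappa^{(\tilde\ell)}(v)$ for some $\kappa\in[\ell,\tilde\ell]$. Taking the union over such $\kappa$ gives the second identity.

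There is no real obstacle here since Proposition~\ref{prop:kappa-compat} does all the heavy lifting; the only point requiring minor care is the passage to the supremum in Definition~\ref{def:trunc-hom-II}, which rests on the elementary fact $\sup_{\ell_0<\ell}\min\{a,\ell_0\}=\min\{a,\ell\}$ together with the monotonicity of $\ell_0\mapsto\kappa_v^{(\ell_0)}$ already recorded after that definition.
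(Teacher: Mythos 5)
Your proposal is correct and follows exactly the route the paper intends: the paper states this proposition as a direct corollary of Proposition~\ref{prop:kappa-compat} together with Definition~\ref{def:trunc-hom-II}, and your argument simply writes out the routine details — passing to the supremum to get $\kappa_v^{(\ell)}=\min\{\kappa_v^{(\tilde\ell)},\ell\}$ and then performing the evident case split on whether this value is below or equal to $\ell$. Nothing is missing.
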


The latter identity essentially means that, if $\phi$ is more regular
than $H^{\ell,\ell/2}$, then $\Gamma^{(\ell)}_\ell(v)$ is an
``aggregate'' of points with higher homogeneities $\kappa$.

We conclude this section with the following description of the free
boundary, based on the fact that the function $\kappa_v^{(\ell)}$
never takes certain values. We also characterize the points that are
in the extended free boundary $\Gamma_*(v)$, but not in the free
boundary $\Gamma(v)$.

\begin{proposition}\label{prop:free-bound-struct} If $v\in \S_\phi(Q_1^+)$ with $\phi\in
  H^{\ell,\ell/2}(Q_1')$, $\ell\geq 2$, then for any $(x_0,t_0)\in
  \Gamma_*(v)$, either we have
$$
\kappa^{(\ell)}_v(x_0,t_0)=\frac32,\quad\text{or}\quad2\leq
\kappa^{(\ell)}_v(x_0,t_0)\leq \ell.
$$
As a consequence,
\[
\Gamma_*(v)=\Gamma^{(\ell)}_{3/2}(v)\cup\bigcup_{2\leq\kappa\leq\ell}\Gamma_{\kappa}^{(\ell)}(v).
\]
Moreover,
\[
\Gamma_*(v)\setminus \Gamma(v)\subset
\Gamma^{(\ell)}_\ell(v)\cup\bigcup_{m\in\N}\Gamma_{2m+1}^{(\ell)}(v).
\]
\end{proposition}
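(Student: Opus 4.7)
The plan is to split the statement into the dichotomy $\kappa^{(\ell)}_v(x_0,t_0)\in\{3/2\}\cup[2,\ell]$ for every point in $\Gamma_*(v)$ and the refined inclusion at points of $\Gamma_*(v)\setminus\Gamma(v)$. For the dichotomy, after translating $(x_0,t_0)$ to the origin I would form $u_k\in\S^{f_k}(S_1^+)$ via Proposition~\ref{prop:uk-def} and, for any $\ell_0\in[\max(k,3/2),\ell)$, take $\mu(r)=r^{2\ell_0}$. Then $|f_k(x,t)|\le M\|(x,t)\|^{\ell-2}$ together with the choice $\sigma\le\ell-\ell_0$ makes $\mu$ compatible with Theorem~\ref{thm:thin-monotonicity}, and $\kappa_\mu=\ell_0\ge 3/2$, so Theorem~\ref{thm:min-homogen} gives $\kappa^{(\ell_0)}_v(0,0)\in\{3/2\}\cup[2,\ell_0]$. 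The consistency formula $\kappa^{(\ell_0)}_v=\min\{\kappa^{(\ell)}_v,\ell_0\}$ from Proposition~\ref{prop:kappa-compat} transfers the dichotomy to $\kappa^{(\ell)}_v$ as $\ell_0\nearrow\ell$, yielding the first claimed decomposition $\Gamma_*(v)=\Gamma^{(\ell)}_{3/2}(v)\cup\bigcup_{2\le\kappa\le\ell}\Gamma^{(\ell)}_\kappa(v)$.

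For $(x_0,t_0)\in\Gamma_*(v)\setminus\Gamma(v)$, set $\kappa:=\kappa^{(\ell)}_v(x_0,t_0)$; the case $\kappa=\ell$ already puts the point in $\Gamma^{(\ell)}_\ell(v)$, so I assume $\kappa<\ell$. By the very definitions of $\Gamma$ and $\Gamma_*$, such a point must lie in the thin-interior of $\{v=\phi\}$ (otherwise it would belong to the boundary of $\{v>\phi\}$, hence to $\Gamma(v)$), so after translation $u_k\equiv 0$ on some thin neighborhood $Q'_\rho$. Choosing $\ell_0\in(\kappa,\ell)$ and applying Theorem~\ref{thm:exist-homogen-blowups}, I extract along $r_j\to 0+$ a nonzero $\kappa$-parabolically homogeneous global Signorini solution $u_0$. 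Because $(u_k)_{r_j}$ vanishes on the growing thin sets $\{|x'|<\rho/r_j,\ -\rho^2/r_j^{2}<t\le 0\}$ and the rescalings enjoy uniform local $H^{\alpha,\alpha/2}$ gradient estimates (Lemma~\ref{lem:known-Halpha-2}), an Arzel\`a--Ascoli argument on thin parabolic compacts forces $u_0\equiv 0$ on all of $S'_\infty$.

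With $u_0$ vanishing on $\{x_n=0\}$, the odd reflection $\tilde u_0$ is $C^1$ across the thin space (both $u_0$ and the jump of $\partial_{x_n}u_0$ induced by the reflection vanish there), and the resulting weakly caloric function on $S_\infty$ is smooth by hypoellipticity, hence a parabolically $\kappa$-homogeneous caloric polynomial, odd in $x_n$. This already forces $\kappa\in\Z_+$. Expanding $u_0(x,t)=\sum_{j\ge 0}x_n^{2j+1}w_{2j+1}(x',t)$ with $w_{2j+1}$ a polynomial of parabolic degree $\kappa-1-2j$, the heat equation yields the recursion $(\Delta'-\partial_t)w_{2j+1}=-(2j+2)(2j+3)w_{2j+3}$, while the Signorini condition $-\partial_{x_n}u_0\geq 0$ reads $w_1\le 0$ on $\R^{n-1}\times(-\infty,0]$. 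If $\kappa$ were even, every monomial $x'^{\alpha'}t^k$ in $w_1$ would satisfy $|\alpha'|+2k=\kappa-1$ odd, hence $|\alpha'|$ odd, giving $w_1(-x',t)=-w_1(x',t)$; coupled with $w_1\le 0$ at both $\pm x'$ this forces $w_1\equiv 0$, and the recursion cascades to $w_{2j+1}\equiv 0$ for every $j$, contradicting $u_0\not\equiv 0$. Therefore $\kappa$ is odd, and combined with $\kappa\ge 3/2$ we conclude $\kappa=2m+1$ for some $m\in\N$.

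The main obstacle is the bridge between the Gaussian-weighted weak convergence produced by Theorem~\ref{thm:exist-homogen-blowups} and the pointwise vanishing of $u_0$ on all of $S'_\infty$: extracting a uniform-on-compacts subsequence relies on the interior $H^{\alpha,\alpha/2}$ estimates for $(u_k)_{r_j}$, which are uniform precisely because the rescalings solve (up to a vanishing right-hand side) the same Signorini problem. Equally important is the verification that the odd reflection of $u_0$ across the thin space really produces a classical caloric function on $S_\infty$---this is what ultimately legitimizes treating $u_0$ as a polynomial and running the short parity argument; the recursion and the sign-obstruction on $w_1$ are then purely algebraic.
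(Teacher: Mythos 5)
Your proof is correct and follows essentially the same route as the paper: the dichotomy is Theorem~\ref{thm:min-homogen} combined with the consistency of truncated homogeneities, and at a point of $\Gamma_*(v)\setminus\Gamma(v)$ with $\kappa<\ell$ the blowup vanishes on the thin space, its odd reflection is a caloric polynomial by the Liouville theorem, and the sign condition on $\partial_{x_n}u_0$ along the thin space forces the homogeneity to be odd. The only (harmless) deviation is in ruling out the degenerate case $\partial_{x_n}u_0\equiv 0$ on $S_\infty'$: the paper invokes Holmgren's uniqueness theorem, whereas you run the explicit recursion $(\Delta'-\partial_t)w_{2j+1}=-(2j+2)(2j+3)w_{2j+3}$ on the $x_n$-Taylor coefficients, an equally valid and purely algebraic substitute.
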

\begin{proof} The first part is nothing but
  Theorem~\ref{thm:min-homogen}.

  Suppose now $(x_0,t_0)\in \Gamma_*(v)\setminus \Gamma(v)$ and that
  $\kappa=\kappa_v^{(\ell)}(x_0,t_0)<\ell$. Then, there exists a small
  $\delta>0$ such that $v=\phi$ on $Q_\delta'(x_0,t_0)$. Next,
  consider the translate $v^{(x_0,t_0)}=v(x_0+\cdot,t_0+\cdot)$ and
  let $u=u_k^{(x_0,t_0)}$ be obtained from $v^{(x_0,t_0)}$ as in
  Proposition~\ref{prop:uk-def}. Since $\kappa<\ell$, by
  Theorem~\ref{thm:exist-homogen-blowups}, there exists a blowup $u_0$
  of $u$ over some sequence $r=r_j\to 0+$. Since $u=0$ on $Q_\delta'$,
  $u_0$ will vanish on $S'_\infty$. Hence, extending it as an odd
  function $\tilde u_0$ of $x_n$ from $S_\infty^+$ to $S_\infty$, we
  will obtain a homogeneous caloric function in $S_\infty$. Then, by
  the Liouville theorem, $\tilde u_0$ must be a caloric polynomial of
  degree $\kappa$. Thus, $\kappa$ is an integer. We further claim that
  $\kappa$ is odd. Indeed, $\tilde u_0$ solves the Signorini problem
  in $S_\infty^+$ and therefore we must have that $-\partial_{x_n}
  u_0(x',0,t)$ is a nonnegative polynomial on $S_\infty'$ of
  homogeneity $\kappa-1$. The latter is possible when either
  $\kappa-1$ is even or if $-\partial_{x_n} u_0$ vanishes on
  $S_\infty'$. However, the latter case is impossible, since otherwise
  Holmgren's uniqueness theorem would imply that $u_0$ is identically
  zero, contrary to Theorem~\ref{thm:exist-homogen-blowups}. Thus, the
  only possibility is that $\kappa-1$ is even, or equivalently,
  $\kappa$ is odd. Since we also have $\kappa\geq 3/2>1$, we obtain
  that $\kappa\in\{2m+1\mid m\in\N\}$.
\end{proof}
\begin{remark} It is easy to construct $v\in\S_0(Q_1^+)$ such that
  $\Gamma(v)=\emptyset$ and
  $\Gamma_*(v)=\Gamma_{2m+1}^{(\ell)}(v)\not= \emptyset$.  The
  simplest example is perhaps
$$
v(x,t)=-\Im(x_1+ix_n)^{2m+1}.
$$
It is easy to verify that $v\in \S_0(Q_1^+)$, and $v=0$ on
$Q_1'$. Thus, $\Gamma(v)=\emptyset$. However,
$\Gamma_*(v)=\{0\}\times\{0\}\times(-1,0]$, and because of the
$(2m+1)$-homogeneity of $v$ with respect to any point on
$\Gamma_*(v)$, if we choose $\ell>2m+1$, we have that
$\Gamma_*(v)=\Gamma_{2m+1}^{(\ell)}(v)$.
\end{remark}

\section{Free boundary: Regular set}
\label{sec:free-bound-regul}
In this section we study a special subset $\cR(v)$ of the extended
free boundary. Namely, the collection of those points having minimal
frequency $\kappa=3/2$.

\begin{definition}[Regular set] Let $v\in \S_\phi(Q_1^+)$ with
  $\phi\in H^{\ell,\ell/2}(Q_1')$, $\ell\geq 2$. We say that
  $(x_0,t_0)\in \Gamma_*(v)$ is a \emph{regular} free boundary point
  if it has a minimal homogeneity $\kappa=3/2$, or equivalently
  $\kappa_{v}^{(\ell)}(x_0,t_0)=3/2$. The set
$$
\cR(v):=\Gamma_{3/2}^{(\ell)}(v)
$$
will be called the \emph{regular set} of $v$.
\end{definition}

We have the following basic fact about $\cR(v)$.

\begin{proposition}\label{prop:reg-set-rel-open} The regular set
  $\cR(v)$ is a relatively open subset of $\Gamma(v)$. In particular,
  for any $(x_0,t_0)\in \cR(v)$ there exists $\delta_0>0$ such that
$$
\Gamma(v)\cap Q_{\delta_0}'(x_0,t_0)=\cR(v)\cap
Q_{\delta_0}'(x_0,t_0).
$$

\end{proposition}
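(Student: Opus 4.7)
The plan is to combine two ingredients established earlier in the paper: the upper semicontinuity of the truncated homogeneity $\kappa_v^{(\ell)}$ (Lemma~\ref{lem:kappa-upp-semi}) and the gap in admissible values of $\kappa_v^{(\ell)}$ coming from the minimal homogeneity theorem (Theorem~\ref{thm:min-homogen}, restated in Proposition~\ref{prop:free-bound-struct}) which says that $\kappa_v^{(\ell)}$ only takes the value $3/2$ or values in $[2,\ell]$.

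First I would verify that $\cR(v)\subset\Gamma(v)$. By Proposition~\ref{prop:free-bound-struct} one has
\[
\Gamma_*(v)\setminus\Gamma(v)\subset\Gamma^{(\ell)}_\ell(v)\cup\bigcup_{m\in\N}\Gamma^{(\ell)}_{2m+1}(v).
\]
Since $\ell\geq 2>3/2$ and $2m+1\geq 3>3/2$ for every $m\in\N$, none of these components can contain a point with $\kappa_v^{(\ell)}=3/2$. Therefore $\cR(v)=\Gamma^{(\ell)}_{3/2}(v)\subset\Gamma(v)$.

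Next, fix $(x_0,t_0)\in\cR(v)$ and apply Lemma~\ref{lem:kappa-upp-semi} with $\epsilon=1/4$: there exists $\delta_0>0$ such that for every $(x,t)\in\Gamma_*(v)\cap Q_{\delta_0}'(x_0,t_0)$,
\[
\kappa_v^{(\ell)}(x,t)\leq \kappa_v^{(\ell)}(x_0,t_0)+\tfrac14=\tfrac74<2.
\]
Combined with the gap from Proposition~\ref{prop:free-bound-struct} (which forces $\kappa_v^{(\ell)}(x,t)\in\{3/2\}\cup[2,\ell]$), this strict inequality leaves the single possibility $\kappa_v^{(\ell)}(x,t)=3/2$. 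Hence every point of $\Gamma_*(v)$ in the parabolic neighborhood $Q_{\delta_0}'(x_0,t_0)$ belongs to $\cR(v)$; in particular, $\Gamma(v)\cap Q_{\delta_0}'(x_0,t_0)\subset\cR(v)$, and together with the reverse inclusion $\cR(v)\subset\Gamma(v)$ proved above this gives the claimed equality $\Gamma(v)\cap Q_{\delta_0}'(x_0,t_0)=\cR(v)\cap Q_{\delta_0}'(x_0,t_0)$, and therefore the relative openness of $\cR(v)$ in $\Gamma(v)$.

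There is no real obstacle here: the entire argument is a bookkeeping of facts already in hand. The only point worth double-checking is that the upper semicontinuity in Lemma~\ref{lem:kappa-upp-semi} is stated with respect to the natural topology on $\Gamma_*(v)$ (Euclidean or parabolic), so the neighborhood one obtains is indeed of the form $Q_{\delta_0}'(x_0,t_0)$ as required by the statement.
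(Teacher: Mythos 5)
Your proof is correct and follows essentially the same route as the paper: the inclusion $\cR(v)\subset\Gamma(v)$ via Proposition~\ref{prop:free-bound-struct}, then relative openness from the upper semicontinuity of $\kappa_v^{(\ell)}$ (Lemma~\ref{lem:kappa-upp-semi}) combined with the gap $(3/2,2)$ in its admissible values. The only difference is that you spell out the quantitative choice $\epsilon=1/4$, which the paper leaves implicit.
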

\begin{proof} First note that, by
  Proposition~\ref{prop:free-bound-struct}, we have $\cR(v)\subset
  \Gamma(v)$. The relative openness of $\cR(v)$ follows from the upper
  semicontinuity of the function $\kappa^{(\ell)}_v$ and form the fact
  that it does not take any values between $3/2$ and $2$.
\end{proof}

We will show in this section that, if the thin obstacle $\phi$ is
sufficiently smooth, then the regular set can be represented locally
as a $(n-2)$-dimensional graph of a parabolically Lipschitz
function. Further, such function can be shown to have H\"older
continuous spatial derivatives. We begin with the following basic
result.

\begin{theorem}[Lipschitz regularity of $\cR(v)$]
  \label{thm:lip-reg-reg-set}
  Let $v\in \S_\phi(Q_1^+)$ with $\phi\in H^{\ell,\ell/2}(Q_1')$,
  $\ell\geq 3$ and that $(0,0)\in\cR(v)$. Then, there exist
  $\delta=\delta_v>0$, and $g\in H^{1,1/2}(Q_\delta'')$ (i.e., $g$ is
  a parabolically Lipschitz function), such that possibly after a
  rotation in $\R^{n-1}$, one has
  \begin{align*}
    \Gamma(v)\cap Q_\delta'=\cR(v)\cap Q_\delta'&=\{(x',t)\in
    Q'_\delta\mid x_{n-1}=g(x'',t)\},\\
    \Lambda(v)\cap Q_\delta'&=\{(x',t)\in Q'_\delta\mid x_{n-1}\leq
    g(x'',t)\},
  \end{align*}
\end{theorem}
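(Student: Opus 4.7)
The plan is to exploit the classification of blowups at regular points provided by Proposition~\ref{prop:homogen-glob-sol-1-2}: using Proposition~\ref{prop:uk-def} to pass from $v\in\S_\phi(Q_1^+)$ to $u := u_k\in\S^{f}(S_1^+)$ with suitable $k$, the assumption $\kappa_v^{(\ell)}(0,0)=3/2$ together with Theorem~\ref{thm:exist-homogen-blowups} yields that every blowup $u_0$ of $u$ at the origin has the form
\[
u_0(x,t) = C_0\,\Re(x'\cdot e + ix_n)_+^{3/2},
\]
for some $e\in\partial B_1'$ and $C_0 > 0$. A key feature of this profile is that it is $t$-independent. After a rotation in $\R^{n-1}$ we may assume $e = e_{n-1}$, and then for every tangential vector $\tau\in\C_\eta'$ with $|\tau|=1$ we have $\partial_\tau u_0 = \tau_{n-1}\,\partial_{x_{n-1}} u_0 \geq \eta\cdot(3C_0/2)\Re(x_{n-1}+ix_n)_+^{1/2}$ on $\R^n\times(-\infty,0]$.

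The first main step is to transfer this directional monotonicity to $u$ itself in a full parabolic neighborhood of the origin. The plan is to argue by contradiction: if the conclusion failed, one could find a sequence $(y_j,s_j)\to(0,0)$ in $Q_{r_j}^+\cup Q_{r_j}'$ (with $r_j\to 0$) and $\tau_j\in\C_\eta'$ with $\partial_{\tau_j}u(y_j,s_j) < 0$. Rescaling by $r_j$ and applying Theorem~\ref{thm:exist-homogen-blowups}, one extracts a subsequential blowup $u_0$ of the form above; interior parabolic estimates (Lemma~\ref{lem:known-Halpha-2}, applied away from $\Gamma_*(u_{r_j})$) upgrade the Gaussian $L_2$-convergence to $C^1_{\loc}$-convergence on compact sets of $(\R^n_+\cup\R^{n-1})\times(-\infty,0)$ away from the degenerate set $\{x_{n-1}\leq 0,\ x_n=0\}$. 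This would force $\partial_\tau u_0\leq 0$ at a limit point, contradicting the strict lower bound above (the contradiction near the degenerate set is handled separately using the nondegeneracy $H_u(r)\geq cr^3$ for regular points, which follows from Lemma~\ref{lem:Hu-est} with $\kappa'=3/2+$). One concludes that there exists $\delta_0 > 0$ such that
\[
\partial_\tau(v-\phi) \geq 0 \quad\text{on } Q_{\delta_0}^+\cup Q_{\delta_0}',\qquad \text{for every } \tau\in\C_\eta'\cap\partial B_1'.
\]

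The cone-of-monotonicity property immediately yields the graph structure. Indeed, writing $w := (v-\phi)|_{Q_{\delta_0}'}\geq 0$, the monotonicity implies that whenever $w(x',t)=0$ one has $w(x'-\lambda\tau,t)=0$ for all $\lambda>0$ and $\tau\in\C_\eta'$ with $(x'-\lambda\tau,t)\in Q_{\delta_0}'$. Defining
\[
g(x'',t) := \inf\{s\in\R \mid (x'',s,t)\in Q_{\delta_0}'\setminus\Lambda(v)\},
\]
this directly gives $\Lambda(v)\cap Q_\delta' = \{x_{n-1}\leq g(x'',t)\}$ for some small $\delta\leq\delta_0$, and the cone condition forces $|g(x'',t)-g(y'',t)|\leq\eta^{-1}|x''-y''|$, i.e.\ spatial Lipschitz regularity with constant $\eta^{-1}$. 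The identity $\cR(v)\cap Q_\delta' = \Gamma(v)\cap Q_\delta'$ follows from Proposition~\ref{prop:reg-set-rel-open}.

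The main obstacle will be the $1/2$-H\"older regularity of $g$ in $t$. The plan is to again use that the blowup $u_0$ is $t$-independent, now applied uniformly over the base point. Concretely, by Lemma~\ref{lem:kappa-upp-semi} together with the gap $\kappa\in\{3/2\}\cup[2,\ell]$, every point of $\Gamma(v)\cap Q_\delta'$ is regular with blowups of the same form; by a uniform quantitative version of the blowup convergence (obtained by combining the monotonicity of $\Phi_{u}^{(\ell_0)}$ with the nondegeneracy $H_u(r)\gtrsim r^3$), one shows that for $(x_0,t_0)\in\Gamma(v)\cap Q_{\delta/2}'$ and $r$ small,
\[
|v(x,t)-\phi(x',t) - C(x_0,t_0)\Re(x'\cdot e(x_0,t_0)+ix_n)_+^{3/2}|\leq o(1)\,r^{3/2} \quad\text{on } Q_r^+(x_0,t_0),
\]
with $C(x_0,t_0)$ and $e(x_0,t_0)$ depending continuously on $(x_0,t_0)$. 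Since the leading profile does not depend on $t$, the zero set of $v-\phi$ at time $t_0+\Delta t$ differs from that at time $t_0$ only by terms of size $o(r^{3/2})/|\nabla'u_0|\lesssim o(|\Delta t|^{1/2})$; a contradiction/compactness argument then yields $|g(x'',t_1)-g(x'',t_2)|\leq C|t_1-t_2|^{1/2}$. Making this comparison rigorous and uniform in the base point, so that it translates into the $H^{1,1/2}$ seminorm control, is the technical core of the argument.
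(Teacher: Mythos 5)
Your overall architecture is the right one and matches the paper's: classify the blowup at a regular point via Proposition~\ref{prop:homogen-glob-sol-1-2}, transfer the directional monotonicity of the blowup in the cone $\C_\eta'$ to $u$ near the origin, read off the graph structure and the spatial Lipschitz bound $|\nabla''g|\leq\eta$, and then obtain the $1/2$-H\"older continuity in $t$ from the $t$-independence of the $3/2$-homogeneous profile. However, there are two genuine gaps in how you propose to execute the two hard steps.

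First, the transfer of monotonicity. The convergence $\nabla u_{r_j}\to\nabla u_0$ is indeed available uniformly on $Q_1^+\cup Q_1'$ (this is Lemma~\ref{lem:blowups-H1alpha-conv}, which rests on the optimal regularity theorem), but this only gives $\partial_e u_{r_j}\geq \partial_e u_0-\epsilon\geq-\epsilon$. The set where this is useless is precisely the set where $\partial_e u_0=\tfrac32 C(e\cdot e_{n-1})\Re(x_{n-1}+ix_n)^{1/2}$ vanishes, i.e.\ a full neighborhood of $\{x_{n-1}\leq 0,\ x_n=0\}$ containing the coincidence set and the free boundary of the rescalings. Your contradiction/compactness argument produces points $(\tilde y_j,\tilde s_j)$ which may well converge into exactly that set, where $C^1$-convergence yields only $\partial_{\tau_j}u_{r_j}\to 0$ and no sign information; and the nondegeneracy $H_u(r)\geq c\,r^3$ is an $L_2$-Gaussian lower bound on $u$ that says nothing about the sign of a directional derivative near the coincidence set. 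What is needed here is a comparison/barrier argument: the paper's Lemma~\ref{lem:glob-local} shows that a function $h$ with $h\geq 0$ on $\Lambda$, $h\geq-\epsilon_0$ everywhere, $h\geq\delta_0$ on $\{|x_n|\geq c_n\}$, and $(\Delta-\partial_t)h\leq\epsilon_0$ off $\Lambda$ must be nonnegative in $Q_{1/2}$; applied to $h=\partial_e u_{r_j}$ (evenly reflected), with the Signorini condition supplying $h\geq 0$ on $\Lambda(u_{r_j})$, this closes the step. Without this (or an equivalent mechanism) your monotonicity claim in $Q_{\delta_0}^+\cup Q_{\delta_0}'$ is not established.

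Second, the time regularity. Your argument invokes a first-order expansion $v-\phi=C(x_0,t_0)\Re(x'\cdot e(x_0,t_0)+ix_n)_+^{3/2}+o(r^{3/2})$ with $C$ and $e$ \emph{continuous} in the base point. This presupposes uniqueness of the blowup at every regular point, which is not available at this stage (and is not proved anywhere in the paper; uniqueness of blowups is only established later at \emph{singular} points via the Monneau-type formula). The paper's argument avoids this entirely: it rescales at the base points $(x_j,t_j)$ of a putative violating sequence by $\delta_j=\max\{|g(x_j'',t_j)-g(x_j'',s_j)|,|t_j-s_j|^{1/2}\}$, uses Dini's theorem on the monotone family $r\mapsto\Phi^{(2)}_{u^{(x,t)}}(r)$ to get uniform convergence of the frequency to $3/2$ on a compact piece of $\Gamma(u)$ (hence subsequential convergence of the rescalings to \emph{some} $3/2$-homogeneous profile $C\Re(x'\cdot e_0+ix_n)^{3/2}$), pins down $e_0\cdot e_{n-1}>0$ from the already-proved cone monotonicity, and then derives a contradiction from the fact that the two rescaled free boundary points, separated by a definite multiple of $e_{n-1}$, must both lie on the zero set $\{x'\cdot e_0=0\}$ of the limit. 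You should replace the continuity-of-blowup claim by such a subsequential compactness argument, for which the uniform (Dini) convergence of the truncated frequency is the key quantitative input.
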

\begin{figure}
  \begin{picture}(150,150)
    \put(12,0){\includegraphics[height=150pt]{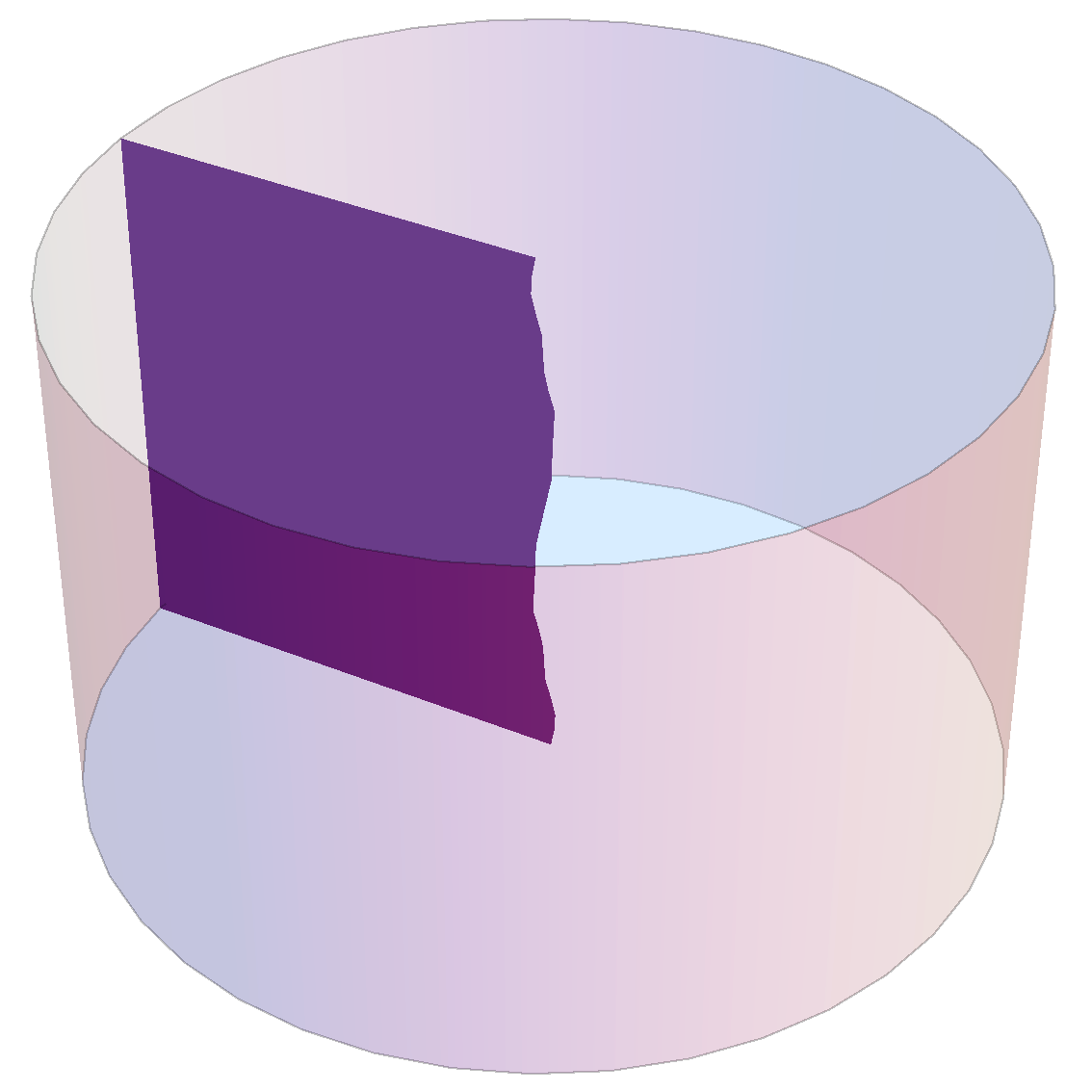}}
    \put(50,80){\footnotesize \color{white}{$\Lambda(v)$}}
    \put(45,100){\footnotesize \color{white}{$v=\phi$}}
    \put(92,65){\footnotesize $x_{n-1}=g(x'',t)$}
    \put(90,105){\footnotesize $\cR(v)$}
    \put(100,70){\vector(-1,1){10}}
  \end{picture}
  \caption{The regular set $\cR(v)$ in $Q_\delta'$, given by the graph
    $x_{n-1}=g(x'')$ with $g\in H^{1,1/2}(Q_\delta'')$ and
    $\nabla''g\in H^{\alpha,\alpha/2}(Q_\delta'')$ by
    Theorems~\ref{thm:lip-reg-reg-set} and \ref{thm:h1alpha-reg-set}}
  \label{fig:reg-set}
\end{figure}

For an illustration, see Fig~\ref{fig:reg-set}.  Following the
well-known approach in the classical obstacle problem, see e.g.\
\cite{PSU}*{Chapter~4}, the idea of the proof is to show that there is
a cone of directions in the thin space, along which $v-\phi$ is
increasing. This approach was successfully used in the elliptic
Signorini problem in \cite{ACS}, \cite{CSS}, see also
\cite{PSU}*{Chapter~9}, and in the arguments below we generalize the
constructions in these papers to the parabolic case. This will
establish the Lipschitz regularity in the space variables. To show the
$1/2$-H\"older regularity in $t$ (actually better than that), we will
use the fact that the $3/2$-homogeneous solutions of the parabolic
Signorini problem are $t$-independent (see
Proposition~\ref{prop:homogen-glob-sol-1-2}).

However, in order to carry out the program outlined above, in addition
to (i) and (ii) in Theorem~\ref{thm:exist-homogen-blowups} above, we
will need a stronger convergence of the rescalings $u_r$ to the
blowups $u_0$. This will be achieved by assuming a slight increase in
the regularity assumptions on the thin obstacle $\phi$, and,
consequently, on the regularity of the right-hand side $f$ in the
construction of Proposition~\ref{prop:uk-def}.

\begin{lemma}\label{lem:blowups-H1alpha-conv} Let $u\in \S^f(S_1^+)$,
  and suppose that for some $\ell_0\geq 2$
  \begin{alignat*}{2}
    |f(x,t)|&\leq M\|(x,t)\|^{\ell_0-2}&\quad&\text{in }S_1^+,\\
    |\nabla f(x,t)|&\leq L\|(x,t)\|^{(\ell_0-3)^+}&\quad&\text{in
    }Q_{{1/2}}^+,
  \end{alignat*}
  and
$$
H_u(r)\geq r^{2\ell_0},\quad\text{for }0<r<r_0.
$$
Then, for the family of rescalings $\{u_r\}_{0<r<r_{0}}$ we have the
uniform bounds
$$
\|u_r\|_{H^{3/2,3/4}(Q_R^+\cup Q_R')}\leq C_u,\quad 0<r<r_{R,u}.
$$
In particular, if the sequence of rescalings $u_{r_j}$ converges to
$u_0$ as in Theorem~\ref{thm:exist-homogen-blowups}, then over a
subsequence
$$
u_{r_j}\to u_0, \quad\nabla u_{r_j}\to \nabla u_0\quad\text{in }
H^{\alpha,\alpha/2}(Q_R^+\cup Q_R'),
$$
for any $0<\alpha<1/2$ and $R>0$.

\end{lemma}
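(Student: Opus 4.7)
The plan is to reduce matters to Theorem~\ref{thm:opt-reg} by establishing uniform $L^\infty$ and $W^{1,0}_\infty$ bounds on the rescalings $u_r$ on arbitrary cylinders $Q_R^+$, and then to invoke an Arzelà–Ascoli argument in Hölder spaces.

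First I would use the lower bound $H_u(r)\ge r^{2\ell_0}$ together with the monotonicity of $\Phi_u$ from Theorem~\ref{thm:thin-monotonicity} (with truncation $\mu(r)=r^{2\ell_0}$) to derive a polynomial upper bound $H_{u_r}(R)\le CR^{2K}$ for $R\ge 1$ and $r$ small; when $\kappa<\ell_0$ this is exactly Lemma~\ref{lem:Hu-est} applied with $\kappa'$ close to $\ell_0$, and otherwise one integrates the bound $\Phi_u(r)\le\Phi_u(r_0)$ directly. In parallel, $H_u(r)^{1/2}\ge r^{\ell_0}$ and the decay hypotheses on $f$ and $\nabla f$ give
\[
|f_r(x,t)|\le M\|(x,t)\|^{\ell_0-2},\qquad |\nabla f_r(x,t)|\le L\|(x,t)\|^{(\ell_0-3)^+},
\]
so that both $f_r$ and $\nabla f_r$ are uniformly $L^\infty$-bounded on each $Q_R^+$.

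Next I would pass to an $L^\infty$ bound on $u_r$ itself: extending $u_r$ by even symmetry in $x_n$, the Signorini condition $-\partial_{x_n}u_r\ge 0$ renders the singular part of the distributional Laplacian of the extension nonpositive, so that both $u_r^+$ and $u_r^-$ satisfy $(\Delta-\partial_t)u_r^\pm\ge -|f_r|$. Lemma~\ref{lem:l2linf-lower} applied with $\gamma=\ell_0$, together with the bound on $H_{u_r}$ from the previous step, yields $\|u_r\|_{L^\infty(Q_R^+)}\le C(R)$ uniformly in $r$. An interior gradient estimate for the Signorini problem, obtained by cutting off $u_r$ on a slightly larger cylinder and arguing as in the proof of Lemma~\ref{lem:known-Halpha-2} (using the $L^\infty$ control in place of the missing $W^{1,0}_\infty$ hypothesis), then promotes this to $\|u_r\|_{W^{1,0}_\infty(Q_R^+)}\le C(R)$.

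With these uniform bounds in hand, a parabolic dilation $(x,t)\mapsto(x/(2R),t/(2R)^2)$ puts $u_r$ into the framework of Theorem~\ref{thm:opt-reg} on $Q_1^+$ with obstacle $\phi=0$; translating the resulting bound back gives $\|u_r\|_{H^{3/2,3/4}(Q_R^+\cup Q_R')}\le C(R)$ for $0<r<r_{R,u}$. The compact embedding $H^{3/2,3/4}\hookrightarrow H^{1+\alpha,(1+\alpha)/2}$ for $\alpha<1/2$ on $Q_R^+\cup Q_R'$, together with a Cantor diagonal argument on $R\to\infty$, refines the blowup sequence from Theorem~\ref{thm:exist-homogen-blowups} into one along which both $u_{r_j}\to u_0$ and $\nabla u_{r_j}\to\nabla u_0$ in $H^{\alpha,\alpha/2}$; the limit is forced to coincide with $u_0$ by uniqueness of the weak $L^2_G$ limit already identified there. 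The main technical obstacle is the $L^\infty$-to-$W^{1,0}_\infty$ step, since Lemma~\ref{lem:known-Halpha-2} as stated presupposes a $W^{1,0}_\infty$ bound; one either reproves this via the penalization scheme of Section~\ref{sec:existence-regularity} together with Lemma~6 of \cite{AU0}, or reorganizes the estimates so that the interior gradient bound is obtained simultaneously with the $L^\infty$ bound by a direct Bernstein-type argument on the even extension of $u_r$.
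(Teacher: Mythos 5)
Your overall architecture coincides with the paper's: bound $f_r$ and $\nabla f_r$ by rescaling the hypotheses and using $H_u(r)\geq r^{2\ell_0}$, get a uniform $L_\infty$ bound on $u_r$ from Lemma~\ref{lem:l2linf-lower} applied to the evenly reflected $(u_r)_\pm$ together with a polynomial bound on $H_{u_r}(R)$, upgrade to a uniform gradient bound, feed everything into Theorem~\ref{thm:opt-reg}, and conclude by compactness of $H^{3/2,3/4}\hookrightarrow H^{1+\alpha,(1+\alpha)/2}$ for $\alpha<1/2$. All of that is fine.

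The one step you do not actually close is the passage from the uniform $L_\infty$ bound to the uniform $W^{1,0}_\infty$ bound, and you say so yourself. Neither of your two suggested repairs is straightforward: rerunning Lemma~\ref{lem:known-Halpha-2} requires a spatial cutoff whose error term reintroduces $\nabla u_r$ into the right-hand side (the circularity you noticed), and a Bernstein-type argument is delicate here because $u_r$ is not $C^2$ up to the thin space. The paper's resolution is different and cleaner, and it is exactly the reason the hypothesis $|\nabla f(x,t)|\leq L\|(x,t)\|^{(\ell_0-3)^+}$ appears in the statement: first the energy inequality for $(u_r)_\pm$ in $Q_{R/2}$ gives a uniform $L_2$ bound on $\nabla u_r$ in $Q_{R/4}$; then one observes that for each $i$ the functions $(\partial_{x_i}u_r)_\pm$ (with the appropriate even/odd reflection across the thin space, as in Proposition~\ref{prop:homogen-glob-sol-1-2}) are continuous, vanish on the coincidence set, and are caloric where nonzero, hence satisfy
\begin{equation*}
(\Delta-\partial_t)(\partial_{x_i}u_r)_\pm\geq -L\,R^{(\ell_0-3)_+}\quad\text{in }Q_R;
\end{equation*}
the local $L_\infty$--$L_2$ estimate for subcaloric functions then converts the $L_2$ gradient bound into the pointwise bound $\sup_{Q_{R/8}}|\nabla u_r|\leq C_nR^{\max\{\ell_0-1,2\}}(1+M+L)$. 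You should incorporate this device (or an equivalent one); as written, your proof has an acknowledged hole at precisely the step that requires the extra regularity hypothesis on $f$, which is a strong hint that this is where that hypothesis must be spent. A further small point: your claim that the route through Lemma~\ref{lem:known-Halpha-2} would work makes the assumption on $\nabla f$ superfluous, which should itself have raised a flag.
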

\begin{proof} Because of Theorem~\ref{thm:opt-reg}, it is enough to
  show that $u_r$, $|\nabla u_r|$, and $f_r$ are bounded in $Q_R^+$.
  We have
  \begin{alignat*}{2}
    |f_r(x,t)|&=\frac{r^2|f(rx,r^2t)|}{H_u(r)^{1/2}}\\&\leq
    \frac{Mr^{\ell_0}\|(x,t)\|^{\ell_0-2}}{H_{u}(r)^{1/2}}\leq M \|(x,t)\|^{\ell_0-2}, &\quad&(x,t)\in S_R^+.\\
    \intertext{Besides, we have that} |\nabla
    f_r(x,t)|&=\frac{r^3|\nabla f(rx,r^2t)|}{H_u(r)^{1/2}}\\&\leq
    \frac{Lr^{\max\{\ell_0,3\}}\|(x,t)\|^{(\ell_0-3)_+}}{H_{u}(r)^{1/2}}\leq
    L \|(x,t)\|^{(\ell_0-3)_+},&\quad&(x,t)\in Q_R^+.
  \end{alignat*}
  Then, the functions
$$
w_\pm=(u_r)_\pm \quad\text{(evenly reflected to $S_R^-$)}
$$
satisfy
$$
\Delta w_\pm-\partial_t w_\pm\geq -M\|(x,t)\|^{\ell_0-2}\quad\text{in
}S_R.
$$
By Lemma~\ref{lem:l2linf-lower} we thus obtain
$$
\sup_{Q_{R/2}}|u_r|\leq C( H_{u_r}(R)^{1/2}+M R^{\ell_0})\leq C
R^{\ell_0}(1+M),
$$
for small $r$. Then, by the energy inequality for $w_\pm$ in
$Q_{R/2}$, we have
$$
\frac{1}{R^{n+2}}\int_{Q_{R/4}} |\nabla u_r|^2\leq C
R^{-2}R^{2\ell_0}(1+M)^2+CR^2 R^{2(\ell_0-2)}M^2\leq C
R^{2\ell_0-2}(1+M)^2.
$$
On the other hand, using that for $i=1,\ldots, n$,
$$
(w_i)_\pm=(\partial_{x_i}u_r)_\pm\quad\text{(evenly reflected to
  $S_R^-$)}
$$
satisfy
$$
\Delta (w_i)_\pm-\partial_t (w_i)_\pm\geq -L
R^{(\ell_0-3)_+}\quad\text{in }Q_R,
$$
then from $L_\infty-L_2$ estimate for subcaloric functions, we obtain
$$
\sup_{Q_{R/8}} |\nabla u_r|\leq C_nR^{\max\{\ell_0-1,2\}}(1+M+L).
$$
Thus, $u_r$, $|\nabla u_r|$ and $f_r$ are uniformly bounded in
$Q_{R/8}^+$ for small $r<r_{R,u}$, and this completes the proof of the
lemma.
\end{proof}

The next lemma will allow to deduce the monotonicity of the solution
$u$ in a cone of directions in the thin space, from that of the
blowup. It is the parabolic counterpart of \cite{ACS}*{Lemma~4} and
\cite{CSS}*{Lemma~7.2}.

\begin{lemma}\label{lem:glob-local} Let $\Lambda$ be a closed subset of
  $\R^{n-1}\times(-\infty,0]$, and $h(x,t)$ a continuous function in
  $Q_{1}$. For any $\delta_0>0$ there exists $\epsilon_0>0$, depending
  only on $\delta_0$ and $n$, such that if
  \begin{enumerate}[label=\textup{\roman*)}]
  \item $h\geq 0$ on $Q_1\cap\Lambda$,
  \item $(\Delta-\partial_t)h\leq \epsilon_0$ in
    $Q_1\setminus\Lambda$,
  \item $h\geq-\epsilon_0$ in $Q_1$,
  \item $h\geq\delta_0$ in $Q_1\cap\{|x_n|\geq c_n\}$,
    $c_n=1/(32\sqrt{n-1})$,
  \end{enumerate}
  then $h\geq 0$ on $Q_{1/2}$.
\end{lemma}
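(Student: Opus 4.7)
The plan is to prove this by a parabolic comparison argument built around a caloric barrier that captures the lower bound coming from hypothesis~(iv).

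First, I will introduce the auxiliary function $h_0:\bar Q_1\to[0,1]$, defined as the bounded weak solution of
\[
(\Delta-\partial_t)h_0=0\ \text{in}\ Q_1\setminus\Lambda,\qquad h_0=\mathbf 1_{\{|x_n|\ge c_n\}}\ \text{on}\ \partial_p Q_1,\qquad h_0=0\ \text{on}\ \Lambda\cap Q_1.
\]
The specific choice $c_n=1/(32\sqrt{n-1})$ is what ensures that the ``good'' portion of $\partial_p Q_1$ occupies a definite share as seen from any point of $\bar Q_{1/2}$; then the parabolic Harnack inequality, together with a direct comparison of $h_0$ with an explicit caloric polynomial of the form $a[(n-1)x_n^2-|x'|^2]+b$, should give a dimensional constant $c_0=c_0(n)>0$ such that $h_0\ge c_0$ on $\bar Q_{1/2}$. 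Establishing this quantitative lower bound is the crux of the argument and the main technical obstacle.

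With $h_0$ in hand, I set $W:=\delta_0 h_0-\epsilon_0$ and $V:=h-W-\epsilon_0 t$. Since $h_0$ is caloric and $(\Delta-\partial_t)(-\epsilon_0 t)=-\epsilon_0$, hypothesis~(ii) gives $(\Delta-\partial_t)V\le 0$ in $Q_1\setminus\Lambda$, so $V$ is a supersolution of the heat equation in this slit domain. A direct check of each piece of $\partial_p(Q_1\setminus\Lambda)$ will show $V\ge 0$ there: on $\partial_p Q_1\cap\{|x_n|\ge c_n\}$, (iv) yields $V\ge \delta_0-(\delta_0-\epsilon_0)-\epsilon_0 t=\epsilon_0(1-t)\ge 0$; on $\partial_p Q_1\cap\{|x_n|<c_n\}$, (iii) yields $V\ge -\epsilon_0+\epsilon_0-\epsilon_0 t=-\epsilon_0 t\ge 0$; and on $\Lambda\cap Q_1$, (i) yields $V\ge 0+\epsilon_0-\epsilon_0 t\ge \epsilon_0>0$.

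The parabolic minimum principle then forces $V\ge 0$ throughout $Q_1\setminus\Lambda$. Combined with the bound $h_0\ge c_0$ on $\bar Q_{1/2}$, this gives
\[
h(x,t)\ge W(x,t)+\epsilon_0 t=\delta_0\, h_0(x,t)-\epsilon_0(1-t)\ge \delta_0\, c_0-\tfrac{5}{4}\epsilon_0\quad\text{in}\ \bar Q_{1/2}\setminus\Lambda,
\]
so choosing $\epsilon_0\le\tfrac{4}{5}\delta_0\, c_0(n)$ yields $h\ge 0$ on $Q_{1/2}\setminus\Lambda$, while hypothesis~(i) covers $\Lambda\cap Q_{1/2}$. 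The delicate step remains the quantitative Harnack-type lower bound $h_0\ge c_0(n)$; this is where the explicit geometric constant $c_n=1/(32\sqrt{n-1})$ is used, and where the ``thickness'' of the favorable portion of the boundary must be traded against the contribution from $\Lambda$.
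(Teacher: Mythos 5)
Your comparison framework (define $V=h-\delta_0h_0+\epsilon_0-\epsilon_0t$, check $\partial_p(Q_1\setminus\Lambda)$, apply the minimum principle) is internally consistent, but the whole argument hinges on the claimed lower bound $h_0\geq c_0(n)>0$ on $\overline{Q_{1/2}}$, and that bound is false in the cases that matter. You impose $h_0=0$ on $\Lambda\cap Q_1$, and $\Lambda$ is an arbitrary closed subset of the thin space which in general meets $Q_{1/2}$ — indeed, in the application of this lemma $\Lambda$ is (the rescaled) coincidence set and the origin is a free boundary point, so $\Lambda$ accumulates at $(0,0)$. Then $h_0$ vanishes on $\Lambda\cap Q_{1/2}$ and is small nearby, so no positive lower bound on $\overline{Q_{1/2}}$ independent of $\Lambda$ can hold. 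Nor can you repair this by weakening the Dirichlet data on $\Lambda$: the boundary check $V\geq 0$ on $\Lambda$ only uses $h\geq 0$ there, which forces $\delta_0 h_0\leq\epsilon_0(1-t)$ on $\Lambda$, i.e.\ $h_0$ \emph{must} be of size $O(\epsilon_0/\delta_0)$ on $\Lambda$; a barrier that is that small on $\Lambda$ cannot be bounded below by $c_0(n)\delta_0\gg\epsilon_0$ at nearby points of $Q_{1/2}$. Near $\Lambda$ your final estimate degenerates to $h\geq-\epsilon_0(1-t)$, which is just hypothesis (iii) again. (There is also a secondary issue: for an arbitrary closed $\Lambda$ in the thin space the Perron solution need not attain the value $0$ on $\Lambda$ at irregular points, so even the boundary comparison on $\Lambda$ is not justified.)

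The paper avoids a global barrier entirely. It argues by contradiction at a hypothetical point $(x_0,t_0)\in Q_{1/2}\cap\{|x_n|\leq c_n\}$ with $h(x_0,t_0)<0$ and adds to $h$ the explicit correction $\frac{\alpha_0}{2(n-1)}|x'-x_0'|^2+\alpha_0(t_0-t)-(\alpha_0+\tfrac{\epsilon_0}{2})x_n^2$ with $\alpha_0=\delta_0/2c_n^2$. This correction has three key features your barrier lacks simultaneously: it is supercaloric enough to absorb the $\epsilon_0$ from (ii); it is \emph{automatically nonnegative on the entire thin space} for $t\leq t_0$ (hence on $\Lambda$, with no need to adapt to $\Lambda$ or solve any boundary value problem); and it is bounded below by $\delta_0-2\alpha_0c_n^2\geq 0$ on $\{|x_n|=c_n\}$, where (iv) takes over. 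If you want to keep a comparison-function strategy, you should replace your $h_0$ by a correction of this pointwise, $\Lambda$-independent type.
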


\begin{proof} 
  It is enough to show that $h\geq 0$ on $Q_{1/2}\cap\{|x_n|\leq
  c_n\}$. Arguing by contradiction, let $(x_0,t_0)\in
  Q_{1/2}\cap\{|x_n|\leq c_n\}$ be such that $h(x_0,t_0)<0$. Consider
  the auxiliary function
$$
w(x,t)=h(x,t)+\frac{\alpha_0}{2(n-1)}|x'-x_0'|^2+\alpha_0(t_0-t)-\left(\alpha_0+\frac{\epsilon_0}2\right)x_n^2,
$$
where $\alpha_0=\delta_0/2c_n^2$.  It is immediate to check that
$$
w(x_0,t_0)<0,\quad(\Delta-\partial_t)w\leq 0\quad\text{in
}Q_{1}\setminus\Lambda.
$$
Now, consider the function $w$ in the set $U=(Q_{3/4}\cap \{|x_n|\leq
c_n, t\leq t_0\})\setminus\Lambda$. By the maximum principle, we must
have
$$
\inf_{\partial_p U} w< 0.
$$
Analyzing the different parts of $\partial_p U$ we show that this
inequality cannot hold:

1) On $\Lambda\cap \partial_p U$ we have $w\geq 0$.

2) On $\{|x_n|=c_n\}\cap\partial_p U$ we have
$$
w(x,t)\geq h(x,t)-2\alpha_0 x_n^2\geq\delta_0-2\alpha_0 c_n^2\geq 0,
$$
if $\epsilon_0\leq 2\alpha_0$.

3) On $\{|x_n|<c_n\}\cap\partial_p U$ we have
\begin{align*}
  w(x,t)&\geq-\epsilon_0+\frac{\alpha_0}{2(n-1)}|x'-x_0'|^2-2\alpha_0
  x_n^2\\
  &\geq-\epsilon_0+\alpha_0\epsilon_n,
\end{align*}
with
$$
\epsilon_n=\frac{1}{128(n-1)}-2c_n^2=\frac{3}{512(n-1)}>0.
$$
If we choose $\epsilon_0<\alpha_0\epsilon_n$, we conclude that $w\geq
0$ on this portion of $\partial_p U$.

4) On $t=-9/16$ we have
$$
w(x,t)\geq-\epsilon_0+\alpha_0\frac{5}{16}-2\alpha_0 c_n^2\geq
-\epsilon_0+\alpha_0\left(\frac{5}{16}-2c_n^2\right)\geq0,
$$
for $\epsilon_0<\alpha_0/4$.

In conclusion, if $\epsilon_0$ is sufficiently small, we see that we
must have $\inf_{\partial_p U} w \ge 0$, thus arriving at a
contradiction with the assumption that $h(x_0,t_0)<0$. This completes
the proof.
\end{proof}

\begin{proof}[Proof of Theorem~\ref{thm:lip-reg-reg-set}] Let $u=u_k$
  and $f=f_k$ be as in Proposition~\ref{prop:uk-def}. From the
  assumption $\ell\geq 3$, we have that $|f|\leq M$ in $S_1^+$ and
  $|\nabla f|\leq L$ in $Q_{1/2}^+$. We also choose $\ell_0=2$. We
  thus conclude that $H_u(r)\geq r^{2\ell_0}$ for $0<r<r_u$. In view
  of Theorem~\ref{thm:exist-homogen-blowups}, the rescalings $u_{r_j}$
  converge (over a sequence $r=r_j\to 0+$) to a homogeneous global
  solution $u_0$ of degree $3/2$. Furthermore, we note that
  Lemma~\ref{lem:blowups-H1alpha-conv} is also applicable here. In
  view of Proposition~\ref{prop:homogen-glob-sol-1-2}, after a
  possible rotation in $\R^{n-1}$, we may assume that
$$
u_0(x,t)=C_n\Re(x_{n-1}+i x_n)^{3/2}.
$$ 
It can be directly calculated that for any $e\in \partial B_1'$
\begin{align*}
  \partial_{e} u_0(x,t)&=\frac32C_n(e\cdot e_{n-1})\Re(x_{n-1}+ix_n)^{1/2}\\
  &=\frac3{2\sqrt2}C_n(e\cdot
  e_{n-1})\sqrt{\sqrt{x_{n-1}^2+x_n^2}+x_{n-1}}.
\end{align*}
Thus, if for any given $\eta>0$ we consider the thin cone around
$e_{n-1}$
$$
\C_\eta':=\{x'=(x'',x_{n-1})\in\R^{n-1}\mid x_{n-1}\geq \eta |x''|\},
$$
then it is immediate to conclude that for any $e\in\C_\eta'$, $|e|=1$,
\begin{alignat*}{2}
  &\partial_e u_0\geq 0, &\quad&\text{in } Q_1^+\\
  &\partial_e u_0\geq \delta_{n,\eta}>0,&\quad&\text{in }
  Q_1^+\cap\{x_n\geq c_n\},
\end{alignat*}
where $c_n=1/(32\sqrt{n-1})$ is the dimensional constant in
Lemma~\ref{lem:glob-local}.  We next observe that, by
Lemma~\ref{lem:blowups-H1alpha-conv}, for any given $\epsilon>0$ we
will have for all directions $e\in \partial B_1'$
$$
|\partial_{e}u_{r_j}-\partial_e u_0|<\epsilon\quad\text{on }Q_1^+,
$$
provided $j$ is sufficiently large.  Moreover, note that in view of
Proposition~\ref{prop:uk-def} we can estimate
\begin{align*}
  |(\Delta-\partial_t) \partial_e u_{r_j}|=\frac{C
    r_j^\ell\|(x,t)\|^{\ell-3}}{H_u(r_j)^{1/2}}\leq C
  r_j^{\ell-\ell_0}\to 0\quad\text{uniformly in }Q_1^+.
\end{align*}
Thus, the function $h=\partial_{e}u_{r_j}$ (evenly reflected to $Q_1$)
will satisfy the conditions of Lemma~\ref{lem:glob-local}, and
therefore we conclude that
$$
\partial_e u_{r_{j}}\geq 0\quad\text{in }Q_{1/2}^+,\quad\text{for any
}e\in \C_\eta',\ |e|=1,
$$
for $j\geq j_\eta$.  Scaling back, we obtain that
$$
\partial_e u\geq 0\quad\text{in }Q_{r_{\eta}}^+,\quad\text{for any
}e\in \C_\eta',\ |e|=1,
$$
where $r_\eta=r_{j_\eta}/2$.  Now a standard argument (see
\cite{PSU}*{Chapter~4, Exercise~4.1}) implies that
$$
\{u(x',0,t)>0\}\cap Q'_{r_\eta}=\{ (x',t)\in Q'_{r_\eta}\mid x_{n-1}>
g(x'',t)\},
$$
where, for every fixed $t\in (-r_\eta^2, 0]$, $x''\mapsto g(x'',t)$ is
a Lipschitz continuous function with
$$
|\nabla'' g|\leq \eta.
$$
We are now left with showing that $g$ is $(1/2)$-H\"older continuous
in $t$.  In fact, we are going to show that
$|g(x,t)-g(x,s)|=o(|t-s|^{1/2})$, uniformly in $Q''_{r_\eta/2}$.

Suppose, towards a contradiction, that for $x_j''\in B''_{r_\eta/2}$,
$-r_\eta^2/4\leq s_j<t_j\leq 0$, $t_j-s_j\to 0$, we have for some
$C>0$
\begin{equation}\label{eq:awayfromzero}
  |g(x_j'',t_j)-g(x_j'',s_j)|\geq C |t_j-s_j|^{1/2}.
\end{equation}
Let
$$
x_j'=(x_j'',g(x_j'',t_j)),\quad y_j'=(x_j'',g(x_j'',s_j))
$$
and
$$
\delta_j=\max\{|g(x_j'',t_j)-g(x_j'',s_j)|, |t_j-s_j|^{1/2}\}.
$$
Let also
$$
\xi_j'=\frac{y_j'-x_j'}{\delta_j},\quad
\tau_j=\frac{s_j-t_j}{\delta_j^2}.
$$
Note that
$$
\xi_j'=|\xi_j'|e_{n-1},\quad (\xi_j', \tau_j)\in \partial_p Q_1'.
$$
Moreover, we claim that $\delta_j\to 0$. Indeed, we may assume that
the sequences $x_j'$, $y_j'$, $t_j$, $\delta_j$ converge to some $x'$,
$y'$, $t$, $\delta$ respectively.  If $\delta>0$ then we obtain that
$(x',t), (y',t)\in \Gamma(v)$. But $y'-x'=|y'-x'|e_{n-1}$, which
cannot happen since $\Gamma(v)$ is given as a graph
$\{x_{n-1}=g(x'',t)\}$ in $Q'_{r_\eta}$. Thus, $\delta_j\to 0$.

Consider now the rescalings of $u$ at $(x_j,t_j)$ by the factor of
$\delta_j$:
\begin{equation}\label{eq:wj}
  w_j(x,t)
  =\frac{u(x_j+\delta_jx, t_j+\delta_j^2t)}{H_{u^{(x_j,t_j)}}(\delta_j)^{1/2}}.
\end{equation}
We want to show that the sequence $w_j$ converges to a homogeneous
global solution in $S_\infty$, of homogeneity $3/2$.  For that
purpose, we first assume that $r_\eta$ is so small that
$$
\Gamma(u)\cap Q'_{r_\eta}=\Gamma_{3/2}^{(2)}(u)\cap Q'_{r_\eta}.
$$
This is possible by the upper semicontinuity of the mapping
$(x,t)\mapsto \kappa^{(2)}_u(x,t)=\Phi_{u^{(x,t)}}^{(2)}(0+)$ on
$\Gamma_*(u)$ as in Lemma~\ref{lem:kappa-upp-semi}, the equality
$\kappa^{(2)}_u(0,0)=\kappa_v^{(2)}(0,0)=3/2$, and
Theorem~\ref{thm:min-homogen}. Moreover, arguing as in the proof of
Lemma~\ref{lem:kappa-upp-semi}, we may assume that
$$
\Phi_{u^{(x,t)}}^{(2)}(r)<7/4,\quad\text{if } r<r_0,\
(x,t)\in\Gamma(u)\cap Q'_{r_\eta}.
$$
This assumption implies
$$
H_{u^{(x,t)}}(r)\geq r^4,\quad\text{if } r<r_0,\ (x,t)\in\Gamma(u)\cap
Q'_{r_\eta}.
$$
Otherwise, we would have $\Phi_{u^{(x,t)}}^{(2)}(r)\geq 2$, a
contradiction. As a consequence, the functions
$$
\phi_r(x,t)=\Phi_{u^{(x,t)}}^{(2)}(r),\quad (x,t)\in\Gamma(u)\cap
Q'_{r_\eta}
$$
will have an explicit representation through $H_{u^{(x,t)}}(r)$ and
its derivatives, and therefore will be continuous.  We thus have a
monotone family of continuous functions $\{\phi_r\}$ on a compact set
$K=\Gamma(u)\cap\overline{Q'_{r_\eta/2}}$ such that
$$
\phi_r\searrow 3/2\quad\text{on }K\quad\text{as }r\searrow 0.
$$
By the theorem of Dini the convergence $\phi_r\to 3/2$ is uniform on
$K$. This implies that
$$
\phi_{r_j}(x_j,t_j)\to 3/2\quad\text{for any }(x_j,
t_j)\in\Gamma(u)\cap\overline{Q'_{r_\eta/2}},\ r_j\to 0.
$$
For the functions $w_j$ defined in \eqref{eq:wj} above this implies
$$
\Phi_{w_j}^{(2)}(r)\to 3/2\quad\text{as }j\to \infty,
$$
for any $r>0$.  Now, analyzing the proof of
Theorem~\ref{thm:exist-homogen-blowups}, we realize that the same
conclusions can be drawn about the sequence $w_j$ as for the sequence
of rescalings $u_{r_j}$. In particular, over a subsequence, we have
$w_j\to w_0$ in $L_{2,\loc}(S_\infty)$, where $w_0$ is a
$3/2$-homogeneous global solution of the Signorini problem.  By
Proposition~\ref{prop:homogen-glob-sol-1-2} we conclude that for some
direction $e_0\in\R^{n-1}$ it must be
$$
w_0(x,t)=C_n\Re (x'\cdot e_0+i x_n)^{3/2}.
$$
Further, since $\partial_e u\geq 0$ for unit $e\in \C_\eta'$, we must
have
$$
\partial_e w_0\geq 0,\quad\text{in }S_\infty^+,\quad e\in \C_\eta'.
$$
Therefore,
$$
e_0\cdot e\geq 0\quad\text{for any } e\in\C_\eta' \Rightarrow e_0\cdot
e_{n-1}>0.
$$
Further, note that since $u\in\S^f(S_1^+)$ with $|f|\leq M$ in
$S_1^+$, and $|\nabla f|\leq L$ in $Q_{1/2}^+$, we can repeat the
arguments in the proof of Lemma~\ref{lem:blowups-H1alpha-conv} (with
$\ell_0=2$) to obtain for any $R>0$
\begin{equation}\label{eq:wjw0Ha}
  w_j\to w_0,\quad \nabla w_j\to \nabla w_0\quad\text{in }H^{\alpha,\alpha/2}(Q_R^+\cup Q'_R).
\end{equation}
Going back to the construction of the functions $w_j$, note that
$(\xi_j',\tau_j)\in \Gamma(w_j)$, in addition to
$(\xi_j',\tau_j)\in\partial_p Q_1'$.  Without loss of generality we
may assume that $(\xi_j',\tau_j)\to(\xi_0',\tau_0)\in \partial_p
Q_1'$. But then the convergence \eqref{eq:wjw0Ha} implies that
$w_0(\xi_0',\tau_0)=0$ and $\nabla w_0(\xi_0',\tau_0)=0$. From the
explicit formula for $w_0$ it follows that
$$
(\xi_0',\tau_0)\in \{(x',t)\in S_\infty'\mid x'\cdot e_0=0\},
$$
or equivalently, $\xi_0'\cdot e_0=|\xi_0'|e_{n-1}\cdot e_0=0$.  Since
$e_{n-1}\cdot e_0>0$, we must have $\xi_0'=0$.  Thus, we have proved
that
$$
|\xi_j'|=\frac{|g(x_j'',s_j)-g(x_j'',t_j)|}{\max\{|g(x_j'',s_j)-g(x_j'',t_j)|,
  |t_j-s_j|^{1/2}\}}\to 0,
$$
which is equivalent to
$$
\frac{|g(x_j'',s_j)-g(x_j'',t_j)|}{|t_j-s_j|^{1/2}}\to 0,
$$
contrary to our assumption \eqref{eq:awayfromzero}.
\end{proof}

We next show that, following an idea in \cite{AC0}, the regularity of
the function $g$ can be improved with an application of a boundary
Harnack principle.

\begin{theorem}[H\"older regularity of $\nabla''g$]
  \label{thm:h1alpha-reg-set}

  In the conclusion of Theorem~\ref{thm:lip-reg-reg-set}, one can take
  $\delta>0$ so that $\nabla''g\in
  H^{\alpha,\frac{\alpha}2}(Q_\delta'')$ for some $\alpha>0$.
\end{theorem}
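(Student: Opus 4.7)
The plan is to follow the idea of Athanasopoulos and Caffarelli \cite{AC0}, applying a parabolic boundary Harnack principle (BHP) for domains with thin Lipschitz complement. Let $u=u_k$ be as in Proposition~\ref{prop:uk-def}, reflected evenly in $x_n$. The proof of Theorem~\ref{thm:lip-reg-reg-set} produces, after a rotation in $\R^{n-1}$, some $\delta_0>0$ and a thin cone $\C_\eta'$ of axis $e_{n-1}$ such that $\partial_e u\geq 0$ on $Q_{\delta_0}$ for every unit $e\in\C_\eta'$, while $\Lambda(v)\cap Q_{\delta_0}'=\{x_{n-1}\leq g(x'',t)\}$ with $g\in H^{1,1/2}(Q_{\delta_0}'')$. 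Moreover each $\partial_e u$ vanishes on $\Lambda(v)$ and satisfies
\[
(\Delta-\partial_t)\partial_e u = \partial_e f_k \quad \text{in } Q_{\delta_0}\setminus \Lambda(v),
\]
with a right-hand side of strictly lower order than the leading behavior of $\partial_e u$ near $\Gamma(v)$.

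The first step is to invoke Lemma~\ref{lem:BHP} (the forward-backward BHP of \cite{Shi}). For each $i\in\{1,\dots,n-2\}$ and a small fixed $\eta>0$, the pair
\[
w_1:=\partial_{e_{n-1}} u,\qquad w_2:=\partial_{e_{n-1}+\eta e_i} u
\]
consists of two nonnegative functions, both caloric modulo the perturbation $\partial_e f_k$, vanishing on $\Lambda(v)$. Since $g$ is parabolically Lipschitz, $Q_{\delta_0}\setminus\Lambda(v)$ has thin Lipschitz complement in the sense of \cite{Shi}, so the BHP yields $w_2/w_1\in H^{\alpha,\alpha/2}(Q_{\delta_0/2}\setminus\Lambda(v))$ up to $\Gamma(v)\cap Q_{\delta_0/2}'$. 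By linearity the ratio $\partial_{x_i} u/\partial_{x_{n-1}} u$ has the same H\"older regularity up to $\Gamma(v)$.

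The second step identifies the boundary values of $\partial_{x_i} u/\partial_{x_{n-1}} u$ along $\Gamma(v)$ with $-\partial_{x_i} g$. Combining Proposition~\ref{prop:homogen-glob-sol-1-2}, Lemma~\ref{lem:blowups-H1alpha-conv}, and the monotonicity of the generalized frequency, one obtains at any regular point an $H^{1,1/2}$-convergence of the rescalings $u_r$ to the blowup $c_0\Re(x_{n-1}+ix_n)_+^{3/2}$. Unraveling this convergence yields, on the thin space near $\Gamma(v)$,
\[
u(x'',x_{n-1},0,t)=c_0\bigl(x_{n-1}-g(x'',t)\bigr)_+^{3/2}+o(\|(x',t)\|^{3/2}),
\]
with corresponding expansions for derivatives. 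Hence $\partial_{x_{n-1}} u\sim \tfrac{3}{2}c_0(x_{n-1}-g)_+^{1/2}$ is nondegenerate from the non-contact side, and a direct differentiation gives
\[
\lim_{x_{n-1}\searrow g(x'',t)}\frac{\partial_{x_i} u}{\partial_{x_{n-1}} u}(x'',x_{n-1},0,t)=-\partial_{x_i} g(x'',t).
\]
Combined with the H\"older regularity of the ratio from the first step, this yields $\partial_{x_i} g\in H^{\alpha,\alpha/2}(Q_\delta'')$ for each $i=1,\dots,n-2$ after shrinking $\delta$.

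The main obstacle I anticipate is the inhomogeneity $\partial_e f_k$ in the equation for $\partial_e u$: the BHP of \cite{Shi} is stated for caloric functions, so the perturbation must be absorbed, either by exploiting the stability of the BHP under lower-order terms or by subtracting an explicit higher-order correction that vanishes on $\Lambda(v)$ and carries the inhomogeneity. A secondary technical point is to ensure the nondegeneracy bound $\partial_{e_{n-1}} u\gtrsim(x_{n-1}-g)_+^{1/2}$ uniformly in the base point on $\cR(v)$, in order to convert the pointwise limit above into genuine H\"older continuity of $\partial_{x_i} g$; this will follow from a compactness argument combined with the relative openness of the regular set (Proposition~\ref{prop:reg-set-rel-open}) and the uniform classification of blowups along $\cR(v)$.
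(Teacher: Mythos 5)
Your plan follows the same strategy as the paper: apply the boundary Harnack principle for domains with thin Lipschitz complement (Lemma~\ref{lem:BHP}) to pairs of directional derivatives of $u$ and deduce H\"older regularity of $\nabla''g$ from the H\"older regularity of the ratio. Two of your choices, however, differ from the paper's proof and make the argument harder than it needs to be. First, the ``main obstacle'' you anticipate --- the inhomogeneity $\partial_e f_k$ in the equation for $\partial_e u$ --- is already built into the hypotheses of Lemma~\ref{lem:BHP} as stated: it allows $|(\Delta-\partial_t)u_i|\leq C_0$, at the price of the \emph{linear} nondegeneracy $u_i\geq c_0\,d(x,t)$ in hypothesis iii). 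The paper establishes exactly this nondegeneracy in Proposition~\ref{prop:nondeg-deu} by a barrier argument (Lemma~\ref{lem:h-nondeg}, giving $\partial_e u_r\geq c|x_n|$) combined with the parabolic Harnack inequality applied at the point $(x+e_nd,\,t-d^2)$; no absorption of the right-hand side and no subtracted corrector is needed. Your proposed nondegeneracy $\partial_{e_{n-1}}u\gtrsim(x_{n-1}-g)_+^{1/2}$ is stronger than what the BHP requires and would be considerably more delicate to prove uniformly along $\cR(v)$. Second, to pass from H\"older continuity of $\partial_{x_i}u/\partial_{x_{n-1}}u$ to H\"older continuity of $\partial_{x_i}g$, the paper does not compute the boundary trace of the ratio via the asymptotic expansion $u\sim c_0(x_{n-1}-g)_+^{3/2}$; instead it observes that the level surfaces $\{u=\epsilon\}$ are graphs $x_{n-1}=g_\epsilon(x'',t)$ whose gradients $\nabla''g_\epsilon=-\nabla''u/\partial_{x_{n-1}}u$ (restricted to the level set) carry uniform-in-$\epsilon$ H\"older bounds, and then lets $\epsilon\to0$. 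This sidesteps the need for uniform expansions of the derivatives of $u$ near $\Gamma(v)$ and the a priori differentiability of $g$, both of which your boundary-trace identification implicitly requires. Your outline is viable, but these two steps are where the real work would be, and the paper's versions of them are significantly simpler.
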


To prove this theorem we first show the following nondegeneracy
property of $\partial_e u$.

\begin{proposition}[Nondenegeracy of $\partial_e u$]\label{prop:nondeg-deu} Let $v\in\S_\phi(Q_1^+)$ and $u=u_k$ be as
  Theorem~\ref{thm:lip-reg-reg-set}. Then, for any $\eta>0$ there
  exist $\delta>0$ and $c>0$ such that
$$
\partial_e u\geq c\, d(x,t)\quad \text{in }Q_\delta^+,\quad\text{for
  any } e\in \C_\eta',\ |e|=1,
$$
where
$$
d(x,t)=\sup\{r\mid \tilde Q_r(x,t)\cap Q_\delta \subset
Q_\delta\setminus \Lambda(v)\}
$$
is the parabolic distance from the point $(x,t)$ to the coincidence
set $\Lambda(v)\cap Q_\delta'$.
\end{proposition}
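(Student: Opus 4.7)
The plan is to argue by contradiction using a blow-up at nearby regular free boundary points. By Theorem~\ref{thm:lip-reg-reg-set} (applied with slope $\eta/2$ in place of $\eta$), Proposition~\ref{prop:reg-set-rel-open}, and the cone monotonicity established in the proof of Theorem~\ref{thm:lip-reg-reg-set}, we may fix $\delta_0>0$ such that: $\Gamma(v)\cap Q'_{\delta_0}=\cR(v)\cap Q'_{\delta_0}=\{x_{n-1}=g(x'',t)\}$ for a parabolically Lipschitz $g$ with $g(0,0)=0$ and $|\nabla''g|\leq \eta/2$; $\partial_e u\geq 0$ in $Q^+_{\delta_0}$ for every unit $e\in\C_\eta'$; and the frequency $\Phi^{(2)}_{u^{(y,s)}}(r)\to 3/2$ uniformly on $\Gamma(v)\cap\overline{Q'_{\delta_0/2}}$ as $r\to 0$ by Dini's theorem. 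Suppose the conclusion fails: there exist $(x^j,t^j)\in Q^+_{\delta_0}$ with $(x^j,t^j)\to(0,0)$, unit vectors $e^j\in\C_\eta'$, and $\epsilon_j\to 0$ such that $\partial_{e^j}u(x^j,t^j)<\epsilon_j d_j$, where $d_j:=d(x^j,t^j)\to 0$. Pick $(y^j,s^j)\in\Lambda(v)\cap\partial\tilde Q_{d_j}(x^j,t^j)$; then $(y^j,s^j)\to(0,0)$ and $(y^j,s^j)\in\cR(v)$ for $j$ large.

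Consider the Almgren-type rescalings at $(y^j,s^j)$,
\[
w^j(x,t)=\frac{u(y^j+d_j x,\, s^j+d_j^2 t)}{H_{u^{(y^j,s^j)}}(d_j)^{1/2}}.
\]
Uniform convergence of frequencies yields $c_\epsilon d_j^{3+2\epsilon}\leq H_{u^{(y^j,s^j)}}(d_j)\leq C_\epsilon d_j^{3-2\epsilon}$ for every $\epsilon>0$ and $j$ large. By Theorem~\ref{thm:exist-homogen-blowups} combined with the strengthened $H^{\alpha,\alpha/2}_\loc$ convergence from Lemma~\ref{lem:blowups-H1alpha-conv} (which requires $\ell\geq 3$), along a subsequence $w^j\to w_0$ in $H^{\alpha,\alpha/2}_\loc$, and by Proposition~\ref{prop:homogen-glob-sol-1-2}
\[
w_0(x,t)=C_n\Re(x'\cdot e_0+ix_n)^{3/2}
\]
for some $e_0\in\partial B_1'$. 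Rescaling the hypograph representation gives $\Lambda(w^j)=\{x_{n-1}\leq g^j(x'',t)\}$ with $g^j$ uniformly Lipschitz and $g^j\to g_\infty$ locally uniformly, where $g_\infty(x'',t)=-x''\cdot e_0''/(e_0)_{n-1}$ is linear with slope $|e_0''|/(e_0)_{n-1}\leq\eta/2$. Hence $|e_0''|\leq(\eta/2)(e_0)_{n-1}$; combined with $|(e^j)''|\leq e^j_{n-1}/\eta$ (from $e^j\in\C_\eta'$), this yields $e^j\cdot e_0\geq \tfrac12 e^j_{n-1}(e_0)_{n-1}\geq c_\eta>0$.

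In rescaled coordinates $(x^j,t^j)$ corresponds to $(\xi^j,\tau^j)=((x^j-y^j)/d_j,\,(t^j-s^j)/d_j^2)\in\partial\tilde Q_1(0,0)$; extract a further subsequence so that $(\xi^j,\tau^j)\to(\xi_0,\tau_0)$ and $e^j\to e\in\C_\eta'$. Scaling the contradiction hypothesis,
\[
\partial_{e^j}w^j(\xi^j,\tau^j)=\frac{d_j}{H_{u^{(y^j,s^j)}}(d_j)^{1/2}}\,\partial_{e^j}u(x^j,t^j)\lesssim\frac{d_j\cdot\epsilon_j d_j}{d_j^{3/2+\epsilon}}=\epsilon_j d_j^{1/2-\epsilon}\to 0,
\]
and $H^{\alpha,\alpha/2}$ convergence gives $\partial_e w_0(\xi_0,\tau_0)=0$. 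Since $\partial_e w_0=\tfrac32 C_n(e\cdot e_0)\Re(x'\cdot e_0+ix_n)^{1/2}$ and $e\cdot e_0\geq c_\eta>0$, we must have $\xi_0^n=0$ and $\xi_0'\cdot e_0\leq 0$, i.e., $(\xi_0,\tau_0)\in\Lambda(w_0)$.

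The final step is to rule this out using the hypograph structure. Assume $(\xi_0,\tau_0)\in\Lambda(w_0)$, so $\xi_0^n=0$ and $\xi_0^{n-1}\leq g_\infty(\xi_0'',\tau_0)$. For small $\delta>0$, the point $p=(\xi_0'',\xi_0^{n-1}-\delta,0)$ satisfies $(p,\tau_0)\in\tilde Q_1(\xi_0,\tau_0)$ and $p^{n-1}<g_\infty(\xi_0'',\tau_0)$. Uniform convergence of $g^j$ then gives $p^{n-1}<g^j(\xi_0'',\tau_0)$ for $j$ large, so $(p,\tau_0)\in\Lambda(w^j)$. But also $(p,\tau_0)\in\tilde Q_1(\xi^j,\tau^j)$ for $j$ large, contradicting $\tilde Q_1(\xi^j,\tau^j)\cap\Lambda(w^j)=\emptyset$ (the rescaled form of the definition of $d_j$). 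Hence $(\xi_0,\tau_0)\notin\Lambda(w_0)$, completing the contradiction. The main technical obstacle is the strong $H^{\alpha,\alpha/2}$ convergence of the rescalings needed to pass to the limit in the derivative term; this is why $\ell\geq 3$ and Lemma~\ref{lem:blowups-H1alpha-conv} are essential.
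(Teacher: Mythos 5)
Your strategy --- a compactness/contradiction argument built on blow-ups at nearby free boundary points --- is genuinely different from the paper's proof, which is much shorter: the paper combines the barrier estimate of Lemma~\ref{lem:h-nondeg} (giving $\partial_e u_r\geq c_\eta|x_n|$ in $Q_{1/2}$ for a fixed small rescaling parameter $r=r_\eta$) with the parabolic Harnack inequality applied from the lowest point $(x+e_n d,\,t-d^2)\in\partial\tilde Q_d(x,t)$, where $|x_n|\sim d$ forces $\partial_e u_r\geq c_0 d$. Your route avoids the explicit barrier but pays for it with the full blow-up machinery at varying centers; most of the ingredients you invoke (uniform Dini convergence of the frequency, convergence of rescalings at moving centers, identification of the limiting coincidence graph) are indeed available and are exactly the tools used in the paper's proof of Theorem~\ref{thm:lip-reg-reg-set}, so the skeleton is viable and the cone computation $e^j\cdot e_0\geq\tfrac12 e^j_{n-1}(e_0)_{n-1}\geq c_\eta$ is correct.

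There is, however, one genuine gap. Because $d$ is measured with the \emph{full} cylinders $\tilde Q_r$, the nearest point $(y^j,s^j)\in\Lambda(v)$ may lie in the past, $s^j<t^j$, so that after rescaling the evaluation point has $\tau^j=(t^j-s^j)/d_j^2>0$ and possibly $\tau_0\in(0,1]$. All the convergence you cite (Theorem~\ref{thm:exist-homogen-blowups}, Lemma~\ref{lem:blowups-H1alpha-conv}) is established only on $S_R^+$ and $Q_R^+\cup Q_R'$, i.e., for times $t\leq 0$ relative to the blow-up center; the paper is explicit elsewhere that convergence in the full cylinder $\tilde Q_R$ is \emph{not} claimed, and Remark~\ref{rem:taylor-cntrex} shows that a solution can genuinely depart from its blow-up profile in forward time. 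Consequently the step ``$H^{\alpha,\alpha/2}$ convergence gives $\partial_e w_0(\xi_0,\tau_0)=0$'' is unjustified when $\tau_0>0$, and even the identification of the limit at such points is unavailable. The gap is repairable by re-centering: since $(y^j,s^j)$ lies on the graph $x_{n-1}=g(x'',t)$ and $g$ is parabolically Lipschitz, the free boundary point $\bigl((y^j)'',\,g((y^j)'',t^j),\,0,\,t^j\bigr)$ at the \emph{same} time $t^j$ is within parabolic distance $C_L\,d_j$ of $(x^j,t^j)$; blowing up there places the evaluation point at rescaled time $0$ (where Lemma~\ref{lem:blowups-H1alpha-conv} applies), while the rescaled distance from the evaluation point to $\Lambda(w^j)$ remains $\geq 1$ and $|\xi^j|\leq C_L$, so the remainder of your argument, including the final hypograph step, goes through unchanged. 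With that repair the proof is correct, though considerably heavier than the paper's barrier-plus-Harnack argument.
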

The proof is based on the following improvement on
Lemma~\ref{lem:glob-local}, which is the parabolic counterpart of
\cite{CSS}*{Lemma~7.3}.

\begin{lemma}\label{lem:h-nondeg}
  For any $\delta_0>0$ there exist $\epsilon_0>0$ and $c_0>0$,
  depending only on $\delta_0$ and $n$, such that if $h$ is a
  continuous function on $Q_1\cap\{0\leq x_n\leq c_n\}$,
  $c_n=1/(32\sqrt{n-1})$, satisfying
  \begin{enumerate}[label=\textup{\roman*)}]
  \item $(\Delta -\partial_t)h\leq \epsilon_0$ in
    $Q_{1}\cap\{0<x_n<c_n\}$,
  \item $h\geq 0$ in $Q_{1}\cap\{0<x_n<c_n\}$,
  \item $h\geq \delta_0$, on $Q_1\cap\{x_n=c_n\}$,
  \end{enumerate}
  then
$$
h(x,t)\geq c_0 x_n\quad\text{in }Q_{1/2}\cap\{0<x_n<c_n\}.
$$
\end{lemma}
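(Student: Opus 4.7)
The proof will proceed by constructing an explicit parabolic barrier and applying the minimum principle for supercaloric functions, in the same spirit as the barrier argument in the proof of Lemma~\ref{lem:glob-local}. Fix $(x_0,t_0)\in Q_{1/2}\cap\{0<x_n<c_n\}$, so in particular $|x_0'|<1/2$ and $t_0\in(-1/4,0]$, hence $t_0+1\geq 3/4$. Consider the parabolic cylinder $U:=B_{3/4}'\times(0,c_n)\times(-1,t_0]$, which is contained in $Q_1\cap\{0<x_n<c_n\}$, and whose parabolic boundary $\partial_p U$ consists of the bottom $\{x_n=0\}$, the top $\{x_n=c_n\}$, the lateral piece $\{|x'|=3/4\}$, and the initial slice $\{t=-1\}$. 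I propose the barrier
$$
\phi(x,t)=c_0\,x_n\bigl(\tfrac{3}{8}-|x'|^2-(t_0-t)\bigr)+E\,x_n^2,
$$
with constants $c_0,E>0$ to be fixed momentarily. A direct computation gives $(\Delta-\partial_t)\phi=2E-c_0 x_n(2n-1)$, so setting $E:=\epsilon_0/2+c_0 c_n(2n-1)/2$ yields $(\Delta-\partial_t)\phi\geq\epsilon_0\geq(\Delta-\partial_t)h$ throughout $U$. Consequently $h-\phi$ is supercaloric in $U$, and the parabolic minimum principle will give $h-\phi\geq\min_{\partial_p U}(h-\phi)$ in $\overline U$.

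The bulk of the work consists in verifying that $\phi\leq h$ on each piece of $\partial_p U$, which forces a specific choice of $c_0=c_0(\delta_0,n)$ and a smallness assumption on $\epsilon_0=\epsilon_0(\delta_0,n)$. On $\{x_n=0\}$ we have $\phi=0\leq h$ by hypothesis~(ii). On $\{x_n=c_n\}$, since $h\geq\delta_0$ by hypothesis~(iii), it is enough to require $\tfrac{3}{8}c_0 c_n+E c_n^2\leq\delta_0$, which is achieved by taking $c_0$ a suitable dimensional multiple of $\delta_0$ (e.g.\ comparable to $\delta_0/c_n$) with $\epsilon_0$ small. On the lateral piece $\{|x'|=3/4\}$, the bracketed coefficient in $\phi$ is bounded above by $\tfrac{3}{8}-\tfrac{9}{16}-(t_0-t)\leq -\tfrac{3}{16}$ (using $t_0-t\geq 0$), which gives $\phi\leq x_n(E x_n-\tfrac{3}{16}c_0)\leq 0\leq h$ provided $E c_n\leq\tfrac{3}{16}c_0$. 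On the initial slice $\{t=-1\}$, the key observation is that $t_0-t=t_0+1\geq 3/4$ outstrips $\tfrac{3}{8}$, so the bracketed coefficient is at most $-\tfrac{3}{8}$, and an identical argument gives $\phi\leq 0\leq h$ under the weaker constraint $Ec_n\leq\tfrac{3}{8}c_0$. All three constraints on $c_0$ and $E$ are mutually consistent because the extra $c_0 c_n^2(2n-1)$ contribution inside $E$ is a small perturbation (since $c_n^2(2n-1)\leq 1/512$), so one first fixes $c_0$ as a dimensional multiple of $\delta_0$ and then takes $\epsilon_0$ small in terms of $\delta_0$ and $n$.

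Applying the minimum principle and evaluating at $(x_0,t_0)$ gives
$$
h(x_0,t_0)\geq\phi(x_0,t_0)=c_0 x_{0,n}\bigl(\tfrac{3}{8}-|x_0'|^2\bigr)+Ex_{0,n}^2\geq c_0 x_{0,n}\bigl(\tfrac{3}{8}-\tfrac{1}{4}\bigr)=\tfrac{c_0}{8}\,x_{0,n},
$$
and relabeling $c_0/8$ as the new $c_0$ finishes the proof. The main technical point, and the reason the barrier must carry the drift term $-(t_0-t)$, lies in handling the initial slice $\{t=-1\}$: a purely spatial barrier positive at $(x_0,0)$ would remain positive on the whole initial slice, where we only know $h\geq 0$; the inequality $t_0+1\geq 3/4$ granted by $(x_0,t_0)\in Q_{1/2}$ is precisely what lets the drift push $\phi$ below zero there, thereby completing the control on $\partial_p U$.
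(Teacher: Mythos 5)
Your proof is correct and follows essentially the same route as the paper's: an explicit polynomial barrier compared with $h$ via the parabolic minimum principle on a subcylinder, with the parabolic boundary checked piece by piece on $\{x_n=0\}$, $\{x_n=c_n\}$, the lateral side, and the initial slice (the paper instead adds the correction terms to $h$ and shows the resulting supercaloric function is nonnegative, which is the same argument up to a sign flip). The only nitpick is that $Q_1=B_1\times(-1,0]$ is half-open in time, so $h$ need not be defined on your initial slice $\{t=-1\}$; placing it at $t=-1+\eta$ for small $\eta$ (or at $t=-9/16$ as in the paper's choice of $Q_{3/4}$, after adjusting the constant $3/8$) fixes this trivially.
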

\begin{proof} The proof is very similar to that of
  Lemma~\ref{lem:glob-local}. Let $(x_0,t_0)\in
  Q_{1/2}\cap\{0<x_n<c_n\}$, and consider the auxiliary function
$$
w(x,t)=h(x,t)+\frac{\alpha_0}{2(n-1)}|x'-x_0'|^2+\alpha_0(t_0-t)-\left(\alpha_0+\frac{\epsilon_0}2\right)x_n^2-c_0x_n,
$$
with $\alpha_0=\delta_0/2c_n^2$.

As before, we have $(\Delta-\partial_t)w\leq 0$ in
$Q_{1}\cap\{0<x_n<c_n\}$. We now claim that $w\geq 0$ on
$U^+=Q_{3/4}\cap\{0<x_n<c_n, t<t_0\}$. This will follow once we verify
that $w\geq 0$ on $\partial_p U^+$.

We consider several cases:

1) On $\{x_n=0\}\cap \partial_p U^+$, we clearly have $w\geq 0$.

2) On $\{x_n=c_n\}\cap \partial_p U^+$, one has
$$
w\geq \delta_0-\frac32\alpha_0 c_n^2-c_0 c_n= \frac12\alpha_0
c_n^2-c_0 c_n\geq 0,
$$
provided $\epsilon_0\leq \alpha_0$ and $c_0\leq c_n\alpha_0/2$.

3) On $\{0<x_n<c_n\}\cap \partial_p U^+$ we have
$$
w\geq \frac{\alpha_0}{128(n-1)}-2\alpha_0c_n^2-c_0c_n\geq
\frac{3\alpha_0}{512(n-1)}-c_0c_n\geq 0,
$$
provided $c_0<3\alpha_0/(512(n-1)c_n)$.

4) On $\{t=-9/16\}\cap \partial_p U^+$ we have
$$
w\geq \frac{5\alpha_0}{16}-2\alpha_0c_n^2-c_0c_n\geq
\frac{\alpha_0}{4}-c_0c_n\geq 0,
$$
provided $c_0< \alpha_0/(4 c_n)$.

In conclusion, for small enough $\epsilon_0$ and $c_0$, we have $w\geq
0$ in $U$, and in particular $w(x_0,t_0)\geq 0$. This implies that
$h(x_0,t_0)> c_0 (x_0)_n$, as claimed.
\end{proof}

\begin{proof}[Proof of Proposition~\ref{prop:nondeg-deu}] Considering
  the rescalings $u_r$ as in the proof of
  Theorem~\ref{thm:lip-reg-reg-set}, and applying
  Lemma~\ref{lem:h-nondeg}, we obtain
$$
\partial_e u_{r}\geq c_\eta |x_n|\quad\text{in }Q_{1/2},\quad e\in
\C'_\eta,
$$
for $0<r=r_\eta$ small. (Here, we assume that $u$ has been extended by
even symmetry in $x_n$ variable to $Q_1$.)  Besides, by choosing $r$
small, we can also make
$$
|(\Delta-\partial_t)\partial_e u_r|\leq Cr^{\ell-\ell_0}\leq
c_\eta\epsilon_n\quad\text{in }Q_{1/2}\setminus\Lambda(u_r),
$$
for a dimensional constant $\epsilon_n>0$ to be specified below. Let
now $(x,t)\in Q_{1/4}^+$ and $d=d_{r}(x,t)$ be the parabolic distance
from $(x,t)$ to $\Lambda(u_{r})\cap Q_1'$. Consider the lowest
rightmost point on the boundary $\partial \tilde Q_d(x,t)$
$$
(x_*,t_*)=(x+e_n d, t-d^2).
$$
We have
$$
\partial_e u_{r}(x_*,t_*)\geq c_0 d.
$$
By the parabolic Harnack inequality (see, e.g.,\ \cite{Lie}*{Theorems
  6.17--6.18})
$$
\partial_e u_{r}(x,t)\geq c_n c_0 d- C_nc_0\epsilon_n d^2\geq c\,d,
$$
if we take $\epsilon_n$ sufficiently small.  Scaling back to $u$, we
complete the proof of the proposition.
\end{proof}

A key ingredient in the proof of Theorem~\ref{thm:h1alpha-reg-set} is
the following version of the parabolic boundary Harnack principle for
domains with thin Lipschitz complements established in
\cite{Shi}*{Section~7}.  To state the result, we will need the
following notations. For a given $L\geq 1$ and $r>0$ denote
\begin{align*}
  \Theta''_r&=\{(x'',t)\in\R^{n-2}\times\R\mid |x_i|<r,
  i=1,\ldots,n-2, -r^2<t\leq 0\},\\
  \Theta'_r&=\{(x',t)\in\R^{n-1}\times\R\mid (x'',t)\in\Theta''_r, |x_{n-1}|<4nLr\},\\
  \Theta_r&=\{(x,t)\in\R^n\times\R\mid (x'',t)\in\Theta'_r,
  |x_{n}|<r\}.
\end{align*}

\begin{lemma}[Boundary Harnack principle]\label{lem:BHP} Let
  $$
  \Lambda=\{(x',t)\in \Theta_1'\mid x_{n-1}\leq g(x'',t)\}
$$ 
for a parabolically Lipschitz function $g$ in $\Theta_1''$ with
Lipschitz constant $L\geq 1$ such that $g(0,0)=0$. Let $u_1$, $u_2$ be
two continuous nonnegative functions in $\Theta_1$ such that for some
positive constants $c_0$, $C_0$, $M$, and $i=1,2$,
\begin{enumerate}[label=\textup{\roman*)}]
\item $0\leq u_i\leq M$ in $\Theta_{1}$ and $u_i=0$ on $\Lambda$,
\item $|(\Delta-\partial_t) u_i|\leq C_0$ in $\Theta_1\setminus
  \Lambda$,
\item $u_i(x,t)\geq c_0\,d(x,t)$ in $\Theta_1\setminus\Lambda$, where
  $d(x,t)=\sup\{r\mid \tilde \Theta_r(x,t)\cap \Lambda=\emptyset\}$.
\end{enumerate}
Assume additionally that $u_1$ and $u_2$ are symmetric in $x_n$.
Then, there exists $\alpha\in (0,1)$ such that
$$
\frac{u_1}{u_2}\in H^{\alpha,\alpha/2}(\Theta_{1/2}).
$$
Furthermore, $\alpha$ and the bound on the corresponding norm
$\|u_1/u_2\|_{H^{\alpha,\alpha/2}(\Theta_{1/2})}$ depend only on $n$,
$L$, $c_0$, $C_0$, and $M$.\qed
\end{lemma}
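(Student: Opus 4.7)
The plan is to implement the three-step scheme for parabolic boundary Harnack principles, adapted to the thin Lipschitz geometry of $\Theta_1\setminus\Lambda$: first derive a two-sided estimate $u_i \asymp d$; then establish a Carleson estimate together with the considerably more delicate backward Harnack inequality for non-negative caloric functions in $\Theta_1\setminus\Lambda$ vanishing on $\Lambda$; and finally iterate an oscillation-decay lemma for the ratio $w=u_1/u_2$.

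First, I would complement hypothesis (iii) with a matching estimate from above, namely $u_i(x,t)\leq C\, d(x,t)$ in $\Theta_{3/4}\setminus\Lambda$ with $C=C(n,L,c_0,C_0,M)$. Because $g$ is parabolically Lipschitz with $g(0,0)=0$, the set $\Lambda$ is at every scale well-approximated by a tangent halfplane, so one can build an explicit upper barrier by scaling the $3/2$-homogeneous profile $\Re(x_{n-1}-g(0,t)+ix_n)^{3/2}$ and adding a quadratic correction that absorbs the $L_\infty$ right-hand side controlled by $C_0$. Combining this barrier with (iii) yields $c_0\,d\leq u_i\leq C d$ throughout $\Theta_{3/4}\setminus\Lambda$; in particular the ratio $w=u_1/u_2$ is trapped in a fixed compact subinterval of $(0,\infty)$.

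The second step establishes a Carleson estimate and a backward Harnack inequality. The Carleson estimate $\sup_{\Theta_{r/2}(Y_0)}u_i\leq C\,u_i(A_r^+)$, with $Y_0\in\Lambda$ and $A_r^+$ a corkscrew point at parabolic distance $\sim r$ and one time-step \emph{after} $Y_0$, follows from the upper bound of step one together with a covering of $\Theta_{r/2}(Y_0)\setminus\Lambda$ by forward parabolic Harnack chains of uniformly bounded length; here the $x_n$-symmetry of $u_i$ is crucial because it lets any point with $x_n>0$ be joined to its mirror image through a chain passing through $\{|x_n|\sim r\}$ without crossing $\Lambda$. The backward Harnack inequality $u_i(A_r^+)\leq C\,u_i(A_r^-)$ is the main technical obstruction: it does \emph{not} follow from the classical parabolic theory of \cite{Kem,FGS,HLN} since $\Lambda$ has codimension two in space-time and the domain is connected across $\Lambda$. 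Following \cite{Shi}, one argues by sliding the lower bound (iii) along a carefully constructed chain of overlapping boxes that reaches $A_r^-$ from $A_r^+$ while staying at parabolic distance $\gtrsim r$ from $\Lambda$; the thin-Lipschitz structure of $\Lambda$ bounds the number of chain links, and the uniform $L_\infty$ bound on $(\Delta-\partial_t)u_i$ absorbs the error terms.

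With the Carleson and backward Harnack inequalities in hand, the third step is the oscillation decay. Fix $Y_0\in\Lambda\cap\Theta_{1/2}$ and set $M_r=\sup_{\Theta_r(Y_0)}w$, $m_r=\inf_{\Theta_r(Y_0)}w$. Both $M_r u_2-u_1$ and $u_1-m_r u_2$ are non-negative, caloric modulo a bounded error, vanish on $\Lambda$, and satisfy the hypotheses used in step two. Evaluating them at $A_r^\pm$ and applying Carleson together with the backward Harnack inequality yields, after a dichotomy on which of the two is larger at $A_r^+$, a decay $M_{r/K}-m_{r/K}\leq \lambda(M_r-m_r)$ with universal constants $\lambda\in(0,1)$ and $K>1$. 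Iterating this doubling-type inequality from $r=1$ gives $\osc_{\Theta_\rho(Y_0)}w\leq C\rho^\alpha$ with $\alpha=\alpha(n,L,c_0,C_0,M)\in(0,1)$, which combined with standard interior parabolic Hölder estimates away from $\Lambda$ produces $w\in H^{\alpha,\alpha/2}(\Theta_{1/2})$.

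The hard part is the backward Harnack inequality in the second step: it is precisely the obstruction mentioned in the introduction that prevents a direct reduction to the parabolic boundary Harnack results for Lipschitz or NTA domains, and its proof is the principal new ingredient contributed by \cite{Shi}. All other steps — the two-sided comparability, the Carleson estimate, and the oscillation-decay iteration — are essentially mechanical once the two-sided bound $u_i\asymp d$ and the symmetry of $u_i$ across $\{x_n=0\}$ have been exploited to produce the required Harnack chains.
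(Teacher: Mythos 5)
The first thing to note is that the paper does not actually prove Lemma~\ref{lem:BHP}: the lemma is stated without proof (note the q.e.d.\ box attached to the statement) and is explicitly imported from \cite{Shi}, the remark that follows serving only to explain why it cannot be reduced to \cite{Kem}, \cite{FGS}, or \cite{HLN}. So there is no internal proof to compare with, and your proposal must stand on its own. Its architecture (two-sided control near $\Lambda$, Carleson estimate plus backward Harnack, oscillation decay for $u_1/u_2$) is the standard one, and you correctly identify the backward Harnack/hopping across the thin parabolically Lipschitz slit as the crux. But at exactly that point you give no argument: ``sliding the lower bound along a carefully constructed chain of boxes'' together with an appeal to \cite{Shi} is a restatement of the citation the paper already makes, not a proof. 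None of the quantitative content --- the chain construction adapted to the slit geometry, the forward/backward time lag at corkscrew points, the absorption of the bounded right-hand side via hypothesis (iii) --- is actually supplied, so the proposal does not close the gap it sets out to close.

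There is also a concrete error in your first step. The upper bound $u_i\le C\,d$ does not follow from (i)--(iii) and is false in general: with $g\equiv 0$ take $u_1=u_2=\Re(x_{n-1}+ix_n)^{1/2}$, which is caloric, nonnegative, bounded, even in $x_n$, vanishes on $\Lambda$, and satisfies (iii) with a constant depending only on $n$ and $L$; yet on the thin ray $\{x_{n-1}=d>0,\ x_n=0\}$ it equals $d^{1/2}\gg d$. This is not a pathological example: in the intended application $u_1,u_2$ are directional derivatives of the solution and behave exactly like this $1/2$-homogeneous profile near regular free boundary points, and a barrier built from the $3/2$-homogeneous profile cannot dominate such functions, since it vanishes at the slit to higher order than they do. Consequently your claims that $u_1/u_2$ is ``trapped in a fixed compact subinterval of $(0,\infty)$'' and that the Carleson estimate follows ``from the upper bound of step one'' are unsupported; in a correct argument the boundedness of the quotient and the Carleson estimate come out of the growth-lemma/Harnack-chain machinery (which is where (iii) and the $x_n$-symmetry enter), not from a pointwise comparison $u_i\asymp d$. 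A minor further slip: $\Lambda$ is a codimension-one slit in space-time; it is its edge $\{x_{n-1}=g(x'',t),\ x_n=0\}$ that has codimension two.
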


\begin{remark}
  We note that, unlike the elliptic case, Lemma \ref{lem:BHP} above
  cannot be reduced to the other known results in the parabolic
  setting (see, e.g., \cite{Kem}, \cite{FGS} for parabolically
  Lipschitz domains, or \cite{HLN} for parabolically NTA domains with
  Reifenberg flat boundary). We also note that this version of the
  boundary Harnack is for functions with nonzero right-hand side and
  therefore the nondegeneracy condition as in iii) is necessary. The
  elliptic version of this result has been established in \cite{CSS}.
\end{remark}

We are now ready to prove Theorem~\ref{thm:h1alpha-reg-set}.

\begin{proof}[Proof of Theorem~\ref{thm:h1alpha-reg-set}]
  Fix $\eta>0$ and let $\theta=\theta_\eta$ be such that
  $e=(\cos\theta)e_{n-1}+(\sin\theta)e_j\in \C'_\eta$ for
  $j=1,\ldots,n-2$. Consider two functions
$$
u_1=\partial_{e}u\quad\text{and}\quad u_2=\partial_{e_{n-1}}u.
$$
Then, by Proposition~\ref{prop:nondeg-deu}, the conditions of
Lemma~\ref{lem:BHP} are satisfied for some rescalings of $u_1$ and
$u_2$. Hence, applying Lemma~\ref{lem:BHP} and scaling back, we obtain
that for a small $\delta>0$, and $\alpha\in (0,1)$,
$$
\frac{\partial_e u}{\partial_{e_{n-1}}u}\in
H^{\alpha,\alpha/2}(\Theta_{\delta}).
$$
This gives
$$
\frac{\partial_{e_j} u}{\partial_{e_{n-1}}u}\in
H^{\alpha,\alpha/2}(\Theta_{\delta}),\quad j=1,\ldots, n-2.
$$
Hence, the level surfaces $\{u=\epsilon\}\cap \Theta'_{\delta}$ are
given as graphs
$$
x_{n-1}=g_\epsilon(x'',t),\quad x''\in \Theta_{\delta}'',
$$
with uniform in $\epsilon$ estimates on
$\langle\nabla''g_\epsilon\rangle^{(\alpha)}_{\Theta_{\delta}''}$ for
small $\epsilon>0$. Consequently, we obtain
\[
\nabla'' g\in H^{\alpha,\alpha/2}(\Theta_{\delta}''),
\]
and this completes the proof of the theorem.
\end{proof}

\section{Free boundary: Singular set}
\label{sec:free-bound-sing}

We now turn to the study of a special class of free boundary points,
called \emph{singular points}, that are characterized by the property
that the coincidence set has a zero density at those points with
respect to the $\mathcal{H}^{n}$ measure in the thin space. In the
time-independent Signorini problem the analysis of the singular set
was carried in the paper \cite{GP}.

\begin{definition}[Singular points]\label{def:parab-sing-points} Let $v\in \S_\phi(Q_1^+)$ with $\phi\in H^{\ell,\ell/2}(Q_1')$,
  $\ell\geq 2$. We say that $(x_0,t_0)\in \Gamma_*(v)$ is
  \emph{singular} if
$$
\lim_{r\to 0+}\frac{\mathcal{H}^{n}(\Lambda(v)\cap
  Q_r'(x_0,t_0))}{\mathcal{H}^n(Q_r')}=0.
$$
We will denote the set of singular points by $\Sigma(u)$ and call it
the \emph{singular set}. We can further classify singular points
according to the homogeneity of their blowup, by defining
$$
\Sigma_\kappa(v):=\Sigma(v)\cap \Gamma_\kappa^{(\ell)}(v),\quad
\kappa\leq\ell.
$$  
\end{definition}
Since we are going to work with the blowups, we will write
$\ell=k+\gamma$, for $k\in\N$, $0<\gamma\leq 1$, and construct the
functions $u_k\in \S^{f_k}(S_1^+)$ as in
Proposition~\ref{prop:uk-def}. By abusing the notation, we will write
$0\in \Sigma_\kappa(u_k)$ whenever $0\in \Sigma_\kappa(v)$.  Also, for
technical reasons, similarly to what we did in the study of the
regular set, we will assume that $\ell\geq 3$ for most of the results
in this section.

The following proposition gives a complete characterization of the
singular points in terms of the blowups and the generalized
frequency. In particular, it establishes that
$$
\Sigma_\kappa(v)=\Gamma_\kappa(v)\quad\text{for }\kappa=2m<\ell,\
m\in\N.
$$

\begin{proposition}[Characterization of singular points]
  \label{prop:char-sing-point}
  Let $u\in\S^f(S_1^+)$ with $|f(x,t)|\leq M \|(x,t)\|^{\ell-2}$ in
  $S_1^+$, $|\nabla f(x,t)|\leq L\|(x,t)\|^{\ell-3}$ in $Q_{1/2}^+$,
  $\ell\geq 3$ and $0\in \Gamma^{(\ell)}_\kappa(u)$ with
  $\kappa<\ell$. Then, the following statements are equivalent:
  \begin{enumerate}[label=\textup{(\roman*)}]
  \item $0\in \Sigma_\kappa(u)$.
  \item any blowup of $u$ at the origin is a nonzero parabolically
    $\kappa$-homogeneous polynomial $p_\kappa$ in $S_\infty$
    satisfying
$$
\Delta p_\kappa-\partial_t p_\kappa=0,\quad p_\kappa(x',0,t)\geq
0,\quad p_\kappa(x',-x_n, t)=p_\kappa(x',x_n,t).
$$
We denote this class by $\P_\kappa$.
\item $\kappa=2m$, $m\in\N$.
\end{enumerate}
\end{proposition}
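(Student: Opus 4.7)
My plan is to establish the three equivalences through the chain (i) $\Rightarrow$ (ii) $\Rightarrow$ (iii), then close the loop with (iii) $\Rightarrow$ (ii). The first two implications use machinery already in place (blowup convergence, parabolic Liouville, Holmgren's uniqueness), while the reverse direction (iii) $\Rightarrow$ (ii) is the main obstacle.

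For (i) $\Rightarrow$ (ii), I would begin by noting that the density quotient $\mathcal{H}^n(\Lambda(u)\cap Q'_r)/\mathcal{H}^n(Q'_r)$ equals $\mathcal{H}^n(\Lambda(u_r)\cap Q'_1)/\mathcal{H}^n(Q'_1)$ under the parabolic rescaling, so the singular-point assumption reads $\mathcal{H}^n(\Lambda(u_r)\cap Q'_1)\to 0$. Since $\kappa<\ell$, one can choose $\ell_0\in(\kappa,\ell)$ and apply Lemma~\ref{lem:blowups-H1alpha-conv} to upgrade the subsequential convergence $u_{r_j}\to u_0$ of Theorem~\ref{thm:exist-homogen-blowups} to $H^{\alpha,\alpha/2}(Q_R^+\cup Q_R')$-convergence for every $R>0$, $\alpha<1/2$. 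This uniform convergence transfers the density-vanishing to the blowup, giving $\mathcal{H}^n(\Lambda(u_0)\cap Q'_1)=0$, so $\{u_0>0\}$ is dense in $S'_\infty$. The complementarity $u_0\,\partial_{x_n}u_0=0$, together with the H\"older continuity of $\partial_{x_n}u_0$, then forces $\partial_{x_n}u_0\equiv 0$ on all of $S'_\infty$, so the even reflection $\tilde u_0$ in $x_n$ is a classical caloric function on the whole strip $S_\infty$ (no distributional jump remains). Its parabolic growth bound $|\tilde u_0|\leq C\|(x,t)\|^\kappa$ combined with the parabolic Liouville theorem then identifies $\tilde u_0$ as a caloric polynomial, necessarily of parabolic degree exactly $\kappa$; the Signorini sign conditions and $x_n$-symmetry place $u_0\in\P_\kappa$.

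For (ii) $\Rightarrow$ (iii), the nonzero polynomial $p_\kappa$ of parabolic homogeneity $\kappa$ forces $\kappa\in\Z_+$. Suppose $\kappa=2m+1$ is odd. Every monomial $(x')^{\alpha'}x_n^{\alpha_n}t^j$ in $p_\kappa$ restricted to $\{x_n=0\}$ satisfies $|\alpha'|+2j=2m+1$, hence $|\alpha'|$ is odd, so $p_\kappa(-x',0,t)=-p_\kappa(x',0,t)$; combined with $p_\kappa\geq 0$ on $S'_\infty$, this forces $p_\kappa|_{S'_\infty}\equiv 0$. The $x_n$-symmetry gives also $\partial_{x_n}p_\kappa|_{S'_\infty}\equiv 0$, and Holmgren's uniqueness theorem for the caloric $p_\kappa$ in $S^+_\infty$ with vanishing Cauchy data on the hyperplane yields $p_\kappa\equiv 0$, a contradiction. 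Hence $\kappa$ is even, i.e., $\kappa=2m$, $m\in\N$ (with $m\geq 1$ since $\kappa\geq 3/2$).

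The chief difficulty is (iii) $\Rightarrow$ (ii): showing that every nonzero $2m$-parabolically-homogeneous global Signorini solution $u_0$ is a caloric polynomial. The plan is to pass to self-similar coordinates by writing $u_0(x,t)=(-t)^m g(x/\sqrt{-t})$ for $t<0$, which reduces the problem to the elliptic Ornstein-Uhlenbeck eigenvalue problem
$$
-\Delta g + \tfrac12 y\cdot\nabla g = m g\quad\text{in }\R^n_+\setminus\Lambda(g),
$$
with Signorini boundary conditions on $\{y_n=0\}$ and $L^2$-integrability against the Gaussian weight $e^{-|y|^2/4}$ (inherited from $H_{u_0}(1)<\infty$). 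The natural tool is to decompose $g$ in the basis of parabolic Hermite polynomials, which diagonalize $L=-\Delta+\tfrac12 y\cdot\nabla$ with the Hermite polynomial of degree $k$ corresponding to eigenvalue $k/2$. The even extension $\tilde g$ satisfies $(L-m)\tilde g = -\mu$ distributionally for a nonnegative measure $\mu$ supported on $\Lambda\subset\{y_n=0\}$, and testing against $H_k$ yields $(k/2-m)\langle g,H_k\rangle_G = \langle\mu,H_k\rangle$ for each $k$. Identifying $u_0$ as a polynomial amounts to forcing the Hermite projections of degree $>2m$ to vanish; this is the step I expect to be the main obstacle, and I anticipate it will require either a Monneau-type monotonicity argument of the kind developed in Section~\ref{sec:weiss-monneau-type} (exploiting that the frequency of $u_0$ equals $2m$ and saturates the homogeneity at the origin), or a direct argument showing that the thin support of $\mu$ is incompatible with nontrivial pairings $\langle\mu,H_k\rangle$ for $k>2m$.
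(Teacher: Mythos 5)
Your plan does not establish the equivalence: the chain (i)~$\Rightarrow$~(ii)~$\Rightarrow$~(iii) together with (iii)~$\Rightarrow$~(ii) gives (ii)~$\Leftrightarrow$~(iii) and (i)~$\Rightarrow$~(ii), but nothing returns to (i). The paper separately proves (ii)~$\Rightarrow$~(i): if $0$ were not singular, then $\mathcal{H}^n(\Lambda(u_{r_j})\cap Q_1')\geq\delta$ along a sequence, uniform convergence forces $\mathcal{H}^n(\Lambda(u_0)\cap Q_1')\geq\delta$, and a polynomial vanishing on a set of positive $\mathcal{H}^n$-measure in the thin space vanishes there identically, whence Holmgren kills $u_0$. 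You need this (or an equivalent) implication. Moreover, your argument for (i)~$\Rightarrow$~(ii) pushes the density information in the wrong direction. Uniform convergence $u_{r_j}\to u_0$ only gives $\{u_0>0\}\subset\liminf_j\{u_{r_j}>0\}$, i.e.\ $\Lambda(u_0)\supset\limsup_j\Lambda(u_{r_j})$, which yields a \emph{lower} bound on $\mathcal{H}^n(\Lambda(u_0))$, not the upper bound $\mathcal{H}^n(\Lambda(u_0)\cap Q_1')=0$ you claim (the rescalings could be positive but tending to zero on a large set, in which case $\Lambda(u_0)$ is large while every $\Lambda(u_{r_j})$ is small). The paper's route avoids this: after even reflection, $\Delta u_r-\partial_t u_r=f_r+2(\partial_{x_n}^+u_r)\,\mathcal{H}^n\big|_{\Lambda(u_r)}$ distributionally, the trace $\partial_{x_n}^+u_r$ is bounded in $L_2(Q_R')$ by the uniform $W^{2,1}_2$ estimates, and Cauchy--Schwarz together with $\mathcal{H}^n(\Lambda(u_r)\cap Q_R')\to 0$ makes the measure term vanish in the limit; then Liouville applies. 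Your (ii)~$\Rightarrow$~(iii) via the parity $p_\kappa(-x',0,t)=-p_\kappa(x',0,t)$ for odd $\kappa$ plus Holmgren is correct and matches the paper.

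The implication (iii)~$\Rightarrow$~(ii) is left genuinely incomplete: you set up the Ornstein--Uhlenbeck/Hermite framework and then state that the key step is an obstacle you have not resolved. In fact your own identity $(k/2-m)\langle g,H_k\rangle_G=\langle\mu,H_k\rangle$ already contains the answer at $k=2m$: it forces $\langle\mu,H_{2m}\rangle=0$ for \emph{every} degree-$2m$ eigenfunction, and since one can exhibit a $2m$-parabolically-homogeneous caloric polynomial that is strictly positive on the thin space (the paper takes $P=\sum_{j<n}\Re(x_j+ix_n)^{2m}+(-1)^m\sum_k \frac{m!}{(m-k)!(2k)!}x_n^{2k}t^{m-k}$, which restricts to $\sum_j x_j^{2m}+(-t)^m$ on $\{x_n=0\}$), the nonpositivity of $\mu=\Delta v-\partial_t v$ together with $\langle\mu,\Psi P\rangle=0$ forces $\mu=0$; the pairing is computed to vanish by integrating by parts against $\Psi=\eta(t)G$ and using $Zv=2mv$, $ZP=2mP$. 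Once $\mu=0$ the reflected solution is entire caloric with polynomial growth and Liouville finishes. You should either carry out this computation (Lemma~\ref{lem:Monn-homogen-harm} in the paper) or complete your Hermite argument along these lines; as written, the implication is asserted rather than proved.
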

\begin{proof} (i) $\Rightarrow$ (ii) Recall that the rescalings $u_r$
  satisfy
  $$
  \Delta u_r-\partial_t u_r=f_r+2(\partial_{x_n}^+ u_r)
  \mathcal{H}^{n}\big|_{\Lambda(u_r)}\quad\text{in }S_{1/r},
  $$
  in the sense of distributions, after an even reflection in $x_n$
  variable. Since $u_r$ are uniformly bounded in $W^{2,1}_2(Q_{2R}^+)$
  for small $r$ by Theorem~\ref{thm:exist-homogen-blowups},
  $\partial_{x_n}^+ u_r$ are uniformly bounded in $L_2(Q_R')$. On the
  other hand, if $0\in \Sigma(u)$, then
$$
\frac{\mathcal{H}^n(\Lambda(u_r)\cap
  Q_R')}{R^n}=\frac{\mathcal{H}^n(\Lambda(u)\cap Q_{Rr})}{(Rr)^n}\to
0\quad\text{as }r\to 0,
$$
and therefore
$$
(\partial_{x_n}^+ u_r) \mathcal{H}^{n}\big|_{\Lambda(u_r)}\to
0\quad\text{in }Q_R.
$$
in the sense of distributions. Further, the bound $|f(x,t)|\leq M
\|(x,t)\|^{\ell-2}$ implies that
\begin{align*}
  |f_r(x,t)|&=\frac{r^2|f(rx,r^2t)|}{H_u(r)^{1/2}}\leq \frac{M
    r^{\ell}}{H_u(r)^{1/2}}\|(x,t)\|^{\ell-2}\\
  &\leq C r^{\ell-\ell_0} R^{\ell-2}\to 0\quad\text{in }Q_R,
\end{align*}
where $\ell_0\in (\kappa,\ell)$ and we have used the fact that
$H_u(r)\geq r^{2\ell_0}$ for $0<r<r_u$. Hence, any blowup $u_0$ is
caloric in $Q_R$ for any $R>0$, meaning that it is caloric in the
entire $S_\infty=\R^n\times(-\infty,0]$.  On the other hand, by
Proposition~\ref{thm:exist-homogen-blowups}(iv), the blowup $u_0$ is
homogeneous in $S_\infty$ and therefore has a polynomial growth at
infinity. Then by the Liouville theorem we can conclude that $u_0$
must be a homogeneous caloric polynomial $p_\kappa$ of a certain
integer degree $\kappa$. Note that $p_\kappa=u_0\not\equiv 0$ by
construction. The properties of $u$ also imply that that
$p_\kappa(x',0,t)\geq 0$ for all $(x',t)\in S_\infty'$ and and
$p_\kappa(x',-x_n,t)=p_\kappa(x',x_n,t)$ for all $(x',x_n,t)\in
S_\infty$.

\medskip\noindent (ii) $\Rightarrow$ (iii) Let $p_\kappa$ be a blowup
of $u$ at the origin. Since $p_\kappa$ is a polynomial, clearly
$\kappa\in\N$. If $\kappa$ is odd, the nonnegativity of $p_\kappa$ on
$\R^{n-1}\times\{0\}\times\{-1\}$ implies that $p_\kappa$ vanishes
there identically, implying that $p_\kappa\equiv 0$ on $S_\infty'$. On
the other hand, from the even symmetry in $x_n$ we also have that
$\partial_{x_n} p_\kappa\equiv 0$ on $S_\infty'$. Since $p_\kappa$ is
caloric in $\R^n$ and $S_\infty'$ is not characteristic for the heat
operator, Holmgren's uniqueness theorem implies that $p_\kappa\equiv
0$ in $\R^n$, contrary to the assumption. Thus, $\kappa\in\{2m\mid
m\in\N\}$.

\medskip\noindent (iii) $\Rightarrow$ (ii) The proof of this
implication is stated as a separate Liouville-type result in
Lemma~\ref{lem:2m-homogen} below.

\medskip\noindent (ii) $\Rightarrow$ (i) Suppose that $0$ is not a
singular point and that over some sequence $r=r_j\to 0+$ we have
$\mathcal{H}^{n}(\Lambda (u_r)\cap Q_1')\geq \delta>0$. By
Lemma~\ref{lem:blowups-H1alpha-conv}, taking a subsequence if
necessary, we may assume that $u_{r_j}$ converges locally uniformly to
a blowup $u_0$. We claim that
$$
\mathcal{H}^{n}(\Lambda (u_0)\cap Q_1')\geq \delta>0.
$$
Indeed, otherwise there exists an open set $U$ in $S_\infty'$ with
$\mathcal{H}^{n}(U)<\delta$ such that $\Lambda (u_0)\cap
\overline{Q_1'} \subset U$. Then for large $j$ we must have $\Lambda
(u_{r_j})\cap \overline{Q_1'} \subset U$, which is a contradiction,
since $\mathcal{H}^{n}(\Lambda (u_{r_j})\cap \overline{Q_1'})\geq
\delta > \mathcal{H}^{n}(U)$. Since $u_0=p_\kappa$ is a polynomial,
vanishing on a set of positive $\mathcal{H}^{n}$-measure on
$S_\infty'$, it follows that $u_0$ vanishes identically on
$S_\infty'$. But then, applying Holmgren's uniqueness theorem one more
time, we conclude that $u_0$ must vanish on $S_\infty$, which is a
contradiction. This completes the proof of the theorem.
\end{proof}

The implication (iii) $\Rightarrow$ (ii) in
Proposition~\ref{prop:char-sing-point} is equivalent to the following
Liouville-type result, which is the parabolic counterpart of
Lemma~1.3.3 in \cite{GP}.

\begin{lemma}\label{lem:2m-homogen} Let $v$ be a parabolically
  $\kappa$-homogeneous solution of the parabolic Signorini problem in
  $S_\infty$ with $\kappa=2m$ for $m\in\N$. Then $v$ is a caloric
  polynomial.
\end{lemma}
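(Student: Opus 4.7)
My plan is to transfer the question via a self-similar substitution to a spectral problem for the Ornstein--Uhlenbeck operator on all of $\R^n$, and then to exploit the Signorini structure to force the resulting profile into a single eigenspace spanned by Hermite polynomials.

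For $t < 0$, the parabolic $2m$-homogeneity of $v$ lets me write
\[
v(x,t) = (-t)^m \Psi(y), \qquad y := x/\sqrt{-t},\qquad \Psi(y) := v(y,-1).
\]
A direct computation shows that $v$ is caloric on $S_\infty \setminus \Lambda(v)$ if and only if $\Psi$ is an eigenfunction at eigenvalue $-m$ of the Ornstein--Uhlenbeck operator $L := \Delta_y - \tfrac12 y\cdot\nabla_y$ on $\R^n \setminus \Lambda_0$, with $\Lambda_0 := \Lambda(v)\cap\{t = -1\}$, and the parabolic Signorini conditions for $v$ translate into elliptic Signorini conditions $\Psi \geq 0$, $-\partial_{y_n}\Psi \geq 0$, $\Psi\,\partial_{y_n}\Psi = 0$ on $\{y_n = 0\}$. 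After even reflection in $y_n$, this becomes the distributional equation
\[
L\Psi + m\Psi = 2(\partial_{y_n}^+\Psi)\,\mathcal{H}^{n-1}\big|_{\Lambda_0},
\]
with the right-hand side a Radon measure supported on $\{y_n = 0\}$.

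The key spectral fact is that $-m$ is precisely the eigenvalue of $L$ on the Gaussian-weighted space $L_2(\R^n, e^{-|y|^2/4}dy)$ whose associated eigenspace is finite-dimensional and spanned by Hermite polynomials of degree $2m$, hence consists \emph{entirely} of polynomials of degree $2m$. Since $v$ has polynomial growth (by the $2m$-homogeneity together with the optimal regularity of Theorem \ref{thm:opt-reg}), $\Psi$ lies in the Gaussian Sobolev space, and I may expand it as $\Psi = \sum_{k\geq 0}\Psi_k$ with each $\Psi_k$ in the $(-k/2)$-eigenspace. Testing the distributional equation above against each component $\Psi_k$ and exploiting the Signorini complementarity $\Psi\,\partial_{y_n}\Psi = 0$ together with the sign conditions, I expect to conclude that every component $\Psi_k$ with $k \neq 2m$ vanishes. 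This would force $\Psi$ to be a pure Hermite polynomial of degree $2m$, whence $v(x,t) = (-t)^m \Psi(x/\sqrt{-t})$ is a caloric polynomial of parabolic degree $2m$.

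The main obstacle I anticipate is rigorously carrying out the spectral decoupling: the measure-valued right-hand side must actually force the off-eigenvalue components of $\Psi$ to vanish, which is not automatic from the eigenvalue equation alone. I expect this to require a careful pairing argument exploiting the $L_2$-orthogonality of Hermite polynomials in the Gaussian weight together with the nonnegativity structure on $\{y_n = 0\}$, in the spirit of the eigenvalue-matching argument used for $\lambda(\Sigma_0^-) = 1/4$ in the proof of Proposition \ref{prop:homogen-glob-sol-1-2}. Should this direct route prove too delicate, a fallback would be an induction on $m$: differentiate $v$ tangentially to produce a $(2m-1)$-homogeneous function that is caloric off $\Lambda(v)$, apply Caffarelli's monotonicity formula to its positive and negative parts to restrict the geometry of $\Lambda(v)$, and reduce to a lower-degree case.
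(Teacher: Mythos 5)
Your reformulation as an Ornstein--Uhlenbeck eigenvalue problem is a correct and natural translation of the statement, but the mechanism you propose for the crucial step does not close, and you have correctly sensed this yourself. Writing the reflected equation as $L\Psi+m\Psi=\nu$ with $\nu:=2(\partial_{y_n}^+\Psi)\,\mathcal{H}^{n-1}\big|_{\Lambda_0}\le 0$, the complementarity $\Psi\,\partial_{y_n}\Psi=0$ gives only the single scalar identity $\langle\nu,\Psi\rangle=0$, i.e.\ $\sum_k\bigl(m-\tfrac{k}{2}\bigr)\|\Psi_k\|^2_{L_2(G)}=0$, whose coefficients change sign at $k=2m$; this does not kill any individual component. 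Pairing instead with a single component $\Psi_k$ yields $\bigl(m-\tfrac{k}{2}\bigr)\|\Psi_k\|^2=\int\Psi_k\,d\nu$, and there is no sign information on the right-hand side because $\Psi_k$ need not have a sign on $\{y_n=0\}$. A nonpositive measure on the hyperplane can have Hermite coefficients of arbitrary sign, so orthogonality plus complementarity alone cannot decouple the spectrum. Your fallback is also shaky: the Caffarelli monotonicity argument of Proposition~\ref{prop:homogen-glob-sol-1-2} extracts information only because the relevant homogeneity is below $2$, and $\partial_e v$ has homogeneity $2m-1\ge 3$ once $m\ge 2$.

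What actually closes the argument --- and is the route the paper takes in Lemma~\ref{lem:Monn-homogen-harm} --- is to test the nonpositive measure $\mu=\Delta v-\partial_t v$ against a \emph{single, explicitly constructed} test function rather than against the components of $\Psi$. One exhibits a parabolically $2m$-homogeneous caloric polynomial $P$ (equivalently, in your picture, a degree-$2m$ Hermite eigenfunction of $L$ at eigenvalue $-m$) that is \emph{strictly positive} on $\{x_n=0\}\times(-\infty,0)$, namely $P=\sum_{j<n}\Re(x_j+ix_n)^{2m}+(-1)^m\sum_k\frac{m!}{(m-k)!(2k)!}x_n^{2k}t^{m-k}$, which restricts to $\sum_j x_j^{2m}+(-t)^m>0$ on the thin space. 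Pairing $\mu$ with $\eta(t)G\,P$ and using the matching homogeneities of $v$ and $P$ together with $\Delta P-\partial_t P=0$ (in your language: self-adjointness of $L$ in $L_2(G)$ and $LP_0+mP_0=0$, so $\langle\nu,P_0\rangle=\langle\Psi,LP_0+mP_0\rangle=0$), one finds $\langle\mu,\eta G P\rangle=0$; since $\mu\le 0$ and the test function is strictly positive on $\supp\mu$, this forces $\mu=0$. This is precisely where the evenness $\kappa=2m$ enters --- no such positive eigenpolynomial exists in odd degree --- and once $v$ is caloric with polynomial growth, Liouville gives that it is a caloric polynomial. Your framework is salvageable, but the missing idea is this positive test function, not a spectral decoupling.
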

	
This, in turn, is a particular case of the following lemma, analogous
to Lemma~1.3.4 in \cite{GP} in the elliptic case, which stems from
Lemma~7.6 in \cite{Mon2}.

\begin{lemma}\label{lem:Monn-homogen-harm} Let $v\in
  W^{1,1}_{2,\loc}(S_\infty)$ be such that $\Delta v-\partial_t v\leq
  0$ in $S_\infty$ and $\Delta v-\partial_t v=0$ in $S_\infty\setminus
  S'_\infty$. If $v$ is parabolically $2m$-homogeneous, $m\in\N$, and
  has a polynomial growth at infinity, then $\Delta v-\partial_t v=0$
  in $S_\infty$.
\end{lemma}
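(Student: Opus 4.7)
The plan is to exploit the parabolic $2m$-homogeneity of $v$ to reduce the spacetime equation to a purely spatial Ornstein--Uhlenbeck eigenvalue equation on the time slice $\{t=-1\}$, and then to extract the vanishing of the measure $\mu:=\Delta v-\partial_t v$ by testing against a single well-chosen Hermite polynomial.

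First I would use homogeneity to write $v(x,t)=(-t)^m V(x/\sqrt{-t})$ for $t<0$, where $V(y):=v(y,-1)$. A direct chain-rule calculation gives
\[
\Delta v-\partial_t v=(-t)^{m-1}\bigl[\mathcal{L}V+mV\bigr],\qquad \mathcal{L}V:=\Delta V-\tfrac12\,y\cdot\nabla V,
\]
so our hypotheses translate into the distributional identity
\[
\mathcal{L}V+mV=\nu\qquad\text{on }\R^n,
\]
where $\nu\leq 0$ is supported on the thin hyperplane $\{y_n=0\}$. Since $\nu$ is a distribution of one sign, it is in fact a Radon measure by Schwartz's theorem; writing $\nu=d\nu'(y')\otimes\delta_{\{y_n=0\}}$, it suffices to show $\nu'\equiv 0$.

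Next I would invoke the spectral theory of the Ornstein--Uhlenbeck operator: $\mathcal{L}$ is self-adjoint on $L^2(\R^n,G(\cdot,-1)dy)$ with an orthonormal eigenbasis of multivariate Hermite polynomials $\{H_\alpha\}$ satisfying $\mathcal{L}H_\alpha=-\tfrac{|\alpha|}{2}H_\alpha$. The critical choice of test function is $H_\beta$ with the single multi-index $\beta=(0,\ldots,0,2m)$, so that $H_\beta(y)=H_{2m}(y_n)$ lies in the kernel of $\mathcal{L}+m$. Pairing the distributional identity against $H_\beta$ in the Gaussian inner product---legitimate because $V$ is tempered (from polynomial growth) and $H_\beta G(\cdot,-1)$ is Schwartz---the self-adjointness of $\mathcal{L}$ on the left and the support of $\nu$ on the right yield
\[
0=\bigl(m-\tfrac{|\beta|}{2}\bigr)\!\int_{\R^n}\!V H_\beta G\,dy=\int_{\R^n}\!H_\beta G\,d\nu=H_{2m}(0)\!\int_{\R^{n-1}}\!G(y',0,-1)\,d\nu'(y').
\]

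The proof is then closed by the sign condition. Since $H_{2m}(0)\neq 0$ (the $2m$-th one-dimensional Ornstein--Uhlenbeck eigenfunction is a polynomial in even powers of $y_n$ whose constant term does not vanish, as is readily verified by the recursion for its coefficients), we obtain $\int_{\R^{n-1}} G(y',0,-1)\,d\nu'=0$. But $G(y',0,-1)>0$ everywhere, while $d\nu'\leq 0$, so the integrand is of a fixed sign and the integral can vanish only when $\nu'\equiv 0$. Hence $\nu=0$ and $\Delta v-\partial_t v=0$ on $S_\infty$. The main technical hurdle will be justifying the distributional manipulations rigorously: the $G$-self-adjoint action of $\mathcal{L}$ on the non-compactly-supported test function $H_\beta G(\cdot,-1)$ must be validated via truncation by smooth cutoffs and a limiting argument exploiting the Gaussian decay of $G$, and the Schwartz-theoretic identification of $\mu$ with a Radon measure must be interpreted compatibly with the trace-type reduction to $\nu'$ on the $(n-1)$-dimensional hyperplane. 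Both steps rest on the regularity $v\in W^{1,1}_{2,\loc}(S_\infty)$ and parallel the integration-by-parts arguments used in the main monotonicity proofs of the paper.
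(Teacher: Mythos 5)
Your argument is correct and is essentially the paper's proof in disguise: the paper tests the nonpositive measure $\mu=\Delta v-\partial_t v$ against $\eta(t)\,G(x,t)\,P(x,t)$ for an explicit $2m$-parabolically-homogeneous caloric polynomial $P$ that is positive on $\{x_n=0\}\times(-\infty,0)$, and the caloric extension of your Hermite polynomial $H_{2m}(y_n)$ is exactly the $(x_n,t)$-part of that $P$, while your eigenvalue identity $(\mathcal{L}+m)H_\beta=0$ for the Ornstein--Uhlenbeck operator is just the slice-at-$t=-1$ form of the caloricity-plus-homogeneity cancellation ($x\cdot\nabla P+2t\partial_t P=2mP$, $x\cdot\nabla v+2t\partial_t v=2mv$) that the paper performs directly in spacetime. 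The one place your route incurs extra work that the paper avoids is the disintegration of the spacetime measure $\mu$ into the slice measure $\nu$ (i.e., justifying $\mathcal{L}V+mV=\nu$ as a distributional identity on $\R^n$), which you correctly flag as a technical hurdle; the paper sidesteps it by keeping the time variable and the cutoff $\eta(t)$ throughout, and otherwise your integrability and self-adjointness concerns are handled exactly as you propose, by the Gaussian energy estimates of Lemma~\ref{lem:w212}.
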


\begin{proof} Consider $\mu:=\Delta v-\partial_tv$ in
  $\R^n\times(-\infty,0)$. By the assumptions, $\mu$ is a nonpositive
  measure, supported on $\{x_n=0\}\times(-\infty,0)$. We are going to
  show that in fact $\mu=0$. To this end, let $P(x,t)$ be a
  parabolically $2m$-homogeneous caloric polynomial, which is positive
  on $\{x_n=0\}\times(-\infty,0)$. For instance, one can take
$$
P(x,t)=\sum_{j=1}^{n-1}\Re(x_j+i
x_n)^{2m}+(-1)^m\sum_{k=0}^m\frac{m!}{(m-k)!(2k)!} x_n^{2k}t^{m-k}.
$$
It is straightforward to check that $P$ is caloric. Moreover on
$\{x_n=0\}\times(-\infty,0)$ we have
$$
P=\sum_{j=1}^{n-1} x_j^{2m}+(-t)^m,
$$
so it is positive on $\{x_n=0\}\times(-\infty,0)$.  Further, let
$\eta\in C^\infty_0((0,\infty))$, with $\eta\geq 0$, and define
$$
\Psi(x,t)=\eta(t) G(x,t)=\frac{\eta(t)}{(-4\pi t)^{n/2}}e^{|x|^2/4t}.
$$
Note that we have the following identity (similar to that of $
G(x,t)$)
$$
\nabla\Psi=\frac{x}{2t}\Psi.
$$
We have
\begin{align*}
  \langle\Delta v,\Psi P\rangle &=-\int_{-\infty}^0\int_{\R^n}\nabla  v\cdot\nabla(\Psi P)\,dx\,dt\\
  &=-\int_{-\infty}^0\int_{\R^n} [\Psi\nabla v\cdot\nabla P+P\nabla v\cdot\nabla\Psi]\,dx\,dt\\
  &=\int_{-\infty}^0\int_{\R^n}[\Psi v\Delta P+v\nabla\Psi\cdot\nabla P-P\nabla v\cdot\nabla\Psi]\,dx\,dt\\
  &=\int_{-\infty}^0\int_{\R^n}\left[v\Delta P+\frac{1}{2t}
    v(x\cdot\nabla P)-\frac{1}{2t}P(x\cdot\nabla
    v)\right]\Psi\,dx\,dt.
\end{align*}
We now use the identities $\Delta P-\partial_t P=0$, $x\cdot\nabla
P+2t\partial_t P=2m P$, $x\cdot\nabla v+2t\partial_t v=2m v$ to arrive
at
\begin{align*}
  \langle\Delta v,\Psi P\rangle
  &=\int_{-\infty}^0\int_{\R^n}\left[2m P v-P(x\cdot\nabla v)\right]\frac{\Psi}{2t}\,dx\,dt\\
  &=\int_{-\infty}^0\int_{\R^n}\partial_t v\Psi P\,dx\,dt\\
  &=\langle\partial_t v,\Psi P\rangle.
\end{align*}
Therefore, $\langle\mu,\Psi P\rangle=\langle\Delta v-\partial_t v,\Psi
P\rangle=0$. Since $\mu$ is a nonpositive measure, this implies that
actually $\mu=0$ and the proof is complete.
\end{proof}

\begin{definition}\label{def:Pk}
  Throughout the rest of the paper we denote by $\P_\kappa$,
  $\kappa=2m$, $m\in \N$, the class of $\kappa$-homogeneous harmonic
  polynomials described in Proposition~\ref{prop:char-sing-point}(ii).
\end{definition}

In the rest of this section we state our main results concerning the
singular set: $\kappa$-differentiability at singular points
(Theorem~\ref{thm:k-diff-sing-p}) and a structural theorem on the
singular set (Theorem \ref{thm:sing-points-nonzero}). The proofs will
require additional technical tools (two families of monotonicity
formulas) that we develop in the next section. The proofs themselves
will be given in Section~\ref{sec:struct-sing-set}.

\begin{theorem}[$\kappa$-differentiability at singular points]\label{thm:k-diff-sing-p} Let
  $u\in\S^f(S_1^+)$ with $|f(x,t)|\leq M\|(x,t)\|^{\ell-2}$ in
  $S_1^+$, $|\nabla f(x,t)|\leq L\|(x,t)\|^{\ell-3}$ in $Q_{1/2}^+$,
  $\ell\geq 3$, and $0\in\Sigma_\kappa(u)$ for $\kappa=2m<\ell$,
  $m\in\N$.  Then, there exists a nonzero $p_\kappa\in\P_\kappa$ such
  that
  $$
  u(x,t)=p_\kappa(x,t)+o(\|(x,t)\|^{\kappa}),\quad t\leq 0.
  $$
  Moreover, if $v\in\S^\phi(Q_1^+)$ with $\phi\in
  H^{\ell,\ell/2}(Q_1')$, $(x_0,t_0)\in\Sigma_\kappa(v)$ and
  $u^{(x_0,t_0)}_k$ is obtained as in Proposition~\ref{prop:uk-def}
  for $v^{(x_0,t_0)}=v(x_0+\cdot, t_0+\cdot)$, then in the Taylor
  expansion
  $$
  u^{(x_0,t_0)}_k(x,t)=p_\kappa^{(x_0,t_0)}(x,t)+o(\|(x,t)\|^{\kappa}),\quad
  t\leq 0,
  $$
  the mapping $(x_0,t_0)\mapsto p^{(x_0,t_0)}_\kappa$ from
  $\Sigma_\kappa(v)$ to $\P_\kappa$ is continuous.
\end{theorem}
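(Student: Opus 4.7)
The plan is to establish the uniqueness of the blowup at singular points via a parabolic Monneau-type monotonicity formula, which will be developed in Section~\ref{sec:weiss-monneau-type}. By Proposition~\ref{prop:char-sing-point}, any blowup of $u$ at $0 \in \Sigma_\kappa(u)$ is already a nonzero polynomial $p_\kappa \in \P_\kappa$; hence the task is to show that such a $p_\kappa$ is independent of the choice of blowup sequence and that the convergence is sufficiently quantitative to yield a genuine Taylor expansion in backward time, plus continuity in the base point.

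The first step is a Weiss-type monotonicity formula for a functional of the form
\[
W_u^\kappa(r) = \frac{1}{r^{2\kappa}}\bigl(I_u(r)-\kappa H_u(r)\bigr)
\]
modulo an integrable error generated by $f$. Since every blowup is a $\kappa$-homogeneous global solution of the Signorini problem (for which $I_{u_0}(r)=\kappa H_{u_0}(r)$), this yields $W_u^\kappa(0+)=0$, and together with Theorem~\ref{thm:thin-monotonicity} it produces the sharp non-degeneracy bound $H_u(r)\geq c\,r^{2\kappa}$. The non-degeneracy is crucial because it allows one to replace the normalization $H_u(r)^{1/2}$ in the rescalings $u_r$ by the scale $r^\kappa$, so that $u(rx,r^2t)/r^\kappa$ stays bounded in the function spaces of Lemma~\ref{lem:blowups-H1alpha-conv}.

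The central step is the Monneau formula: for any $p\in \P_\kappa$, the quantity
\[
M_u(r,p) = \int_{\R^n_+}\Bigl(\tfrac{u(ry,-r^2)}{r^\kappa}-p(y,-1)\Bigr)^2 G(y,-1)\,dy
\]
(equivalently $r^{-2\kappa}\int_{\R^n_+} (u-p)^2 G(\cdot,-r^2)\,dx$) is non-decreasing up to an $O(r^\sigma)$ correction coming from $f$. If $p_\kappa$ is a blowup of $u$ along some $r_j\to 0+$, passing to the limit gives $M_u(r_j,p_\kappa)\to 0$, and the monotonicity then forces $M_u(0+,p_\kappa)=0$. Applying the same argument to any second subsequential blowup $\tilde p$ yields $M_u(\tilde r_j,p_\kappa)\to\|\tilde p-p_\kappa\|_{L^2(\R^n_+,G(\cdot,-1))}^2$, so $\tilde p = p_\kappa$. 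Once uniqueness of the blowup is established, the full convergence $u(rx,r^2t)/r^\kappa \to p_\kappa(x,t)$ as $r\to 0+$ (locally uniformly in $S_\infty^+\cup S'_\infty$ by Lemma~\ref{lem:blowups-H1alpha-conv}) is precisely the Taylor expansion $u(x,t)=p_\kappa(x,t)+o(\|(x,t)\|^\kappa)$ in the backward half $t\leq 0$.

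For the continuous dependence, fix $(x_0,t_0)\in\Sigma_\kappa(v)$ and apply the Monneau formula to $u_k^{(x_j,t_j)}$ with the test polynomial $p_\kappa^{(x_0,t_0)}$: for any $r>0$,
\[
\|p_\kappa^{(x_j,t_j)}-p_\kappa^{(x_0,t_0)}\|_{L^2(\R^n_+,G(\cdot,-1))}^2 = M_{u_k^{(x_j,t_j)}}(0+,p_\kappa^{(x_0,t_0)}) \leq M_{u_k^{(x_j,t_j)}}(r,p_\kappa^{(x_0,t_0)})+Cr^\sigma.
\]
Since $u_k^{(x_j,t_j)}\to u_k^{(x_0,t_0)}$ locally and $r\mapsto M_{u_k^{(\cdot,\cdot)}}(r,p_\kappa^{(x_0,t_0)})$ depends continuously on the base point for fixed $r$, letting first $j\to\infty$ and then $r\to 0+$ (and using $M_{u_k^{(x_0,t_0)}}(0+,p_\kappa^{(x_0,t_0)})=0$) gives $p_\kappa^{(x_j,t_j)}\to p_\kappa^{(x_0,t_0)}$ in $L^2_G$, hence in every norm by finite-dimensionality of $\P_\kappa$. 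The main obstacle is the derivation of the Monneau formula itself: the error terms arising from $f$ involve integrals such as $r^{-2\kappa-2}\int_{S_r^+} t(u-p)fG$, and making them integrable near $r=0$ requires both the growth bound $|f|\leq M\|(x,t)\|^{\ell-2}$ with $\ell>\kappa$ and a careful Cauchy--Schwarz argument coupled with the non-degeneracy $H_u(r)\geq cr^{2\kappa}$. The time-asymmetry of $G$ (supported only on $t<0$) is precisely what confines the resulting Taylor expansion to $t\leq 0$, as anticipated in Remark~\ref{rem:taylor-cntrex}.
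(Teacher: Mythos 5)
Your overall architecture is the same as the paper's: a Weiss-type formula giving $W_u^\kappa(0+)=0$, a Monneau-type formula whose monotonicity forces uniqueness of the blowup and, applied at nearby base points with a fixed test polynomial, continuity of $(x_0,t_0)\mapsto p_\kappa^{(x_0,t_0)}$; the passage from uniform convergence of the $\kappa$-homogeneous rescalings to the Taylor expansion for $t\leq 0$ is also exactly as in the paper. However, there is one step that does not work as you state it: the claim that $W_u^\kappa(0+)=0$ ``together with Theorem~\ref{thm:thin-monotonicity}'' produces the nondegeneracy $H_u(r)\geq c\,r^{2\kappa}$. The Weiss formula only gives almost-monotonicity of $r\mapsto H_u(r)/r^{2\kappa}$ (since $\frac{d}{dr}\big(H_u(r)/r^{2\kappa}\big)\approx \frac{4}{r}W_u^\kappa(r)\geq -Cr^{2\sigma-1}$), and the frequency formula only gives $H_u(r)\geq c_{\kappa'}r^{2\kappa'}$ for every $\kappa'>\kappa$; neither rules out $\lim_{r\to 0+}H_u(r)/r^{2\kappa}=0$. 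The paper obtains the sharp lower bound in Lemma~\ref{lem:nondeg-sing-nonzero} by a contradiction argument that itself uses the Monneau formula, applied with a (nonzero) subsequential blowup $q_\kappa$ of the $H_u(r)^{1/2}$-normalized rescalings: if $H_u(r_j)=o(r_j^{2\kappa})$, then $M^\kappa_{u,q_\kappa}(0+)=\int_{S_1^+}q_\kappa^2\,G$, and expanding the square and rescaling leads to $-\int_{S_1^+}q_\kappa^2\,G\geq 0$, a contradiction. So the correct logical order is Weiss $\to$ Monneau $\to$ nondegeneracy $\to$ uniqueness; your order (nondegeneracy before Monneau) leaves the nondegeneracy unproved, and without it the homogeneous rescalings $u(rx,r^2t)/r^\kappa$ could degenerate to $0$, making the uniqueness argument vacuous and the ``nonzero $p_\kappa$'' claim false. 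Note also the minor normalization slip: for a $\kappa$-homogeneous global solution one has $I_{u_0}=\tfrac{\kappa}{2}H_{u_0}$, not $I_{u_0}=\kappa H_{u_0}$.

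A second point you gloss over, which is the structural heart of the Monneau formula for the Signorini problem: differentiating $M^\kappa_{u,p_\kappa}(r)=H_w(r)/r^{2\kappa}$ with $w=u-p_\kappa$ produces, besides $\frac{4}{r}W_w^\kappa(r)$ and the error from $f$, the thin-space boundary term $\frac{4}{r^{2\kappa+3}}\int_{S_r'}t\,u_{x_n}p_\kappa\,G$. This term is not an error: it is nonnegative precisely because $t\leq 0$, $u_{x_n}\leq 0$ (Signorini condition) and $p_\kappa\geq 0$ on the thin space (membership in $\P_\kappa$), and this sign is what makes the formula monotone. Your write-up attributes all corrections to $f$, so you should make explicit that the boundary term is discarded by sign, and that $W_w^\kappa(r)=W_u^\kappa(r)$ (which the paper verifies by an integration by parts using the homogeneity and caloricity of $p_\kappa$) so that the Weiss bound $W_u^\kappa(r)\geq -Cr^{2\sigma}$ can be inserted.
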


\begin{remark}\label{rem:Pk-norm}Note that since $\P_\kappa$ is a convex subset of a finite-dimensional
  vector space, namely the space of all $\kappa$-homogeneous
  polynomials, its topology can be induced from any norm on that
  space.  For instance, the topology can be induced from the embedding
  of $\P_\kappa$ into $L_2(S_1^+, G)$, or even into $L_2(Q_1')$, since
  the elements of $\P_\kappa$ can be uniquely recovered from their
  restriction to the thin space.
\end{remark}
\begin{remark}\label{rem:taylor-cntrex} We want to emphasize here that
  the asymptotic development, as stated in
  Theorem~\ref{thm:k-diff-sing-p}, does not generally hold for
  $t>0$. Indeed, consider the following example. Let
  $u:\R^n\times\R\to \R$ be a continuous function such that
  \begin{itemize}
  \item $u(x,t)=-t-x_n^2/2$ for $x\in\R^n$ and $t\leq 0$.
  \item In $\{x_n\geq0,t\geq0\}$, $u$ solves the Dirichlet problem
    \begin{alignat*}{2}
      \Delta u-\partial_tu&=0,&\quad& x_n>0,t>0,\\
      u(x,0)&=-x_n^2, && x_n\geq 0,\\
      u(x',0,t)&=0 && t\geq 0.
    \end{alignat*}
  \item In $\{x_n\leq0,t\geq0\}$, we extend the function by even
    symmetry in $x_n$:
$$
u(x',x_n,t)=u(x',-x_n,t).
$$
\end{itemize}
It is easy to see that $u$ solves the parabolic Signorini problem with
zero obstacle and zero right-hand side in all of
$\R^n\times\R$. Moreover, $u$ is homogeneous of degree two and clearly
$0\in\Sigma_2(u)$. Now, if $p(x,t)=-t-x_n^2/2$, then $p\in\P_2$ and we
have the following equalities:
\begin{alignat*}{2}
  &u(x,t)=p(x,t),&\quad&\text{for }t\leq 0,\\
  &u(x',0,t)=0,\quad p(x',0,t)=-t &\quad&\text{for } t\geq 0.
\end{alignat*}
So for $t\geq 0$ the difference $u(x,t)-p(x,t)$ is not
$o(\|(x,t)\|^2)$, despite being zero for $t\leq 0$.

\end{remark}

We next want to state a structural theorem for the singular set,
similar to the ones in \cite{Ca} for the classical obstacle problem
and \cite{GP} for the thin obstacle problem. In order to do so, we
define the spatial dimension $d=d_\kappa^{(x_0,t_0)}$ of
$\Sigma_\kappa(v)$ at a given point $(x_0,t_0)$ based on the
polynomial $p_\kappa^{(x_0,t_0)}$.

\begin{definition}[Spatial dimension of the singular
  set]\label{def:dim-sing-point}  For a singular point
  $(x_0,t_0)\in\Sigma_\kappa(v)$ we define
  \begin{multline*}
    d_\kappa^{(x_0,t_0)}=\dim\{\xi\in\R^{n-1}\mid \xi\cdot
    \nabla_{x'}\partial_{x'}^{\alpha'}\partial_t^jp_\kappa^{(x_0,t_0)}=0\\
    \text{for any $\alpha'=(\alpha_1,\ldots,\alpha_{n-1})$ and $j\geq
      0$ such that } |\alpha'|+2j=\kappa-1\},
  \end{multline*}
  which we call the \emph{spatial dimension} of $\Sigma_\kappa(v)$ at
  $(x_0,t_0)$. Clearly, $d_{\kappa}^{(x_0,t_0)}$ is an integer between
  $0$ and $n-1$. Then, for any $d=0,1,\ldots,n-1$ define
$$
\Sigma_\kappa^d(v):=\{(x_0,t_0)\in\Sigma_\kappa(v) \mid
d^{(x_0,t_0)}_\kappa=d\}.
$$
\end{definition}

The case $d=n-1$ deserves a special attention.

\begin{lemma}[Time-like singular points]\label{lem:timelike-blowup} Let $(x_0,t_0)\in
  \Sigma^{n-1}_\kappa(v)$, $\kappa=2m<\ell$. Then
$$
p_{\kappa}^{(x_0,t_0)}(x,t)=C(-1)^m\sum_{k=0}^m
\frac{t^{m-k}}{(m-k)!}\frac{x_n^{2k}}{2k!},
$$
for some positive constant $C$. In other words,
$p_{\kappa}^{(x_0,t_0)}$ depends only on $x_n$ and $t$ and is unique
up to a multiplicative factor. We call such singular points
\emph{time-like}.
\end{lemma}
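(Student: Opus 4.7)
The plan is to exploit the hypothesis $d_\kappa^{(x_0,t_0)} = n-1$ to force the restriction $p_\kappa^{(x_0,t_0)}(x',0,t)$ to be a polynomial in $t$ alone; caloricity together with the evenness in $x_n$ will then propagate the polynomial uniquely upward from the thin space, and nonnegativity on $S_\infty'$ will pin down the sign of the single remaining constant. I write $p := p_\kappa^{(x_0,t_0)}$ for brevity; by Proposition~\ref{prop:char-sing-point} we have $p \in \P_\kappa$, so $p$ is a nonzero, parabolically $2m$-homogeneous caloric polynomial, even in $x_n$, and nonnegative on the thin space.

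First I would unpack the condition $d_\kappa^{(x_0,t_0)} = n-1$: per Definition~\ref{def:dim-sing-point} the corresponding subspace of $\R^{n-1}$ is all of $\R^{n-1}$, so
\[
\partial_{x_i}\partial_{x'}^{\alpha'}\partial_t^j p = 0,\qquad i=1,\ldots,n-1,\ |\alpha'|+2j = \kappa-1.
\]
Since $p$ is parabolically $\kappa$-homogeneous, each such mixed derivative has parabolic degree zero and is therefore constant; a direct Taylor-coefficient identification shows this constant equals $(\alpha'+e_i)!\,j!$ times the coefficient of $x'^{\alpha'+e_i}\,t^j$ in $p$. Thus every Taylor coefficient of a pure $(x',t)$-monomial $x'^{\beta'}t^j$ in $p$ with $|\beta'|\ge1$ and $|\beta'|+2j = \kappa$ must vanish, so the restriction $q_0(x',t) := p(x',0,t)$, which is parabolically $2m$-homogeneous in $(x',t)$, reduces to $q_0(x',t) = a\,t^m$ for some $a \in \R$.

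Next I would use the evenness in $x_n$ to write
\[
p(x,t) = \sum_{k=0}^m \frac{x_n^{2k}}{(2k)!}\,q_k(x',t),
\]
with $q_k$ parabolically homogeneous of degree $2(m-k)$. Matching coefficients of $x_n^{2k}$ in $\Delta p - \partial_t p = 0$ yields the recursion $q_{k+1} = (\partial_t - \Delta_{x'})q_k$ for $k=0,\ldots,m-1$, plus the terminal identity $(\partial_t - \Delta_{x'})q_m = 0$; both hold automatically for our $q_0 = a\,t^m$, since $\Delta_{x'}q_k \equiv 0$ at every stage and $\partial_t^{m+1}(a\,t^m) = 0$. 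Iterating gives $q_k(t) = a\,\frac{m!}{(m-k)!}\,t^{m-k}$, and reassembly produces
\[
p(x,t) = a\cdot m! \sum_{k=0}^m \frac{t^{m-k}}{(m-k)!}\frac{x_n^{2k}}{(2k)!}.
\]

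Finally, the thin-space nonnegativity $p(x',0,t)\ge 0$ on $S_\infty'$ reduces to $a\,t^m \ge 0$ for all $t\le 0$, which forces $\operatorname{sign}(a) = (-1)^m$; and $p\not\equiv 0$ (Proposition~\ref{prop:char-sing-point}(ii)) forces $a\ne 0$. Setting $C := |a|\cdot m! > 0$, so that $a\cdot m! = C(-1)^m$, yields the claimed representation. The only genuinely delicate point is the first step, which hinges on correctly reading the dimension condition as the vanishing of all Taylor coefficients of $p|_{x_n=0}$ other than the pure $t^m$ term; once that is in hand, the remaining steps amount to the explicit caloric lifting already built into Lemma~\ref{lem:calor-ext}.
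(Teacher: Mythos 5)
Your proof is correct, and it completes the computation that the paper dismisses as ``elementary.'' The overall strategy is the same as the paper's --- read off from $d_\kappa^{(x_0,t_0)}=n-1$ that the thin trace of $p_\kappa$ is degenerate, then use caloricity and evenness in $x_n$ to reconstruct the whole polynomial --- but the middle step runs differently. The paper observes that the dimension condition is equivalent to $\partial_{x_i}p_\kappa=0$ on $S_\infty'$ for $i=1,\dots,n-1$, notes that $\partial_{x_n}\partial_{x_i}p_\kappa$ also vanishes there by even symmetry, and invokes Holmgren's uniqueness theorem to conclude $\partial_{x_i}p_\kappa\equiv0$, so that $p_\kappa=p_\kappa(x_n,t)$; it then writes the general homogeneous ansatz $\sum_k a_k\,\tfrac{t^{m-k}}{(m-k)!}\tfrac{x_n^{2k}}{(2k)!}$ and leaves the determination of the $a_k$ and the sign to the reader. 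You instead identify the Taylor coefficients directly (your observation that $\partial_{x_i}\partial_{x'}^{\alpha'}\partial_t^j p$ is a \emph{constant} isolating a single coefficient of $p|_{x_n=0}$ is correct, since the derivative has parabolic order exactly $\kappa$), conclude $p(x',0,t)=a\,t^m$, and then lift via the recursion $q_{k+1}=(\partial_t-\Delta_{x'})q_k$ coming from matching powers of $x_n$ in the heat equation --- which is exactly the uniqueness statement underlying Lemma~\ref{lem:calor-ext}. This buys you two things: you avoid Holmgren entirely (for polynomials your recursion is the elementary substitute for unique continuation from Cauchy data on the thin space), and you obtain the explicit constants and the sign condition $a(-1)^m>0$ in one pass. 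Both arguments are sound; yours is the more self-contained of the two.
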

\begin{proof} The condition $d_\kappa^{(x_0,t_0)}=n-1$ is equivalent
  to the following property of $p_\kappa=p_\kappa^{(x_0,t_0)}$:
$$
\nabla_{x'}\partial_{x'}^{\alpha'}\partial_t^j p_\kappa=0,
$$
for any multi-index $\alpha'=(\alpha_1,\ldots,\alpha_{n-1})$ and $j$
such that $|\alpha'|+2j=\kappa-1$. It is easy to see that this is
equivalent to vanishing of $\partial_{x_i}p_\kappa$ on $S_\infty'$ for
$i=1,\ldots,n-1$. On the other hand, $\partial_{x_i}p_\kappa$ is
caloric in $S_\infty$ and is also even symmetric in $x_n$ variable,
implying that $\partial_{x_n}\partial_{x_i}p_\kappa=0$ on
$S_\infty'$. Then, by Holmgren's uniqueness theorem
$\partial_{x_i}p_\kappa$ is identically $0$ in $S_\infty$, implying
that $p_\kappa(x,t)$ depends only on $x_n$ and $t$. The homogeneity of
$p_\kappa$ implies that we can write it in the form
$$
p_\kappa(x,t)=\sum_{k=0}^{m}
a_k\frac{t^{m-k}}{(m-k)!}\frac{x_n^{2k}}{(2k)!}.
$$
The rest of the proof is then elementary.
\end{proof}

\begin{definition}[Space-like and time-like manifolds] We say that a
  $(d+1)$-dimensional manifold $\mathcal{S}\subset\R^{n-1}\times\R$,
  $d=0,\ldots, n-2$, is \emph{space-like} of class $C^{1,0}$, if
  locally, after a rotation of coordinate axes in $\R^{n-1}$ one can
  represent it as a graph
$$
(x_{d+1},\ldots,x_{n-1})=g(x_1,\ldots,x_d,t),
$$
where $g$ is of class $C^{1,0}$, i.e., $g$ and $\partial_{x_i}g$,
$i=1,\ldots,d$ are continuous.

We say that $(n-1)$-dimensional manifold
$\mathcal{S}\subset\R^{n-1}\times\R$ is \emph{time-like} of class
$C^1$ if it can be represented locally as
$$
t=g(x_1,\ldots,x_{n-1}),
$$
where $g$ is of class $C^1$.

\end{definition}
\begin{figure}[t]
  \begin{picture}(150,150)
    \put(0,0){\includegraphics[height=150pt]{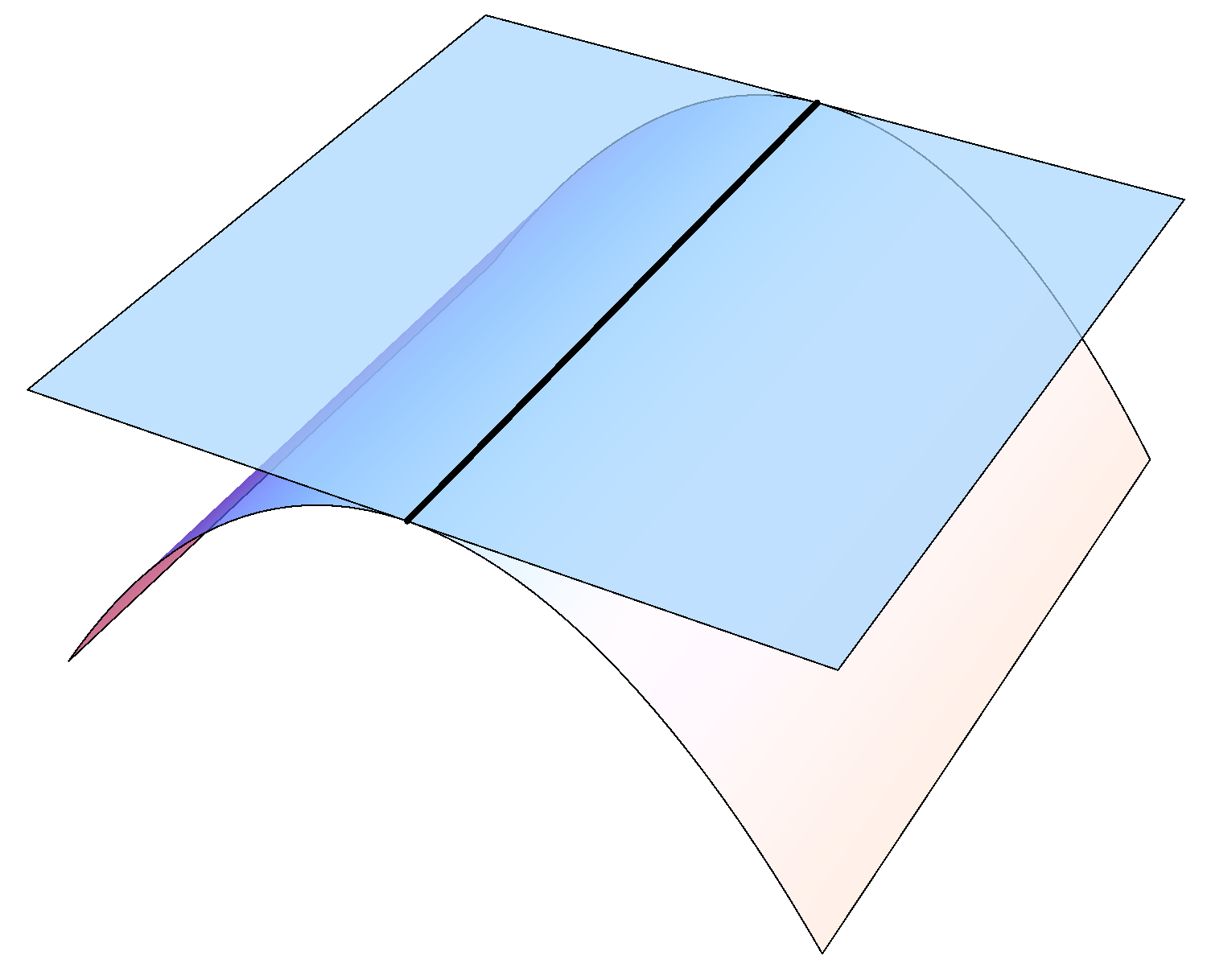}}
    \put(120,90){\small $\Sigma_2^2$} \put(60,110){\small
      $\Sigma_2^2$} \put(72,40){\small $\Sigma_{10}^1$}
    \put(75,50){\vector(0,2){30}} \put(125,30){\small $\Sigma_4^1$}
    \put(25,50){\small $\Sigma_4^1$} \put(162,40){\small $t=-x_1^2$}
    \put(180,105){\small $t=0$}
  \end{picture}
  \caption{Structure of the singular set $\Sigma(v)\subset \R^2\times
    (-\infty,0]$ for the solution $v$ with
    $v(x_1,x_2,0,t)=-t(t+x_1^2)^4$, $t\leq 0$ with zero thin
    obstacle. Note that the points on $\Sigma_4^1$ and $\Sigma_{10}^1$
    are space-like, and the points on $\Sigma_2^2$ are time-like.}
  \label{fig:sing-set-ex}
\end{figure}

\begin{theorem}[Structure of the singular set]\label{thm:sing-points-nonzero} Let
  $v\in\S_\phi(Q_1^+)$ with $\phi\in H^{\ell,\ell/2}(Q_1')$, $\ell\geq
  3$. Then, for any $\kappa=2m<\ell$, $m\in\N$, we have
  $\Gamma_\kappa(v)=\Sigma_\kappa(v)$. Moreover, for every
  $d=0,1,\ldots, n-2$, the set $\Sigma_\kappa^d(v)$ is contained in a
  countable union of $(d+1)$-dimensional space-like $C^{1,0}$
  manifolds and $\Sigma_{\kappa}^{n-1}(v)$ is contained in a countable
  union of $(n-1)$-dimensional time-like $C^1$ manifolds.
\end{theorem}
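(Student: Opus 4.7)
The plan follows the Whitney-extension / implicit-function-theorem strategy of \cite{GP} in the elliptic case, adapted to the parabolic setting and to the parabolic Whitney extension theorem of Appendix~\ref{sec:parab-whitn-extens}. The equality $\Gamma_\kappa(v)=\Sigma_\kappa(v)$ for $\kappa=2m<\ell$ is essentially immediate from what has already been established: one inclusion is by definition, while for the reverse, if $(x_0,t_0)\in\Gamma_\kappa^{(\ell)}(v)$ with $\kappa=2m<\ell$, Proposition~\ref{prop:free-bound-struct} forces $(x_0,t_0)\in\Gamma(v)$ (since $\kappa$ is neither $\ell$ nor odd), and then Proposition~\ref{prop:char-sing-point}, implication (iii)$\Rightarrow$(i), places $(x_0,t_0)$ in $\Sigma_\kappa(v)$.

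The heart of the argument is to show that the jets $\{p^{(x_0,t_0)}_\kappa\}_{(x_0,t_0)\in\Sigma_\kappa(v)}$ from Theorem~\ref{thm:k-diff-sing-p} satisfy the parabolic Whitney compatibility condition
$$\partial_{x'}^{\alpha'}\partial_t^j p^{(x_0,t_0)}_\kappa(x-x_0,t-t_0)=\partial_{x'}^{\alpha'}\partial_t^j p^{(x,t)}_\kappa(0,0)+o(\|(x-x_0,t-t_0)\|^{\kappa-|\alpha'|-2j})$$
for all pairs $(x_0,t_0),(x,t)\in\Sigma_\kappa(v)$ (with no sign restriction on $t-t_0$) and all $|\alpha'|+2j\le\kappa$. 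This is where I expect the main obstacle to lie: as Remark~\ref{rem:taylor-cntrex} warns, the pointwise asymptotic of Theorem~\ref{thm:k-diff-sing-p} fails for $t>t_0$, so the verification must be carried out \emph{along} the singular set rather than pointwise in $Q_1'$. I would combine the backward-in-time expansion, applied both at $(x_0,t_0)$ and at the ``future'' point $(x,t)$ (with the roles of the two points reversed when $t>t_0$), the vanishing of $u^{(x_0,t_0)}_k$ and $\partial_{x_n}u^{(x_0,t_0)}_k$ on $\Sigma_\kappa(v)$, and the continuous dependence $(x_0,t_0)\mapsto p^{(x_0,t_0)}_\kappa\in\P_\kappa$ from Remark~\ref{rem:Pk-norm}, restricted to compact subsets.

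Once compatibility is in hand, I would partition $\Sigma_\kappa(v)$ into countably many compact sets $E_j$ on which the $o(\cdot)$-term above is uniform, apply the parabolic Whitney extension theorem of Appendix~\ref{sec:parab-whitn-extens} to the thin-space restrictions $q^{(x_0,t_0)}(x',t):=p^{(x_0,t_0)}_\kappa(x',0,t)$, and obtain $F_j\in H^{\kappa,\kappa/2}(Q_1')$ whose parabolic $\kappa$-jet at each $(x_0,t_0)\in E_j$ coincides with that of $q^{(x_0,t_0)}$. Since each $q^{(x_0,t_0)}$ is parabolically $\kappa$-homogeneous, the parabolic derivatives of $F_j$ of order strictly less than $\kappa$ vanish identically on $E_j$, while for $|\alpha'|+2j=\kappa$ the value $\partial_{x'}^{\alpha'}\partial_t^j F_j(x_0,t_0)$ realizes the corresponding leading coefficient of $q^{(x_0,t_0)}$. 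In the space-like case $0\le d\le n-2$, Definition~\ref{def:dim-sing-point} produces $n-1-d$ multi-indices $(\alpha'_i,j_i)$ with $|\alpha'_i|+2j_i=\kappa-1$ for which the constant vectors $\nabla_{x'}\partial_{x'}^{\alpha'_i}\partial_t^{j_i}p^{(x_0,t_0)}_\kappa$ are linearly independent in $\R^{n-1}$. The map $\mathbf{G}:=(\partial_{x'}^{\alpha'_i}\partial_t^{j_i}F_j)_i$ then vanishes on $E_j$ and, after a rotation in $\R^{n-1}$ taking the independent directions to $e_{d+1},\ldots,e_{n-1}$, has full-rank spatial Jacobian at $(x_0,t_0)$. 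A parabolic implicit function theorem applied to $\mathbf{G}=0$ yields a $C^{1,0}$ graph $(x_{d+1},\ldots,x_{n-1})=g(x_1,\ldots,x_d,t)$ containing a neighborhood of $(x_0,t_0)$ in $E_j$; a further stratification by the choice of multi-indices and rotation gives the required countable covering of $\Sigma^d_\kappa(v)$.

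In the time-like case $d=n-1$, Lemma~\ref{lem:timelike-blowup} gives $p^{(x_0,t_0)}_\kappa(x,t)=C(x_0,t_0)(-1)^m\sum_{k=0}^{m}\frac{t^{m-k}}{(m-k)!}\frac{x_n^{2k}}{(2k)!}$ with $C(x_0,t_0)>0$ depending continuously on $(x_0,t_0)\in\Sigma^{n-1}_\kappa(v)$, so that $q^{(x_0,t_0)}(x',t)=C(x_0,t_0)(-1)^m t^m/m!$. The function $G:=\partial_t^{m-1}F_j$ then belongs to $H^{2,1}(Q_1')$, vanishes on $E_j$, and satisfies $\partial_t G(x_0,t_0)=(-1)^m C(x_0,t_0)\ne 0$ with $\partial_t G$ and $\nabla_{x'}G$ continuous. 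The classical implicit function theorem exhibits $E_j$ locally as a $C^1$ graph $t=g(x_1,\ldots,x_{n-1})$, which is precisely a time-like $C^1$ manifold in the sense of the paper; countably many such patches cover $\Sigma^{n-1}_\kappa(v)$, completing the proof.
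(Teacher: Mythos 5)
Your proposal is correct and follows essentially the same route as the paper: the identity $\Gamma_\kappa(v)=\Sigma_\kappa(v)$ via Proposition~\ref{prop:char-sing-point}, verification of the parabolic Whitney compatibility on the compact pieces $E_j$ of the $F_\sigma$-decomposition (with the same role-reversal device for handling pairs of points with $t>t_0$, which is exactly how the paper circumvents Remark~\ref{rem:taylor-cntrex}), the parabolic Whitney extension of Appendix~\ref{sec:parab-whitn-extens}, and the implicit function theorem applied to the $(\kappa-1)$-order derivatives in the space-like case and to $\partial_t^{m-1}F$ in the time-like case via Lemma~\ref{lem:timelike-blowup}. The one cosmetic deviation — performing the Whitney extension on the thin-space restrictions $q^{(x_0,t_0)}$ rather than on the full polynomials $p^{(x_0,t_0)}_\kappa$ — is harmless, since elements of $\P_\kappa$ are determined by their traces on the thin space and all derivatives entering the implicit function theorem are tangential.
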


For a small illustration, see Fig.~\ref{fig:sing-set-ex}.

\section{Weiss and Monneau type monotonicity formulas}
\label{sec:weiss-monneau-type}

In this section we construct two families of monotonicity formulas
that will play a crucial role in the study of the singular set. They
generalize the corresponding formulas in \cite{GP} in the study of the
elliptic thin obstacle problem.

The first family of monotonicity formulas goes back to the work of
Weiss \cite{Wei1} in the elliptic case and \cite{Wei2} in the
parabolic case; see also \cite{CPS}.

\begin{theorem}[Weiss-type monotonicity formula]\label{thm:Weiss}
  Let $u\in\S^f(S_1^+)$ with $|f(x,t)|\leq M\|(x,t)\|^{\ell-2}$ in
  $S_1^+$, $\ell\geq 2$.  For any $\kappa\in(0,\ell)$, define the
  Weiss energy functional
  \begin{align*}
    W_u^\kappa(r) :=&
    \frac{1}{r^{2\kappa+2}}\int_{S_r^+}\Big(|t||\nabla
    u|^2-\frac\kappa2 u^2\Big) G\\
    =&\frac{1}{r^{2\kappa}}\Big(I_u(r)-\frac{\kappa}{2}H_u(r)\Big),\quad0<r<1.
  \end{align*}
  Then, for any $\sigma<\ell-\kappa$ there exists $C>0$ depending only
  on $\sigma$, $\ell$, $M$, and $n$, such that
  \[
  \frac{d}{dr}W_u^\kappa\geq-C r^{2\sigma-1},\quad\text{for a.e.\
  }r\in (0,1).
  \]
  In particular, the function
$$
r\mapsto W_u^\kappa(r)+C{r^{2\sigma}}
$$
is monotonically nondecreasing for $r\in(0,1)$.
\end{theorem}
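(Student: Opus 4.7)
The natural strategy is to differentiate $W_u^\kappa(r)=r^{-2\kappa}\bigl(I_u(r)-\tfrac{\kappa}{2}H_u(r)\bigr)$ explicitly, using the formulas for $H_u'$ and $I_u'$ from Proposition~\ref{prop:diff-form-signor}, and then rewrite everything in terms of the ``correct'' homogeneity defect $Zu-\kappa u$. Concretely I would compute
\begin{align*}
\tfrac{d}{dr}W_u^\kappa(r)&= r^{-2\kappa}I_u'(r)-\tfrac{\kappa}{2}r^{-2\kappa}H_u'(r)
-2\kappa r^{-2\kappa-1}I_u(r)+\kappa^2 r^{-2\kappa-1}H_u(r),
\end{align*}
and substitute the two differentiation formulas. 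The cross-terms involving $I_u$ and $H_u$ combine cleanly, leaving
\[
\tfrac{d}{dr}W_u^\kappa(r)\geq r^{-2\kappa-3}\!\int_{S_r^+}(Zu)^2 G
+2r^{-2\kappa-3}\!\int_{S_r^+}t(Zu)f\,G
-4\kappa r^{-2\kappa-1}I_u+\kappa^2 r^{-2\kappa-1}H_u+2\kappa r^{-2\kappa-3}\!\int_{S_r^+}tuf\,G.
\]

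The key algebraic step is to replace the bulk terms by the square of the homogeneity defect. Using the identity from step~(1$^\circ$) in the proof of Lemma~\ref{lem:differentiation-formulae-smooth}, together with $u u_{x_n}=0$ on $S_1'$, one obtains the representation
\[
\int_{S_r^+} u(Zu)\,G=2r^2 I_u(r)-2\!\int_{S_r^+}tuf\,G,
\qquad \int_{S_r^+}u^2 G=r^2 H_u(r).
\]
Therefore
\[
\int_{S_r^+}(Zu-\kappa u)^2 G=\int_{S_r^+}(Zu)^2 G-4\kappa r^2 I_u
+4\kappa\!\int_{S_r^+}tuf\,G+\kappa^2 r^2 H_u.
\]
Plugging this into the lower bound for $\tfrac{d}{dr}W_u^\kappa$ makes all the $I_u$, $H_u$, and $\int tuf G$ terms cancel exactly, yielding the compact inequality
\[
\tfrac{d}{dr}W_u^\kappa(r)\geq \frac{1}{r^{2\kappa+3}}\!\int_{S_r^+}(Zu-\kappa u)^2 G
+\frac{2}{r^{2\kappa+3}}\!\int_{S_r^+}t(Zu-\kappa u)f\,G.
\]
Completing the square in the integrand gives
\[
\tfrac{d}{dr}W_u^\kappa(r)\geq \frac{1}{r^{2\kappa+3}}\!\int_{S_r^+}\bigl(Zu-\kappa u+tf\bigr)^2 G
-\frac{1}{r^{2\kappa+3}}\!\int_{S_r^+}t^2 f^2 G\;\geq\;-\frac{1}{r^{2\kappa+3}}\!\int_{S_r^+}t^2 f^2\,G.
\]

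Finally, the growth assumption $|f(x,t)|\leq M\|(x,t)\|^{\ell-2}$ together with a parabolic rescaling $t=-s$, $x=\sqrt{s}\,y$ shows (just as in Section~\ref{sec:class-free-bound}) that
\[
\int_{S_r^+}t^2 f^2 G\leq C_{n,\ell}M^2\int_0^{r^2}s^{\ell}ds\cdot\!\int_{\R^n_+}(|y|^2+1)^{\ell-2}G(y,-1)dy=C\,r^{2\ell+2}.
\]
Hence $\tfrac{d}{dr}W_u^\kappa(r)\geq -C r^{2(\ell-\kappa)-1}$, and for any $\sigma<\ell-\kappa$ and $r\in(0,1)$ we have $r^{2(\ell-\kappa)-1}\leq r^{2\sigma-1}$, giving the claimed bound. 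Monotonicity of $r\mapsto W_u^\kappa(r)+Cr^{2\sigma}/(2\sigma)$ follows by integration.

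\textbf{Main obstacle.} The only subtlety is the algebraic bookkeeping: identifying that the correct ``square to complete'' is $(Zu-\kappa u+tf)^2$ and verifying that all the remaining $I_u$, $H_u$, and $\int tuf\,G$ terms cancel exactly (this is what makes $\kappa$ the critical homogeneity). The analytical ingredients---the differentiation formulas, the boundary-term identity $u u_{x_n}=0$ on $S_1'$, and the Gaussian estimate on $\int t^2 f^2 G$---are already available from Proposition~\ref{prop:diff-form-signor} and Lemma~\ref{lem:differentiation-formulae-smooth}, so no new technical machinery is needed.
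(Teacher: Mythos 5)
Your proposal is correct and follows essentially the same route as the paper: differentiate $W_u^\kappa$ via Proposition~\ref{prop:diff-form-signor}, reduce the bulk terms to the completed square $(Zu+tf-\kappa u)^2 - t^2f^2$ (using the identity $I_u(r)=\tfrac{1}{2r^2}\int_{S_r^+}u(Zu)G+\tfrac{1}{r^2}\int_{S_r^+}tufG$, valid because $uu_{x_n}=0$ on $S_1'$), and then bound $\int_{S_r^+}t^2f^2G\leq CM^2r^{2\ell+2}$ by the growth hypothesis on $f$. The only difference is cosmetic bookkeeping — you isolate $(Zu-\kappa u)^2$ first and then absorb $tf$, whereas the paper completes the full square in one chain — so no further comment is needed.
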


\begin{proof}
  The proof is by direct computation using
  Proposition~\ref{prop:diff-form-signor}. We have
  \begin{align*}
    r^{2\kappa+1}\frac{d}{dr}W_u^\kappa(r) &= r I'_u(r)-2\kappa I_u(r)-\frac{\kappa}{2}r H'_u(r)+\kappa^2 H_u(r)\\
    &\geq r\Big(\frac{1}{r^3}\int_{S_r^+}(Zu)^2 G\,dx\,dt+\frac{2}{r^3}\int_{S_r^+} t (Zu) f G\,dx\,dt\Big)-2\kappa I_u(r)\\
    &\qquad-\frac{\kappa}{2} r\Big(\frac{4}{r} I_u(r)-\frac{4}{r^3}\int_{S_r^+} t u f G\,dx\,dt\Big)+\kappa^2 H_u(r)\\
    &=\frac{1}{r^2}\int_{S_r^+}(Zu)^2
    G\,dx\,dt+\frac{2}{r^2}\int_{S_r^+} t
    (Zu) f G\,dx\,dt\\
    &\qquad+\frac{2\kappa}{r^2}\int_{S_r^+} t u f G\,dx\,dt-4\kappa I_u(r)+\kappa^2 H_u(r)\\
    &=\frac{1}{r^2}\int_{S_r^+}(Zu)^2 G\,dx\,dt+\frac{2}{r^2}\int_{S_r^+} t (Zu) f G\,dx\,dt+\frac{2\kappa}{r^2}\int_{S_r^+} t u f G\,dx\,dt\\
    &\qquad-4\kappa\Big(\frac{1}{2r^2}\int_{S_r^+}u(Zu) G\,dx\,dt+\frac{1}{r^2}\int_{S_r^+} tuf G\,dx\,dt\Big)+\kappa^2\frac{1}{r^2}\int_{S_r^+}u^2\,dx\,dt\\
    &=\frac{1}{r^2}\int_{S_r^+}(Zu+tf-\kappa u)^2
    G\,dx\,dt-\frac{1}{r^2}\int_{S_r^+} t^2 f^2 G\,dx\,dt.
  \end{align*}
  Hence, using the integral estimate on $f$ as at the beginning of
  Section~\ref{sec:class-free-bound}, we obtain
  \begin{align*}
    \frac{d}{dr}W_u^\kappa(r) &\geq-\frac1{r^{2\kappa+3}}\int_{S_r^+} t^2 f^2 G\,dx\,dt\\
    &\geq-C\frac{r^{2\ell+2}}{r^{2\kappa+3}}\\
    &\geq -Cr^{2\sigma-1},
  \end{align*}
  which yields the desired conclusion.
\end{proof}

Note that in Theorem~\ref{thm:Weiss}, we do not require $0\in
\Gamma_\kappa^{(\ell)}(u)$. However, if we do so, then we will have
the following fact.
\begin{lemma}\label{lem:W0+} Let $u$ be as in Theorem~\ref{thm:Weiss},
  and assume additionally that $0\in\Gamma^{(\ell)}_\kappa(u)$,
  $\kappa<\ell$. Then,
  \[
  W^\kappa_u(0+)=0.
  \]
\end{lemma}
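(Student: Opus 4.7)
The plan is to exploit the simple algebraic identity
\[
W_u^\kappa(r) \;=\; \frac{H_u(r)}{r^{2\kappa}}\left(\frac{I_u(r)}{H_u(r)}-\frac{\kappa}{2}\right),
\]
which is immediate from the definition of $W_u^\kappa$. This splits the quantity into a "ratio factor" that vanishes in the limit and a "size factor" $H_u(r)/r^{2\kappa}$ that needs to be bounded above. Equivalently, one can view this as the rescaling identity $W_u^\kappa(r) = (H_u(r)/r^{2\kappa})\, W_{u_r}^\kappa(1)$, since $H_{u_r}(1)=1$ and $I_{u_r}(1)=I_u(r)/H_u(r)$.

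Since $0\in\Gamma_\kappa^{(\ell)}(u)$ with $\kappa<\ell$, I would pick $\ell_0\in(\kappa,\ell)$ and invoke Theorem~\ref{thm:thin-monotonicity} with $\mu(r)=r^{2\ell_0}$; then $\kappa_\mu=\ell_0>\kappa$, and Lemma~\ref{lem:Hu-mu} yields both $H_u(r)\geq \mu(r)=r^{2\ell_0}$ for all small $r$ and
\[
\lim_{r\to 0+}\frac{I_u(r)}{H_u(r)}=\frac{\kappa}{2},
\]
so the ratio factor tends to $0$.

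The main (and only genuine) obstacle is the upper bound on $H_u(r)/r^{2\kappa}$; this does not come from Lemma~\ref{lem:Hu-est}, which only yields the reverse one-sided growth estimates. The idea is to use the monotonicity of $\Phi_u$ combined with $\Phi_u(0+)=\kappa$, which forces $\Phi_u(r)\geq\kappa$ for all small $r$. Because $H_u(r)>\mu(r)$ throughout this range, $\Phi_u(r)$ coincides with the explicit expression $\tfrac{1}{2}re^{Cr^\sigma}H_u'(r)/H_u(r)+2(e^{Cr^\sigma}-1)$, so
\[
r\,\frac{H_u'(r)}{H_u(r)}\;\geq\;2\kappa\, e^{-Cr^\sigma}-4(1-e^{-Cr^\sigma})\;\geq\;2\kappa-C'r^\sigma.
\]
Rewritten as $\frac{d}{dr}\log\bigl(H_u(r)/r^{2\kappa}\bigr)\geq -C'r^{\sigma-1}$ and integrated from $r$ to a fixed small $r_0$, this yields $H_u(r)/r^{2\kappa}\leq C$ for all $r\in(0,r_0)$.

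Combining the two bounds,
\[
|W_u^\kappa(r)|\;=\;\frac{H_u(r)}{r^{2\kappa}}\left|\frac{I_u(r)}{H_u(r)}-\frac{\kappa}{2}\right|\;\leq\;C\left|\frac{I_u(r)}{H_u(r)}-\frac{\kappa}{2}\right|\;\longrightarrow\;0,
\]
which gives $W_u^\kappa(0+)=0$ as desired.
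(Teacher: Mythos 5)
Your proof is correct and follows essentially the same route as the paper: the identity $W_u^\kappa(r)=\frac{H_u(r)}{r^{2\kappa}}\bigl(\frac{I_u(r)}{H_u(r)}-\frac{\kappa}{2}\bigr)$ together with $\lim_{r\to 0+}I_u(r)/H_u(r)=\kappa/2$ from Lemma~\ref{lem:Hu-mu} and the growth bound $H_u(r)\leq Cr^{2\kappa}$. The only difference is that you reprove that growth bound inline by integrating $rH_u'(r)/H_u(r)\geq 2\kappa-C'r^\sigma$, whereas the paper cites Lemma~\ref{lem:HuMr2kappa}, whose proof is the same computation.
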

The proof will require the following growth estimate, which we will
use a few more times in the remaining part of the paper.

\begin{lemma}[Growth estimate]\label{lem:HuMr2kappa} Let $u\in \S^f(S_1^+)$ with
  $|f(x,t)|\leq M\|(x,t)\|^{\ell-2}$, $\ell\geq2$, and
  $0\in\Gamma_\kappa^{(\ell)}(u)$ with $\kappa<\ell$ and let
  $\sigma<\ell-\kappa$. Then
$$
H_r(u)\leq C\big( \|u\|_{L_2(S_1^+, G)}^2+M^2\big) r^{2\kappa},\quad
0<r<1,
$$
with $C$ depending only on $\sigma$, $\ell$, $n$.
\end{lemma}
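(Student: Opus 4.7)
The plan is to apply Theorem~\ref{thm:thin-monotonicity} with the truncation $\mu(r)=A\,r^{2\ell_0}$, where $\ell_0\in(\kappa,\ell-\sigma]$ (a nonempty interval by the assumption $\sigma<\ell-\kappa$) and $A$ is a constant chosen to both absorb $M^2$ and yield the stated right-hand side. If $\|u\|_{L_2(S_1^+,G)}^2+M^2=0$, then $u\equiv 0$ in $S_1^+$ by continuity and the estimate is trivial, so assume this quantity is positive. Using $|f(x,t)|\le M\|(x,t)\|^{\ell-2}$ and the parabolic scaling $x=ry$, one checks that
\[
r^{4-2\sigma}\int_{\R^n_+}f(\cdot,-r^2)^2\,G(\cdot,-r^2)\,dx\le C_{\ell,n}\,M^2\,r^{2\ell_0},\qquad 0<r<1,
\]
so setting $A:=C_{\ell,n}\bigl(\|u\|_{L_2(S_1^+,G)}^2+M^2\bigr)$ makes $\mu$ satisfy the hypothesis of Theorem~\ref{thm:thin-monotonicity} with $C_\mu=1$, producing a monotone nondecreasing functional $\Phi_u$ whose structural constant $C=C(\sigma,\ell,n)$ is independent of $A$.

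To identify $\Phi_u(0+)$, observe that since $0\in\Gamma_\kappa^{(\ell)}(u)$ and $\ell_0<\ell$, Proposition~\ref{prop:kappa-compat} gives $\kappa_u^{(\ell_0)}(0,0)=\kappa<\ell_0=\kappa_\mu$. Applied to the original truncation $r^{2\ell_0}$, Lemma~\ref{lem:Hu-mu} forces $H_u(r)\ge r^{2\ell_0}$ for $0<r<r_u$ and $\kappa=\tfrac12\lim_{r\to 0+}rH_u'(r)/H_u(r)$. A fortiori, $H_u(r)\gtrsim r^{2\kappa+\varepsilon}\gg A\,r^{2\ell_0}$ as $r\to 0+$, so for small $r$ one has $H_u(r)>\mu(r)$ and $\Phi_u$ reduces to its $H_u$-explicit form, giving $\Phi_u(0+)=\kappa$. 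Consequently $\Phi_u(r)\ge\kappa$ throughout $(0,1)$, which rearranges to
\[
\frac{d}{dr}\log\phi(r)\ge\frac{2\kappa}{re^{Cr^{\sigma}}}-\frac{4(e^{Cr^{\sigma}}-1)}{re^{Cr^{\sigma}}}\qquad\text{for a.e.\ }r\in(0,1),
\]
with $\phi(r):=\max\{H_u(r),\mu(r)\}$.

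Integrating this inequality from $r$ to $1$ yields the result. Both correction terms are absolutely integrable on $(0,1)$ because $1-e^{-Cs^{\sigma}}$ and $e^{Cs^{\sigma}}-1$ are $O(s^{\sigma})$, so
\[
\log\phi(1)-\log\phi(r)\ge 2\kappa\log\frac{1}{r}-C',\qquad C'=C'(\sigma,\ell,n),
\]
which exponentiates to $\phi(r)\le e^{C'}\phi(1)\,r^{2\kappa}$. Finally, $\phi(1)\le H_u(1)+\mu(1)=\|u\|_{L_2(S_1^+,G)}^2+A\le (1+C_{\ell,n})\bigl(\|u\|_{L_2(S_1^+,G)}^2+M^2\bigr)$, and $H_u(r)\le\phi(r)$ delivers the claimed bound with $C=C(\sigma,\ell,n)$. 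The main delicate point is the identification $\Phi_u(0+)=\kappa$ under the $A$-modified truncation, which is handled by reducing it to the original truncation via Proposition~\ref{prop:kappa-compat} and Lemma~\ref{lem:Hu-mu} as above; the rest of the argument is a straightforward integration of the differential inequality afforded by the generalized frequency formula.
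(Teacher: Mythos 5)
Your proof is correct and follows essentially the same route as the paper, which simply says to take $\mu(r)=\big(\|u\|_{L_2(S_1^+,G)}^2+M^2\big)r^{2\ell-2\sigma}$ and argue as in Lemma~\ref{lem:HuMr3}; you have filled in exactly the omitted details (verifying the hypothesis of Theorem~\ref{thm:thin-monotonicity} with $C_\mu$ independent of the prefactor, identifying $\Phi_u(0+)=\kappa$ via Lemma~\ref{lem:Hu-mu}, and integrating the resulting differential inequality). The only cosmetic difference is that you integrate $\log\max\{H_u,\mu\}$ globally on $(r,1)$ instead of working on maximal intervals of $\{H_u>\mu\}$ as in Lemma~\ref{lem:HuMr3}, which is a clean and valid shortcut.
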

\begin{proof}
  Take $\mu(r)=\big(\|u\|_{L_2(S_1^+, G)}^2+M^2\big)r^{2\ell-2\sigma}$
  and proceed as in the proof of Lemma~\ref{lem:HuMr3}. We omit the
  details.
\end{proof}

We can now prove Lemma~\ref{lem:W0+}.

\begin{proof}[Proof of Lemma~\ref{lem:W0+}]Since $\kappa < \ell$, by
  Lemma~\ref{lem:Hu-mu} we have
  \[
  \lim_{r\to 0+}\frac{I_u(r)}{H_u(r)} =\frac{\kappa}{2}.
  \]
  Further, by Lemma~\ref{lem:HuMr2kappa} above, we will have
  \[
  H_u(r)\leq Cr^{2\kappa}.
  \]
  Hence, we obtain
  \[
  \lim_{r\to 0+} W_u^\kappa(r) =\lim_{r\to
    0}\frac{H_u(r)}{r^{2\kappa}}\Big(\frac{I_u(r)}{H_u(r)}-\frac{\kappa}{2}\Big)
  = 0.\qedhere
  \]
\end{proof}

The next monotonicity formula is specifically tailored for singular
points. It goes back to the paper of Monneau \cite{Mon} for the
classical obstacle problem. The theorem below is the parabolic
counterpart of the Monneau-type monotonicity formula in \cite{GP}.

\begin{theorem}[Monneau-type monotonicity formula]\label{thm:monneau}
  Let $u\in\S^f(S_1^+)$ with $|f(x,t)|\leq M\|(x,t)\|^{\ell-2}$ in
  $S_1^+$, $|\nabla f(x,t)|\leq L\|(x,t)\|^{\ell-3}$ in $Q_{1/2}^+$,
  $\ell\geq 3$. Suppose that $0\in \Sigma_{\kappa}(u)$ with
  $\kappa=2m<\ell$, $m\in\N$. Further, let $p_\kappa$ be any
  parabolically $\kappa$-homogeneous caloric polynomial from class
  $\P_\kappa$ as in Definition~\ref{def:Pk}. For any such $p_\kappa$,
  define Monneau's functional as
  \begin{align*}
    M^\kappa_{u,p_\kappa}(r) :&=
    \frac{1}{r^{2\kappa+2}}\int_{S_r^+}(u-p_\kappa)^2 G,\quad 0<r<1,\\
    &=\frac{H_w(r)}{r^{2\kappa}},\quad\text{where }w=u-p_\kappa.
  \end{align*}
  Then, for any $\sigma<\ell-\kappa$ there exists a constant $C$,
  depending only on $\sigma$, $\ell$, $M$, and $n$, such that
  \[
  \frac{d}{dr}M^\kappa_{u,p_\kappa}(r)\geq-C\Big(1+\|u\|_{L_2(S_1^+,
    G)}+\|p_\kappa\|_{L_2(S_1^+, G)} \Big) r^{\sigma-1}.
  \]
  In particular, the function
  \[
  r\mapsto M_{u,p_\kappa}^\kappa(r)+C r^\sigma
  \]
  is monotonically nondecreasing for $r\in (0,1)$ for a constant $C$
  depending $\sigma$, $\ell$, $M$, $n$, $\|u\|_{L_2(S_1^+, G)}$, and
  $\|p_\kappa\|_{L_2(S_1^+, G)}$.
\end{theorem}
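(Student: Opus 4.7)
Set $w=u-p_\kappa$, so $M^\kappa_{u,p_\kappa}(r)=H_w(r)/r^{2\kappa}$ and
\[
\frac{d}{dr}M^\kappa_{u,p_\kappa}(r)=\frac{rH_w'(r)-2\kappa H_w(r)}{r^{2\kappa+1}}.
\]
The plan is to derive a one-sided differentiation inequality for $H_w$ analogous to Proposition~\ref{prop:diff-form-signor}, then exploit an algebraic cancellation that reduces $I_w-\tfrac\kappa2H_w$ to the Weiss energy $W_u^\kappa$, and finally absorb the two error terms with the Weiss monotonicity formula (Theorem~\ref{thm:Weiss}) together with Lemma~\ref{lem:W0+}.

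\textbf{Step 1: one-sided differentiation formula.} Since $p_\kappa$ is even in $x_n$, $(p_\kappa)_{x_n}\equiv 0$ on $S_1'$, so $w_{x_n}=u_{x_n}\le 0$ there; hence $ww_{x_n}=-p_\kappa u_{x_n}\ge 0$ on $S_1'$, as $p_\kappa\ge 0$. I would repeat the approximation scheme of Proposition~\ref{prop:diff-form-signor} on $w^\epsilon=v^\epsilon-p_\kappa$ where $v^\epsilon$ is the penalized approximation to $u$: the only new boundary contribution is $-\int tp_\kappa\beta_\epsilon(v^\epsilon)G$, which is nonpositive ($t<0$, $p_\kappa\ge 0$, $\beta_\epsilon(v^\epsilon)\le 0$) and survives in the limit as $-\frac{4}{r^3}\int_{S_r'}tww_{x_n}G\ge 0$. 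Since $(\Delta-\partial_t)w=f$ in $S_1^+$, dropping this nonnegative term gives
\[
H_w'(r)\ \ge\ \frac{4}{r}I_w(r)\ -\ \frac{4}{r^3}\int_{S_r^+}twf\,G.
\]

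\textbf{Step 2: key identity $I_w(r)-\tfrac\kappa2 H_w(r)=r^{2\kappa}W_u^\kappa(r)$.} Expand $w^2$ and $|\nabla w|^2$. Because $p_\kappa$ is a $\kappa$-homogeneous caloric polynomial, $Zp_\kappa=\kappa p_\kappa$, and step~(1$^\circ$) of the proof of Lemma~\ref{lem:differentiation-formulae-smooth} applied to $p_\kappa$ (with $g=0$ and vanishing boundary trace of $(p_\kappa)_{x_n}$) yields $i_{p_\kappa}(t)=\tfrac\kappa2 h_{p_\kappa}(t)$, hence $I_{p_\kappa}-\tfrac\kappa2 H_{p_\kappa}\equiv 0$. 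For the cross terms, integration by parts in $x$ using $\Delta p_\kappa-\partial_t p_\kappa=0$, $(p_\kappa)_{x_n}|_{S_1'}=0$, $\nabla G=\frac{x}{2t}G$, and $x\cdot\nabla p_\kappa=\kappa p_\kappa-2t\partial_t p_\kappa$ gives
\[
\int_{\R^n_+}\nabla u\cdot\nabla p_\kappa\,G\,dx\ =\ -\frac{\kappa}{2t}\int_{\R^n_+}u\,p_\kappa\,G\,dx,
\]
so $|t|$ times this equals $\tfrac\kappa2\int up_\kappa G\,dx$. Integrating in $t$ and dividing by $r^2$ shows $\frac{2}{r^2}\int_{S_r^+}|t|\nabla u\cdot\nabla p_\kappa\,G=\frac{\kappa}{r^2}\int_{S_r^+}up_\kappa G$, so the cross contributions to $I_w-\tfrac\kappa2H_w$ cancel, leaving only the $u$-part $I_u-\tfrac\kappa2H_u=r^{2\kappa}W_u^\kappa(r)$.

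\textbf{Step 3: conclusion.} Combining Steps 1--2,
\[
\frac{d}{dr}M^\kappa_{u,p_\kappa}(r)\ \ge\ \frac{4}{r}W_u^\kappa(r)\ -\ \frac{4}{r^{2\kappa+3}}\int_{S_r^+}twf\,G.
\]
Theorem~\ref{thm:Weiss} together with Lemma~\ref{lem:W0+} gives $W_u^\kappa(r)\ge -Cr^{2\sigma}$, so the first term is bounded below by $-Cr^{2\sigma-1}$. For the second, Cauchy--Schwarz combined with $\int_{S_r^+}t^2f^2G\le CM^2r^{2\ell+2}$ (scaling of the Gaussian integral against the pointwise bound on $f$) and $\int_{S_r^+}w^2G\le C(\|u\|^2_{L_2(S_1^+,G)}+\|p_\kappa\|^2_{L_2(S_1^+,G)}+M^2)r^{2\kappa+2}$ (by Lemma~\ref{lem:HuMr2kappa} for $H_u$ and an explicit homogeneity computation for $H_{p_\kappa}$) yields a bound by $Cr^{\ell-\kappa-1}\le Cr^{\sigma-1}$ since $\sigma<\ell-\kappa$ and $r\in(0,1)$. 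Together,
\[
\frac{d}{dr}M^\kappa_{u,p_\kappa}(r)\ \ge\ -C\bigl(1+\|u\|_{L_2(S_1^+,G)}+\|p_\kappa\|_{L_2(S_1^+,G)}\bigr)r^{\sigma-1},
\]
so $r\mapsto M^\kappa_{u,p_\kappa}(r)+Cr^\sigma$ is nondecreasing for a suitable $C$ absorbing a factor of $1/\sigma$.

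\textbf{Main obstacle.} The delicate step is Step~1: $w=u-p_\kappa$ is not itself a Signorini solution, so Proposition~\ref{prop:diff-form-signor} does not apply directly. The content is recognizing that the extra boundary term produced by replacing the complementarity $uu_{x_n}=0$ with $ww_{x_n}=-p_\kappa u_{x_n}$ has the right sign, which is what turns the proposition's equality into the one-sided inequality needed here. Once Step~1 is in hand, the cancellation in Step~2 is purely algebraic, and Step~3 is a routine Cauchy--Schwarz estimate.
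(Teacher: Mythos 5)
Your proposal is correct and follows essentially the same route as the paper: the differentiation formula for $H_w$ via the penalized approximation with the observation that the extra Signorini boundary term $-\tfrac{4}{r^3}\int_{S_r'}tww_{x_n}G=\tfrac{4}{r^3}\int_{S_r'}tp_\kappa u_{x_n}G$ is nonnegative, the cancellation identity $W_w^\kappa=W_u^\kappa$ (the paper phrases it as $W_u^\kappa-W_{p_\kappa}^\kappa$ with $W_{p_\kappa}^\kappa\equiv 0$ from Lemma~\ref{lem:W0+}), and then Theorem~\ref{thm:Weiss}, Lemma~\ref{lem:W0+}, Cauchy--Schwarz and Lemma~\ref{lem:HuMr2kappa} exactly as in the text. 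The only blemish is a sign slip in your description of the intermediate penalized boundary term (you call $-\int tp_\kappa\beta_\epsilon(v^\epsilon)G$ nonpositive while its limit is the nonnegative term you drop), but the stated limit and the final inequality are correct.
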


\begin{proof} First note that $W_{p_\kappa}^\kappa(r)$ is constant in
  $r$, which follows easily from the homogeneity of $p_\kappa$. Then,
  since also $0\in\Gamma_\kappa^{(\ell)}(p_\kappa)$, by
  Lemma~\ref{lem:W0+} we have $W^\kappa_{p_\kappa}(0+)=0$, implying
  that
  \[
  W_{p_\kappa}^\kappa(r) \equiv 0.
  \]
  Therefore, integrating by parts, we have
  \begin{align*}
    W_u^\kappa(r) &= W_u^\kappa(r)-W_{p_\kappa}^\kappa(r)\\
    &= W_w^\kappa(r)-\frac{2}{r^{2\kappa+2}}\int_{S_r^+} t \nabla w \nabla p_\kappa G-\frac{\kappa}{r^{2\kappa+2}}\int_{S_r^+} wp_\kappa G\\
    &= W_w^\kappa(r)+\frac{2}{r^{2\kappa+2}}\int_{S_r^+} tw(\Delta p_\kappa G+\nabla p_\kappa\nabla G)-\frac{\kappa}{r^{2\kappa+2}}\int_{S_r^+}wp_\kappa G\\
    &=W_w^\kappa(r)+\frac1{r^{2\kappa+2}}\int_{S_r^+}2tw\Big(\partial_t
    p_\kappa+\nabla
    p_\kappa \frac{x}{2t}\Big) G-\frac{\kappa}{r^{2\kappa+2}}\int_{S_r^+}wp_\kappa G\\
    &= W_w^\kappa(r)+\frac{1}{r^{2\kappa+2}}\int_{S_r^+}w(Zp_\kappa-\kappa p_\kappa) G\\
    &= W_w^\kappa(r).
  \end{align*}
  Next, we want to compute the derivative of
  $M_{u,p_\kappa}^\kappa$. With this objective in mind, we remark that
  we can compute the derivative of $H_w$ by a formula similar to that
  in Lemma~\ref{lem:differentiation-formulae-smooth}:
$$
H'_w(r)=\frac{4}{r} I_w(r)-\frac{4}{r^3}\int_{S_r^+} twf
G-\frac{4}{r^3}\int_{S_r'} tw_{x_n}w G,
$$
for a.e.\ $r\in(0,1)$. Indeed, we first note that
Lemma~\ref{lem:differentiation-formulae-smooth} holds for smooth
functions with a polynomial growth at infinity, since the same spatial
integration by parts used to derive
Lemma~\ref{lem:differentiation-formulae-smooth} is still valid. Then,
as in Proposition~\ref{prop:diff-form-signor} approximate $u$ by the
solutions $u^\epsilon$ of the penalized problem, apply
Lemma~\ref{lem:differentiation-formulae-smooth} for
$(H^\delta_{w^\epsilon})'(r)$, where
$w^\epsilon=u^\epsilon\zeta-p_\kappa$, and then pass to the limit as
$\epsilon\to 0$ and $\delta\to 0$ as in
Proposition~\ref{prop:diff-form-signor}. We thus arrive at the formula
for $H'_w(r)$ given above.

We therefore can write
\begin{align*}
  \frac{d}{dr}M_{u,p_\kappa}^\kappa(r) &= \frac{H'_w(r)}{r^{2\kappa}}-\frac{2\kappa}{r^{2\kappa+1}} H_w(r)\\
  &= \frac{4}{r^{2\kappa+1}} I_w(r)-\frac{4}{r^{2\kappa+3}}\int_{S_r^+}twf G-\frac{4}{r^{2\kappa+3}}\int_{S'_r} tw_{x_n}w G-\frac{2\kappa}{r^{2\kappa+1}} H_w(r)\\
  &= \frac{4}{r} W_w^\kappa(r)-\frac{4}{r^{2\kappa+3}}\int_{S_r^+}twf
  G+\frac{4}{r^{2\kappa+3}}\int_{S'_r} tu_{x_n}p_\kappa G.
\end{align*}
We now estimate each term on the right hand side.

To estimate the first term we use that, in view of
Theorem~\ref{thm:Weiss}, for an appropriately chosen $C$, the function
$W_u^\kappa(r)+Cr^{2\sigma}$ is nondecreasing, and therefore
$$
W_w^\kappa(r)=W_u^\kappa(r)\geq
W_u^\kappa(0+)-Cr^{2\sigma}=-Cr^{2\sigma},
$$
where in the last equality we have used Lemma~\ref{lem:W0+}.

The second term can be estimated by the Cauchy-Schwarz inequality, the
integral estimate on $f$ at the beginning of
Section~\ref{sec:class-free-bound}, and Lemma~\ref{lem:HuMr2kappa}:
\begin{align*}
  \frac{1}{r^{2\kappa+3}}\int_{S_r^+}twf G &\leq \frac{1}{r^{2\kappa+3}}\Big(\int_{S_r^+}w^2 G\Big)^{1/2}\Big(\int_{S_r^+} t^2 f^2 G\Big)^{1/2}\\
  &\leq \frac{C r}{r^{2\kappa+3}}
  \big(H_u(r)^{1/2}+H_{p_\kappa}(r)^{1/2}\big) C r^{\ell+1}\\
  &\leq C\Big(1+\|u\|_{L_2(S_1^+, G)}+\|p_\kappa\|_{L_2(S_1^+, G)}\Big) r^{\ell-\kappa-1}\\
  &\leq C\Big(1+\|u\|_{L_2(S_1^+, G)}+\|p_\kappa\|_{L_2(S_1^+,
    G)}\Big) r^{\sigma-1}.
\end{align*}
For the last term, just notice that on $S'_1$,
\[
t\leq 0,\quad u_{x_n}\leq0,\quad p_\kappa\geq 0
\]
and thus,
\[
\int_{S'_r} t u_{x_n}p_\kappa G\geq 0.
\]
Combining all these estimates, we obtain,
\begin{align*}
  \frac{d}{dr}M_{u,p_\kappa}^\kappa(r) &\geq-Cr^{2\sigma-1}-C\Big(1+\|u\|_{L_2(S_1, G)}+\|p_\kappa\|_{L_2(S_1^+)} \Big) r^{\sigma-1}\\
  &\geq -C\Big(1+\|u\|_{L_2(S_1^+, G)}+\|p_\kappa\|_{L_2(S_1^+, G)}
  \Big) r^{\sigma-1},
\end{align*}
which is the desired conclusion.
\end{proof}

\section{Structure of the singular set}
\label{sec:struct-sing-set}

In this section, we prove our main results on the singular set, stated
at the end of Section~\ref{sec:struct-sing-set} as
Theorems~\ref{thm:k-diff-sing-p} and \ref{thm:sing-points-nonzero}.

We start by remarking that in the following proofs it will be more
convenient to work with a slightly different type of rescalings and
blowups, than the ones used up to now. Namely, we will work with the
following $\kappa$-\emph{homogeneous rescalings}
$$
u^{(\kappa)}_r(x,t):=\frac{u(rx,r^2t)}{r^\kappa}
$$
and their limits as $r\to 0+$. The next lemma shows the viability of
this approach.

\begin{lemma}[Nondegeneracy at singular
  points]\label{lem:nondeg-sing-nonzero} Let $u\in\S^f(S_1^+)$ with
  $|f(x,t)|\leq M \|(x,t)\|^{\ell-2}$ in $S_1^+$, $|\nabla f(x,t)|\leq
  L\|(x,t)\|^{\ell-3}$ in $Q_{1/2}^+$, $\ell\geq 3$, and
  $0\in\Sigma_\kappa (u)$ for $\kappa<\ell$. Then, there exists
  $c=c_u>0$ such that
  $$
  H_u(r)\geq c\,r^{2\kappa},\quad\text{for any } 0<r<1.
  $$
\end{lemma}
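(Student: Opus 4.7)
The plan is to proceed by contradiction, exploiting the Monneau-type monotonicity formula (Theorem~\ref{thm:monneau}) applied with varying test polynomials from $\P_\kappa$. Suppose the lemma fails, so there is a sequence $r_k\downarrow 0$ along which
\[
g(r_k):=H_u(r_k)/r_k^{2\kappa}\longrightarrow 0;
\]
equivalently, the $\kappa$-homogeneous rescalings $u^{(\kappa)}_{r_k}(x,t):=u(r_kx,r_k^2t)/r_k^\kappa$ tend to $0$ in $L_2(S_1^+,G)$.

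Since $0\in\Sigma_\kappa(u)$, Proposition~\ref{prop:char-sing-point} supplies a nonzero blowup $q_\kappa\in\P_\kappa$ of the normalized rescalings, with $\|q_\kappa\|_{L_2(S_1^+,G)}=1$: along some subsequence $r_j\downarrow 0$ we have $u_{r_j}\to q_\kappa$ in $L_2(S_1^+,G)$. The key identity linking the two families of rescalings is $u^{(\kappa)}_{r_j}=\beta_j u_{r_j}$ with $\beta_j:=g(r_j)^{1/2}$. I would then apply Theorem~\ref{thm:monneau} with $p_\kappa=\lambda q_\kappa\in\P_\kappa$ for every $\lambda>0$ (allowed because $\P_\kappa$ is a convex cone closed under positive scaling). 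The almost-monotonicity of $M^\kappa_{u,\lambda q_\kappa}(r)+C(\lambda)r^\sigma$ yields a unique limit $M^\kappa_{u,\lambda q_\kappa}(0+)$; evaluating via $\{r_k\}$ gives $\|\lambda q_\kappa\|^2=\lambda^2$, whereas evaluating via $\{r_j\}$ (passing to a subsequence along which $\beta_j\to\beta_\infty$) and using the strong $L_2$ convergence $u_{r_j}\to q_\kappa$ gives $(\beta_\infty-\lambda)^2$. Requiring $(\beta_\infty-\lambda)^2=\lambda^2$ for every $\lambda>0$ forces $\beta_\infty=0$; hence $g(r_j)\to 0$ along every blowup subsequence, and the almost-monotonicity of $g$ itself (from Monneau with $p_\kappa=0$, for which the boundary integrand $tu_{x_n}p_\kappa$ vanishes) promotes this to $g(r)\to 0$ as $r\downarrow 0$.

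The crux, and the main technical obstacle, is converting this consistency into a contradiction. I would pursue this by carrying the Monneau lower bound $M^\kappa_{u,q_\kappa}(r)\geq 1-Cr^\sigma$ back along the blowup sequence: the expansion
\[
M^\kappa_{u,q_\kappa}(r_j)=(\beta_j-1)^2+o_j(1),
\]
combined with a quantitative rate for the convergence $u_{r_j}\to q_\kappa$ (extractable from Lemma~\ref{lem:fr-est} together with Lemmas~\ref{lem:w212}--\ref{lem:w112-diff}), should give $g(r_j)\leq Cr_j^{2\sigma}$ for any $\sigma<\ell-\kappa$. On the other hand, Lemma~\ref{lem:Hu-mu} applied with the truncation $\mu(r)=r^{2\ell_0}$ for $\ell_0<\ell$ arbitrarily close to $\ell$ yields the matching lower bound $g(r_j)\geq r_j^{2(\ell_0-\kappa)}$. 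Choosing $\sigma$ and $\ell_0$ so that both bounds are nearly $r_j^{2(\ell-\kappa)}$ and exploiting that $\Phi_u(0+)=\kappa$ (not $\ell$) then produces an incompatibility with the monotonicity of the Almgren--Poon frequency. I expect this last step to absorb most of the technical effort, since it demands a careful synchronization between the Monneau scale $r^\sigma$, the truncation scale $r^{\ell_0-\kappa}$, and the rate at which $u_{r_j}$ approaches $q_\kappa$.
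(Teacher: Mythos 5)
Your proposal follows the same route as the paper's proof: argue by contradiction, extract a nonzero blowup $q_\kappa\in\P_\kappa$ via the characterization of singular points, and play the Monneau monotonicity of $M^\kappa_{u,q_\kappa}$ against the soft lower bound $H_u(r)\geq c\,r^{2\kappa'}$, valid for every $\kappa'>\kappa$ because $\Phi_u(0+)=\kappa$ (Lemma~\ref{lem:Hu-est}). Two remarks on the execution. First, the preliminary step with the test polynomials $\lambda q_\kappa$ is correct but superfluous: if you extract the blowup along a subsequence of the degenerate sequence $r_k$ itself (Theorem~\ref{thm:exist-homogen-blowups} allows this), then $\beta_j=H_u(r_j)^{1/2}/r_j^{\kappa}\to 0$ holds by hypothesis and no transfer of information between two different sequences is required.

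Second --- and this is the one place where your plan, taken literally, would stall --- no quantitative rate for the convergence $u_{r_j}\to q_\kappa$ is extractable from Lemmas~\ref{lem:fr-est}, \ref{lem:w212}, \ref{lem:w112-diff}: that convergence is produced by compactness and carries no rate. Fortunately none is needed. Writing
$M^\kappa_{u,q_\kappa}(r_j)=\beta_j^2-2\beta_j\int_{S_1^+}u_{r_j}q_\kappa\,G+1$
and using $M^\kappa_{u,q_\kappa}(r_j)\geq M^\kappa_{u,q_\kappa}(0+)-Cr_j^{\sigma}=1-Cr_j^{\sigma}$ from Theorem~\ref{thm:monneau}, you get
$\beta_j\bigl(2\int_{S_1^+}u_{r_j}q_\kappa\,G-\beta_j\bigr)\leq Cr_j^{\sigma}$;
since the factor in parentheses tends to $2\|q_\kappa\|_{L_2(S_1^+,G)}^2=2$, the \emph{unquantified} convergence already gives $\beta_j\leq Cr_j^{\sigma}$ for large $j$ (the error in the cross term is $o(1)\cdot\beta_j$ and is absorbed by the main term). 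Combined with $\beta_j\geq c\,r_j^{\kappa'-\kappa}$ for one fixed $\kappa'\in(\kappa,\kappa+\sigma)$, this is the contradiction; it is precisely the paper's computation, which divides the expanded Monneau inequality by $H_u(r)^{1/2}/r^{\kappa}$ and sends $r=r_j\to 0$, the error $Cr^{\kappa+\sigma}/H_u(r)^{1/2}$ vanishing for the same reason. In particular, your closing discussion about pushing $\sigma$ and $\ell_0$ simultaneously toward $\ell-\kappa$ and $\ell$ is an overcomplication: any fixed $\sigma<\ell-\kappa$ and any $\kappa'\in(\kappa,\kappa+\sigma)$ suffice, and the final step is not the technical bottleneck you anticipate.
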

\begin{proof} Assume the contrary. Then for a sequence $r=r_j\to 0$
  one has
  $$
  H_u(r)=\frac1{r^2}\int_{S_1^+}u^2 G=o(r^{2\kappa}).
  $$
  Since $0$ is a singular point, by
  Proposition~\ref{prop:char-sing-point}, we have that, over a
  subsequence,
  $$
  u_r(x,t)=\frac{u(rx,r^2t)}{H_u(r)^{1/2}}\to q_\kappa(x,t),
  $$
  as described in Theorem~\ref{thm:exist-homogen-blowups}, for some
  nonzero $q_\kappa\in\P_\kappa$.  Now, for such $q_\kappa$ we apply
  Theorem~\ref{thm:monneau} to $M^\kappa_{u,q_\kappa}(r)$. From the
  assumption on the growth of $u$ is is easy to recognize that
  $$
  M^\kappa_{u,q_\kappa}(0+)=\int_{S_1^+}q_\kappa^2
  G=\frac{1}{r^{2\kappa+2}} \int_{S_r^+} q_\kappa^2 G.
  $$
  Therefore, using the monotonicity of
  $M^\kappa_{u,q_\kappa}(r)+C\,r^\sigma$ (see
  Theorem~\ref{thm:monneau}) for an appropriately chosen $C>0$, we
  will have that
  $$
  C\,r^\sigma+\frac{1}{r^{2\kappa+2}} \int_{S_r^+} (u-q_\kappa)^2
  G\geq \frac{1}{r^{2\kappa+2}} \int_{S_r^+} q_\kappa^2 G,
  $$
  or equivalently
  $$
  \frac{1}{r^{2\kappa+2}}\int_{S_r^+} (u^2-2 u q_\kappa) G\geq
  -C\,r^\sigma.
  $$
  After rescaling, we obtain
  $$
  \frac{1}{r^{2\kappa}}\int_{S_1^+} (H_u(r)u_r^2-2H_u(r)^{1/2}
  r^{\kappa} u_r q_\kappa) G\geq - C\,r^\sigma,
  $$
  which can be rewritten as
  $$
  \int_{S_1^+} \Big(\frac{H_u(r)^{1/2}}{r^\kappa} u_r^2-2u_r
  q_\kappa\Big) G\geq -C\,\frac{r^{\kappa+\sigma}}{H_u(r)^{1/2}}.
  $$
  Now from the arguments in the proof of Lemma~\ref{lem:Hu-est}, we
  have $H_u(r)> c\,r^{2\kappa'}$, for any $\kappa'>\kappa$, for
  sufficiently small $r$. Hence, choosing $\kappa'<\kappa+\sigma$, we
  will have that $r^{\kappa+\sigma}/H_u(r)^{1/2}\to 0$. Thus, passing
  to the limit over $r=r_j\to 0$, we will arrive at
  $$
  -\int_{S_1^+} q_\kappa^2\geq 0,
  $$
  which is a contradiction, since $q_\kappa\not\equiv 0$.
\end{proof}

One consequence of the nondegeneracy at singular points is that the
singular set has a topological type $F_\sigma$; this will be important
in our application of Whitney's extension theorem in the proof of
Theorem~\ref{thm:sing-points-nonzero}.

\begin{lemma}[$\Sigma_\kappa(v)$ is
  $F_\sigma$]\label{lem:sing-set-F-sigma-nonzero} For any $v\in
  \S_\phi(Q_1^+)$ with $\phi\in H^{\ell,\ell/2}(Q_1')$, the set
  $\Sigma_\kappa(v)$ with $\kappa=2m<\ell$, $m\in \N$, is of
  topological type $F_\sigma$, i.e., it is a union of countably many
  closed sets.
\end{lemma}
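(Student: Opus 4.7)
The plan is to exhibit $\Sigma_\kappa(v)$ as a countable union of closed sets defined through two-sided scale-invariant bounds on the weighted $L_2$-norm $H_{u_k^{(x_0,t_0)}}(\rho)$, where $u_k^{(x_0,t_0)}$ is the function obtained from $v^{(x_0,t_0)}$ by the construction of Proposition~\ref{prop:uk-def}. First, I would use the equivalence (i) $\Leftrightarrow$ (iii) of Proposition~\ref{prop:char-sing-point}: since $\kappa = 2m < \ell$, every point of $\Gamma_\kappa^{(\ell)}(v)$ is automatically singular, so $\Sigma_\kappa(v) = \Gamma_\kappa^{(\ell)}(v)$, and it suffices to realize the latter as an $F_\sigma$ set.

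The natural candidates are, for $j \in \N$,
\[
E_j := \Big\{(x_0,t_0) \in \Gamma_*(v) \cap \overline{Q'_{1-1/j}} \;:\; \tfrac{1}{j}\, \rho^{2\kappa+2} \le \int_{S_\rho^+} (u_k^{(x_0,t_0)})^2\, G \le j\, \rho^{2\kappa+2} \text{ for all } 0 < \rho < \tfrac{1}{j}\Big\}.
\]
The inclusion $\Sigma_\kappa(v) \subset \bigcup_j E_j$ follows from combining the growth estimate (Lemma~\ref{lem:HuMr2kappa}), which yields $H_{u_k^{(x_0,t_0)}}(\rho) \le C \rho^{2\kappa}$ with a constant depending on $(x_0,t_0)$, with the singular-point nondegeneracy (Lemma~\ref{lem:nondeg-sing-nonzero}), which yields the matching lower bound $H_{u_k^{(x_0,t_0)}}(\rho) \ge c\, \rho^{2\kappa}$; a single integer $j$ then captures the point. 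For the reverse inclusion $E_j \subset \Sigma_\kappa(v)$, I would argue that the two-sided bound in the definition of $E_j$ forces $\kappa_v^{(\ell)}(x_0,t_0) = \kappa$: if the truncated frequency were strictly smaller than $\kappa$, then by Lemma~\ref{lem:Hu-mu} and integration of $\Phi_u$ one obtains $H_u(\rho) \ge c\, \rho^{2\kappa - \delta}$ for small $\rho$, violating the upper bound; if it were strictly larger, then $H_u(\rho) \le C\, \rho^{2\kappa + \epsilon}$, violating the lower bound.

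Finally I would verify that each $E_j$ is closed in $Q_1'$. The sets $\Gamma_*(v)$ and $\overline{Q'_{1-1/j}}$ are closed, so the task reduces to continuous dependence of the map
\[
(x_0,t_0) \;\longmapsto\; \int_{S_\rho^+} (u_k^{(x_0,t_0)})^2\, G
\]
on the base point $(x_0,t_0) \in \Gamma_*(v)$, for each fixed $\rho \in (0, 1/j)$. This continuity is inherited from the explicit formula in Proposition~\ref{prop:uk-def}: the translate $v^{(x_0,t_0)}$ depends continuously on the base point in $L_2$ on compact subcylinders, the Taylor polynomial $q_k^{(x_0,t_0)}$ and its caloric extension $\tilde q_k^{(x_0,t_0)}$ depend continuously on $(x_0,t_0)$ because $\phi \in H^{\ell,\ell/2}(Q_1')$, and the cutoff $\psi$ together with the weight $G$ are fixed. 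The anticipated main technical point is establishing this continuous dependence rigorously, which I expect to follow from the local $H^{3/2,3/4}$-regularity of $v$ (Theorem~\ref{thm:opt-reg}) together with the continuity of parabolic Taylor polynomial coefficients in the base point. With these ingredients in place, $E_j$ is an intersection of closed conditions, and $\Sigma_\kappa(v) = \bigcup_{j \in \N} E_j$ is the desired $F_\sigma$ decomposition.
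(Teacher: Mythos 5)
Your proposal is correct and follows essentially the same route as the paper: the same Whitney-type decomposition into sets $E_j$ cut out by two-sided bounds $\tfrac1j r^{2\kappa}\leq H_{u_k^{(x_0,t_0)}}(r)\leq j\,r^{2\kappa}$ (your normalization with $r^{2\kappa+2}$ on the unnormalized integral is equivalent), with the covering $\Sigma_\kappa(v)\subset\bigcup_j E_j$ coming from Lemmas~\ref{lem:HuMr2kappa} and \ref{lem:nondeg-sing-nonzero}, and closedness from the continuity of $(x_0,t_0)\mapsto H_{u_k^{(x_0,t_0)}}(r)$ together with Proposition~\ref{prop:char-sing-point} to see that the two-sided bound forces membership in $\Gamma_\kappa^{(\ell)}(v)=\Sigma_\kappa(v)$. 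Your write-up actually spells out two points the paper leaves implicit (why the two-sided growth pins the frequency at $\kappa$, and why $H$ depends continuously on the base point), so no gaps.
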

\begin{proof} For $j\in\N$, $j\geq 2$, let $E_j$ be the set of points
  $(x_0,t_0)\in \Sigma_\kappa(v)\cap \overline{Q_{1-1/j}}$ satisfying
  \begin{equation}\label{eq:Ej}
    \begin{aligned}&\tfrac1j\, r^{2\kappa}\leq H_{u^{(x_0,t_0)}_k}(r) \leq
      j\,r^{2\kappa}\\&\qquad\text{for every $0<r<\min\{1-|x_0|,\sqrt{1+t_0}\}$}.
    \end{aligned}
  \end{equation}
  By Lemmas~\ref{lem:HuMr2kappa} and \ref{lem:nondeg-sing-nonzero}, we
  have that
$$
\Sigma_\kappa(v)\subset \bigcup_{j=2}^\infty E_j.
$$
So to complete the proof, it is enough to show that $E_j$ are
closed. Indeed, if $(x_0,t_0)\in \overline E_j$, then it readily
satisfies \eqref{eq:Ej}. That, in turn, implies that $(x_0,t_0)\in
\Gamma_\kappa^{(\ell)}(v)$ and by
Proposition~\ref{prop:char-sing-point} that $(x_0,t_0)\in
\Sigma_\kappa(v)$. Hence $(x_0,t_0)\in E_j$. This completes the proof.
\end{proof}

We next show the existence and uniqueness of blowups with respect to
$\kappa$-homogeneous rescalings.

\begin{theorem}[Uniqueness of $\kappa$-homogeneous blowups at singular
  points]\label{thm:uniq-blowup-sing-nonzero} Let $u\in\S^f(S_1^+)$
  with $|f(x,t)|\leq M\|(x,t)\|^{\ell-2}$ in $S_1^+$, $|\nabla
  f(x,t)|\leq L\|(x,t)\|^{\ell-3}$ in $Q_{1/2}^+$, $\ell\geq 3$, and
  $0\in\Sigma_\kappa(u)$ with $\kappa<\ell$. Then there exists a
  unique nonzero $p_\kappa\in\P_\kappa$ such that
  $$
  u_r^{(\kappa)}(x,t):=\frac{u(rx,r^2t)}{r^\kappa}\to p_\kappa(x,t).
  $$
\end{theorem}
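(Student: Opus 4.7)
My plan is to follow the standard Monneau-type argument, combining the nondegeneracy at singular points with the Monneau monotonicity formula, which together force the blowup to be unique.

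\textbf{Step 1: Existence of subsequential blowups in $\P_\kappa$.} First I would relate the $\kappa$-homogeneous rescaling to the frequency rescaling: $u_r^{(\kappa)}(x,t) = (H_u(r)^{1/2}/r^\kappa)\, u_r(x,t)$. By Lemma~\ref{lem:HuMr2kappa} and Lemma~\ref{lem:nondeg-sing-nonzero}, the ratio $H_u(r)^{1/2}/r^\kappa$ is bounded above and below away from $0$. Hence, given any sequence $r_j \to 0+$, we may pass to a subsequence along which this ratio converges to some $c_0 \in (0,\infty)$ and, by Theorem~\ref{thm:exist-homogen-blowups} together with Proposition~\ref{prop:char-sing-point}, $u_{r_j} \to c_0^{-1} p_\kappa$ (in the appropriate weighted senses) for some nonzero $p_\kappa \in \P_\kappa$. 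Consequently $u_{r_j}^{(\kappa)} \to p_\kappa$ in $L_2(S_1^+, G)$.

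\textbf{Step 2: Monneau's formula yields $M^\kappa_{u,p_\kappa}(0+) = 0$.} By the change of variables $x \mapsto rx$, $t \mapsto r^2 t$, together with $G(rx, r^2 t) = r^{-n}G(x,t)$ and the $\kappa$-homogeneity of $p_\kappa$, one has the rescaling identity
\[
M^\kappa_{u,p_\kappa}(r) \;=\; \int_{S_1^+} \bigl(u_r^{(\kappa)} - p_\kappa\bigr)^2 G.
\]
For the $p_\kappa$ produced in Step~1, the right-hand side tends to $0$ along $r_j \to 0+$. By Theorem~\ref{thm:monneau}, $r \mapsto M^\kappa_{u,p_\kappa}(r) + Cr^\sigma$ is nondecreasing, so $M^\kappa_{u,p_\kappa}(0+)$ exists, and the subsequential limit along $r_j$ forces $M^\kappa_{u,p_\kappa}(0+) = 0$.

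\textbf{Step 3: Uniqueness.} Suppose $\tilde p_\kappa \in \P_\kappa$ is another subsequential blowup, along $\tilde r_j \to 0+$. Monneau's formula applied to the same $p_\kappa$ from Step~1 gives $M^\kappa_{u,p_\kappa}(\tilde r_j) \to 0$ by Step~2, while the rescaling identity and the convergence $u_{\tilde r_j}^{(\kappa)} \to \tilde p_\kappa$ in $L_2(S_1^+, G)$ give
\[
M^\kappa_{u,p_\kappa}(\tilde r_j) \;\longrightarrow\; \int_{S_1^+} (\tilde p_\kappa - p_\kappa)^2\, G.
\]
Thus $\tilde p_\kappa = p_\kappa$ on $S_1^+$, and since both are polynomials, $\tilde p_\kappa \equiv p_\kappa$ on all of $S_\infty$. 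Hence every subsequential blowup coincides.

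\textbf{Step 4: Full convergence.} With the blowup uniquely identified as $p_\kappa$, the identity $M^\kappa_{u,p_\kappa}(r) = \int_{S_1^+}(u_r^{(\kappa)} - p_\kappa)^2 G$ together with $M^\kappa_{u,p_\kappa}(0+) = 0$ yields $u_r^{(\kappa)} \to p_\kappa$ in $L_2(S_1^+, G)$ as $r \to 0+$ (without passing to subsequences). The convergence in stronger norms, if desired on compact subsets, follows from the uniform interior estimates already used for the blowup analysis.

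The main obstacle, as in the elliptic analogue of \cite{GP}, is the uniqueness step; everything hinges on the nondegeneracy estimate $H_u(r) \geq c\, r^{2\kappa}$ of Lemma~\ref{lem:nondeg-sing-nonzero}, which is what allows us to pair the arbitrary $\kappa$-homogeneous rescaling with a genuine element of $\P_\kappa$ and then invoke the Monneau formula — without it, the subsequential blowups in Step~1 could collapse to $0$ and the Monneau comparison in Step~3 would become vacuous.
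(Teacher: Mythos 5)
Your proposal is correct and follows essentially the same route as the paper: nondegeneracy (Lemma~\ref{lem:nondeg-sing-nonzero}) plus the growth estimate (Lemma~\ref{lem:HuMr2kappa}) identify subsequential $\kappa$-homogeneous blowups with elements of $\P_\kappa$, and the Monneau monotonicity formula combined with the rescaling identity $M^\kappa_{u,p_\kappa}(r)=\int_{S_1^+}(u_r^{(\kappa)}-p_\kappa)^2G$ forces $M^\kappa_{u,p_\kappa}(0+)=0$ and hence full convergence and uniqueness. The paper compresses your Steps 3 and 4 into a single observation, but the argument is the same.
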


\begin{proof} By Lemmas~\ref{lem:HuMr2kappa} and
  \ref{lem:nondeg-sing-nonzero}, there are positive constants $c$ and
  $C$ such that
$$
c\,r^{\kappa}\leq H_u(r)^{1/2}\leq C\, r^\kappa,\quad 0<r<1.
$$
This implies that any limit $u_0$ of the $\kappa$-homogeneous
rescalings $u_r^{(\kappa)}$ over any sequence $r=r_j\to 0+$ is just a
positive multiple of a limit of regular rescalings
$u_r(x,t)=u(rx,r^2t)/H_u(r)^{1/2}$, since over a subsequence
$H_u(r)^{1/2}/r^\kappa$ converges to a positive number.  Since the
limits of rescalings $u_r$ are polynomials of class $\P_\kappa$, we
obtain also that $u_0\in\P_\kappa$.

To show the uniqueness of $u_0$, we apply Theorem~\ref{thm:monneau}
with $p_\kappa=u_0$. This result implies that the limit
$M^\kappa_{u,u_0}(0+)$ exists and can be computed by
  $$
  M^\kappa_{u,u_0}(0+)=\lim_{r_j\to 0+}
  M^\kappa_{u,u_0}(r_j)=\lim_{j\to\infty} \int_{S_1^+}
  (u^{(\kappa)}_{r_j}-u_0)^2 G=0.
  $$
  The latter equality is a consequence of
  Theorem~\ref{thm:exist-homogen-blowups}, and of the argument at the
  beginning of this proof.  In particular, we obtain that
  $$
  \int_{S_1^+} (u^{(\kappa)}_r-u_0)^2 G= M^\kappa_{u,u_0}(r)\to 0
  $$
  as $r\to 0+$ (not just over $r=r_j\to 0+$!). Thus, if $u_0'$ is a
  limit of $u_r^{(\kappa)}$ over another sequence $r=r_j'\to 0$, we
  conclude that
  $$
  \int_{S_1^+} (u_0'-u_0)^2 G=0.
  $$
  This implies that $u_0'=u_0$ and completes the proof of the theorem.
\end{proof}

\begin{theorem}[Continuous dependence of blowup]
  \label{thm:cont-dep-blowup-nonzero} Let $v\in\S_\phi(Q_1^+)$ with
  $\phi\in H^{\ell,\ell/2}(Q_1')$, $\ell\geq 3$, and $\kappa=2m<\ell$,
  $m\in \N$. For $(x_0,t_0)\in\Sigma_\kappa(v)$, let $u_k^{(x_0,t_0)}$
  be as constructed in Proposition~\ref{prop:uk-def} for the function
  $v^{(x_0,t_0)}=v(x_0+\cdot,t_0+.)$, and denote by
  $p^{(x_0,t_0)}_\kappa$ the $\kappa$-homogeneous blowup of
  $u^{(x_0,t_0)}_k$ at $(x_0,t_0)$ as in
  Theorem~\ref{thm:uniq-blowup-sing-nonzero}, so that
  $$
  u^{(x_0,t_0)}_k(x,t)=p^{(x_0,t_0)}_\kappa(x,t)+o(\|(x,t)\|^{\kappa}).
  $$
  Then, the mapping $(x_0,t_0)\mapsto p_\kappa^{(x_0,t_0)}$ from
  $\Sigma_\kappa(v)$ to $\P_\kappa$ is continuous. Moreover, for any
  compact subset $K$ of $\Sigma_\kappa(v)$ there exists a modulus of
  continuity $\sigma=\sigma^K$, $\sigma(0+)=0$ such that
  $$
  |u^{x_0,t_0}_k(x,t)-p^{(x_0,t_0)}_\kappa(x,t)|\leq \sigma
  (\|(x,t)\|)\|(x,t)\|^{\kappa},\quad t\leq 0.
  $$
  for any $(x_0,t_0)\in K$.

\end{theorem}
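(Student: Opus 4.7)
\medskip
\noindent\textbf{Strategy.} The plan is to leverage Monneau's monotonicity formula (Theorem~\ref{thm:monneau}) in two ways: first, applied with a \emph{fixed} test polynomial $p_\kappa^{(x_0,t_0)}$, it will force any convergent subsequence of blowups $p_\kappa^{(x_j,t_j)}$ to agree with $p_\kappa^{(x_0,t_0)}$, yielding pointwise continuity; second, applied with the \emph{variable} test polynomial $p_\kappa^y$ at $y=(x_0,t_0)$, combined with a Dini-type covering argument on compact subsets, it will give a uniform $L_2(G)$-rate for the convergence of the $\kappa$-homogeneous rescalings. This rate will then be upgraded to the claimed uniform pointwise modulus by interpolating against a uniform $H^{3/2,3/4}$ bound on the rescalings provided by Theorem~\ref{thm:opt-reg}.

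\medskip
\noindent\textbf{Continuous dependence.} Fix $(x_0,t_0)\in\Sigma_\kappa(v)$, take $(x_j,t_j)\to(x_0,t_0)$ in $\Sigma_\kappa(v)$, and write $u_j=u_k^{(x_j,t_j)}$, $p_j=p_\kappa^{(x_j,t_j)}$, $u_0=u_k^{(x_0,t_0)}$, $p_0=p_\kappa^{(x_0,t_0)}$. By Proposition~\ref{prop:uk-def}, the hypothesis $\phi\in H^{\ell,\ell/2}$ ($\ell\geq 3$), and Theorem~\ref{thm:opt-reg}, the functions $u_j$ and right-hand sides $f_j$ depend continuously on the base point; in particular $u_j\to u_0$ uniformly on compact subsets of $\overline{S_1^+}$ and $|f_j(x,t)|\leq M\|(x,t)\|^{\ell-2}$ with $M$ uniform in $j$. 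Lemma~\ref{lem:HuMr2kappa} then yields $H_{u_j}(r)\leq Cr^{2\kappa}$ uniformly, which bounds $\|p_j\|_{L_2(S_1^+,G)}$; extracting a subsequence, $p_j\to\bar p\in\P_\kappa$. Applying Theorem~\ref{thm:monneau} to $u_j$ with the fixed test polynomial $p_0$, uniform bounds on $\|u_j\|_{L_2}+\|p_0\|_{L_2}$ make the monotonicity constant uniform in $j$, and the identity
\[
M^\kappa_{u_j,p_0}(0^+)=\int_{S_1^+}(p_j-p_0)^2\,G,
\]
which uses $u_j-p_j=o(\|(x,t)\|^\kappa)$ together with the $\kappa$-homogeneity of $p_j-p_0$, combined with monotonicity gives, for any $r\in(0,1)$,
\[
\int_{S_1^+}(p_j-p_0)^2\,G \;\leq\; \frac{1}{r^{2\kappa+2}}\int_{S_r^+}(u_j-p_0)^2\,G + Cr^\sigma.
\]
Sending $j\to\infty$ by dominated convergence and then $r\to 0^+$ (using $M^\kappa_{u_0,p_0}(0^+)=0$, which is the content of Theorem~\ref{thm:uniq-blowup-sing-nonzero}), we conclude $\bar p=p_0$. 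Since the limit is independent of the subsequence, the entire sequence $p_j\to p_0$.

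\medskip
\noindent\textbf{Uniform modulus on compact $K$.} For $y=(x_0,t_0)$ set $F(r,y):=M^\kappa_{u_k^y,p_\kappa^y}(r)$. The continuity just established, together with continuity of $y\mapsto u_k^y$, makes $y\mapsto F(r,y)$ continuous on $\Sigma_\kappa(v)$ for each fixed $r\in(0,1)$, while $r\mapsto F(r,y)+C_K r^\sigma$ is monotone nondecreasing with $C_K$ uniform on $K$ (since $\|u_k^y\|_{L_2}+\|p_\kappa^y\|_{L_2}$ is bounded on $K$ by continuity and compactness). Because $F(0^+,y)=0$ pointwise, a Dini-type covering argument---pick $r_y$ with $F(r_y,y)+C_K r_y^\sigma<\epsilon$, invoke continuity in $y$ for a neighborhood, and cover $K$ by finitely many such---shows $\sup_{y\in K}F(r,y)\to 0$ as $r\to 0^+$, so $M^\kappa_{u_k^y,p_\kappa^y}(r)\leq\sigma_0^K(r)$ uniformly in $y\in K$ for some modulus $\sigma_0^K$. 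To convert this $L_2(G)$-modulus into the pointwise one, observe that the $\kappa$-homogeneous rescalings $(u_k^y)_r^{(\kappa)}(x,t):=u_k^y(rx,r^2t)/r^\kappa$ satisfy a Signorini problem with right-hand side of order $r^{\ell-\kappa}\|(x,t)\|^{\ell-2}$ and have uniformly bounded $L_2(S_R^+,G)$-norm (from $H_{u_k^y}(r)\leq Cr^{2\kappa}$); an argument analogous to Lemma~\ref{lem:blowups-H1alpha-conv} combined with Theorem~\ref{thm:opt-reg} then bounds $\{(u_k^y)_r^{(\kappa)}\}$ uniformly in $H^{3/2,3/4}(Q_R^+\cup Q_R')$. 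Interpolating the uniform $L_2(G)$-convergence to $p_\kappa^y$ against this uniform higher regularity produces uniform $L_\infty(Q_1)$-convergence at some (possibly slower) rate $\sigma^K$, and rescaling back gives
\[
|u_k^y(x,t)-p_\kappa^y(x,t)|\leq\sigma^K(\|(x,t)\|)\,\|(x,t)\|^\kappa,\qquad t\leq 0,\ y\in K,
\]
as required. The main obstacle is the careful bookkeeping of uniformities on $K$---in particular ensuring that every constant (in Monneau, in the growth bound Lemma~\ref{lem:HuMr2kappa}, and in the optimal regularity applied to the rescalings) depends only on quantities that are continuous in $y$, hence uniformly controlled on $K$---so that both the Dini argument and the interpolation step genuinely yield moduli independent of $y\in K$.
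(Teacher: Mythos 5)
Your proposal is correct and follows essentially the same route as the paper: Monneau's monotonicity formula applied with a fixed reference polynomial, together with the continuous dependence of $u_k^{(x_0,t_0)}$ on the base point in $L_2(S_r^+,G)$, gives the continuity of $(x_0,t_0)\mapsto p_\kappa^{(x_0,t_0)}$, and a compactness argument (your Dini-type argument is interchangeable with the paper's finite cover of $K$ by cylinders with the triangle inequality) gives the uniform $L_2(S_1^+,G)$-modulus for the homogeneous rescalings. The only divergence is the final upgrade from $L_2(G)$-closeness to the pointwise bound: you interpolate against a uniform $H^{3/2,3/4}$ bound on the rescalings, whereas the paper applies the $L_\infty$--$L_2$ sub-mean-value estimate directly to $\bigl(w_r^{(x_0',t_0')}-p_\kappa^{(x_0',t_0')}\bigr)_\pm$, which are subcaloric up to an $O(r^{\ell-\kappa})$ error; both work, though your route implicitly relies on the same $L_\infty$--$L_2$ machinery to obtain the uniform sup and gradient bounds needed before the optimal regularity theorem can be invoked, and it needs the extra care you flag near $t=0$, where the Gaussian weight degenerates.
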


\begin{proof} Given $(x_0,t_0)\in \Sigma_\kappa(v)$ and $\epsilon>0$
  fix $r_\epsilon=r_\epsilon(x_0,t_0)>0$ such that
  $$
  M^\kappa_{u^{(x_0,t_0)}_k,
    p_\kappa^{(x_0,t_0)}}(r_\epsilon)=\frac{1}{r_\epsilon^{2\kappa+2}}\int_{S_{r_\epsilon}^+}
  \big(u^{(x_0,t_0)}_k- p_\kappa^{(x_0,t_0)}\big)^2 G<\epsilon.
  $$
  Then, there exists $\delta_\epsilon=\delta_\epsilon(x_0,t_0)$ such
  that if $(x_0',t_0')\in\Sigma_\kappa(u)$ and
  $\|(x_0'-x_0,t_0'-t_0)\|<\delta_\epsilon$ one has
  $$
  M^\kappa_{u^{(x_0',t_0')}_k,
    p_\kappa^{(x_0,t_0)}}(r_\epsilon)=\frac{1}{r_\epsilon^{2\kappa+2}}\int_{S_{r_\epsilon}^+}
  \big(u^{(x_0',t_0')}_k- p_\kappa^{(x_0,t_0)}\big)^2 G<2\epsilon.
  $$
  This follows from the continuous dependence of $u^{(x_0,t_0)}_k$ on
  $(x_0,t_0)\in\Gamma(v)$ in $L_2(S_{r_\epsilon}^+, G)$ norm, which is
  a consequence of $H^{\ell,\ell/2}$ regularity of the thin obstacle
  $\phi$.  Then, from Theorem~\ref{thm:monneau}, we will have that
  $$
  M^\kappa_{u^{(x_0',t_0')}_k,
    p_\kappa^{(x_0,t_0)}}(r_\epsilon)=\frac{1}{r_\epsilon^{2\kappa+2}}\int_{S_{r_\epsilon}^+}
  \big(u^{(x_0',t_0')}_k- p_\kappa^{(x_0,t_0)}\big)^2 G < 2\epsilon+
  C\,r_\epsilon^\sigma,\quad 0<r<r_\epsilon,
  $$
  for a constant $C=C(x_0,t_0)$ depending on $L_2$ norms of
  $u^{(x_0',t_0')}_k$ and $p_\kappa^{(x_0,t_0)}$, which can be made
  uniform for $(x_0',t_0')$ in a small neighborhood of $(x_0,t_0)$.
  Letting $r\to 0$ we will therefore obtain
  $$
  M^\kappa_{u^{(x_0',t_0')}_k,
    p_\kappa^{(x_0,t_0)}}(0+)=\int_{S_{1}^+}
  \big(p^{(x_0',t_0')}_\kappa- p_\kappa^{(x_0,t_0)}\big)^2 G\leq
  2\epsilon+C\,r_\epsilon^\sigma.
  $$
  This shows the first part of the theorem (see
  Remark~\ref{rem:Pk-norm}).

  To show the second part, we notice that we have
  \begin{align*}
    \|u^{(x_0',t_0')}_k-p^{(x_0',t_0')}_\kappa\|_{L_2(S_r^+, G)}&\leq
    \|u^{(x_0',t_0')}_k-p^{(x_0,t_0)}_\kappa\|_{L_2(S_r^+, G)}\\
    &\qquad +
    \|p^{(x_0',t_0')}_\kappa-p^{(x_0,t_0)}_\kappa\|_{L_2(S_r^+,
      G)}\\&\leq 2(2\epsilon +C\,r_\epsilon^\sigma)^{1/2}r^{\kappa+1},
  \end{align*}
  for $\|(x_0'-x_0,t_0'-t_0)\|<\delta_\epsilon$, $0<r<r_\epsilon$, or
  equivalently
  \begin{equation}\label{eq:w-p-L2-nonzero}
    \|w^{x_0',t_0'}_r-p^{(x_0',t_0')}_\kappa\|_{L_2(S_1^+, G)}\leq 2(2\epsilon+C r_\epsilon^\sigma)^{1/2},
  \end{equation}
  where
$$
w^{(x_0',t_0')}_r(x,t):=\frac{u^{(x_0',t_0')}_k(rx,r^2t)}{r^\kappa}
$$
is the homogeneous rescaling of $u^{(x_0',t_0')}_k$.  Making a finite
cover of the compact $K$ with full parabolic cylinders $\tilde
Q_{\delta_\epsilon(x_0^i,t_0^i)}(x_0^i,t_0^i)$ for some
$(x_0^i,t_0^i)\in K$, $i=1,\ldots,N$, we see that
\eqref{eq:w-p-L2-nonzero} is satisfied for all $(x_0',t_0')\in K$,
$r<r_\epsilon^K:=\min\{r_\epsilon(x_0^i,t_0^i)\mid i=1,\ldots,N\}$ and
$C=C^K:=\max\{C(x_0^i,t_0^i)\mid i=1,\ldots, N\}$.

Now note that $w^{(x_0',t_0')}_r$ solves the parabolic Signorini
problem in $Q_1^+$ with zero thin obstacle and the right-hand side
$$
\big|g^{(x_0',t_0')}_r(x,t)\big|=\Big|\frac{f^{(x_0',t_0')}(rx,r^2t)}{r^{\kappa-2}}\Big|\leq
M r^{\ell-\kappa}\quad\text{in }Q_1^+.
$$
Besides, $p^{(x_0',t_0')}_\kappa$ also solves the parabolic Signorini
problem with zero thin obstacle, and zero right-hand side. This
implies
$$
(\Delta-\partial_t)\Big(w^{(x_0',t_0')}_r(x,t)-p^{(x_0',t_0')}_\kappa(x,t)\Big)_\pm\geq
-M r^{\ell-\kappa}\to 0\quad\text{in }Q_1,
$$
and therefore using the $L_\infty-L_2$ bounds as in the proof of
Theorem~\ref{thm:exist-homogen-blowups}(iii), we obtain that
$$
\|w^{(x_0',t_0')}_r-p^{(x_0',t_0')}_\kappa\|_{L_\infty(Q_{1/2})}\leq
C_\epsilon,
$$
for all $(x_0',t_0')\in K$, $r<r_\epsilon^K$ and $C_\epsilon\to 0$ as
$\epsilon\to 0$. It is now easy to see that this implies the second
part of the theorem.
\end{proof}

We are now ready to prove Theorems~\ref{thm:k-diff-sing-p} and
\ref{thm:sing-points-nonzero}.

\begin{proof}[Proof of Theorem~\ref{thm:k-diff-sing-p}] Simply combine
  Theorems~\ref{thm:uniq-blowup-sing-nonzero} and
  \ref{thm:cont-dep-blowup-nonzero}.
\end{proof}

\begin{proof}[Proof of Theorem~\ref{thm:sing-points-nonzero}]
  \mbox{}

  \smallskip
  \noindent\emph{Step 1: Parabolic Whitney's extension.}
  For any $(x_0,t_0)\in\Sigma_\kappa(v)$ let the polynomial
  $p^{(x_0,t_0)}_\kappa\in\P_\kappa$ be as in
  Theorem~\ref{thm:cont-dep-blowup-nonzero}. Write it in the expanded
  form
  $$
  p^{(x_0,t_0)}_\kappa(x)=\sum_{|\alpha|+2j=\kappa}
  \frac{a_{\alpha,j}(x_0,t_0)}{\alpha!j!}x^\alpha t^j.
  $$
  Then, the coefficients $a_{\alpha,j}(x,t)$ are continuous on
  $\Sigma_{\kappa}(v)$.  Next, for any multi-index
  $\alpha=(\alpha_1,\ldots,\alpha_n)$ and integer $j=0,\ldots,m$, let
  $$
  f_{\alpha,j}(x,t)=\begin{cases} 0, & |\alpha|+2j<\kappa\\
    a_{\alpha,j}(x,t), & |\alpha|+2j=\kappa,
  \end{cases}\qquad (x,t)\in\Sigma_\kappa(v).
$$
Then, we have the following compatibility lemma.

\begin{lemma}\label{lem:Whitney-compat-nonzero}
  Let $K=E_j$ for some $j\in\N$, as in
  Lemma~\ref{lem:sing-set-F-sigma-nonzero}. Then for any $(x_0,t_0),
  (x,t)\in K$
  \begin{multline}\label{eq:compat-1-nonzero}
    f_{\alpha,j}(x,t)=\sum_{|\beta|+2k\leq
      \kappa-|\alpha|-2j}\frac{f_{\alpha+\beta,j+k}(x_0,t_0)}{\beta!k!}(x-x_0)^\beta
    (t-t_0)^j\\ +R_{\alpha,j}(x,t;x_0,t_0)
  \end{multline}
  with
  \begin{equation}\label{eq:compat-2-nonzero}
    |R_{\alpha,j}(x,t;x_0,t_0)|\leq \sigma_{\alpha,j}(\|(x-x_0,t-t_0\|)\|(x-x_0,t-t_0)\|^{\kappa-|\alpha|-2j},
  \end{equation}
  where $\sigma_{\alpha,j}=\sigma_{\alpha,j}^K$ is a certain modulus
  of continuity.
\end{lemma}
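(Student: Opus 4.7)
The plan is to split the verification of \eqref{eq:compat-1-nonzero}--\eqref{eq:compat-2-nonzero} into two cases according to whether $|\alpha|+2j=\kappa$ or $|\alpha|+2j<\kappa$. In the first case, the only term in the sum with $|\beta|+2k\le 0$ is $(\beta,k)=(0,0)$, so the identity reduces to
\[
R_{\alpha,j}(x,t;x_0,t_0) = a_{\alpha,j}(x,t) - a_{\alpha,j}(x_0,t_0),
\]
and the required decay with a uniform modulus follows at once from the continuity of $(x,t)\mapsto p_\kappa^{(x,t)}$ proved in Theorem~\ref{thm:cont-dep-blowup-nonzero}, together with the compactness of $K=E_j$. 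In the main case $|\alpha|+2j<\kappa$ we have $f_{\alpha,j}\equiv 0$, and by the $\kappa$-homogeneity of $p_\kappa^{(x_0,t_0)}$ only terms with $|\beta|+2k=\kappa-|\alpha|-2j$ contribute to the sum, which then equals exactly $\partial_x^\alpha\partial_t^j p_\kappa^{(x_0,t_0)}(x-x_0,t-t_0)$. Setting $\rho:=\|(x-x_0,t-t_0)\|$, the whole task thus reduces to showing
\[
\bigl|\partial_x^\alpha\partial_t^j p_\kappa^{(x_0,t_0)}(x-x_0,t-t_0)\bigr| = \sigma_{\alpha,j}(\rho)\,\rho^{\kappa-|\alpha|-2j}
\]
with $\sigma_{\alpha,j}\to 0$ at $0$, uniformly for $(x_0,t_0),(x,t)\in K$.

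To establish this, the plan is to compare two Taylor polynomials centered at nearby singular points. Introduce
\[
Q(y,s) := p_\kappa^{(x_0,t_0)}(y-x_0,s-t_0) - p_\kappa^{(x,t)}(y-x,s-t),
\]
a polynomial of parabolic degree at most $\kappa$. Expanding $Q$ as a parabolic Taylor polynomial about $(x,t)$, the coefficient of $(y-x)^\alpha(s-t)^j/(\alpha!j!)$ is precisely $\partial_x^\alpha\partial_t^j p_\kappa^{(x_0,t_0)}(x-x_0,t-t_0)$ for $|\alpha|+2j<\kappa$ (since $p_\kappa^{(x,t)}$ is $\kappa$-homogeneous, all its lower-order derivatives at the origin vanish), and equals $a_{\alpha,j}(x_0,t_0)-a_{\alpha,j}(x,t)$ for $|\alpha|+2j=\kappa$. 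Using the uniform asymptotic expansion of Theorem~\ref{thm:cont-dep-blowup-nonzero} at both centers,
\[
u_k(y,s) - p_\kappa^{(x_0,t_0)}(y-x_0,s-t_0) = o\bigl(\|(y-x_0,s-t_0)\|^\kappa\bigr)\quad\text{for } s\le t_0,
\]
and the analogous estimate centered at $(x,t)$ for $s\le t$, a direct subtraction yields the pointwise bound $|Q(y,s)|\le \tilde\sigma(\rho)\,\rho^\kappa$ on the set
\[
D := \bigl\{(y,s): \|(y-x,s-t)\|\le \rho,\ s\le\min(t_0,t)\bigr\},
\]
with $\tilde\sigma(\rho)\to 0$ uniformly on $K$. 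Since polynomials of parabolic degree $\le\kappa$ form a finite-dimensional space on which, after rescaling by $\rho$, the sup-norm over a parabolic ball of unit size controls all coefficients via a universal constant, this pointwise bound converts into the desired estimate on each coefficient.

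The main obstacle is that the coefficient-extraction argument requires $D$ to contain a parabolic ball comparable to $\rho$ around $(x,t)$. When $t\le t_0$ this is automatic, and when $t>t_0$ with $x\ne x_0$ one obtains a comparable region by swapping the roles of $(x_0,t_0)$ and $(x,t)$ and running the symmetric argument. The genuinely delicate sub-case is the degenerate configuration $x=x_0$, $t>t_0$, where $\rho=\sqrt{t-t_0}$ and $D$ collapses to essentially a single point. For this case the plan is to evaluate the Taylor expansion at $(x,t)$ at the singular point $(x_0,t_0)$ in its past, which gives $p_\kappa^{(x,t)}(0,t_0-t)=o(\rho^\kappa)$; combining with the continuity of $(x,t)\mapsto p_\kappa^{(x,t)}$ in $\P_\kappa$, the $\kappa$-homogeneity, the even symmetry in $x_n$, and the caloric equation $(\Delta-\partial_t)p_\kappa^{(x_0,t_0)}=0$, one can reconstruct the required directional information on $p_\kappa^{(x_0,t_0)}$ and its derivatives at $(0,t-t_0)$. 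A compactness argument with the rescaled pair $(\xi,\tau):=((x-x_0)/\rho,(t-t_0)/\rho^2)$ on the parabolic unit sphere, together with the upper semicontinuity already used in Lemma~\ref{lem:sing-set-F-sigma-nonzero}, then delivers the uniform modulus $\sigma_{\alpha,j}$ and closes the argument.
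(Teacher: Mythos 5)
Your reduction of the lemma to the bound $|\partial_x^\alpha\partial_t^j p_\kappa^{(x_0,t_0)}(x-x_0,t-t_0)|\leq\sigma(\rho)\,\rho^{\kappa-|\alpha|-2j}$ is correct, and your route — comparing the two Taylor polynomials pointwise on the region where both asymptotic expansions are valid and then extracting coefficient bounds by finite-dimensionality — is genuinely different from the paper's, which instead argues by contradiction: it rescales at both points, shows both rescaled families converge to the same $p_\kappa^{(x_0,t_0)}$ on past cylinders, derives the translation invariance $p_\kappa^{(x_0,t_0)}(\cdot+\xi_0,\cdot+\tau_0)=p_\kappa^{(x_0,t_0)}$ first on $Q_1$ and then everywhere by analyticity of polynomials, and concludes that all derivatives of order $<\kappa$ vanish at $(\xi_0,\tau_0)$. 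Your direct approach can be made to work, but as written it has a real gap precisely where you locate the difficulty. The problem is not only the exact configuration $x=x_0$, $t>t_0$: whenever $|x-x_0|^2\ll t-t_0$ the set $D$ degenerates to a slab of relative thickness $|x-x_0|^2/\rho^2\to 0$, so the norm-equivalence step fails. The proposed ``swap'' does not repair this: expanding $Q$ about $(x_0,t_0)$ controls $\partial_x^\alpha\partial_t^j p_\kappa^{(x,t)}(x_0-x,t_0-t)$, i.e.\ the derivative of the \emph{other} polynomial at the \emph{reflected} point, and parabolic homogeneity gives no relation between $q(z,\tau)$ and $q(-z,-\tau)$ (the dilation $\lambda=-1$ flips $z$ but not $\tau$). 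The single-point evaluation $p_\kappa^{(x,t)}(0,t_0-t)=o(\rho^\kappa)$ only yields $a_{0,m}(x,t)\to 0$, whereas what is needed in this regime is that \emph{every} coefficient $a_{\alpha,j'}$ with $j'\geq 1$ tends to zero (equivalently, that the blowup becomes $t$-independent in the limit); the caloric relations $a_{\alpha,j+1}=\sum_i a_{\alpha+2e_i,j}$ connect these coefficients but do not force them to vanish, so ``reconstructing the required directional information'' from homogeneity, symmetry and the heat equation is not a proof. This is exactly the rigidity that the paper extracts from the limiting identity $p(x,t+\tau_0)=p(x,t)$, which for a polynomial forces $t$-independence.

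The good news is that your scheme closes with a small modification you did not make: replace $D$ by $\{\|(y-x,s-t)\|\leq 2\rho,\ s\leq\min(t,t_0)\}$. Since $t-t_0\leq\rho^2$, after parabolic rescaling by $2\rho$ this set always contains the fixed open set $\{\|(y,s)\|\leq 1,\ s\leq -1/4\}$, so the coefficient extraction is uniform in all configurations and no case distinction is needed. Two further points must then be supplied. First, for a nonzero obstacle the two expansions concern two \emph{different} functions $u_k^{(x_0,t_0)}$ and $u_k^{(x,t)}$ (different Taylor polynomials of $\phi$ are subtracted at the two base points); bounding their difference on $D$ by $O(\delta^{\ell-\kappa})\rho^\kappa$ requires the $H^{\ell,\ell/2}$ regularity of $\phi$ with $\ell>\kappa$, exactly as in \eqref{eq:wixi} of the paper — your proposal writes both expansions for a single $u_k$, which is only legitimate for $\phi\equiv 0$. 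Second, the uniformity of the modulus over $K=E_j$ does follow, as you say, from the compactness of $E_j$ together with the uniform modulus in Theorem~\ref{thm:cont-dep-blowup-nonzero}; that part is fine.
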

\begin{proof}
  1) In the case $|\alpha|+2j=\kappa$ we have
$$
R_{\alpha,j}(x,t;x_0,t_0)=a_{\alpha,j}(x,t)-a_{\alpha,j}(x_0,t_0)
$$
and the statement follows from the continuity of $a_{\alpha,j}(x,t)$
on $\Sigma_\kappa(v)$.

\smallskip 2) For $0\leq |\alpha|+2j<\kappa$ we have

\begin{align*}
  R_\alpha(x,t,x_0,t_0)&=-\sum_{\substack{(\gamma,k)\geq
      (\alpha,j)\\|\gamma|+2k=\kappa}}
  \frac{a_{\gamma,k}(x_0,t_0)}{(\gamma-\alpha)!(k-j)!}(x-x_0)^{\gamma-\alpha}(t-t_0)^{k-j}\\
  &= - \partial^\alpha_x\partial_t^j p^{(x_0,t_0)}_\kappa
  (x-x_0,t-t_0).
\end{align*}
Suppose now that there exists no modulus of continuity
$\sigma_{\alpha,j}$ such that \eqref{eq:compat-2-nonzero} is satisfied
for all $(x_0,t_0),(x,t)\in K$. Then, there exists $\eta>0$ and a
sequence $(x_0^i,t_0^i), (x^i,t^i)\in K$, with
$$
\max\{|x^i-x_0^i|,|t^i-t_0^i|^{1/2}\}=:\delta_i\to 0,
$$
such that
\begin{multline}\label{eq:compat-contr-nonzero}
  \Big|\sum_{\substack{(\gamma,k)\geq(\alpha,j)\\|\gamma|+2k=\kappa}}
  \frac{a_{\gamma,k}(x_0^i)}{(\gamma-\alpha)!(k-j)!}(x^i-x_0^i)^{\gamma-\alpha}(t^i-t_0^i)^{k-j}\Big|\\\geq
  \eta \|(x^i-x_0^i,t^i-t_0^i)\|^{\kappa-|\alpha|-2j}.
\end{multline}
Consider the rescalings
$$
w^i(x,t)=\frac{u^{(x_0^i,t_0^i)}_k(\delta_i x,\delta_i
  t^2)}{\delta_i^\kappa},\quad
(\xi^i,\tau^i)=\Big(\frac{x^i-x_0^i}{\delta_i},\frac{t^i-t_0^i}{\delta_i^2}\Big).
$$
Without loss of generality we may assume that $(x_0^i,t_0^i)\to
(x_0,t_0)\in K$ and $(\xi^i,\tau^i)\to (\xi_0,\tau_0)\in \partial
\tilde Q_1$.  Then, by Theorem~\ref{thm:cont-dep-blowup-nonzero}, for
any $R>0$ and large $i$ we have for a modulus of continuity
$\sigma=\sigma^K$
$$
|w^i(x,t)-p^{(x_0^i,t_0^i)}_\kappa(x,t)|\leq
\sigma(\delta_i\|(x,t)\|)\|(x,t)\|^\kappa,\quad (x,t)\in S_R,
$$
and therefore
\begin{equation}\label{eq:wip}
  w^i(x,t)\to p^{(x_0,t_0)}_\kappa(x,t)\quad\text{in } L_\infty(Q_R).
\end{equation}
Note that we do not necessarily have the above convergence in the full
parabolic cylinder $\tilde Q_R$. Next, consider the rescalings at
$(x^i,t^i)$ instead of $(x_0^i,t_0^i)$
$$
\tilde w^i(x,t)=\frac{u^{(x^i,t^i)}_k(\delta_i x,\delta_i^2
  t)}{\delta_i^\kappa}.
$$
Then, by the same argument as above
\begin{equation}\label{eq:wip'}
  \tilde w^i(x,t)\to p^{(x_0,t_0)}_\kappa(x,t)\quad\text{in } L_\infty(Q_R).
\end{equation}
We then claim that the $H^{\ell,\ell/2}$ regularity of the thin
obstacle $\phi$ implies that
\begin{equation}\label{eq:wixi}
  w^i(x+\xi^i,t+\tau^i)-\tilde w^i(x,t)\to 0\quad\text{in } L_\infty(\tilde
  Q_R)
\end{equation}
for any $R>0$, or equivalently
\begin{equation}\label{eq:wixi'}
  w^i(x,t)-\tilde w^i(x-\xi^i,t-\tau^i)\to 0\quad\text{in } L_\infty(\tilde
  Q_R).
\end{equation}
Indeed, if $q^{(x_0,t_0)}_k(x',t)$ (as usual) denotes the $k$-th
parabolic Taylor polynomial of the thin obstacle $\phi(x')$ at
$(x_0,t_0)$, then\footnote{just note that the arguments inside $\phi$
  are the same}
\begin{align*}
  &\frac{q^{(x_0^i,t_0^i)}_k(\delta_i(x'+\xi^i),\delta_i^2(t+\tau^i))-q^{(x^i,t^i)}_k(\delta_i
    x',\delta_i^2t)}{\delta_i^\kappa}\\&=\frac{\phi(x_0^i+\delta_i(x'+\xi^i),t_0^i+\delta_i^2(t+\tau^i))+O(\delta_i^\ell\|(x'+\xi^i,t+\tau^i)\|^\ell)}{\delta_i^\kappa}\\
  &\qquad -\frac{\phi(x^i+\delta_ix',t^i+\delta_i^2t)+O(\delta_i^\ell\|(x',t)\|^\ell)}{\delta_i^\kappa}\\
  &=O(\delta_i^{\ell-\kappa})\to 0
\end{align*}
and this implies the convergence \eqref{eq:wixi}, if we write the
explicit definition of $w^i$ using the construction in
Proposition~\ref{prop:uk-def}.

To proceed further, we consider two different cases:

1) There are infinitely many indexes $i$ such that $\tau^i\geq 0$.

2) There are infinitely many indexes $i$ such that $\tau^i\leq 0$.

In both cases, passing to subsequences we may assume that $\tau^i\geq
0$ $(\leq 0)$ for all indexes $i$.

In case 1) we proceed as follows. If we take any $(x,t)\in Q_1$,
because of the nonpositivity of $\tau_i$ we have
$(x-\xi^i,t-\tau^i)\in Q_2$. Passing to the limit in \eqref{eq:wip},
\eqref{eq:wip'}, and \eqref{eq:wixi'}, we thus obtain
\begin{equation}\label{eq:p-translate}
  p^{(x_0,t_0)}_\kappa(x,t)=p^{(x_0,t_0)}_\kappa(x-\xi_0,t-\tau_0),\quad
  \text{for any }(x,t)\in Q_1.
\end{equation}
Because of the real analyticity of polynomials, it follows that
\eqref{eq:p-translate} holds in fact for all
$(x,t)\in\R^n\times\R$. But then, we also obtain
\begin{equation}\label{eq:p+translate}
  p^{(x_0,t_0)}_\kappa(x+\xi_0,t+\tau_0)=p^{(x_0,t_0)}_\kappa(x,t),\quad
  \text{for any }(x,t)\in\R^n\times\R.
\end{equation}
In particular, this implies that
$$
\partial_x^\alpha\partial_t^j
p^{(x_0,t_0)}_\kappa(\xi_0,\tau_0)=\partial_x^\alpha\partial_t^j
p^{(x_0,t_0)}_\kappa(0,0)=0,\quad |\alpha|+2j<\kappa.
$$
On the other hand, dividing both sides of
\eqref{eq:compat-contr-nonzero} by $\delta_i^{\kappa-|\alpha|-2j}$ and
passing to the limit, we obtain
$$
|\partial^\alpha_x\partial_t^j
p^{(x_0,t_0)}_\kappa(\xi_0,\tau_0)|=\Big|\sum_{\substack{(\gamma,k)\geq(\alpha,j)\\|\gamma|+2k=\kappa}}
\frac{a_{\gamma,k}(x_0)}{(\gamma-\alpha)!}\xi_0^{\gamma-\alpha}\tau_0^{k-j}\Big|\geq
\eta>0,
$$
a contradiction.

In case 2), when there are infinitely many indexes $i$ so that
$\tau^i\leq 0$, passing to the limit in \eqref{eq:wip},
\eqref{eq:wip'}, and \eqref{eq:wixi}, we obtain \eqref{eq:p+translate}
for $(x,t)\in Q_1$. Again by real analyticity, we have the same
conclusion for all $(x,t)\in\R^n\times\R$. Then, we complete the proof
arguing as in case 1).
\end{proof}

So in all cases, the compatibility conditions
\eqref{eq:compat-1-nonzero}--\eqref{eq:compat-2-nonzero} are satisfied
and we can apply the parabolic Whitney's extension theorem that we
have proved in Appendix~\ref{sec:parab-whitn-extens}, see
Theorem~\ref{thm:parab-Whitney}. Thus, there exists a function $F\in
C^{2m,m}(\R^n\times\R)$ such that
$$
\partial^\alpha_x\partial_t^j F=f_{\alpha,j}\quad\text{on } K,
$$
for any $|\alpha|+2j\leq \kappa$.

\medskip \emph{Step 2: Implicit function theorem.} Suppose now
$(x_0,t_0)\in \Sigma_\kappa^d(v)\cap K$.

We will consider two subcases: $d\leq n-2$ and $d=n-1$.

\smallskip 1) $d\in \{0,1,\ldots,n-2\}$. Then, there are multi-indexes
$(\beta'_i,k_i)$ with $|\beta_i'|+2k_i=\kappa-1$, for
$i=1,\ldots,n-1-d$ such that
$$
v_i=\nabla \partial_{x'}^{\beta'_i}\partial_t^{k_i}F(x_0,t_0)=\nabla \partial_{x'}^{\beta'_i}\partial_t^{k_i}p_\kappa^{(x_0,t_0)}
$$
are linearly independent.

On the other hand,
$$
\Sigma_\kappa^d(v)\cap K \subset \bigcap_{i=1}^{n-1-d}
\{\partial^{\beta_i'}_{x'}\partial_t^{k_i} F=0\}.
$$
Therefore, in view of the implicit function theorem, the linear
independence of $v_1,\ldots,v_{n-1-d}$ implies that
$\Sigma_\kappa^d(v)\cap K$ is contained in a $d$-dimensional
space-like $C^{1,0}$ manifold in a neighborhood of
$(x_0,t_0)$. Finally, since $\Sigma_k(u)=\bigcup_{j=1}^{\infty} E_j$
this implies the statement of the theorem in the case $d\in
\{0,1,\ldots, n-2\}$.

\smallskip 2) Suppose now $d=n-1$. In this case, by
Lemma~\ref{lem:timelike-blowup}, we have
$$
\partial_t^{m}F(x_0,t_0)=\partial_t^{m}p_\kappa^{(x_0,t_0)}\not=0.
$$
On the other hand,
$$
\Sigma_\kappa^d(v)\cap K \subset\{\partial_t^{m-1} F=0\}.
$$
Thus, by the implicit function theorem we obtain that
$\Sigma_\kappa^d(v)\cap K$ in a neighborhood of $(x_0,t_0)$ is
contained in a time-like $(n-1)$-dimensional $C^1$ manifold, as
required. This completes the proof of the theorem.
\end{proof}

\appendix
\section{Estimates in Gaussian spaces: Proofs}
\label{sec:est-gauss-proofs}

In this section we give the proofs of the estimates stated in
Section~\ref{sec:estimates-w2-1_2s_1+}.

\begin{proof}[Proof of Lemma~\ref{lem:w212}]
  \setcounter{step}{0} \step{1-W21-gauss} For the given $\rho>0$,
  choose $\rho<\tilde\rho<1$. Then, note that without loss of
  generality we may assume that $u(\cdot,-1)=0$, by multiplying $u$
  with a smooth cutoff function $\eta(t)$ such that $\eta=1$ on
  $[-\tilde\rho^2,0]$ and $\eta=0$ near $t=-1$.

  Next, fix a cutoff function $\hat \zeta_0\in C^\infty_0(\R^n)$ and
  let $R$ be so large that $\hat\zeta_0$ vanishes outside $B_{R-1}$
  and $u$ vanishes outside $B_{R-1}\times(-1,0]$. Then, approximate
  $u$ with the solutions of the penalized problem
  \begin{align*}
    \Delta u^\epsilon-\partial_t u^\epsilon=f^\epsilon&\quad\text{in }
    B_R^+\times(-1,0],\\
    \partial_{x_n}u^\epsilon=\beta_\epsilon(u^\epsilon)&\quad\text{on } B_R'\times(-1,0],\\
    u^\epsilon=0&\quad\text{on }(\partial B_R)^+\times(-1,0],\\
    u^\epsilon(\cdot, -1)=0&\quad\text{on }B_R^+,
  \end{align*}
  where $f^\epsilon$ is a mollification of $f$.

  Let now $r\in [\rho,\tilde\rho]$ be arbitrary. Then, for any small
  $\delta>0$ and $\eta\in W^{1,0}_2(B_R^+\times(-r^2,-\delta^2])$,
  vanishing on $(B_R^+\setminus B_{R-1}^+)\times(-r^2,-\delta^2]$, we
  will have
$$
\int_{S_r^+\setminus S_\delta^+} [\nabla u^\epsilon\nabla
\eta+u_t^\epsilon\eta+ f^\epsilon\eta] dxdt=-\int_{S_r'\setminus
  S_\delta'}\beta_\epsilon(u^\epsilon)\eta dx'dt.
$$
Now, for the cutoff function $\hat\zeta_0$ as above, define the family
of homogeneous functions in $S_1$ by letting
$$
\zeta_k(x,t)=|t|^{k/2}\hat\zeta_0(x/\sqrt{|t|}).
$$
Then, choosing $\eta=u^\epsilon\zeta_1^2 G$, we will have
\begin{align*}
  &\int_{S_r^+\setminus S_\delta^+}|\nabla u^\epsilon|^2\zeta_1^2
  G+(u^\epsilon\nabla u^\epsilon\nabla G\zeta_1^2+ u^\epsilon
  u^\epsilon_t\zeta_1^2 G)+2u^\epsilon\zeta_1\nabla u^\epsilon\nabla\zeta_1  G+f^\epsilon u^\epsilon\zeta_1^2 G\\
  &\qquad=-\int_{S_r'\setminus
    S_\delta'}\beta_\epsilon(u^\epsilon)u^\epsilon\zeta_1^2 G
  dx'dt\leq 0,
\end{align*}
where we have used that $s\beta_\epsilon(s)\geq 0$.  Next, recalling
that $\nabla G=\frac{x}{2t} G$ and that $Z((u^\epsilon)^2)=2
u^\epsilon(Zu^\epsilon)=2u^\epsilon(x\nabla u^\epsilon+2t\partial_t
u^\epsilon)$, we can rewrite the above inequality as
$$
\int_{S_r^+\setminus S_\delta^+}|\nabla u^\epsilon|^2\zeta_1^2
G+\frac{1}{4t}Z((u^\epsilon)^2)\zeta_1^2 G+2u^\epsilon\zeta_1 \nabla
u^\epsilon\nabla \zeta_1 G+f^\epsilon u^\epsilon\zeta_1^2 G\leq 0.
$$
We then can use the standard arguments in the proof of energy
inequalities, except that we need to handle the term involving
$Z((u^\epsilon)^2)$.  Making the change of variables $t=-\lambda^2$,
$x=\lambda y$ and using the identities
$$
G(\lambda x,-\lambda^2)=\lambda^n G(x,-1),\quad \zeta_1(\lambda
y,-\lambda^2)=\lambda \hat\zeta_0(y),$$ we obtain
\begin{align*}
  &\int_{S_r^+\setminus S_\delta^+}\frac{1}{2t}Z((u^\epsilon)^2)\zeta_1^2 G dxdt\\
  &\qquad =-\int_{\delta}^r\int_{\R^n} \lambda
  Z((u^\epsilon)^2)(\lambda y,-\lambda^2)\hat\zeta_0^2(y) G(y,-1)
  dyd\lambda\\
  &\qquad =
  -\int_\delta^r\int_{\R^n}\lambda^2\left[\frac{d}{d\lambda}(u^\epsilon(\lambda
    y,-\lambda^2)^2)\right]\hat\zeta_0^2(y) G(y,-1)dyd\lambda\\
  &\qquad= -\int_{\R^n_+} [r^2u^\epsilon(ry,-r^2)^2-\delta^2
  u^\epsilon(\delta
  y,-\delta^2)^2]\hat\zeta_0^2(y) G(y,-1)dy\\
  &\qquad\qquad+\int_\delta^r\int_{\R^n_+}
  2\lambda u^\epsilon(\lambda y,-\lambda^2)^2\hat\zeta_0^2 G(y,-1)dyd\lambda\\
  &\qquad \geq -r^2\int_{\R^n_+}
  u^\epsilon(ry,-r^2)^2\hat\zeta_0^2(y) G(y,-1)dy\\
  &\qquad=-r^2\int_{\R^n_+}
  u^\epsilon(\cdot,-r^2)\zeta_0^2G(\cdot,-r^2)dy,
\end{align*}
where we have used integration by parts is $\lambda$ variable.  Thus,
using Young's inequality, we conclude that
\begin{multline}
  \int_{S_r^+\setminus S_\delta^+}|\nabla u^\epsilon|^2\zeta_1^2 G\leq
  C_{n,\rho}\Big(\int_{\R^n_+} u^\epsilon(\cdot,-r^2)^2\zeta_0^2
  G(\cdot,-r^2)\\+\int_{S_r^+\setminus S_\delta^+}
  [(u^\epsilon)^2(|\nabla \zeta_1|^2+\zeta_0^2) +
  (f^\epsilon)^2\zeta_2^2] G\Big).
\end{multline}
Now, integrating over $r\in[\rho,\tilde \rho]$, we obtain
$$
\int_{S_\rho^+\setminus S_\delta^+}|\nabla u^\epsilon|^2\zeta_1^2
G\leq C_{n,\rho}\int_{S_{\tilde \rho}^+\setminus S_\delta^+}
[(u^\epsilon)^2(|\nabla \zeta_1|^2+\zeta_0^2) +
(f^\epsilon)^2\zeta_2^2] G.
$$
Letting first $\epsilon\to 0+$, then letting the support of the cutoff
function $\hat\zeta_0$ sweep $\R^n$, $\delta\to 0$, and replacing the
integral over $S_{\tilde\rho}^+$ by $S_1^+$, we conclude that
$$
\int_{S_{\rho}^+}|t||\nabla u|^2 G\leq C_{n,\rho} \int_{S_1^+}(u^2
+|t|^2f^2)G.
$$

\step{2-W21-gauss} Let $\hat\zeta_0$, $R$, homogeneous functions
$\zeta_k$, the approximations $u^\epsilon$, $r\in[\rho,\tilde \rho]$,
and $\delta$ be as above. Then, choosing as a test function
$\eta_{x_i}$, $i=1,2,\ldots, n-1$, with $\eta\in
W^{2,1}_2(S_r^+\setminus S_\delta^+)$ vanishing in $(B_R^+\setminus
B_{R-1}^+)\times(-r^2,-\delta^2]$ and integrating by parts, we obtain
$$
\int_{S_r^+\setminus S_\delta^+}[\nabla u_{x_i}^\epsilon\nabla
\eta+u^\epsilon_{x_it}
\eta+f^\epsilon\eta_{x_i}]dxdt=-\int_{S_r'\setminus
  S_\delta'}\beta_\epsilon'(u^\epsilon)u^\epsilon_{x_i}\eta dx'dt.
$$
Then, plugging $\eta=u_{x_i}^\epsilon\zeta^2_2 G$ in the above
identity, we will have
\begin{align*}
  &\int_{S_r^+\setminus S_\delta^+} |\nabla
  u_{x_i}^\epsilon|^2\zeta_2^2 G+(u_{x_i}^\epsilon\nabla
  u_{x_i}^\epsilon\nabla G\zeta_2^2+u_{x_it}^\epsilon u_{x_i}^\epsilon
  \zeta_2^2 G)+2u_{x_i}^\epsilon\zeta_2\nabla
  u_{x_i}^\epsilon\nabla \zeta_2 G\\
  &\qquad+\int_{S_r^+\setminus S_\delta^+} f^\epsilon
  u_{x_ix_i}\zeta_2^2 G+ 2f^\epsilon
  u_{x_i}^\epsilon\zeta_2(\zeta_2)_{x_i} G+f^\epsilon
  u_{x_i}^\epsilon\zeta_2^2 G_{x_i}\\
  &\qquad=-\int_{S_r'\setminus S_\delta'}
  \beta'_\epsilon(u^\epsilon)(u_{x_i}^\epsilon)^2\zeta_2^2 G \leq 0.
\end{align*}
Then, again using the identity $\nabla G=\frac{x}{2t} G$, we may
rewrite the above inequality as
\begin{align*}
  &\int_{S_r^+\setminus S_\delta^+} |\nabla
  u_{x_i}^\epsilon|^2\zeta_2^2 G+\frac{1}{4t}
  Z((u_{x_i}^\epsilon)^2)\zeta_2^2 G +2u_{x_i}^\epsilon\zeta_2\nabla
  u_{x_i}^\epsilon\nabla \zeta_2 G\\
  &\qquad\leq \int_{S_r^+\setminus S_\delta^+} |f^\epsilon|
  |u_{x_ix_i}^\epsilon|\zeta_2^2 G+ 2|f^\epsilon|
  |u_{x_i}^\epsilon||\zeta_2||\nabla \zeta_2| G+|f^\epsilon|
  |u_{x_i}^\epsilon|\zeta_2^2\frac{|x|}{2|t|} G.
\end{align*}
Arguing as in step \stepref{1-W21-gauss} above, we have
\begin{align*}
  \int_{S_r^+\setminus
    S_\delta^+}\frac{1}{2t}Z((u_{x_i}^\epsilon)^2)\zeta_2^2
  G&=-\int_{\R^n_+}
  [r^4u_{x_i}^\epsilon(ry,-r^2)^2-\delta^4u_{x_i}^\epsilon(\delta
  y,-\delta^2)^2]\hat\zeta_0(y)^2 G(y,-1)
  \\
  &\qquad+4\int_\delta^r\int_{\R^n_+}\lambda^3(u_{x_i}^\epsilon)^2\hat\zeta_0^2
  G\\&\geq -r^4\int_{\R^n_+}
  u_{x_i}^\epsilon(ry,-r^2)^2\hat\zeta_0(y)^2 G(y,-1)dy \\&=
  -r^2\int_{\R^n_+} u_{x_i}^\epsilon(\cdot,-r^2)^2\zeta_1^2
  G(\cdot,-r^2).
\end{align*}
We further estimate, by the appropriate Young inequalities,
\begin{align*}
  \int_{S_r^+\setminus
    S_\delta^+}|f^\epsilon||u_{x_i}^\epsilon|\zeta_2^2\frac{|x|}{2|t|}
  G&\leq C_{n,\rho}\int_{S_r^+\setminus S_\delta^+}
  (f^\epsilon)^2\zeta_2^2 G+c_{n,\rho}\int_{S_r^+\setminus
    S_\delta^+}|\nabla
  u^\epsilon|^2\zeta_1^2\frac{|x|^2}{t} G\\
  &\leq C_{n,\rho}\int_{S_r^+\setminus S_\delta^+}
  (f^\epsilon)^2\zeta_2^2 G \\
  &\qquad+c_{n,\rho}\int_{S_r^+\setminus S_\delta^+}[ |\nabla
  u^\epsilon|^2(\zeta_1^2+|\nabla \zeta_2|^2)+|D^2 u|^2\zeta_2^2]G,
\end{align*}
with a small constant $c_{n,\rho}>0$, where in the last step we have
used that the following claim.

\begin{claim}\label{clm:v2x2t} For any $v\in W^1_2(\R^n, G)$ and $t<0$
  we have
$$
\int_{\R^n} v^2\frac{|x|^2}{|t|} G(x,t)\leq C_n\int_{\R^n}
(v^2+|t||\nabla v|^2) G.
$$
\end{claim}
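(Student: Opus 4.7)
The plan is to prove this as a weighted Hardy/Poincar\'e-type inequality by exploiting the identity
$$
\nabla G(x,t) = \frac{x}{2t}\, G(x,t),
$$
which for $t<0$ rewrites as $\dfrac{x}{|t|}\,G = -2\,\nabla G$. This turns the weight $|x|^2/|t|$ in the integrand into something that can be handled by integration by parts in $x$ against the Gaussian.

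First I would write
$$
\int_{\R^n} v^2 \frac{|x|^2}{|t|}\,G\,dx = \int_{\R^n} v^2\, \frac{x\cdot x}{|t|}\,G\,dx = -2\int_{\R^n} v^2\, (x\cdot \nabla G)\,dx.
$$
Integrating by parts in $x$ (which is justified for $v\in W^1_2(\R^n,G)$ by a standard truncation/approximation argument using the rapid decay of $G$ and its derivatives), and using $\nabla\cdot x = n$, this becomes
$$
-2\int v^2 (x\cdot\nabla G)\,dx = 2n\int v^2\,G\,dx + 4\int v\,(x\cdot\nabla v)\,G\,dx.
$$

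Next I would apply Young's inequality to the last term in the form
$$
4|v|\,|x|\,|\nabla v| \;\le\; \tfrac12\, v^2\frac{|x|^2}{|t|} + 8\,|t|\,|\nabla v|^2,
$$
which yields
$$
\int v^2\frac{|x|^2}{|t|}\,G \;\le\; 2n\int v^2\,G + \tfrac12\int v^2\frac{|x|^2}{|t|}\,G + 8\int |t|\,|\nabla v|^2\,G.
$$
Absorbing the middle term on the left gives the claim with $C_n = \max\{4n,16\}$. The only (minor) technical point to justify carefully is the boundary term vanishing in the integration by parts; this is routine since one may first prove the estimate for $v\in C^\infty_c(\R^n)$ and then extend by density in $W^1_2(\R^n,G)$, using that the Gaussian weight $G(\cdot,t)$ for fixed $t<0$ makes $C^\infty_c(\R^n)$ dense. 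There is no substantial obstacle here; the whole proof is a short integration-by-parts/Young argument once the identity $\nabla G = (x/2t)G$ is put to use.
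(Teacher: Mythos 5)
Your proof is correct and is essentially identical to the paper's argument: both rewrite the weight via $\nabla G=\tfrac{x}{2t}G$, integrate by parts using $\div(xv^2)=nv^2+2v(x\cdot\nabla v)$, and absorb the cross term by Young's inequality. Your added remark about first working with $v\in C^\infty_c$ to justify the absorption and the vanishing boundary terms is a sensible (if routine) precaution that the paper leaves implicit.
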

\begin{proof} Using that $\nabla G=\frac{x}{2t} G$, and then
  integrating by parts, we obtain
  \begin{align*}
    \int_{\R^n} v^2\frac{|x|^2}{2|t|} G&=-\int_{\R^n}v^2x\cdot \nabla
    G=\int_{\R^n}\div(x v^2) G \\
    &=n\int_{\R^n} v^2 G+\int 2v (x\nabla v)  G\\
    &\leq n\int_{\R^n} v^2 G+\int_{\R^n}v^2\frac{|x|^2}{4|t|}
    G+\int_{\R^n}4|t||\nabla v|^2 G,
  \end{align*}
  which implies the desired estimate.
\end{proof}
Combining the estimates above, we obtain
\begin{align*}
  &\int_{S_r^+\setminus S_\delta^+} |\nabla u_{x_i}^\epsilon|^2\zeta_2^2 G\\
  &\qquad\leq C_{n,\rho}\Big(\int_{\R^n_+} u_{x_i}^\epsilon(\cdot,
  -r^2)^2\zeta_1^2 G(\cdot,-r^2)+\int_{S_r^+\setminus S_\delta^+}
  [|\nabla u^\epsilon|^2(\zeta_1^2+|\nabla \zeta_2|^2)+
  (f^\epsilon)^2\zeta_2^2] G\Big)\\
  &\qquad\qquad+c_{n,\rho}\int_{S_r^+\setminus S_\delta^+} |D^2
  u^\epsilon|^2\zeta_2^2 G.
\end{align*}

\step{3-W21-gauss} Using the notations of the previous step, taking a
test function $\eta_{x_n}$ and integrating by parts, we will obtain
$$
\int_{S_r^+\setminus S_\delta^+}[\nabla u_{x_n}^\epsilon\nabla
\eta+u^{\epsilon}_{x_nt}\eta+f^\epsilon \eta_{x_n}]
dxdt=-\int_{S_r'\setminus S_\delta'} [u_t^\epsilon
\eta+\nabla'u^\epsilon\nabla'\eta]dx'dt.
$$
Plugging $\eta=u_{x_n}^\epsilon\zeta_2^2 G$, we will have
\begin{align*}
  &\int_{S_r^+\setminus S_\delta^+}|\nabla
  u_{x_n}^\epsilon|^2\zeta_2^2 G+(u_{x_n}^\epsilon\nabla
  u_{x_n}^\epsilon\nabla G\zeta_2^2+u_{x_nt}^\epsilon u_{x_n}^\epsilon
  \zeta_2^2 G)+2u_{x_n}^\epsilon\zeta_2\nabla
  u_{x_n}^\epsilon\nabla \zeta_2 G\\
  &\qquad+\int_{S_r^+\setminus S_\delta^+} f^\epsilon
  u_{x_nx_n}\zeta_2^2 G+ 2f^\epsilon
  u_{x_n}^\epsilon\zeta_2(\zeta_2)_{x_n} G+f^\epsilon
  u_{x_n}^\epsilon\zeta_2^2 G_{x_n}\\
  &\qquad=-\int_{S_r'\setminus S_\delta'} [\nabla'u^\epsilon\nabla'
  G\beta_\epsilon(u^\epsilon)\zeta_2^2+u_t^\epsilon
  \beta_\epsilon(u^\epsilon)\zeta_2^2 G]+|\nabla'
  u^\epsilon_{x_i}|^2\beta'_\epsilon(u^\epsilon)\zeta_2^2 G\\
  &\qquad\qquad-\int_{S_r'\setminus
    S_\delta'}2u^\epsilon_{x_n}\nabla'u^\epsilon\nabla'\zeta_2\zeta_2
  G.
\end{align*}
We therefore have
\begin{align*}
  &\int_{S_r^+\setminus S_\delta^+}|\nabla
  u_{x_n}^\epsilon|^2\zeta_2^2 G+\frac{1}{4t} Z
  ((u_{x_n}^\epsilon)^2)\zeta_2^2 G +2u_{x_n}^\epsilon\zeta_2\nabla
  u_{x_n}^\epsilon\nabla \zeta_2 G\\
  &\qquad+\int_{S_r^+\setminus S_\delta^+} f^\epsilon
  u_{x_nx_n}\zeta_2^2 G+ 2f^\epsilon
  u_{x_n}^\epsilon\zeta_2(\zeta_2)_{x_n} G+f^\epsilon
  u_{x_n}^\epsilon\zeta_2^2 G_{x_n}\\
  &\qquad\leq-\int_{S_r'\setminus S_\delta'} \frac{1}{4t}
  Z(\mathcal{B}_\epsilon(u^\epsilon))\zeta_2^2
  G+2u^\epsilon_{x_n}\nabla'u^\epsilon\nabla'\zeta_2\zeta_2 G
  =J_1+J_2.
\end{align*}
To estimate $J_1$ we argue as before, however we now take into account
that the spatial dimension is less by one:
\begin{align*}
  J_1&=-\int_{S_r'\setminus S_\delta^+} \frac{1}{4t}
  Z(\mathcal{B}_\epsilon(u^\epsilon))\zeta_2^2 G\\
  &
  =\frac12\int_{\R^{n-1}}[r^3\mathcal{B}_\epsilon(u^\epsilon(ry',-r^2))-\delta^3\mathcal{B_\epsilon}(u^\epsilon(\delta
  y',-\delta^2))]\hat\zeta_0(y')^2 G(y',-1)dy'\\
  &\qquad -\frac32\int_\delta^r\int_{\R^{n-1}}\lambda^2\mathcal{B}_\epsilon(u^\epsilon(\lambda y',-\lambda^2))\hat\zeta_0(y')^2G(y'-1)dy'd\lambda\\
  &\leq C_{n,\rho}\epsilon,
\end{align*}
since $\mathcal{B}_\epsilon(s)\geq0$ for any $s\in\R$ and we have used
that $\mathcal{B}_\epsilon(u^\epsilon)\leq C_{n,\rho}\epsilon$ on
$B_{R-1}'\times[\rho,\tilde \rho]$. For more details see the proof of
Proposition~\ref{prop:diff-form-signor},
step~\stepref{3.3.2-diff-form}.

In order to estimate $J_2$ we write it as a solid integral
\begin{align*}
  J_2&=\int_{S_r^+\setminus S_\delta^+}\partial_{x_n}
  (u_{x_n}^\epsilon
  u_{x_i}(\zeta_2^2)_{x_i} G)\\
  &=\int_{S_r^+\setminus S_\delta^+}u^\epsilon_{x_nx_n}
  u_{x_i}^\epsilon(\zeta_2^2)_{x_i} G+ u_{x_n}^\epsilon
  u_{x_ix_n}^\epsilon (\zeta_2^2)_{x_i} G+u_{x_n}^\epsilon
  u_{x_i}^\epsilon(\zeta_2^2)_{x_ix_n} G\\
  &\qquad+\int_{S_r^+\setminus S_\delta^+}u_{x_n}^\epsilon
  u_{x_i}^\epsilon(\zeta_2^2)_{x_i} G_{x_n}=J_{21}+J_{22}.
\end{align*}
The terms in $J_{21}$ are estimated in a standard way by the
appropriate Young inequalities. The integral $J_{22}$ is estimated by
using Claim~\ref{clm:v2x2t}:
\begin{align*}
  |J_{22}|&\leq \int_{S_r^+\setminus S_\delta^+} |\nabla u^\epsilon|^2
  \zeta_2 |\nabla
  \zeta_2|\frac{x_n}{t} G dxdt\\
  &\leq C_{n,\rho}\int_{S_r^+\setminus S_\delta^+}|\nabla
  u^\epsilon|^2|\nabla \zeta_2|^2 G+c_{n,\rho}\int_{S_r^+\setminus
    S_\delta^+}|\nabla
  u^\epsilon|^2\zeta_1^2\frac{|x|^2}{|t|} G\\
  &\leq C_{n,\rho}\int_{S_r^+\setminus S_\delta^+}|\nabla
  u^\epsilon|^2|\nabla \zeta_2|^2 G+c_{n,\rho}\int_{S_r^+\setminus
    S_\delta^+}[ |\nabla u^\epsilon|^2(\zeta_1^2+|\nabla \zeta_2|^2)
  G+|D^2 u|^2\zeta_2^2 G],
\end{align*}
for a small constant $c_{n,\rho}>0$.

We further treat the term
$$
\int_{S_r^+\setminus S_\delta^+}\frac1{4t} Z
((u_{x_n}^\epsilon)^2)\zeta_2^2 G
$$
analogously to the similar term with $u_{x_i}^\epsilon$ in
\stepref{2-W21-gauss}. Collecting all estimates in this step, combined
with appropriate Young inequalities, we obtain
\begin{align*}
  \int_{S_r^+\setminus S_\delta^+} |\nabla
  u_{x_n}^\epsilon|^2\zeta_2^2 G&\leq C_{n,\rho}\int_{\R^n_+}
  u_{x_n}^\epsilon(\cdot,
  -r^2)^2\zeta_1^2 G(\cdot,-r^2)\\
  &\qquad+C_{n,\rho}\int_{S_r^+\setminus S_\delta^+} [|\nabla
  u^\epsilon|^2(\zeta_1^2+|\nabla \zeta_2|^2+|D^2\zeta_3|^2)+
  (f^\epsilon)^2\zeta_2^2] G\\
  &\qquad+c_{n,\rho}\int_{S_r^+\setminus S_\delta^+} |D^2
  u^\epsilon|^2\zeta_2^2 G + C_{n,\rho}\epsilon.
\end{align*}

\step{4-W21-gauss} Now combining the estimates in
\stepref{2-W21-gauss} and \stepref{3-W21-gauss} above and integrating
over $r\in[\rho,\tilde \rho]$ we obtain
\begin{align*}
  \int_{S_\rho^+\setminus S_\delta^+} |D^2 u^\epsilon|^2\zeta_2^2 G&
  \leq C_{n,\rho}\int_{S_{\tilde \rho}^+\setminus S_\delta^+}[|\nabla
  u^\epsilon|^2(\zeta_1^2+|\nabla
  \zeta_2|^2+|D^2\zeta_3|^2) +(f^\epsilon)^2\zeta_2^2] G\\
  &\qquad+C_{n,\rho}\epsilon.
\end{align*}
As before, passing to the limit as $\epsilon\to0$, increasing the
support of $\hat\zeta_0$, and then letting $\delta\to 0$, we conclude
that
$$
\int_{S_{\rho}^+} |t|^2|D^2u|^2 G\leq C_{n,\rho}\int_{S_1^+}
(|t||\nabla u|^2 +|t|^2 f^2) G.
$$
Finally noticing that $u_t=\Delta u-f$, we obtain the desired integral
estimate for $u_t$ as well. The proof is complete.
\end{proof}

\begin{proof}[Proof of Lemma~\ref{lem:w112-diff}] The proof is very
  similar to part \stepref{1-W21-gauss} of the proof of
  Lemma~\ref{lem:w212}. Indeed, for approximations $u_i^\epsilon$,
  $i=1,2$, we have the integral identities
$$
\int_{S_r^+\setminus S_\delta^+} [\nabla u_i^\epsilon\nabla
\eta+\partial_tu_i^\epsilon\eta+ f_i^\epsilon\eta]
dxdt=-\int_{S_r'\setminus S_\delta'}\beta_\epsilon(u_i^\epsilon)\eta
dx'dt.
$$
Taking the difference, choosing
$\eta=(u_1^\epsilon-u_2^\epsilon)\zeta_1^2 G$, and using the
inequality
$$
[\beta_\epsilon(u_1^\epsilon)-\beta_\epsilon(u_2^\epsilon)](u_1^\epsilon-u_2^\epsilon)\geq
0,
$$
we complete the proof as in step \stepref{1-W21-gauss} of the proof of
Lemma~\ref{lem:w212}.
\end{proof}

\section{Parabolic Whitney's extension theorem}
\label{sec:parab-whitn-extens}

Let $E$ be a compact subset of $\R^n\times\R$ and $f:E\to \R$ a
certain continuous function. Here we want to establish a theorem of
Whitney type (see \cite{Whi}) that will allow the extension of the
function $f$ to a function of class $C^{2m,m}(\R^n\times\R)$,
$m\in\N$. In fact, for that we need to have a family of functions
$\{f_{\alpha,j}\}_{|\alpha|+2j\leq m}$, where
$\alpha=(\alpha_1,\ldots,\alpha_n)$ and $j$ is a nonnegative integer.

\begin{theorem}[Parabolic Whitney's extension]\label{thm:parab-Whitney} Let $\{f_{\alpha,j}\}_{|\alpha|+2j\leq m}$ be a family
  of functions on $E$, with $f_{0,0}\equiv f$, satisfying the
  following compatibility conditions: there exists a family of moduli
  of continuity $\{\omega_{\alpha,j}\}_{|\alpha|+2j\leq 2m}$, such
  that
$$
f_{\alpha,j}(x,t)=\sum_{|\beta|+2k\leq 2m-|\alpha|-2j}
\frac{f_{\alpha+\beta,j+k}(x_0,t_0)}{\beta!k!}(x-x_0)^\beta(t-t_0)^k+R_{\alpha,j}(x,t;
x_0,t_0)
$$
and
$$
|R_{\alpha,j}(x,t; x_0,t_0)|\leq
\omega_{\alpha,j}(\|(x-x_0,t-t_0)\|)\|(x-x_0,t-t_0)\|^{2m-|\alpha|-2j}.
$$
Then, there exists a function $F\in C^{2m,m}(\R^n\times\R)$ such that
$F=f$ on $E$ and moreover $\partial_x^\alpha\partial_t^j
F=f_{\alpha,j}$ on $E$, for $|\alpha|+2j\leq 2m$.
\end{theorem}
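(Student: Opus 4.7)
My plan is to mimic Whitney's classical construction, adapted to the parabolic geometry in which the time variable scales like the square of the spatial variables. Throughout, I will use the parabolic distance $\|(x,t)\|=(|x|^2+|t|)^{1/2}$ and the associated parabolic quasi-metric $d_p((x,t),(y,s))=\|(x-y,t-s)\|$, with respect to which $E$ is closed and $\R^n\times\R\setminus E$ is open. The first step is to produce a parabolic Whitney-type decomposition of $\R^n\times\R\setminus E$: cover it by a locally finite family of ``parabolic cubes'' $\{Q_k\}$ (products of Euclidean cubes of sidelength $\ell_k$ and time intervals of length $\ell_k^2$) whose parabolic diameters are comparable to their parabolic distance to $E$. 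This can be obtained by dyadically subdividing the standard Whitney decomposition so that the time-side and space-side are in the right ratio, or by directly running the Whitney construction with respect to $d_p$. Once this decomposition is in hand, I build a smooth parabolic partition of unity $\{\varphi_k\}$ subordinate to slight enlargements $Q_k^*$, with uniform bounds $|\partial_x^\alpha\partial_t^j\varphi_k|\le C\,\ell_k^{-|\alpha|-2j}$ for $|\alpha|+2j\le 2m$.

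Next, for each $k$ select a point $(p_k,\tau_k)\in E$ with $d_p((p_k,\tau_k),Q_k)\le C\,\mathrm{diam}_p(Q_k)$, and associate the parabolic Taylor polynomial
\[
P_k(x,t)=\sum_{|\alpha|+2j\le 2m}\frac{f_{\alpha,j}(p_k,\tau_k)}{\alpha!\,j!}(x-p_k)^\alpha(t-\tau_k)^j.
\]
Define the extension by
\[
F(x,t)=\begin{cases} f(x,t),& (x,t)\in E,\\ \sum_k\varphi_k(x,t)P_k(x,t),& (x,t)\notin E.\end{cases}
\]
Since the sum is locally finite and each $P_k$ is smooth, $F\in C^\infty(\R^n\times\R\setminus E)$ automatically. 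The work is to show that $F\in C^{2m,m}$ globally and that $\partial_x^\alpha\partial_t^j F=f_{\alpha,j}$ on $E$.

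For a point $(x,t)\notin E$, I would compute $\partial_x^\alpha\partial_t^j F$ by differentiating the sum and, using $\sum_k\varphi_k\equiv 1$, rewriting
\[
\partial_x^\alpha\partial_t^j F(x,t)=\sum_k\partial_x^\alpha\partial_t^j\bigl[\varphi_k(P_k-P_{k_0})\bigr](x,t)+\partial_x^\alpha\partial_t^j P_{k_0}(x,t),
\]
where $k_0$ is an index with $(x,t)\in Q_{k_0}^*$. The compatibility hypothesis, applied at $(p_k,\tau_k)$ and $(p_{k_0},\tau_{k_0})$, bounds $P_k-P_{k_0}$ and all its derivatives up to order $(2m,m)$ by $\omega_{*}(\eta_k)\,\eta_k^{2m-|\alpha|-2j}$ where $\eta_k=d_p((p_k,\tau_k),(p_{k_0},\tau_{k_0}))$; combined with the size estimates on $\partial_x^\alpha\partial_t^j\varphi_k$ and the fact that $\eta_k$ is comparable to the parabolic distance of $(x,t)$ from $E$, this gives both the desired continuity of $\partial_x^\alpha\partial_t^j F$ up to $E$ and the identification $\partial_x^\alpha\partial_t^j F(x_0,t_0)=f_{\alpha,j}(x_0,t_0)$ for $(x_0,t_0)\in E$ via the standard argument: approaching $(x_0,t_0)\in E$ along points $(x,t)\notin E$, the difference quotient is controlled by $\omega_{\alpha,j}(\|(x-x_0,t-t_0)\|)$.

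The main obstacle, as in the classical proof, is the bookkeeping in the estimates for $P_k-P_{k_0}$ and for derivatives of $\varphi_k(P_k-P_{k_0})$; here one must track the anisotropic scaling carefully, weighting $\ell_k^{-|\alpha|-2j}$ against $\eta_k^{2m-|\alpha|-2j}$ using $\ell_k\lesssim\eta_k$, and verifying that the compatibility remainder $R_{\alpha,j}$ of the hypothesis delivers precisely the right power $\eta_k^{2m-|\alpha|-2j}$ times the modulus $\omega_{\alpha,j}(\eta_k)$. Once these estimates are in place, continuity of $\partial_x^\alpha\partial_t^j F$ on all of $\R^n\times\R$ follows, yielding $F\in C^{2m,m}$ with the required boundary values on $E$.
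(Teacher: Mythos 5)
Your proposal follows essentially the same route as the paper's proof: a parabolic Whitney decomposition with cubes of side $\ell$ in space and $\ell^2$ in time, a subordinate partition of unity with the anisotropic derivative bounds $|\partial_x^\alpha\partial_t^j\phi_i^*|\le C\ell_i^{-|\alpha|-2j}$, Taylor polynomials attached to nearest points of $E$, and the key estimate comparing $F$ and its derivatives to the Taylor polynomial at a nearby point of $E$ via the compatibility remainders. The outline is correct and the bookkeeping you flag (weighting $\ell_k^{-|\alpha|-2j}$ against $\eta_k^{2m-|\alpha|-2j}$ using $\ell_k\lesssim\eta_k$) is exactly the computation carried out in the paper.
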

The construction of the extension is done in the following four steps.

\medskip \emph{Step 1:} Parabolic Whitney cube decomposition of
$E^c=(\R^{n}\times\R)\setminus E$. We say that $Q$ is a parabolic
($k$-)dyadic cube if it has a form
$$Q=[a_1 2^{-k}, (a_1+1)2^{-k}]\times\cdots\times [a_n 2^{-k},
(a_n+1)2^{-k}]\times[b2^{-2k},(b+1)2^{-2k}],$$ where $a_i, b\in\Z$. We
will call $2^{-k}$ the size of $Q$ and denote it by $\ell(Q)$. We will
also call $((a_1+\frac12)2^{-k},\ldots, (a_n+\frac12)2^{-k},
(b+\frac12)2^{-2k}]$ the center of $Q$.

\smallskip The proof of the following lemma is very similar to its
Euclidean counterpart and is therefore omitted. (A slightly different
version of this lemma can be found in \cite{FS}*{Lemma~1.67}, for more
general homogeneous spaces.)

\begin{lemma}[Parabolic Whitney cube decomposition] For any closed set
  $E$ there exists a family $\mathcal{W}=\{Q_i\}$ of parabolic dyadic
  cubes with the following properties:
  \begin{enumerate}[label=\textup{(\roman*)}]
  \item $\bigcup_i Q_i=\Omega=E^c$,
  \item $Q_i^\circ\cap Q_j^\circ=\emptyset$ for $i\not=j$,
  \item $c_n\ell(Q_i)\leq \dist_p(Q_i,E)\leq C_n \ell(Q_i)$ for some
    positive constants $c_n, C_n$ depending only on the dimension
    $n$.\qed
  \end{enumerate}
\end{lemma}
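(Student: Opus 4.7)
The plan is to mimic the classical Euclidean Whitney decomposition (see e.g.\ \cite{LSU} or Stein's Singular Integrals) by replacing Euclidean dyadic cubes and Euclidean distance with parabolic dyadic cubes and the parabolic distance $\|\cdot\|$. The essential geometric input is that a parabolic $k$-dyadic cube $Q$ has parabolic diameter $\operatorname{diam}_p(Q) \le D_n\,\ell(Q)$ with $D_n = \sqrt{n+1}$, since its spatial side-length is $2^{-k}$ and its temporal side-length $2^{-2k}$ contributes $\sqrt{2^{-2k}} = 2^{-k}$ to the parabolic diameter. This matches the homogeneity of the parabolic scaling generator $Z$ and means the heuristic ``size equals parabolic distance'' is scale-consistent.

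The construction has three steps. First, I would layer $\Omega = E^c$ by distance: set
\[
\Omega_j = \{P\in\Omega \mid 2\cdot 2^{-j} < \operatorname{dist}_p(P,E) \le 4\cdot 2^{-j}\},\qquad j\in\Z,
\]
so that $\Omega = \bigsqcup_{j} \Omega_j$. Second, for each $j$ let $\mathcal{F}_j^0$ be the collection of all $j$-dyadic parabolic cubes that meet $\Omega_j$, and set $\mathcal{F}^0 = \bigcup_j \mathcal{F}_j^0$. For any $Q\in \mathcal{F}_j^0$ one immediately gets the two-sided bound
\[
(2-D_n\,2^{-j+j})\,2^{-j} \le \operatorname{dist}_p(Q,E) \le 4\cdot 2^{-j} + D_n\,2^{-j},
\]
i.e.\ $c_n \ell(Q) \le \operatorname{dist}_p(Q,E) \le C_n \ell(Q)$, which is property (iii). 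Moreover $\mathcal{F}^0$ covers $\Omega$, giving (i). Third, to enforce the disjointness (ii), I apply the standard dyadic maximality argument: since parabolic dyadic cubes at different scales are either nested or have disjoint interiors, I keep only the \emph{maximal} cubes of $\mathcal{F}^0$ with respect to inclusion, obtaining a subfamily $\mathcal{W}$. The constants can be chosen so that the two-sided distance bound is preserved for the ancestor cubes selected in this pruning, because passing from a cube to its parent doubles $\ell(Q)$ and changes $\operatorname{dist}_p(Q,E)$ by at most $D_n\ell(Q)$.

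The main (minor) technical point to check is that this pruning does not allow cubes to become ``too large'' — i.e.\ that the ancestor selection terminates before $\operatorname{dist}_p(Q,E)$ ceases to control $\ell(Q)$ from above. This is handled by choosing the layering constants (the $2$ and $4$ above) and the maximality threshold so that if $Q'$ is the parent of $Q\in \mathcal{F}^0$, then either $Q'\in \mathcal{F}^0$ too, or $\operatorname{dist}_p(Q',E) < c'_n\ell(Q')$, which forces the process to stop. Apart from this bookkeeping — identical to the Euclidean case up to replacing Euclidean norms by the parabolic norm and adjusting the dimensional constants — the proof is mechanical. Since the excerpt explicitly says the argument is analogous to its Euclidean counterpart (cf.\ \cite{FS}*{Lemma~1.67}), the details are omitted in the paper.
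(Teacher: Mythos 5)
Your construction is the standard Euclidean Whitney decomposition transplanted to parabolic dyadic cubes and the parabolic distance, which is exactly what the paper has in mind when it omits the proof (citing the Euclidean case and \cite{FS}); the skeleton — layering $\Omega=E^c$ by $\dist_p$, selecting cubes of matching scale, then passing to maximal cubes using dyadic nesting (which does hold for parabolic dyadic cubes, since temporal intervals of length $2^{-2k}$ nest inside those of length $2^{-2(k-1)}$) — is correct.

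There is, however, one step that fails as literally written: your layering constants $2$ and $4$ are dimension-independent, while $\operatorname{diam}_p(Q)\leq \sqrt{n+1}\,\ell(Q)$, so your claimed lower bound for $Q\in\mathcal{F}_j^0$ is $\dist_p(Q,E)\geq (2-\sqrt{n+1})\,2^{-j}$, which is non-positive for $n\geq 3$; hence property (iii) (the lower bound) is not established in that range. The repair is the usual one: make the layering thresholds proportional to the cube diameter, e.g.\ $\Omega_j=\{P\in\Omega \mid 2\sqrt{n+1}\,2^{-j}<\dist_p(P,E)\leq 4\sqrt{n+1}\,2^{-j}\}$; these intervals still tile $(0,\infty)$, and then $\dist_p(Q,E)\geq \sqrt{n+1}\,\ell(Q)$ and $\dist_p(Q,E)\leq 4\sqrt{n+1}\,\ell(Q)$ for every $Q\in\mathcal{F}_j^0$. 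A second, smaller point: your closing worry about the pruning is unnecessary and slightly misstated. One never passes to parents outside $\mathcal{F}^0$; one simply keeps the maximal elements of $\mathcal{F}^0$ itself, which therefore satisfy (iii) with no further estimate. Maximal elements exist because if $Q\subset Q'$ with both in $\mathcal{F}^0$, then $\dist_p(Q',E)\leq\dist_p(Q,E)$, so the (corrected) two-sided bound gives $\ell(Q')\leq C_n\,\ell(Q)$, i.e.\ the chain of ancestors inside $\mathcal{F}^0$ has boundedly many scales; disjointness of interiors of maximal cubes then follows from dyadic nesting. With these adjustments the argument is complete and matches the intended proof.
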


For every $Q_i$ let $(x_i,t_i)$ be the center and $\ell_i$ the size of
the parabolic cube $Q_i$. Then let
$Q_i^*=\delta_{1+\epsilon}[Q_i-(x_i,t_i)]+(x_i,t_i)$, where
$\delta_\lambda: (x,t)\mapsto (\lambda x, \lambda^2t)$ is the
parabolic dilation. Clearly, the family of $\{Q_i^*\}$ is no longer
disjoint, however, we every point in $E^c$ has a small neighborhood
that intersects at most $N=N_n$ cubes $Q_i^*$, provided
$0<\epsilon<\epsilon_n$ is small. Then define
$$
\phi_i(x,t)=\phi\left(\frac{x-x_i}{\ell_i},\frac{t-t_i}{\ell_i^2}\right),
$$
where $\phi\in C^\infty(\R^n\times\R)$ such that
$$
\phi\geq 0,\quad \phi>0\text{ on } I_1,\quad \supp \phi\subset
I_{1+\epsilon},
$$
where
$$
I_\lambda=[-\lambda/2,\lambda/2]\times\cdots\times[-\lambda/2,/2]\times[-\lambda^2/2,\lambda^2/2].
$$
We also observe that
\begin{equation*}
  |\partial_x^\alpha\partial_t^j \phi_i(x,t)|\leq A_{\alpha,j} \ell_i^{-|\alpha|-2j},
\end{equation*}
for some constants $A_{\alpha,j}$.  Next, we define a partition of
unity $\{\phi_i^*\}$ subordinate to $\{Q_i^*\}$ as follows. Let
$$
\phi_i^*(x,t)=\frac{\phi_i(x,t)}{\Phi(x,t )},\quad
\Phi(x,t)=\sum_{k}\phi_k(x,t), \quad (x,t)\in E^c.
$$
Note that in the definition of $\Phi$, the sum is locally finite and
therefore $C^\infty$ in $E^c$, and also satisfies $1\leq \Phi\leq N$.
Then $\phi_i^*$ are also $C^\infty$ in $E^c$ and we have
$$
\sum_{i}\phi_i^*=1\quad\text{in }E^c,
$$
where again the sum is locally finite. Moreover, it is easy to see
that, similarly to $\phi_i$, we have the estimates
\begin{equation}\label{eq:WE-dphi*-est}
  |\partial_x^\alpha\partial_t^j \phi_i^*(x,t)|\leq A_{\alpha,j}^* \ell_i^{-|\alpha|-2j}.
\end{equation}
\medskip

\emph{Step 2:} For every $(x_0,t_0)\in E$ let
$$
P(x,t; x_0,t_0)=\sum_{|\alpha|+2j\leq 2m}
\frac{f_{\alpha,j}(x_0,t_0)}{\alpha!j!}(x-x_0)^\alpha (t-t_0)^j.
$$
In addition to $P$, it is convenient to introduce
$$
P_{\alpha,j}(x,t; x_0,t_0)=\sum_{|\beta|+2k\leq 2m-|\alpha|-2j}
\frac{f_{\alpha+\beta,j+k}(x_0,t_0)}{\beta!k!}(x-x_0)^\beta(t-t_0)^k
$$
for $|\alpha|+2j\leq 2m$. Note that in fact
$P_{\alpha,j}(x,t;x_0,t_0)=\partial_x^\alpha\partial_t^j
P(x,t;x_0,t_0)$.  Then by definition
$$
f_{\alpha,j}(x,t)=P_{\alpha,j}(x,t; x_0,t_0)+R_{\alpha,j}(x,t;
x_0,t_0),
$$
for any $(x,t), (x_0,t_0)\in E$.

\begin{lemma}\label{lem:WE-diff} For any $(x_0,t_0), (x_1,t_1)\in E$ and
  $(x,t)\in\R^n\times\R$, we have
$$
P(x,t;x_1,t_1)-P(x,t;x_0,t_0)=\sum_{|\beta|+k\leq 2m}
R_{\beta,k}(x_1,t_1;x_0,t_0)
\frac{(x-x_1)^\beta}{\beta!}\frac{(t-t_1)^k}{k!}
$$
and more generally
\begin{multline*}
  P_{\alpha,j}(x,t;x_1,t_1)-P_{\alpha,j}(x,t;x_0,t_0)=\\\sum_{|\beta|+k\leq
    2m-|\alpha|-2j} R_{\alpha+\beta,j+k}(x_1,t_1;x_0,t_0)
  \frac{(x-x_1)^\beta}{\beta!}\frac{(t-t_1)^k}{k!}.
\end{multline*}
\end{lemma}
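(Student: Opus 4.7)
\medskip\noindent\textbf{Proof plan.} My approach is to reduce the statement to the exactness of a finite Taylor expansion for parabolic polynomials, combined with the compatibility hypothesis. The first observation is purely algebraic: differentiating the definition of $P$ monomial by monomial (using $\partial_{x_i}(x-x_0)^\alpha = \alpha_i(x-x_0)^{\alpha-e_i}$ and $\partial_t(t-t_0)^j = j(t-t_0)^{j-1}$ and reindexing with $\beta=\gamma-\alpha$, $k=\ell-j$) yields the identity
\[
P_{\alpha,j}(x,t;x_0,t_0) \;=\; \partial_x^\alpha\partial_t^j\, P(x,t;x_0,t_0),
\]
so that the family $\{P_{\alpha,j}\}_{|\alpha|+2j\le 2m}$ is nothing but the collection of all parabolic derivatives of $P(\cdot,\cdot;x_0,t_0)$ of parabolic order at most $2m$.

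\medskip
Next, I would exploit the fact that $P(\cdot,\cdot;x_0,t_0)$ is, by construction, a sum of monomials $x^\alpha t^j$ with $|\alpha|+2j\le 2m$. A direct binomial expansion of each such monomial around $(x_1,t_1)$,
\[
x^\alpha t^j \;=\; \sum_{\beta\le\alpha,\, k\le j}\binom{\alpha}{\beta}\binom{j}{k} x_1^{\alpha-\beta}t_1^{j-k}(x-x_1)^\beta(t-t_1)^k,
\]
produces only terms with $|\beta|+2k\le|\alpha|+2j\le 2m$, and the coefficients are precisely $(\beta!\,k!)^{-1}\partial_x^\beta\partial_t^k(x^\alpha t^j)|_{(x_1,t_1)}$. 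Summing over the monomials of $P(\cdot,\cdot;x_0,t_0)$ shows that its parabolic Taylor expansion around $(x_1,t_1)$ truncated at parabolic order $2m$ is exact:
\[
P(x,t;x_0,t_0)=\sum_{|\beta|+2k\le 2m}\frac{P_{\beta,k}(x_1,t_1;x_0,t_0)}{\beta!\,k!}(x-x_1)^\beta(t-t_1)^k,
\]
where I have used the identification of step one.

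\medskip
Writing $P(x,t;x_1,t_1)$ directly from its definition and subtracting the previous identity yields
\[
P(x,t;x_1,t_1)-P(x,t;x_0,t_0)=\sum_{|\beta|+2k\le 2m}\frac{f_{\beta,k}(x_1,t_1)-P_{\beta,k}(x_1,t_1;x_0,t_0)}{\beta!\,k!}(x-x_1)^\beta(t-t_1)^k.
\]
By the compatibility hypothesis applied with $\alpha=\beta$, $j=k$ at the pair $(x_1,t_1),(x_0,t_0)\in E$, the numerator on the right is exactly $R_{\beta,k}(x_1,t_1;x_0,t_0)$, which gives the claimed identity for $P$. The more general statement for $P_{\alpha,j}$ then follows by applying $\partial_x^\alpha\partial_t^j$ in the $(x,t)$ variable to both sides (since this operator preserves parabolic degree at most $2m-|\alpha|-2j$ and the remainders $R_{\beta,k}$ are constants in $(x,t)$), yielding the analogous identity with the summation restricted to $|\beta|+2k\le 2m-|\alpha|-2j$.

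\medskip
There is no real obstacle here; the only point that requires a moment of care is verifying that the parabolic Taylor expansion of a parabolic polynomial of degree $\le 2m$ is exact when truncated at parabolic order $2m$ (as opposed to an ordinary Taylor expansion which would need to go up to order $2m$ in $t$ alone). This is settled by the monomial-by-monomial argument above, and once it is in hand everything else is bookkeeping.
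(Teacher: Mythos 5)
Your proof is correct and follows essentially the same route as the paper's: both arguments rest on the fact that each side is a polynomial of parabolic degree at most $2m-|\alpha|-2j$, hence determined by its derivatives at $(x_1,t_1)$, together with the compatibility identity $f_{\beta,k}(x_1,t_1)=P_{\beta,k}(x_1,t_1;x_0,t_0)+R_{\beta,k}(x_1,t_1;x_0,t_0)$. The only cosmetic difference is that you prove the $\alpha=0$, $j=0$ case via the exact parabolic Taylor expansion and then differentiate, whereas the paper verifies the general identity directly by matching all derivatives of order $\le 2m-|\alpha|-2j$ at $(x_1,t_1)$.
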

\begin{proof} We will prove the latter formula. It is enough to check
  that the partial derivatives $\partial_x^\beta\partial_t^k$ of both
  sides equal to each other for $|\beta|+2k\leq 2m-|\alpha|-2j$, as
  both sides are polynomials of parabolic degree $ 2m-|\alpha|-2j$. We
  have
  \begin{align*}
    \partial_x^\beta\partial_t^k P_{\alpha,j}(x,t;
    x_1,t_1)\big|_{(x,t)=(x_1,t_1)}&=f_{\alpha+\beta,j+k}(x_1,t_1)\\
    \partial_x^\beta\partial_t^k P_{\alpha,j}(x,t;
    x_0,t_0)\big|_{(x,t)=(x_1,t_1)}&=P_{\alpha+\beta,j+k}(x_1,t_1;x_0,t_0),
  \end{align*}
  which implies the desired equality.
\end{proof}

\emph{Step 3:} We are now ready to define the extension function $F$.
For every $Q_i$ let $(y_i,s_i)\in E$ be such that
$\dist_p(Q_i,E)=\dist_p(Q_i,(y_i,s_i))$. Note that $(y_i,s_i)$ is not
necessarily unique. Then define
$$
F(x,t)=
\begin{cases}
  f(x,t)=f_{0,0}(x,t), & (x,t)\in E\\
  \displaystyle{\sum_{i} P(x,t;y_i,s_i)\phi^*_i(x,t)}, &(x,t)\in E^c.
\end{cases}
$$
From the local finiteness of the partition of unity, it is clear that
$F$ is $C^\infty$ in $E^c$. Then we can define
$$
F_{\alpha,j}(x,t)=
\begin{cases}
  f_{\alpha,j}(x,t), & (x,t)\in E\\
  \partial^\alpha_x\partial^j_t F(x,t), &(x,t)\in E^c,
\end{cases}
$$
for $|\alpha|+2j\leq 2m$.

\begin{lemma}\label{lem:WE-F-P} There exist moduli of continuity $\tilde\omega=\tilde\omega_{0,0}$ and
  $\tilde\omega_{\alpha,j}$, $|\alpha|+2j\leq 2m$, such that for
  $(x,t)\in\R^n\times\R$ and $(x_0,t_0)\in E$ we have
  \begin{align*}
    |F(x,t)-P(x,t;x_0,t_0)|&\leq \tilde\omega
    (\|(x-x_0,t-t_0)\|)\|(x-x_0,t-t_0)\|^{2m},\\
    \intertext{and more generally}
    |F_{\alpha,j}(x,t)-P_{\alpha,j}(x,t;x_0,t_0)|&\leq
    \tilde\omega_{\alpha,j}(\|(x-x_0,t-t_0)\|)\|(x-x_0,t-t_0)\|^{2m-|\alpha|-2j},
  \end{align*}
  for $|\alpha|+2j\leq 2m$.
\end{lemma}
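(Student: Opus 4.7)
The proof splits according to whether $(x,t)\in E$ or $(x,t)\in E^c$. If $(x,t)\in E$ then by definition $F_{\alpha,j}(x,t)=f_{\alpha,j}(x,t)$, so $F_{\alpha,j}(x,t)-P_{\alpha,j}(x,t;x_0,t_0)=R_{\alpha,j}(x,t;x_0,t_0)$, and the compatibility hypothesis gives the claim with $\tilde\omega_{\alpha,j}=\omega_{\alpha,j}$. From now on let $(x,t)\in E^c$, set $d=\dist_p((x,t),E)$, and pick an auxiliary point $(y_0,s_0)\in E$ with $\|(x-y_0,t-s_0)\|\leq 2d$. The essential geometric fact is that if $\phi_i^*(x,t)\neq 0$ then $(x,t)\in Q_i^*$, and combined with the Whitney estimate $c_n\ell_i\leq \dist_p(Q_i,E)\leq C_n\ell_i$ this forces $\ell_i$ and $d$ to be comparable, which in turn gives $\|(x-y_i,t-s_i)\|\leq C\ell_i$ and $\|(y_i-y_0,s_i-s_0)\|\leq Cd$. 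Since $(x_0,t_0)\in E$, also $d\leq \|(x-x_0,t-t_0)\|$, so $\|(y_0-x_0,s_0-t_0)\|$ and $\|(y_i-x_0,s_i-t_0)\|$ are both bounded by $C\|(x-x_0,t-t_0)\|$.

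The plan is to decompose the error as $F_{\alpha,j}(x,t)-P_{\alpha,j}(x,t;x_0,t_0)=A+B$, where
$$A:=F_{\alpha,j}(x,t)-P_{\alpha,j}(x,t;y_0,s_0),\qquad B:=P_{\alpha,j}(x,t;y_0,s_0)-P_{\alpha,j}(x,t;x_0,t_0),$$
and to estimate $A$ and $B$ separately. The term $B$ is immediate: Lemma~\ref{lem:WE-diff} expands it as a sum over $|\beta|+2k\leq 2m-|\alpha|-2j$ of $R_{\alpha+\beta,j+k}(y_0,s_0;x_0,t_0)(x-y_0)^\beta(t-s_0)^k/(\beta!\,k!)$, and the compatibility bound combined with $\|(y_0-x_0,s_0-t_0)\|\leq C\|(x-x_0,t-t_0)\|$ and $|(x-y_0)^\beta(t-s_0)^k|\leq Cd^{|\beta|+2k}\leq C\|(x-x_0,t-t_0)\|^{|\beta|+2k}$ shows that $|B|\leq \tilde\omega(\|(x-x_0,t-t_0)\|)\|(x-x_0,t-t_0)\|^{2m-|\alpha|-2j}$ for a suitable modulus of continuity.

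For $A$, the Leibniz rule applied to $F(x,t)=\sum_i P(x,t;y_i,s_i)\phi_i^*(x,t)$ on $E^c$ yields
$$F_{\alpha,j}(x,t)=\sum_{i\in S}\sum_{(\alpha',j')\leq(\alpha,j)}\binom{\alpha}{\alpha'}\binom{j}{j'}P_{\alpha',j'}(x,t;y_i,s_i)\,\partial_x^{\alpha-\alpha'}\partial_t^{j-j'}\phi_i^*(x,t),$$
where $S=\{i:\phi_i^*(x,t)\neq 0\}$ has cardinality at most $N_n$. Since $\sum_i\phi_i^*\equiv 1$ on $E^c$ with locally finite sum, $\sum_i\partial_x^{\alpha-\alpha'}\partial_t^{j-j'}\phi_i^*=0$ for every $(\alpha',j')<(\alpha,j)$, which together with the identity $P_{\alpha,j}(x,t;y_0,s_0)=\sum_i P_{\alpha,j}(x,t;y_0,s_0)\phi_i^*(x,t)$ allows us to rewrite
$$A=\sum_{i\in S}\sum_{(\alpha',j')\leq(\alpha,j)}\binom{\alpha}{\alpha'}\binom{j}{j'}[P_{\alpha',j'}(x,t;y_i,s_i)-P_{\alpha',j'}(x,t;y_0,s_0)]\,\partial_x^{\alpha-\alpha'}\partial_t^{j-j'}\phi_i^*(x,t).$$
Applying Lemma~\ref{lem:WE-diff} with both centers $(y_i,s_i),(y_0,s_0)\in E$, the compatibility bound together with $\|(y_i-y_0,s_i-s_0)\|\leq Cd$ and $|(x-y_i)^\beta(t-s_i)^k|\leq C\ell_i^{|\beta|+2k}\leq Cd^{|\beta|+2k}$ yields $|P_{\alpha',j'}(x,t;y_i,s_i)-P_{\alpha',j'}(x,t;y_0,s_0)|\leq \tilde\omega(Cd)\,d^{2m-|\alpha'|-2j'}$. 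Multiplying by the bound $|\partial_x^{\alpha-\alpha'}\partial_t^{j-j'}\phi_i^*(x,t)|\leq A_{\alpha,j}^*\ell_i^{-|\alpha-\alpha'|-2(j-j')}\leq Cd^{-|\alpha-\alpha'|-2(j-j')}$ from \eqref{eq:WE-dphi*-est} causes the powers of $d$ to collapse to $d^{2m-|\alpha|-2j}$, and summing over the $O(1)$ indices $i$ and $(\alpha',j')$ gives $|A|\leq \tilde\omega(Cd)\,d^{2m-|\alpha|-2j}\leq \tilde\omega(C\|(x-x_0,t-t_0)\|)\|(x-x_0,t-t_0)\|^{2m-|\alpha|-2j}$.

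The essential obstacle, and the reason for introducing the intermediate point $(y_0,s_0)\in E$, lies in the off-diagonal terms $(\alpha',j')<(\alpha,j)$ in the expansion of $A$: the derivatives $\partial_x^{\alpha-\alpha'}\partial_t^{j-j'}\phi_i^*$ carry singular factors $\ell_i^{-|\alpha-\alpha'|-2(j-j')}$ which blow up as $d\to 0$. A direct comparison with $(x_0,t_0)$ would bound the corresponding polynomial differences only by $\|(x-x_0,t-t_0)\|^{2m-|\alpha'|-2j'}$, which is insufficient to absorb the singular factors when $d\ll \|(x-x_0,t-t_0)\|$; by contrast, comparing to the nearby $(y_0,s_0)$ yields the tighter bound $d^{2m-|\alpha'|-2j'}$, which exactly cancels $\ell_i^{-|\alpha-\alpha'|-2(j-j')}\approx d^{-|\alpha-\alpha'|-2(j-j')}$ to produce the correct total homogeneity $d^{2m-|\alpha|-2j}$.
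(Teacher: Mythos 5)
Your proof is correct, and it is worth comparing it with the paper's own argument because the two are not quite the same. For the zeroth-order estimate the paper compares $P(x,t;y_i,s_i)$ with $P(x,t;x_0,t_0)$ directly, using $\sum_i\phi_i^*=1$, Lemma~\ref{lem:WE-diff}, and the chain of inequalities $\|(x_0-y_i,t_0-s_i)\|\leq C_n\|(x-y_i,t-s_i)\|\leq C_n^2\|(x-x_0,t-t_0)\|$; since no derivatives fall on $\phi_i^*$ there, all factors are controlled by powers of $\delta=\|(x-x_0,t-t_0)\|$ and the direct comparison suffices. For the derivative estimates the paper only writes the Leibniz expansion and says one ``argues as above,'' whereas you insert the intermediate nearest point $(y_0,s_0)$ and split the error into $A+B$. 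Your extra step is not cosmetic: in the off-diagonal Leibniz terms the factor $\partial_x^{\alpha-\alpha'}\partial_t^{j-j'}\phi_i^*$ contributes $\ell_i^{-|\alpha-\alpha'|-2(j-j')}$ with $\ell_i\sim d=\dist_p((x,t),E)$, while a direct comparison of $P_{\alpha',j'}(\cdot;y_i,s_i)$ with $P_{\alpha',j'}(\cdot;x_0,t_0)$ only controls the lower-order pieces of that difference (e.g.\ the constant term $R_{\alpha',j'}(y_i,s_i;x_0,t_0)$) by powers of $\delta$, which cannot absorb the negative powers of $\ell_i$ when $d\ll\delta$. Comparing instead against $(y_0,s_0)$, at distance $\sim d$ from $(y_i,s_i)$, replaces those powers of $\delta$ by powers of $d$, and the homogeneities cancel exactly — this is the classical Whitney--Stein device, and it (or an equivalent regrouping using $\sum_i\partial_x^{\alpha-\alpha'}\partial_t^{j-j'}\phi_i^*=0$) is precisely what the paper's ``argue as above'' leaves implicit. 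So your write-up is, if anything, the more complete one; all the supporting geometric facts you invoke ($\ell_i\sim d$, $\|(y_i-y_0,s_i-s_0)\|\leq Cd$, $d\leq\delta$, the bounded overlap of the $Q_i^*$) are correct consequences of the Whitney decomposition and of \eqref{eq:WE-dphi*-est}.
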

\begin{proof} Note that for $(x,t)\in E$, the estimates follows from
  the compatibility assumptions. For $(x,t)\in E^c$, we have
  \begin{align*}
    &|F(x,t)-P(x,t;x_0,t_0)|=\big|\sum_i [P(x,t;y_i,s_i)-P(x,t;x_0,t_0)]\phi_i^*(x,t)\big|\\
    &\qquad\leq\sum_{i} \sum_{|\alpha|+2j\leq
      2m}|R_{\alpha,j}(y_i,s_i;x_0,t_0)
    |\frac{|x-y_i|^\alpha}{\alpha!}\frac{|t-s_i|^j}{j!}\phi_i^*(x,t)\\
    &\qquad\leq \sum_{i, |\alpha|+2j\leq
      2m}\omega_{\alpha,j}(C_n\|(x-y_i,t_0-s_i)\|)\|(x-y_i,t-s_i)\|^{2m-|\alpha|-2j+|\alpha|+2j}\phi_i^*(x,t)\\
    &\qquad\leq \tilde\omega(\|(x-x_0,t-t_0)\|)\|(x-x_0,t-t_0)\|^{2m},
  \end{align*}
  using that
$$
\|(x_0-y_i,t_0-s_i)\|\leq C_n\|(x-y_i,t-s_i)\|\leq
C_n^2\|(x-x_0,t-t_0)\|
$$
for $(x,t)\in Q_i^*$.

The second estimate in the lemma is obtained in a similar way.
Indeed, we can write
\begin{align*}
  &|F_{\alpha,j}(x,t)-P_{\alpha,j}(x,t)|=|\partial_x^\alpha\partial_t^j
  [F(x,t)-P(x,t;x_0,t_0]|\\
  &\qquad=|\partial_x^\alpha\partial_t^j\sum_i
  [P(x,t;y_i,s_i)-P(x,t;x_0,t_0)]\phi_i^*(x,t)|\\
  &\qquad=\sum_{i,\beta\leq \alpha, k\leq j}
  C^{\alpha,j}_{\beta,k}[P_{\beta,k}(x,t;y_i,s_i)-P_{\beta,k}(x,t;x_0,t_0)]\partial_x^{\alpha-\beta}\partial_t^{j-k}\phi_i^*(x,t)
\end{align*}
and then we argue as above by using Lemma~\ref{lem:WE-diff} and the
estimates \eqref{eq:WE-dphi*-est}.
\end{proof}

\begin{proof}[Proof of Theorem~\ref{thm:parab-Whitney}] Note that
  Lemma~\ref{lem:WE-F-P} implies that
  \begin{alignat*}{2}
    \partial_x^\beta
    F_{\alpha,j}(x,t)&=F_{\alpha+\beta,j}(x,t),&\quad&\text{for
      $|\beta|=1$,
      if $\alpha|+2j\leq 2m-1$}\\
    \partial_t F_{\alpha,j}(x,t)&=F_{\alpha,j+1}(x,t),&\quad&\text{if
      $\alpha|+2j\leq 2m-2$}
  \end{alignat*}
  at every $(x,t)\in E$. The same equalities hold also in $E^c$, by
  the definition of $F_{\alpha,j}$.  Thus, arguing by induction in the
  order of the derivative $|\alpha|+2j\leq 2m$ and using
  Lemma~\ref{lem:WE-F-P}, we prove that everywhere in $\R^n\times\R$
$$
\partial_x^\alpha\partial_t^j F=F_{\alpha,j},\quad |\alpha|+2j\leq 2m.
$$
We also note that $F_{\alpha,j}$ are continuous by
Lemma~\ref{lem:WE-F-P}.  The proof is complete.
\end{proof}


\begin{bibdiv}
\begin{biblist}
\bib{ALP}{article}{
  author={Allen, Mark},
  author={Lindgren, Erik},
  author={Petrosyan, Arshak},
  title={The two-phase fractional obstacle problem},
  date={2012},
  eprint={arXiv:1212.1492},
}

\bib{AS}{article}{
  author={Allen, Mark},
  author={Shi, Wenhui},
  title={The two phase parabolic Signorini problem},
  date={2013},
  eprint={arXiv:1304.0424},
}

\bib{Alm}{book}{
   author={Almgren, Frederick J., Jr.},
   title={Almgren's big regularity paper},
   series={World Scientific Monograph Series in Mathematics},
   volume={1},
   note={$Q$-valued functions minimizing Dirichlet's integral and the
   regularity of area-minimizing rectifiable currents up to codimension 2;
   With a preface by Jean E.\ Taylor and Vladimir Scheffer},
   publisher={World Scientific Publishing Co. Inc.},
   place={River Edge, NJ},
   date={2000},
   pages={xvi+955},
   isbn={981-02-4108-9},
   review={\MR{1777737 (2003d:49001)}},
}

\bib{And}{article}{
   author={Andersson, John},
   title={Optimal regularity and free boundary regularity for the Signorini
   problem},
   journal={Algebra i Analiz},
   volume={24},
   date={2012},
   number={3},
   pages={1--21},
   issn={0234-0852},
   translation={
      journal={St. Petersburg Math. J.},
      volume={24},
      date={2013},
      number={3},
      pages={371--386},
      issn={1061-0022},
   },
   review={\MR{3014126}},
   doi={10.1090/S1061-0022-2013-01244-1},
}

\bib{AU0}{article}{
   author={Arkhipova, A. A.},
   author={Ural{\cprime}tseva, N. N.},
   title={Regularity of the solution of a problem with a two-sided limit on
   a boundary for elliptic and parabolic equations},
   language={Russian},
   note={Translated in Proc.\ Steklov Inst.\ Math.\ {\bf 1989}, no.\ 2,
   1--19;
   Boundary value problems of mathematical physics, 13 (Russian)},
   journal={Trudy Mat. Inst. Steklov.},
   volume={179},
   date={1988},
   pages={5--22, 241},
   issn={0371-9685},
   review={\MR{964910 (90h:35044)}},
}

\bib{AU}{article}{
   author={Arkhipova, A.},
   author={Uraltseva, N.},
   title={Sharp estimates for solutions of a parabolic Signorini problem},
   journal={Math. Nachr.},
   volume={177},
   date={1996},
   pages={11--29},
   issn={0025-584X},
   review={\MR{1374941 (97a:35084)}},
   doi={10.1002/mana.19961770103},
}

\bib{Ath1}{article}{
   author={Athanasopoulous, Ioannis},
   title={Regularity of the solution of an evolution problem with
   inequalities on the boundary},
   journal={Comm. Partial Differential Equations},
   volume={7},
   date={1982},
   number={12},
   pages={1453--1465},
   issn={0360-5302},
   review={\MR{679950 (84m:35052)}},
   doi={10.1080/03605308208820258},
}

\bib{Ath2}{article}{
   author={Athanasopoulos, Ioannis},
   title={A temperature control problem},
   journal={Internat. J. Math. Math. Sci.},
   volume={7},
   date={1984},
   number={1},
   pages={113--116},
   issn={0161-1712},
   review={\MR{743830 (85i:35143)}},
   doi={10.1155/S0161171284000120},
}

\bib{AC0}{article}{
   author={Athanasopoulos, Ioannis},
   author={Caffarelli, Luis A.},
   title={A theorem of real analysis and its application to free boundary
   problems},
   journal={Comm. Pure Appl. Math.},
   volume={38},
   date={1985},
   number={5},
   pages={499--502},
   issn={0010-3640},
   review={\MR{803243 (86j:49062)}},
   doi={10.1002/cpa.3160380503},
}

\bib{AC}{article}{
   author={Athanasopoulos, I.},
   author={Caffarelli, L. A.},
   title={Optimal regularity of lower dimensional obstacle problems},
   language={English, with English and Russian summaries},
   journal={Zap. Nauchn. Sem. S.-Peterburg. Otdel. Mat. Inst. Steklov.
   (POMI)},
   volume={310},
   date={2004},
   number={Kraev. Zadachi Mat. Fiz. i Smezh. Vopr. Teor. Funkts. 35
   [34]},
   pages={49--66, 226},
   issn={0373-2703},
   translation={
      journal={J. Math. Sci. (N. Y.)},
      volume={132},
      date={2006},
      number={3},
      pages={274--284},
      issn={1072-3374},
   },
   review={\MR{2120184 (2006i:35053)}},
   doi={10.1007/s10958-005-0496-1},
}

\bib{AC-two-phase}{article}{
   author={Athanasopoulos, I.},
   author={Caffarelli, L. A.},
   title={Continuity of the temperature in boundary heat control problems},
   journal={Adv. Math.},
   volume={224},
   date={2010},
   number={1},
   pages={293--315},
   issn={0001-8708},
   review={\MR{2600998 (2011b:35587)}},
   doi={10.1016/j.aim.2009.11.010},
}

\bib{ACS}{article}{
   author={Athanasopoulos, I.},
   author={Caffarelli, L. A.},
   author={Salsa, S.},
   title={The structure of the free boundary for lower dimensional obstacle
   problems},
   journal={Amer. J. Math.},
   volume={130},
   date={2008},
   number={2},
   pages={485--498},
   issn={0002-9327},
   review={\MR{2405165 (2009g:35345)}},
   doi={10.1353/ajm.2008.0016},
}

\bib{Bre}{article}{
   author={Br{\'e}zis, Ha{\"{\i}}m},
   title={Probl\`emes unilat\'eraux},
   journal={J. Math. Pures Appl. (9)},
   volume={51},
   date={1972},
   pages={1--168},
   issn={0021-7824},
   review={\MR{0428137 (55 \#1166)}},
}

\bib{Ca-sig}{article}{
   author={Caffarelli, L. A.},
   title={Further regularity for the Signorini problem},
   journal={Comm. Partial Differential Equations},
   volume={4},
   date={1979},
   number={9},
   pages={1067--1075},
   issn={0360-5302},
   review={\MR{542512 (80i:35058)}},
   doi={10.1080/03605307908820119},
}

\bib{Ca1}{article}{
   author={Caffarelli, Luis A.},
   title={A monotonicity formula for heat functions in disjoint domains},
   conference={
      title={Boundary value problems for partial differential equations and
      applications},
   },
   book={
      series={RMA Res. Notes Appl. Math.},
      volume={29},
      publisher={Masson},
      place={Paris},
   },
   date={1993},
   pages={53--60},
   review={\MR{1260438 (95e:35096)}},
}

\bib{Ca}{article}{
   author={Caffarelli, L. A.},
   title={The obstacle problem revisited},
   journal={J. Fourier Anal. Appl.},
   volume={4},
   date={1998},
   number={4-5},
   pages={383--402},
   issn={1069-5869},
   review={\MR{1658612 (2000b:49004)}},
   doi={10.1007/BF02498216},
}

\bib{CF}{article}{
   author={Caffarelli, Luis},
   author={Figalli, Alessio},
   title={Regularity of solutions to the parabolic fractional obstacle
     problem},
   journal={J. Reine Angew. Math},
   date={2012},
   doi={10.1515/crelle.2012.036},
}

\bib{CPS}{article}{
   author={Caffarelli, Luis},
   author={Petrosyan, Arshak},
   author={Shahgholian, Henrik},
   title={Regularity of a free boundary in parabolic potential theory},
   journal={J. Amer. Math. Soc.},
   volume={17},
   date={2004},
   number={4},
   pages={827--869},
   issn={0894-0347},
   review={\MR{2083469 (2005g:35303)}},
   doi={10.1090/S0894-0347-04-00466-7},
}

\bib{CSS}{article}{
   author={Caffarelli, Luis A.},
   author={Salsa, Sandro},
   author={Silvestre, Luis},
   title={Regularity estimates for the solution and the free boundary of the
   obstacle problem for the fractional Laplacian},
   journal={Invent. Math.},
   volume={171},
   date={2008},
   number={2},
   pages={425--461},
   issn={0020-9910},
   review={\MR{2367025 (2009g:35347)}},
   doi={10.1007/s00222-007-0086-6},
}

\bib{CS}{article}{
   author={Caffarelli, Luis},
   author={Silvestre, Luis},
   title={An extension problem related to the fractional Laplacian},
   journal={Comm. Partial Differential Equations},
   volume={32},
   date={2007},
   number={7-9},
   pages={1245--1260},
   issn={0360-5302},
   review={\MR{2354493 (2009k:35096)}},
   doi={10.1080/03605300600987306},
}

\bib{DL}{book}{
   author={Duvaut, G.},
   author={Lions, J.-L.},
   title={Inequalities in mechanics and physics},
   note={Translated from the French by C. W. John;
   Grundlehren der Mathematischen Wissenschaften, 219},
   publisher={Springer-Verlag},
   place={Berlin},
   date={1976},
   pages={xvi+397},
   isbn={3-540-07327-2},
   review={\MR{0521262 (58\#25191)}},
}

\bib{GL1}{article}{
   author={Garofalo, Nicola},
   author={Lin, Fang-Hua},
   title={Monotonicity properties of variational integrals, $A_p$ weights
   and unique continuation},
   journal={Indiana Univ. Math. J.},
   volume={35},
   date={1986},
   number={2},
   pages={245--268},
   issn={0022-2518},
   review={\MR{833393 (88b:35059)}},
   doi={10.1512/iumj.1986.35.35015},
}

\bib{GL2}{article}{
   author={Garofalo, Nicola},
   author={Lin, Fang-Hua},
   title={Unique continuation for elliptic operators: a
   geometric-variational approach},
   journal={Comm. Pure Appl. Math.},
   volume={40},
   date={1987},
   number={3},
   pages={347--366},
   issn={0010-3640},
   review={\MR{882069 (88j:35046)}},
   doi={10.1002/cpa.3160400305},
}

\bib{GP}{article}{
   author={Garofalo, Nicola},
   author={Petrosyan, Arshak},
   title={Some new monotonicity formulas and the singular set in the lower
   dimensional obstacle problem},
   journal={Invent. Math.},
   volume={177},
   date={2009},
   number={2},
   pages={415--461},
   issn={0020-9910},
   review={\MR{2511747}},
   doi={10.1007/s00222-009-0188-4},
}

\bib{Gro}{article}{
   author={Gross, Leonard},
   title={Logarithmic Sobolev inequalities},
   journal={Amer. J. Math.},
   volume={97},
   date={1975},
   number={4},
   pages={1061--1083},
   issn={0002-9327},
   review={\MR{0420249 (54 \#8263)}},
} 

\bib{FGS}{article}{
   author={Fabes, Eugene B.},
   author={Garofalo, Nicola},
   author={Salsa, Sandro},
   title={Comparison theorems for temperatures in noncylindrical domains},
   language={English, with Italian summary},
   journal={Atti Accad. Naz. Lincei Rend. Cl. Sci. Fis. Mat. Natur. (8)},
   volume={77},
   date={1984},
   number={1-2},
   pages={1--12 (1985)},
   issn={0392-7881},
   review={\MR{884371 (88i:35069)}},
}

\bib{Fic}{article}{
   author={Fichera, Gaetano},
   title={Problemi elastostatici con vincoli unilaterali: Il problema di
   Signorini con ambigue condizioni al contorno},
   language={Italian},
   journal={Atti Accad. Naz. Lincei Mem. Cl. Sci. Fis. Mat. Natur. Sez. I
   (8)},
   volume={7},
   date={1963/1964},
   pages={91--140},
   review={\MR{0178631 (31 \#2888)}},
}

\bib{FS}{book}{
   author={Folland, G. B.},
   author={Stein, Elias M.},
   title={Hardy spaces on homogeneous groups},
   series={Mathematical Notes},
   volume={28},
   publisher={Princeton University Press},
   place={Princeton, N.J.},
   date={1982},
   pages={xii+285},
   isbn={0-691-08310-X},
   review={\MR{657581 (84h:43027)}},
}
\bib{HLN}{article}{
   author={Hofmann, Steve},
   author={Lewis, John L.},
   author={Nystr{\"o}m, Kaj},
   title={Caloric measure in parabolic flat domains},
   journal={Duke Math. J.},
   volume={122},
   date={2004},
   number={2},
   pages={281--346},
   issn={0012-7094},
   review={\MR{2053754 (2005e:35092)}},
   doi={10.1215/S0012-7094-04-12222-5},
}
\bib{Kem}{article}{
   author={Kemper, John T.},
   title={Temperatures in several variables: Kernel functions,
   representations, and parabolic boundary values},
   journal={Trans. Amer. Math. Soc.},
   volume={167},
   date={1972},
   pages={243--262},
   issn={0002-9947},
   review={\MR{0294903 (45 \#3971)}},
}

\bib{LSU}{book}{
   author={Lady{\v{z}}enskaja, O. A.},
   author={Solonnikov, V. A.},
   author={Ural{\cprime}ceva, N. N.},
   title={Linear and quasilinear equations of parabolic type},
   language={Russian},
   series={Translated from the Russian by S. Smith. Translations of
   Mathematical Monographs, Vol. 23},
   publisher={American Mathematical Society},
   place={Providence, R.I.},
   date={1967},
   pages={xi+648},
   review={\MR{0241822 (39 \#3159b)}},
}

\bib{Lie}{book}{
   author={Lieberman, Gary M.},
   title={Second order parabolic differential equations},
   publisher={World Scientific Publishing Co. Inc.},
   place={River Edge, NJ},
   date={1996},
   pages={xii+439},
   isbn={981-02-2883-X},
   review={\MR{1465184 (98k:35003)}},
}

\bib{LS}{article}{
   author={Lions, J.-L.},
   author={Stampacchia, G.},
   title={Variational inequalities},
   journal={Comm. Pure Appl. Math.},
   volume={20},
   date={1967},
   pages={493--519},
   issn={0010-3640},
   review={\MR{0216344 (35 \#7178)}},
}

\bib{Mon}{article}{
   author={Monneau, R.},
   title={On the number of singularities for the obstacle problem in two
   dimensions},
   journal={J. Geom. Anal.},
   volume={13},
   date={2003},
   number={2},
   pages={359--389},
   issn={1050-6926},
   review={\MR{1967031 (2004a:35223)}},
   doi={10.1007/BF02930701},
}
\bib{Mon2}{article}{
   author={Monneau, R.},
   title={Pointwise estimates for Laplace equation. Applications to the free
   boundary of the obstacle problem with Dini coefficients},
   journal={J. Fourier Anal. Appl.},
   volume={15},
   date={2009},
   number={3},
   pages={279--335},
   issn={1069-5869},
   review={\MR{2511866 (2010h:35064)}},
   doi={10.1007/s00041-009-9066-0},
}

\bib{PSU}{book}{
   author={Petrosyan, Arshak},
   author={Shahgholian, Henrik},
   author={Uraltseva, Nina},
   title={Regularity of free boundaries in obstacle-type problems},
   series={Graduate Studies in Mathematics},
   volume={136},
   publisher={American Mathematical Society},
   place={Providence, RI},
   date={2012},
   pages={x+221},
   isbn={978-0-8218-8794-3},
   review={\MR{2962060}},
}

\bib{Shi}{article}{
    author={Petrosyan, Arshak},
    author={Shi, Wenhui},
    title={Parabolic Boundary Harnack Principle in Domains with Thin
      Lipschitz Complement},
    status={in preparation},
    date={2013},
}

\bib{Poo}{article}{
   author={Poon, Chi-Cheung},
   title={Unique continuation for parabolic equations},
   journal={Comm. Partial Differential Equations},
   volume={21},
   date={1996},
   number={3-4},
   pages={521--539},
   issn={0360-5302},
   review={\MR{1387458 (97f:35081)}},
   doi={10.1080/03605309608821195},
}

\bib{Ric}{book}{
   author={Richardson, David Joseph Allyn},
   title={Variational problems with thin obstacles},
   note={Thesis (Ph.D.)--The University of British Columbia (Canada)},
   publisher={ProQuest LLC, Ann Arbor, MI},
   date={1978},
   pages={(no paging)},
   review={\MR{2628343}},
}

\bib{Sig}{article}{
   author={Signorini, A.},
   title={Questioni di elasticit\`a non linearizzata e semilinearizzata},
   language={Italian},
   journal={Rend. Mat. e Appl. (5)},
   volume={18},
   date={1959},
   pages={95--139},
   review={\MR{0118021 (22 \#8794)}},
}

\bib{Sil}{article}{
   author={Silvestre, Luis},
   title={Regularity of the obstacle problem for a fractional power of the
   Laplace operator},
   journal={Comm. Pure Appl. Math.},
   volume={60},
   date={2007},
   number={1},
   pages={67--112},
   issn={0010-3640},
   review={\MR{2270163 (2008a:35041)}},
   doi={10.1002/cpa.20153},
}

\bib{Ur}{article}{
   author={Ural{\cprime}tseva, N. N.},
   title={H\"older continuity of gradients of solutions of parabolic
   equations with boundary conditions of Signorini type},
   language={Russian},
   journal={Dokl. Akad. Nauk SSSR},
   volume={280},
   date={1985},
   number={3},
   pages={563--565},
   issn={0002-3264},
   review={\MR{775926 (87b:35025)}},
}

\bib{Wei1}{article}{
   author={Weiss, Georg S.},
   title={A homogeneity improvement approach to the obstacle problem},
   journal={Invent. Math.},
   volume={138},
   date={1999},
   number={1},
   pages={23--50},
   issn={0020-9910},
   review={\MR{1714335 (2000h:35057)}},
   doi={10.1007/s002220050340},
}

\bib{Wei2}{article}{
   author={Weiss, G. S.},
   title={Self-similar blow-up and Hausdorff dimension estimates for a class
   of parabolic free boundary problems},
   journal={SIAM J. Math. Anal.},
   volume={30},
   date={1999},
   number={3},
   pages={623--644 (electronic)},
   issn={0036-1410},
   review={\MR{1677947 (2000d:35267)}},
   doi={10.1137/S0036141097327409},
}            

\bib{Whi}{article}{
   author={Whitney, Hassler},
   title={Analytic extensions of differentiable functions defined in closed
   sets},
   journal={Trans. Amer. Math. Soc.},
   volume={36},
   date={1934},
   number={1},
   pages={63--89},
   issn={0002-9947},
   review={\MR{1501735}},
   doi={10.2307/1989708},
}
\end{biblist}

\end{bibdiv}
\end{document}